\documentclass[11pt]{article}
\usepackage{amsmath,amssymb,tikz-cd}
\usepackage{mathrsfs}
\usepackage{mathrsfs}
\usepackage{mathrsfs}
\usepackage{mathtools}
\usepackage{amsfonts}
\usepackage{amssymb}
\usepackage{amsmath}
\usepackage{amsthm}
\usepackage{color}
\usepackage{enumitem}
\usepackage{bm}
\theoremstyle{plain}

\newtheorem{thm}{Theorem}[section]
\newtheorem{cor}[thm]{Corollary}
\newtheorem{lem}[thm]{Lemma}
\newtheorem{prop}[thm]{Proposition}
\newtheorem{exam}[thm]{Example}
\newtheorem{rem}[thm]{Remark}
\newtheorem{defn}[thm]{Definition}

\begin{document}
\title{{\Large\bf {Taylor-type joint spectrum and essential spectrum of row-bounded sequences of commuting operators\uppercase\expandafter{}
}}
\thanks{
This work is supported by National Natural Science Foundation of China: (12271298 and 11871308).}}
\author{{\normalsize Penghui Wang, Fan Zhang,  Ruoyu Zhang and Zeyou Zhu}} \date{}
 \maketitle
 
 \begin{abstract}
 As the cornerstone of multivariable operator theory, (essential) Taylor's joint spectrum plays crucial roles in many areas. The present paper is devoted to developing the theory of (essential) Taylor-type joint spectrum of row-bounded sequences of commuting operators on Hilbert spaces and to establishing the associated  Fredholm index theory. Furthermore, the holomorphic functional calculus for row-bounded sequences is established.  At last, we obtain a Gleason-Richter-Sundberg type index theorem in the infinite-variable setting, which connects the Fredholm index, the asymptotic Arveson's curvature invariant and fibre dimensions. 
 \end{abstract}
\
\noindent{\bf 2010 AMS Subject Classification:}  47A13; 46E22.

\noindent{\bf Keywords.} Taylor's joint spectrum, holomorphic functional calculus, Fredholm index, row-contraction, sequence of commuting operators\\

\maketitle
\numberwithin{equation}{section}
\newtheorem{theorem}{Theorem}[section]
\newtheorem{lemma}[theorem]{Lemma}
\newtheorem{proposition}[theorem]{Proposition}
\newtheorem{corollary}[theorem]{Corollary}
\newtheorem{claim}{Claim}
\def\si{\sigma_{\text{I}}}
\def\sii{\sigma_{\text{II}}}
\def\dT{\partial_{\mathbb T}}

\section{Introduction}

Let $\mathbb T_d=(T_1,\ldots,T_d)$ be a $d$-tuple of commuting bounded operators on a nonzero Hilbert space $H$. As the cornerstone of multivariable operator theory, the joint spectrum $\sigma(\mathbb T_d)$ was introduced by Taylor \cite{Taylorspectrum} via the exactness of the Koszul complex. It plays a very important role in many areas of mathematics;  see
\cite{ CR, CurtoYan1991, EschmeierPutinar1996,  Helemskii1981, Mller2003SpectralTO, FV, Vasilescu1982,  taylor} and references therein.

In the present paper, we introduce the Taylor-type joint spectra of row-bounded sequences of commuting bounded operators $\mathbb T=(T_1,T_2,\ldots)$ on $H$, where a sequence of bounded operators is called row-bounded if there is a constant $C>0$ such that 
$$
\sum\limits_{i=1}^k T_iT_i^*\leq C I, \quad \forall k\in\mathbb N.
$$
Moreover, if $C=1$, then the sequence  $\mathbb T$ is called a row-contraction. To define the Taylor-type joint spectrum, we need the Koszul complex of infinite length.  Let $\Lambda^n$ be the antisymmetric  $n$-fold tensor product of $\ell^2$ with itself for $n\in\mathbb N$, and $\Lambda^0=\mathbb C$ be the vacuum space. The antisymmetric Fock space is defined by $$\Lambda=\bigoplus\limits_{n=0}^\infty \Lambda^n.$$ For simplicity, set $\Lambda(H)=H\otimes_{\mathbb C}\Lambda$ and $\Lambda^n(H)=H\otimes_{\mathbb C}\Lambda^n$. Obviously, 
$$
\Lambda(H)=H \otimes_{\mathbb{C}} \Lambda \cong \mathop{\oplus}\limits_{n=0}^{\infty} H \otimes_{\mathbb{C}} \Lambda^{n}=\mathop{\oplus}\limits_{n=0}^{\infty} \Lambda^{n}(H).
$$
Let $\{e_i\}_{i=1}^\infty$ be an orthonormal basis of $\ell^2$.
For $i\in\mathbb N$, the creation operator  $E_i:\Lambda\to\Lambda$ is defined  by
$$
E_i\xi=e_i\wedge \xi, \quad \forall \xi\in\Lambda.
$$
Then it can be verified that 
\begin{eqnarray}\label{anticommutationrelations}
E_iE_j+E_jE_i=0,\quad E_i^*E_j+E_jE_i^*=\delta_{ij}I_{\Lambda}.
\end{eqnarray}
Furthermore obviously for $k\in\mathbb N$, $E_{i}(\Lambda^{k-1})\subset \Lambda^{k}.$ For convenience, write 
$$E_{i,k}=E_{i}|_{\Lambda^{k-1}}: \Lambda^{k-1}\to \Lambda^k,$$  and hence 
$$
\left\{\sum\limits_{i=1}^n T_i\otimes E_{i,k}^*\right\}_{n=1}^\infty\subset B\left(\Lambda^{k}(H),\Lambda^{k-1}(H)\right).
$$
As a prerequisite, in Section 2,  we prove that for any $k\in\mathbb N$, the sequence
$
\left\{\sum\limits_{i=1}^n T_i\otimes E_{i,k}^*\right\}_{n=1}^\infty
$
  is convergent in strong operator topology(SOT for short) and the limit is denoted by 
 \begin{eqnarray}
  \partial_{\mathbb T,k} =\operatorname{SOT-}\lim\limits_{n\to\infty}\sum\limits_{i=1}^{n} T_{i} \otimes E_{i,k}^{*}. 
 \end{eqnarray}
 Obviously, $\partial_{\mathbb T,0}=0$. \emph{In what follows, we always assume that $\mathbb T$ is a row-bounded sequence of commuting operators on $H$.} 
 Similar to \cite{taylor}, we have the following Koszul complex of infinite length associated with $\mathbb T$
\begin{eqnarray}\label{Koszul-complex}
K(\mathbb{T}): 0 \stackrel{\partial_{\mathbb{T}, 0}}{\longleftarrow} \Lambda^{0}(H) \stackrel{\partial_{\mathbb{T}, 1}}{\longleftarrow} \Lambda^{1}(H) \stackrel{\partial_{\mathbb{T}, 2}}{\longleftarrow} \Lambda^{2}(H)\longleftarrow \cdots .
\end{eqnarray}
As usual, the $k$-th homology group is defined by
$$
H^k_{\mathbb T}=\ker\partial_{\mathbb T,k}/\operatorname{Im}\partial_{\mathbb T,k+1}.
$$

With these preparations, we define the Taylor-type joint invertibility for commuting row-bounded sequence as follows. 

\begin{defn}\label{1and2invert}
Let $\mathbb T$ be a row-bounded sequence of commuting operators on $H$. $\mathbb T$ is called \emph{invertible} if 
\begin{itemize}
\item[(1)] $H^n_\mathbb T=0$ for any $n\geq0$, and
\item[(2)] there is a positive constant $c$, which is independent of $n$, such that for any $x\in\ker\partial_{\mathbb T,n}$ there exists $y\in\Lambda^{n+1}(H)$, which satisfies 
    $$
    x=\partial_{\mathbb T,n+1}y\quad{\text{and}}\quad\|y\|\leq c\|x\|.
    $$
\end{itemize}
\end{defn}

It should be pointed out that the uniformly controllable condition 2) in the above definition plays a central role. Next, we introduce the spectra for commuting row-bounded sequences.
\begin{defn}
The \emph{spectrum} for $\mathbb T$ is defined by
\begin{equation}\label{I-spectrum}
  \sigma(\mathbb T)=\{(\lambda_1,\lambda_2,\ldots)\in l^2; (\lambda_1 I-T_1,\lambda_2 I-T_2,\ldots) \text{ is not invertible}\}
\end{equation}
\end{defn}
For a single operator \(T\) on a Hilbert space \(H\) and a connected open set \(\Omega\subset\mathbb C\), the Cowen--Douglas class \(B_n(\Omega)\) \cite{CowenDouglas1978} consists of those \(T\) for which
\begin{itemize}
\item[(1)]$\operatorname{ran}(T-\omega I)=H ,\quad \forall\,\omega\in\Omega$,
\item[(2)]$\dim\ker(T-\omega I)=n,\quad \forall\,\omega\in\Omega$,
\item[(3)]$\overline{\operatorname{span}}\{\ker(T-\omega I):\,\omega\in\Omega\}=H.$
\end{itemize}
\noindent Cowen--Douglas theory shows that the eigenspaces \(\ker(T-\omega I)\) form a Hermitian holomorphic vector bundle over \(\Omega\), and that unitary equivalence corresponds to bundle equivalence; for \(n=1\), the curvature is a complete unitary invariant. Curto--Salinas\cite{CurtoSalinas1984} extended this viewpoint to commuting tuples via generalized Bergman kernels. Thus spectral theory encodes not only spectral points but also the local fibres and their geometry. Our definition of the spectrum for infinite commuting tuples is designed to capture this geometric information through the full infinite Koszul complex.

Słodkowski and Żelazko \cite[Theorem 2.3]{SZ1974} defined the following spectrum for a row-bounded sequence \(\mathbb T=(T_1,T_2,\ldots)\) of commuting operators on a Hilbert space \(H\):
\begin{equation}
\sigma_{SZ}(\mathbb T)
=
\left\{\lambda\in\mathbb C^\infty:
(\lambda_1,\cdots,\lambda_n)\in\sigma(T_1,\cdots,T_n)
\ \text{for all }n\ge 1\right\}.
\end{equation}
For a row-bounded sequence $\mathbb{T}$, by \cite[Theorem 2.3]{SZ1974}, it is easy to prove that $\sigma_{SZ}(\mathbb{T})\subset\ell^2$ is a nonempty compact subset of \((\ell^2,\tau_p)\), where  $(\ell^2,\tau_p)$ is considered as a topological subspace of $\mathbb{C}^\infty$ with product topology.
However, the case for $\sigma(\mathbb{T})$  is more complicated. In fact, we have the following result.

\begin{thm}\label{main-thm1} 
Let $\mathbb T$ be a row-bounded sequence of commuting operators on $H$. Then $\sigma(\mathbb T)$ is a bounded closed subset of $\ell^2$. Furthermore, for $1\le p\le 2$, if
\begin{eqnarray}\label{p-summability}
\sum\limits_{i=1}^\infty \|T_i\|^p<\infty,
\end{eqnarray}
then $\sigma(\mathbb T)$ is a nonempty compact subset of $\ell^p$.
\end{thm}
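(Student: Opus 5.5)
Boundedness comes from the standard Koszul-complex identity $\partial_{\mathbb S}\delta_{\mathbb S}+\delta_{\mathbb S}\partial_{\mathbb S}=\sum_i S_iS_i^*\otimes I$. Closedness comes from a perturbation argument that uses the uniform constant $c$ in Definition~\ref{1and2invert}. Nonemptiness and compactness in $\ell^p$ come from comparing with $\sigma_{SZ}(\mathbb T)$ and from the finite sections $(T_1,\dots,T_n)$.

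\emph{Boundedness.} Write $\mathbb S=\lambda-\mathbb T$ and $\delta_{\mathbb S}=\operatorname{SOT-}\lim\sum_i S_i^*\otimes E_i$. By the anticommutation relations (\ref{anticommutationrelations}) and commutativity,
$$\partial_{\mathbb S}\delta_{\mathbb S}+\delta_{\mathbb S}\partial_{\mathbb S}=\sum_i S_iS_i^*\otimes I .$$
Here $\sum_i S_iS_i^*=\|\lambda\|^2 I-2\operatorname{Re}\sum_i\bar\lambda_iT_i+\sum_iT_iT_i^*$; the middle sum converges in SOT, using row-boundedness and $\lambda\in\ell^2$. If $\|\lambda\|_2>2\sqrt C$, then $\|\sum_i\bar\lambda_i T_i\|\le\|\lambda\|\sqrt C$, so $A:=\sum_i S_iS_i^*\ge(\|\lambda\|^2-2\sqrt C\|\lambda\|)I$ is invertible. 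Since $A\otimes I$ commutes with $\partial_{\mathbb S}$ and $\delta_{\mathbb S}$, every $x\in\ker\partial_{\mathbb S,n}$ satisfies $x=\partial_{\mathbb S}\big((A^{-1}\otimes I)\delta_{\mathbb S}x\big)$. This gives exactness, with the bound $\|y\|\le\|A^{-1}\|\,\|\delta_{\mathbb S}\|\,\|x\|$ independent of $n$. Hence $\sigma(\mathbb T)\subset\{\|\lambda\|_2\le 2\sqrt C\}$.

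\emph{Closedness.} I show the resolvent set is open in $\ell^2$. If $\lambda$ is invertible with constant $c$ and $\|\mu-\lambda\|_2<\varepsilon$, then $\partial_{\mu-\mathbb T}=\partial_{\lambda-\mathbb T}+D$, where $D=\sum_i(\mu_i-\lambda_i)\otimes E_i^*$ and $\|D\|\le\|\mu-\lambda\|_2$. Condition (2) says the complex $K(\lambda-\mathbb T)$ splits uniformly. Concretely, there are contracting maps $h_n$ with $\|h_n\|\le c'$ and $\partial h+h\partial=I$, obtained by taking minimal-norm preimages on the closed ranges. Then $\partial'h+h\partial'=I+(Dh+hD)$, and $Dh+hD$ has norm at most $2c'\varepsilon<1$. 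This operator commutes with $\partial'$ because $\partial'^2=0$. Inverting $I+(Dh+hD)$ by a Neumann series gives a contracting homotopy $h'$ for $\partial'$ with a uniform bound, so $\mu$ is invertible.

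\emph{Nonemptiness and compactness.} Under (\ref{p-summability}), the first step refined gives $\sigma(\mathbb T)\subset\{\lambda:|\lambda_i|\le\|T_i\|\}$ coordinatewise. If some $|\lambda_i|>\|T_i\|$, then $S_i$ is invertible, and the one-variable homotopy $S_i^{-1}\otimes E_i$ splits the complex with a uniform bound. So $\sigma(\mathbb T)$ lies in the product $\prod_i\{|z|\le\|T_i\|\}$, which is compact in $\ell^p$ since $\sum\|T_i\|^p<\infty$. Being closed in $\ell^2$, $\sigma(\mathbb T)$ is also closed in this set, because on this set the $\ell^p$ and $\ell^2$ topologies coincide with the product topology (dominated tails). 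Therefore $\sigma(\mathbb T)$ is compact in $\ell^p$.

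For nonemptiness I will show $\sigma_{SZ}(\mathbb T)\subset\sigma(\mathbb T)$, with $\sigma_{SZ}(\mathbb T)\neq\emptyset$ by \cite{SZ1974}. This is the main obstacle. I would argue contrapositively: take $\lambda\notin\sigma(\mathbb T)$ but $\lambda\in\sigma_{SZ}(\mathbb T)$. Split $\Lambda=\Lambda(\mathbb C^n)\otimes\Lambda(\ell^2\ominus\mathbb C^n)$, so that $K(\lambda-\mathbb T)$ is the total complex of $K(\lambda'-\mathbb T_n)$ tensored with the tail Koszul complex. Summability makes the tail perturbation small in norm for large $n$. Using the perturbation estimate from the closedness step, exactness with uniform bounds then descends to the finite section $K(\lambda'_n-\mathbb T_n)$, because the tail part $(\lambda_i-T_i)_{i>n}$ can be compared with the decoupled complex where the tail operators are replaced by $\lambda_i I$. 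This would contradict $(\lambda_1,\dots,\lambda_n)\in\sigma(T_1,\dots,T_n)$. Making this descent rigorous, especially when $\lambda_i$ is small for $i>n$, is where I expect the real work.
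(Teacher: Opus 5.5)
\textbf{Summary.} Closedness and $\ell^p$-compactness are fine; boundedness and nonemptiness have genuine gaps.

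Your closedness argument is correct and more elementary than the paper's. The paper applies a perturbation theorem for closed operators to $\widehat{\mathbb T}$ (Corollary~\ref{invert continious}). You instead perturb the uniformly bounded contracting homotopy degreewise: since $\Phi=\partial'h+h\partial'$ commutes with $\partial'$, $h\Phi^{-1}$ is again a uniformly bounded homotopy. The coordinatewise bound and the $\ell^p$-compactness are the paper's argument via Corollary~\ref{TS=1}.

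\textbf{Boundedness.} The identity $\partial_{\mathbb S}\delta_{\mathbb S}+\delta_{\mathbb S}\partial_{\mathbb S}=\sum_i S_iS_i^*\otimes I$ is false for a merely commuting sequence. Expanding with the CAR relations, the left side is
$\sum_i\bigl(S_iS_i^*\otimes E_i^*E_i+S_i^*S_i\otimes E_iE_i^*\bigr)+\sum_{i\neq j}(S_jS_i^*-S_i^*S_j)\otimes E_j^*E_i$.
This reduces to $\sum_iS_iS_i^*\otimes I$ only when $S_i^*S_j=S_jS_i^*$ for all $i,j$. That is exactly the doubly commuting hypothesis of Lemma~\ref{normal contrac} and Lemma~\ref{lem:bounded-partial-normal}. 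For the same reason, $A\otimes I$ need not commute with $\partial_{\mathbb S}$.

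The step can be repaired with a homotopy built from operators that do commute with $\mathbb T$. Put $Y=\|\lambda\|^2-\sum_i\bar\lambda_iT_i$; this series converges in norm, with $\|\sum_i\bar\lambda_iT_i\|\le\sqrt C\|\lambda\|$. So $Y$ is invertible for $\|\lambda\|>\sqrt C$ and commutes with every $T_j$. Then $B_i=-\bar\lambda_iY^{-1}$ satisfies $\sum_i(T_i-\lambda_i)B_i=I$ and $\|\sum_iB_i\otimes E_i^*\|\le\|Y^{-1}\|\,\|\lambda\|$. Corollary~\ref{TS=1} then gives $\sigma(\mathbb T)\subset\sqrt C\,\overline{\mathbb B}$. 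This is also much lighter than the paper's route through the model on $\mathcal M^\perp\subset H^2\otimes E$ and the corona theorem.

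\textbf{Nonemptiness.} This is not proved, and the descent as described cannot work. If $(\lambda_i)_{i>n}\neq 0$, the decoupled complex $K((\lambda_i-T_i)_{i\le n})\otimes K((\lambda_i)_{i>n})$ is contractible whatever the finite section does. The reason is that the Koszul complex of a nonzero scalar vector is contractible. So exactness does not descend.

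What you could use is quantitative. The Laplacian of the decoupled complex is $\Delta_n\otimes I+\|(\lambda_i)_{i>n}\|^2 I$. So if $(\lambda_1,\dots,\lambda_n)\in\sigma(T_1,\dots,T_n)$, every contracting homotopy has norm $\gtrsim\|(\lambda_i)_{i>n}\|^{-1}\to\infty$. That contradicts a uniform constant, but only if the tail perturbation is small in norm. For $p=1$ this holds, since $\|\sum_{i>n}T_i\otimes E_i^*\|\le\sum_{i>n}\|T_i\|$. For $1<p\le 2$ you only have $\sum\|T_i\|^2<\infty$, and smallness of that operator norm needs an extra Khintchine-type estimate you do not supply.

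The paper (Theorem~\ref{thm:normconv-nonemptyI} with $J=\emptyset$) avoids both issues. The homotopy $h$ for $\mathbb T-\lambda$ gives $\|\zeta\|^2\le 2\|h\|^2(\|\partial\zeta\|^2+\|\partial^*\zeta\|^2)$. It tests this on approximately harmonic unit vectors $\xi_N\in H\otimes\Lambda[\mathbb C^N]$ of the $N$-th section. The tail of $\partial$ annihilates $\xi_N$. The tail of $\partial^*$ contributes at most $\|\sum_{j>N}(T_j-\lambda_j)(T_j-\lambda_j)^*\|$, which tends to $0$ because $\sum\|T_j\|^2<\infty$. So only norm convergence of the row sum is needed.
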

\begin{rem}\label{rem:nonempty}
\begin{itemize}
\item[(1)]Without the \(p\)-summability condition (\ref{p-summability}) in Theorem \ref{main-thm1}, the spectrum \(\sigma(\mathbb T)\) may be empty. An example of a row-bounded sequence \(\mathbb T\) with \(\sigma(\mathbb T)=\varnothing\) is constructed in Example \ref{empty spec}.
\item[(2)] In Remark 3.12. 1), we construct an example such that $\mathbb T$ is invertible, however for any $n\geq 0$, $(T_1,\cdots, T_n)$ is not invertible. That is, $0\not\in\sigma(\mathbb T)$ and $0\in\sigma_{SZ}(\mathbb T)$. 
\item[(3)] In Remark 3.12. 2), we show that for any nonempty bounded closed subset $F\subset\ell^2$, there is a row-bounded sequence $\mathbb T$, such that $\sigma(\mathbb T)=F$. This is not the case for $\sigma_{SZ}(\mathbb T)$, because $\pi_n\sigma_{SZ}(\mathbb T)=\sigma(\mathbb T_n)$, where $\pi_n$ is the projection onto the first $n$-coordinates. For example, let $\{e_n\}$ be the standard orthonormal basis of $\ell^2$, $q_{n}={1\over n} e_1+e_n$, then $S=\{q_n\}_{n=1}^\infty$  is bounded and closed. However $\pi_1(S)$ is not closed, therefore it is not the spectra of any operator.
\item[(4)]The  $p$-summability condition (\ref{p-summability}) can be replaced by the assumption that \(\left\{\sum\limits_{i=1}^{k}T_i\otimes E_i^*\right\}_{k\geq1}\) converges in norm and in this case, $\sigma(\mathbb T)=\sigma_{SZ}(\mathbb T)$, which is a nonempty compact subset of $\ell^2$, in the norm topology.
\end{itemize}
\end{rem}

 In Section 7, we calculate the spectrum of the coordinate sequence $(M_{z_1}, M_{z_2},\cdots)$ on the Drury-Arveson space $H^2$ of infinitely many variables, which  is the reproducing kernel Hilbert space of holomorphic functions on the open unit ball \(\mathbb B\) of \(\ell^2\) with the reproducing kernel 
$$
K_\lambda(z)={1\over 1-\langle z,\lambda\rangle}, \text{ for }\lambda,z\in\mathbb B.
$$ 
We have that $$\sigma(M_{z_1},M_{z_2},\cdots)=\bar{\mathbb B},$$ which is a (noncompact) bounded closed subset of $\ell^2$.

We now turn to the  functional calculus of $\mathbb T$ by holomorphic functions in ${\rm Hol}(\sigma(\mathbb T))$, where ${\rm Hol}(\sigma(\mathbb T))$ is the algebra of germs of
holomorphic functions on neighborhoods of $\sigma(\mathbb T)$. One studies the holomorphic functional calculus of an 
$n$-tuple because single-operator theory is insufficient for multivariate systems. Moreover, the Spectral Mapping Theorem \cite{taylor} coming from holomorphic functional calculus makes the spectrum more computable. For infinite-variable operator theory, holomorphic functional calculus of a row-bounded sequence of commuting operators should be one of the most important tasks. 

In what follows, for $d>1$, $\mathbb T_d=(T_1,\cdots,T_d)$ is always denoted to be a $d$-tuple of commuting operators. As usual, for a compact subset $K\subset\mathbb C^d$,  $[f]_d\in\mathrm{Hol}(K)$ means that there is a neighbourhood $U$ of $K$ such that $f$ is holomorphic on $U$. Recall that the classical holomorphic functional calculus of $\mathbb T_n$ is a unital  homomorphism  $\tau_n: {\rm Hol}(\sigma(
\mathbb T_n))\to B(H)$ satisfying 
				\begin{enumerate}
					\item[\textup{(1)}]
					For every $1\leq j\leq n$,
					$\tau_n([z_j]_n)=T_j$;
					
					\item[\textup{(2)}]
					For every open neighborhood $U$
					of $\sigma(\mathbb T_n)$ in $\mathbb C^n$, the map
					$\operatorname{Hol}(U)\to B(H)$,
					$f\mapsto\tau_n([f]_n)$, is continuous from the
					compact-open topology to the operator norm topology.
					
				\end{enumerate} 

The holomorphic functional calculus can be used to study von Neumann inequality. Precisely, let $\mathbb T_d$ be a $d$-contraction, then there exists $r>1$ such that
	$\sigma(\mathbb T_d)\subseteq r\overline{\mathbb B_d}$
	and for any $f\in\operatorname{Hol}(r\overline{\mathbb B_d})$,
	\[
	\|f(\mathbb T_d)\|
	\le \sup_{z\in r\overline{\mathbb B_d}}|f(z)|.
	\]
A homomorphism $\tau:\operatorname{Hol}(r\overline{\mathbb B_d})\to B(H)$ can be seen as the functional calculus of $\mathbb T_d$ on $r\overline{\mathbb B_d}$.
In general, if one replaces $\sigma(\mathbb T_n)$ by some compact set $K\subset\mathbb C^n$, then a unital homomorphism $\tau_n: {\rm Hol}(K)\to B(H)$ satisfying the above properties can be seen as the holomorphic functional calculus of $\mathbb T_n$ on $K$. For convenience, an element in ${\rm Hol}(K)$ will be written as $[g]_n$. 
For each $n\geq1$, let $\pi_n:\ell^2\to\mathbb C^n$ be the coordinate projection.

\begin{defn}\label{def:tau} Let $\mathbb T$ be a row-bounded sequence of commuting operators, $K\subset \ell^2$ be a compact set. 
		The family of holomorphic functional calculi $\{\tau_n\}_{n\geq1}$ is called \emph{compatible}
		if, for every $m\geq n\geq1$ and every
		$[g]_n\in\operatorname{Hol}(\pi_nK)$,
		\[
		\tau_m([g\circ\pi_{m,n}]_m)=\tau_n([g]_n),
		\]
		where $\pi_{m,n}:\mathbb C^m\to\mathbb C^n$
		is the coordinate projection.
		We say that \emph{$\mathbb T$ is compatible with respect to $\{\pi_nK\}_{n\geq1}$} if for each $n\geq1$, $(T_1,\ldots,T_n)$	admits a  holomorphic functional calculus	$\tau_n$ on $\pi_nK$ and $\{\tau_n\}_{n\geq1}$ is compatible.
\end{defn}
 
Next, we need the concept of compact-set von Neumann inequality. 
In what follows, \emph{let $\mathbb C[z_1,\ldots,z_n]$ denote the ring of polynomials in $z_1,\ldots,z_n$ with complex coefficients,
and let \begin{eqnarray}\mathcal P=\bigcup\limits_{n\ge1}\mathbb C[z_1,\ldots,z_n]\end{eqnarray} be the polynomial ring in infinitely many variables.}

\begin{defn}A row-bounded sequence of commuting operators $\mathbb T$ is called to satisfy the \emph{compact-set von Neumann inequality} if there exists a nonempty compact subset $\mathcal K$ of \(\ell^2\) such that for any $p\in \cal P$, 
\[
\|p(\mathbb T)\|
\leq \sup_{z\in \mathcal K}|p(z)|.
\]
\end{defn}

The compact-set von Neumann inequality holds for every finite tuple of commuting bounded operators $\mathbb T_d$, with $\mathcal K$ chosen to be a sufficiently large compact polydisc containing
$\sigma(\mathbb T_d)$.
However, for the case of infinite-variable operator theory,  it is much more complicated. The next proposition will be proved in Section 4. 
\begin{prop}\label{Prop:vNinequtocompactness}
Let $\mathbb T$ be a row-bounded sequence of commuting operators. If $\mathbb T$ satisfies the compact-set von Neumann inequality, then both  $\sigma(\mathbb T)$ and $\sigma_{SZ}(\mathbb T)$ are compact subsets in $\ell^2$.
\end{prop}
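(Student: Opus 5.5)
Since Theorem \ref{main-thm1} already gives that $\sigma(\mathbb T)$ is a bounded closed subset of $\ell^2$, and $\sigma_{SZ}(\mathbb T)\subset\ell^2$ is closed in the norm topology (it is closed even in $\tau_p$), the plan is to show that both spectra lie inside a fixed compact subset of $\ell^2$ built from $\mathcal K$; a closed subset of a compact set is compact. The natural candidate is the \emph{polynomially convex hull}
$$
\widehat{\mathcal K}=\Big\{\lambda\in\ell^2:\ |p(\lambda)|\le \sup_{z\in\mathcal K}|p(z)|\ \text{ for all } p\in\mathcal P\Big\},
$$
or a simpler compact superset of it.

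\textbf{Step 1 (spectra lie in the hull).} For $\sigma_{SZ}$: if $\lambda\in\sigma_{SZ}(\mathbb T)$, then for each $n$ we have $(\lambda_1,\dots,\lambda_n)\in\sigma(T_1,\dots,T_n)$. For $p\in\mathbb C[z_1,\dots,z_n]$ the finite-variable spectral mapping theorem gives $p(\lambda_1,\dots,\lambda_n)\in\sigma(p(T_1,\dots,T_n))$. Hence
$$
|p(\lambda)|\le\|p(\mathbb T)\|\le\sup_{\mathcal K}|p|,
$$
so $\sigma_{SZ}(\mathbb T)\subset\widehat{\mathcal K}$. For $\sigma(\mathbb T)$ I would show $\sigma(\mathbb T)\subset\sigma_{SZ}(\mathbb T)$ as a set inclusion; Remark \ref{rem:nonempty}(2) only says the reverse inclusion fails. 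If an inclusion is not available in the paper, I would instead prove directly that for $\lambda\notin\widehat{\mathcal K}$ the sequence $\lambda-\mathbb T$ is invertible. The argument would go as follows: choose $p$ with $|p(\lambda)|>\sup_{\mathcal K}|p|\ge\|p(\mathbb T)\|$, so that $p(\lambda)-p(\mathbb T)$ is invertible. Write $p(\lambda)-p(z)=\sum_{i\le n}(\lambda_i-z_i)q_i(z)$. The standard homotopy argument (the identity acts as multiplication by an invertible element that is nullhomotopic on the Koszul complex) then yields exactness with uniform bounds. The uniform bound should follow because only the finitely many coordinates $i\le n$ enter the contracting homotopy, so condition (2) of Definition \ref{1and2invert} holds with a constant depending on $\|(p(\lambda)-p(\mathbb T))^{-1}\|$ and the $\|q_i(\mathbb T)\|$.

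\textbf{Step 2 (the hull is compact).} This is the main obstacle. Since $\mathcal K$ is norm-compact in $\ell^2$, for every $\varepsilon>0$ there is $N$ with $\sup_{z\in\mathcal K}\sum_{i>N}|z_i|^2<\varepsilon$. I would use linear polynomials $p(z)=\sum_{i=N+1}^{M}\bar c_i z_i$ with $\|c\|=1$; these give $|\langle \lambda',c\rangle|\le\sup_{\mathcal K}|\langle z',c\rangle|\le\sqrt\varepsilon$ for the tail $\lambda'$ of $\lambda$. Taking the supremum over $c$ gives $\sum_{i>N}|\lambda_i|^2\le\varepsilon$ uniformly for $\lambda\in\widehat{\mathcal K}$. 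Similarly, $p=z_i$ gives boundedness of each coordinate. A subset of $\ell^2$ that is bounded and has uniformly small tails is totally bounded, so $\widehat{\mathcal K}$ is relatively compact. It is closed because each defining condition involves only finitely many coordinates and is therefore continuous. Hence $\widehat{\mathcal K}$ is compact.

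\textbf{Step 3 (conclusion).} Combining the steps, $\sigma(\mathbb T)$ and $\sigma_{SZ}(\mathbb T)$ are closed subsets of the compact set $\widehat{\mathcal K}$, and therefore compact. In fact only Step 2's tail estimate is needed: the linear-polynomial test applied directly to $\lambda\in\sigma_{SZ}$, or to $\lambda\in\sigma(\mathbb T)$ via Step 1, already gives uniform tails.

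The delicate point is Step 1 for $\sigma(\mathbb T)$. There the uniformity constant in the infinite Koszul complex must be controlled, which is why the homotopy involving only finitely many variables is the key tool.
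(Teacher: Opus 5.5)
Your argument is correct and is essentially the paper's. The paper also tests the von Neumann inequality only on linear polynomials. It handles $\sigma(\mathbb T)$ by the same finitely supported homotopy you describe: Corollary \ref{TS=1} with $\mathbb S=(a_1B^{-1},\dots,a_nB^{-1},0,\dots)$, where $B=\sum_{i\le n}a_i(T_i-\lambda_i I)$. It handles $\sigma_{SZ}(\mathbb T)$ by the finite spectral mapping theorem and concludes from closedness; the only difference is that it encloses both spectra in the compact ellipsoid $\mathcal F_{\mathbf d}\supset\mathcal K$ of Lemma \ref{lem:compact-ellipsoid}, instead of proving total boundedness of $\widehat{\mathcal K}$ directly. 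Drop your first idea of proving $\sigma(\mathbb T)\subset\sigma_{SZ}(\mathbb T)$, since that inclusion fails in general (in the example following Lemma \ref{finite invertible to I invertible}, $(A_1,A_2)$ is Taylor invertible yet $\mathbb T$ is not invertible); your direct fallback is the argument to use.
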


\begin{rem}
A simple sufficient condition for the compact-set von Neumann inequality is the norm summability \(\sum\limits_{i=1}^\infty \|T_i\| < \infty\). We refer to Section 4 for the proof.
\end{rem}
Now, we introduce the holomorphic functional calculus for row-bounded sequence of commuting operators. Recall that the compact-open topology on \(\operatorname{Hol}(U)\) is the topology of uniform convergence on compact subsets, generated by the seminorms \(p_K(f) = \sup\limits_{z \in K} |f(z)|\) for compact \(K \subset U\).

\begin{defn}\label{def:Gamma}
	Let $K$ be a subset of $\ell^2$.
	We say that $\mathbb T$ admits a \emph{continuous holomorphic functional calculus on $K$}  if there exists a unital algebra homomorphism $\Gamma:\operatorname{Hol}(K)\to B(H)$ satisfying the following conditions
	\begin{enumerate}
		\item[(1)]
		For every $j\geq1$, $\Gamma([z_j])=T_j$;
		
		\item[(2)]
		For every open neighborhood $U$ of $K$ in $\ell^2$, the map	$\Gamma: \operatorname{Hol}(U)\to B(H)$, $f\mapsto\Gamma([f])$ is continuous from the compact-open topology to the operator norm topology.
	\end{enumerate}
\end{defn}

With the above preparations, we have the following theorem, which is the main result on holomorphic functional calculus in the infinite-variable operator theory settings. 
\begin{thm}\label{thm:weighted-polynomial-calculus}
	Let $\mathbb T$ be a row-bounded sequence of commuting operators,  $K$ be a compact subset in $\ell^2$. 
	Then the following statements are equivalent.
	\begin{enumerate}
	\item[(1)]
			$\mathbb T$ satisfies the compact-set von Neumann inequality,  and there is a compatible family of holomorphic functional calculi  $\tau_n: {\rm Hol}(\pi_nK)\to B(H)$ of $\mathbb T_n$ with respect to $\pi_nK$;
				\item[(2)]
		$\mathbb T$ admits a continuous holomorphic functional
		calculus on $K$.
	\end{enumerate}
\end{thm}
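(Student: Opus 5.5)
We prove the two implications separately. The key tool throughout is a neighbourhood lemma for compact $K\subset\ell^2$: every open $U\supset K$ contains a neighbourhood of the form $\pi_n^{-1}(V)\cap U'$ for large $n$. More precisely, since $K$ is compact, $\sup_{z\in K}\sum_{i>n}|z_i|^2\to0$. Hence for each $\varepsilon>0$ there is an $n$ such that the $\varepsilon$-tubes of $K$ are contained in sets $\{z:\pi_nz\in V_n,\ \|(z_i)_{i>n}\|<\varepsilon\}$, with $V_n$ a neighbourhood of $\pi_nK$.

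\textbf{(2)$\Rightarrow$(1).} Given $\Gamma$, we define $\tau_n([g]_n)=\Gamma([g\circ\pi_n])$. This makes sense because $g\circ\pi_n$ is holomorphic on $\pi_n^{-1}(V)$, which is an open neighbourhood of $K$ whenever $V\supset\pi_nK$ is open. Each $\tau_n$ is a unital homomorphism with $\tau_n(z_j)=T_j$. Continuity from the compact-open topology on $\operatorname{Hol}(V)$ follows from continuity of $\Gamma$ on $\operatorname{Hol}(\pi_n^{-1}V)$: a compact $L\subset\pi_n^{-1}V$ has compact image $\pi_nL\subset V$, so uniform convergence on compacta of $V$ implies it for the composed functions on $\pi_n^{-1}V$. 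Compatibility is immediate, since $g\circ\pi_{m,n}\circ\pi_m=g\circ\pi_n$.

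For the von Neumann inequality, we use continuity of $\Gamma$ on $\operatorname{Hol}(U)$ with $U$ a bounded neighbourhood (say the $1$-tube of $K$). This gives a compact $\mathcal K_0\subset U$ and $C>0$ with $\|\Gamma(f)\|\le C\sup_{\mathcal K_0}|f|$. To remove $C$, apply the bound to $p^m$: $\|p(\mathbb T)\|^m$ need not be controlled this way, so instead use the spectral radius formula in the commutative algebra $\Gamma(\operatorname{Hol}(U))$. This yields $r(p(\mathbb T))\le\sup_{\mathcal K_0}|p|$. The genuine norm inequality does not follow from this, so I would define $\mathcal K$ as a suitable compact set and rescale. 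This point needs care and may require adjusting the convention to an inequality with a constant, or choosing $\mathcal K$ to be a larger compact set (for instance a scaled product-type set $\{z:|z_i|\le a_i\}$ with $a\in\ell^2$), on which the bound $C\sup|p|$ can be absorbed.

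\textbf{(1)$\Rightarrow$(2).} Given $[f]\in\operatorname{Hol}(K)$ holomorphic on $U\supset K$, set $f_n=f\circ\iota_n\circ\pi_n$, where $\iota_n:\mathbb C^n\to\ell^2$ is the inclusion. For large $n$, $f_n$ is holomorphic near $\pi_nK$, since $\iota_n\pi_nK$ lies within $\varepsilon$ of $K$. Define
\[
\Gamma([f])=\lim_{n\to\infty}\tau_n([f\circ\iota_n]_n).
\]
\emph{This Cauchy estimate is the main obstacle.} We must show that $\tau_m(f\circ\iota_m)-\tau_n(f\circ\iota_n\circ\pi_{m,n})\to0$. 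The difference $h=f\circ\iota_m-f\circ\iota_n\circ\pi_{m,n}$ is uniformly small on a neighbourhood of $\pi_mK$, because $f$ is uniformly continuous on a tube around $K$ and the tail norms are small there. We then need a bound $\|\tau_m(h)\|\le\sup_{W}|h|$ that is uniform in $m$. Here the compact-set von Neumann inequality enters: I would approximate $h$ uniformly on a neighbourhood of $\mathcal K\cup K$ by polynomials, using Taylor expansions on polydisc-type neighbourhoods of compact sets in $\ell^2$ (Oka–Weil fails in general, so I would first replace $U$ by a tube of polynomially convex type). I would then use compatibility to identify $\tau_m$ on polynomials with $p(\mathbb T)$ and pass to the limit.

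Once $\Gamma$ is well defined, the remaining properties are checked as follows. It is independent of the representative $f$. It is multiplicative because each $\tau_n$ is and the limits exist. It satisfies $\Gamma(z_j)=T_j$. Continuity follows from the same uniform estimate, since $\|\Gamma(f)\|\le\sup_L|f|$ for a fixed compact $L\subset U$ built from $K$ and $\mathcal K$.
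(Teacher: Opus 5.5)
Both directions have a genuine gap, and in each case it sits exactly where you flag uncertainty. For (2)$\Rightarrow$(1), your definition $\tau_n([g]_n)=\Gamma([g\circ\pi_n])$ and your checks of continuity and compatibility are fine. The problem is that the compact-set von Neumann inequality requires constant exactly $1$, and you stop at $\|p(\mathbb T)\|\le C\sup_{\mathcal K_0}|p|$. Enlarging or dilating the set does not by itself remove $C$: Cauchy estimates on $a\mathcal K_0$ only give $\|p(\mathbb T)\|\le|p(0)|+\frac{C}{a-1}\sup_{a\mathcal K_0}|p|$. Passing to powers of $p$ is useless because the operator norm is not multiplicative, and changing the convention would change the theorem.

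The missing step is a Carath\'eodory-type coefficient estimate. Take $\mathbf K$ and $C\ge1$ from continuity of $\Gamma$ on $\operatorname{Hol}(\ell^2)$, put $a=2C+1$, and choose a compact circular ellipsoid $\mathcal F_{\mathbf d}\supset a\mathbf K$. Normalize $p\in\mathbb C[z_1,\ldots,z_n]$ so that $p(0)\ge0$ and $\sup_{\pi_n\mathcal F_{\mathbf d}}|p|=1$. Then $1-\operatorname{Re}p(ae^{i\theta}\pi_nz)\ge0$ for $z\in\mathbf K$, and integrating against $e^{-ik\theta}$ gives $a^k|p_k(\pi_nz)|\le2(1-p(0))$ for every homogeneous component $p_k$ with $k\ge1$. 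Hence $\|p(\mathbb T)\|\le p(0)+C\sum_{k\ge1}\sup_{\mathbf K}|p_k\circ\pi_n|\le p(0)+2C(1-p(0))\sum_{k\ge1}a^{-k}=1$. It is the factor $1-p(0)$, not the size of the set, that absorbs $C$.

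For (1)$\Rightarrow$(2), the uniform bound on $\|\tau_m(h)\|$ cannot come from approximating $h$ by polynomials in all variables. The difference $h$ still depends on the first coordinates, where $f$ may be singular just off $K$. If $f=(z_1-c)^{-1}g$ with $c\in\pi_1\mathcal K\setminus\pi_1K$, then $h$ is in general not even defined near $\mathcal K$, which is a fixed set unrelated to $K$. If $f=z_1^{-1}g$ with $\pi_1K$ the unit circle, then $h$ is in general not a uniform limit of polynomials on any neighbourhood of $\pi_mK$; polynomial convexity is a property of $K$ and cannot be arranged by shrinking $U$. Nothing in your argument controls the continuity constants of the individual $\tau_m$ uniformly in $m$. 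Your final bound over a compact set ``built from $K$ and $\mathcal K$'' also fails, since such a set need not lie in $U$.

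The missing idea is a head/tail splitting: apply the von Neumann inequality \emph{only to the tail variables}, and handle the first $N$ variables by the single given calculus $\tau_N$.
\begin{enumerate}
\item Enclose $\mathcal K\cup K$ in a compact ellipsoid $\mathcal F_{\mathbf d}$. Its tails are uniformly small, so for large $N$ the set $F=(I-P_N)\mathcal F_{\mathbf d}$ lies in a ball $\frac r2(I-P_N)B$ on which $f(x,\cdot)$ is holomorphic for all $x$ in a neighbourhood $\Omega$ of $\pi_NK$.
\item Polynomials in $z_{N+1},\ldots,z_m$ satisfy $\|p(T_{N+1},\ldots,T_m)\|\le\sup_F|p|$. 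This gives a contractive homomorphism $\rho$ on the uniform algebra $\mathcal A\subset C(F)$ they generate, and $\rho$ commutes with $\tau_N$ by compatibility.
\item The map $x\mapsto f(x,\cdot)|_F$ is holomorphic from $\Omega$ into $\mathcal A$. The identification $\operatorname{Hol}(\Omega;\mathcal A)\cong\operatorname{Hol}(\Omega)\widehat{\otimes}_{\pi}\mathcal A$ then yields $\Gamma_V(f)$ with $\|\Gamma_V(f)\|\le C\sup_{\mathbf K\times F}|f|$, where $\mathbf K\subset\Omega$ is the compact set controlling $\tau_N$.
\item Only afterwards does one show, by Taylor expansion in the tail variables and compatibility, that $\Gamma_V(f)=\lim_m\tau_m([f\circ\iota_m]_m)$. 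This gives existence of your limit, independence of the representative, multiplicativity and continuity.
\end{enumerate}
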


Set $K=\sigma(\mathbb T)$.
As explained in Remark~\ref{rem:spectrum-calculus},
it follows from Theorem~\ref{thm:weighted-polynomial-calculus} that $\mathbb T$ admits a continuous holomorphic functional calculus on $\sigma(\mathbb T)$ if and only if $\mathbb T$ satisfies the compact-set von Neumann inequality and there is a compatible family of holomorphic functional calculi  $\tau_n: {\rm Hol}(\pi_n\sigma(\mathbb T))\to B(H)$ of $\mathbb T_n$ with respect to $\pi_n\sigma(\mathbb T)$.

	By the definition of $\sigma_{SZ}(\mathbb T)$, it is not hard to see that the family of holomorphic functional calculi $\{\tau_n: {\rm Hol}(\pi_n(\sigma_{SZ}(T)))\to B(H)\}$ is automatically compatible, and we have
	\begin{thm}\label{1.12}
		Let $\mathbb T$ be a row-bounded sequence
		of commuting operators.
		Then the following conditions are equivalent:
		\begin{enumerate}
			\item[(1)]
			$\mathbb T$ satisfies the compact-set von Neumann
			inequality; 
			
			\item[(2)]
			$\mathbb T$ admits a unique continuous holomorphic functional calculus \(\Gamma\)
			on $\sigma_{SZ}(\mathbb T)$ such that, for every $n\geq1$ and
			$[g]_n\in\operatorname{Hol}(\sigma(T_1,\ldots,T_n))$, 
			\[
			\Gamma([g\circ\pi_n])=g(T_1,\ldots,T_n).
			\]
		\end{enumerate}
	\end{thm}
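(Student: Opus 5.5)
The plan is to deduce Theorem \ref{1.12} from Theorem \ref{thm:weighted-polynomial-calculus} with $K=\sigma_{SZ}(\mathbb T)$, together with Proposition \ref{Prop:vNinequtocompactness}.

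\textbf{(1)$\Rightarrow$(2).} Assume the compact-set von Neumann inequality. By Proposition \ref{Prop:vNinequtocompactness}, $K=\sigma_{SZ}(\mathbb T)$ is a compact subset of $\ell^2$. As noted in the introduction, $\pi_nK=\sigma(T_1,\ldots,T_n)$. Hence each $\mathbb T_n$ has its Taylor functional calculus $\tau_n([g]_n)=g(T_1,\ldots,T_n)$ on $\pi_nK$.

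These calculi are compatible. Fix $m\ge n$ and $[g]_n\in\operatorname{Hol}(\sigma(\mathbb T_n))$. The projection property of the Taylor spectrum gives $\pi_{m,n}\sigma(\mathbb T_m)=\sigma(\mathbb T_n)$, so $g\circ\pi_{m,n}$ is holomorphic near $\sigma(\mathbb T_m)$. The composition (subsystem) property of Taylor's calculus, or its uniqueness via Putinar's theorem, then gives $(g\circ\pi_{m,n})(\mathbb T_m)=g(\mathbb T_n)$.

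Theorem \ref{thm:weighted-polynomial-calculus} (1)$\Rightarrow$(2) now yields a continuous holomorphic calculus $\Gamma$ on $K$. We still need $\Gamma([g\circ\pi_n])=g(\mathbb T_n)$. I expect this to come from the construction in the proof of that theorem, where $\Gamma$ is built from the $\tau_n$. Independently of the construction, it can be argued as follows. The map $[g]_n\mapsto\Gamma([g\circ\pi_n])$ is a unital homomorphism on $\operatorname{Hol}(\sigma(\mathbb T_n))$. It sends $z_j$ to $T_j$ for $j\le n$. It is continuous: if $g$ is holomorphic on an open $V\supset\sigma(\mathbb T_n)$, then $\pi_n^{-1}(V)$ is an open neighborhood of $K$, and compact subsets of $\pi_n^{-1}(V)$ project to compact subsets of $V$. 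By the uniqueness of the finite-variable holomorphic functional calculus (Putinar), this map coincides with $\tau_n$.

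\textbf{Uniqueness of $\Gamma$.} Every polynomial $p\in\mathcal P$ is of the form $g\circ\pi_n$, so $\Gamma$ is determined on $\mathcal P$. I would show that for each open $U\supset K$ and each $f\in\operatorname{Hol}(U)$, the germ of $f$ is a limit of cylinder functions $f_n=f\circ\iota_n\circ\pi_n$, where $\iota_n:\mathbb C^n\to\ell^2$ is the inclusion. Since $K$ is compact, $\iota_n\pi_n\to\mathrm{id}$ uniformly on $K$. Hence on a smaller neighborhood $U'$ of $K$ (a finite union of balls) the $f_n$ are defined for large $n$ and converge to $f$ uniformly on compact subsets. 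Each $f_n=g_n\circ\pi_n$ with $g_n=f\circ\iota_n$ holomorphic near $\pi_nK$. By the prescribed values and continuity (2), $\Gamma([f])=\lim_n g_n(\mathbb T_n)$, so $\Gamma$ is unique. The main obstacle is choosing $U'$ so that this compact-open convergence holds on all compacta of $U'$; this uses compactness of $K$ and a uniform radius $r$ with $K+B_r\subset U$.

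\textbf{(2)$\Rightarrow$(1).} Given $\Gamma$ on $\sigma_{SZ}(\mathbb T)$, I would apply Theorem \ref{thm:weighted-polynomial-calculus} (2)$\Rightarrow$(1) with $K=\sigma_{SZ}(\mathbb T)$; this implicitly requires $K$ to be compact, as in part (2). Alternatively, directly: choose a compact neighborhood $L$ of $K$ inside a bounded open $U$ (for instance $K+\overline{B_r}$, whose compactness needs care in infinite dimensions). Continuity of $\Gamma$ gives a compact $L\subset U$ and a constant $C$ with $\|\Gamma(f)\|\le C\sup_L|f|$. The constant is removed by the usual power trick: $\|p(\mathbb T)\|^k=\|p^k(\mathbb T)\|\le C\sup_L|p|^k$, then take $k$-th roots and let $k\to\infty$. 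This gives the compact-set von Neumann inequality with $\mathcal K=L$.
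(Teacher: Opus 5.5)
Your (1)$\Rightarrow$(2) is essentially the paper's argument (Remark~\ref{rem:spectrum-calculus}). It gets compactness from Proposition~\ref{lem:compactness-of-both-spectra}, uses compatibility of the Taylor calculi, applies Theorem~\ref{thm:weighted-polynomial-calculus}, and proves uniqueness via $f\circ P_m\to f$ as in Remark~\ref{unique}. Your appeal to Putinar's uniqueness theorem for $\Gamma([g\circ\pi_n])=g(T_1,\ldots,T_n)$ is a valid replacement for the paper's use of Lemma~\ref{lem:bijinGammaV}. It even shows that this normalization holds for every continuous calculus on $\sigma_{SZ}(\mathbb T)$.

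The gap is in (2)$\Rightarrow$(1). Citing Theorem~\ref{thm:weighted-polynomial-calculus} as a black box requires $\sigma_{SZ}(\mathbb T)$ to be compact. Hypothesis (2) does not give you this, and Proposition~\ref{lem:compactness-of-both-spectra} already needs (1), so you leave the point unresolved.

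Your direct fallback fails at the power trick. The identity $\|p(\mathbb T)\|^k=\|p(\mathbb T)^k\|$ is false for non-normal operators. From $\|p(\mathbb T)^k\|\le C\sup_L|p|^k$ you only get the spectral radius bound $r(p(\mathbb T))\le\sup_L|p|$, never $\|p(\mathbb T)\|\le\sup_{\mathcal K}|p|$ with constant $1$. Also, no subset of $\ell^2$ has a compact neighborhood, so $K+\overline{B_r}$ is never compact. You do not need one, though: continuity on $U=\ell^2$ already supplies a compact $\mathbf K$ and $C\ge1$ with $\|\Gamma([f])\|\le C\sup_{\mathbf K}|f|$ for entire $f$.

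The missing idea is to absorb $C$ by enlarging the compact set rather than by taking powers. This is what the paper does in the proof of Theorem~\ref{thm4.8}, (2)$\Rightarrow$(1), and that argument never uses compactness of $K$.
\begin{itemize}
\item Put $a=2C+1$ and, by Lemma~\ref{lem:compact-ellipsoid}, pick a compact circled ellipsoid $\mathcal F_{\mathbf d}\supset a\mathbf K$.
\item Normalize $p\in\mathbb C[z_1,\ldots,z_n]$ so that $p(0)\ge0$ and $\sup_{\pi_n\mathcal F_{\mathbf d}}|p|=1$, and write $p=p(0)+\sum_{k\ge1}p_k$ in homogeneous parts.
\item For $z\in\mathbf K$ the function $\theta\mapsto 1-\operatorname{Re}p(ae^{i\theta}\pi_nz)$ is nonnegative, and its $k$-th Fourier coefficient is $-\tfrac12 a^kp_k(\pi_nz)$.
\item Hence $|a^kp_k(\pi_nz)|\le 2(1-p(0))$, and
\[
\|p(T_1,\ldots,T_n)\|\le p(0)+C\sum_{k\ge1}\sup_{\mathbf K}|p_k\circ\pi_n|\le p(0)+\frac{2C(1-p(0))}{a-1}=1 .
\]
\end{itemize}
This is the compact-set von Neumann inequality with $\mathcal K=\mathcal F_{\mathbf d}$.

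Alternatively, the spectral radius bound you did obtain is enough to rescue the black-box route. Apply it to linear $p$, together with $\sum_i a_i\lambda_i\in\sigma\bigl(\sum_i a_iT_i\bigr)$ for $\lambda\in\sigma_{SZ}(\mathbb T)$. This places the closed set $\sigma_{SZ}(\mathbb T)$ inside the closed absolutely convex hull of $\mathbf K$, which is compact. Theorem~\ref{thm:weighted-polynomial-calculus} then applies with $K=\sigma_{SZ}(\mathbb T)$.
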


At last, we will introduce  essential Taylor-type joint spectra and the related Fredholm index theory. To do this, inspired by Curto\cite{CR}, we have the following definition of Fredholmness.  
\begin{defn}\label{def:Fredholm}
The tuple  $\mathbb T$ is Fredholm if there is an $N\in\mathbb N$ such that 
\begin{itemize}
\item[(1)] for $n\leq N$, $\dim H_{\mathbb T}^n<\infty$;
\item[(2)] for $n > N$, $\dim H_{\mathbb T}^n=0$;
\item[(3)] there is a positive constant $c$, such that for $n>N$,  for any $x\in\ker\partial_{\mathbb T,n}$ there exists $y\in\Lambda^{n+1}(H)$ satisfying 
    $$
    x=\partial_{\mathbb T,n+1}y\quad{\text{and}}\quad\|y\|\leq c\|x\|.
    $$
\end{itemize}
 The Taylor-type essential joint spectrum of $\mathbb T$ is defined  by
$$
\sigma_{e}(\mathbb T)=\left\{\lambda \in \ell^{2} \mid \mathbb T-\lambda \text{ is not Fredholm}\right\}. 
$$
Moreover, if $\mathbb T$ is Fredholm, then its Fredholm index is defined by
$$
\operatorname{ind}(\mathbb T)=\sum_{k=0}^{\infty}(-1)^k\dim H^k_{\mathbb T}.
$$
\end{defn}
\begin{rem}
For any row-bounded sequence of commuting operators $\mathbb T$, if there is an $m$ such that $(T_1,\cdots,T_m)$ is Fredholm, then $(T_1,\cdots,T_m,T_{m+1})$ can be deformed to $(T_1,\cdots, T_m, I)$. It follows that the so-called ``finite Taylor Fredholmness" does not carry any information for the index of $\mathbb T$. This is the reason why we define Fredholmness through the infinite Koszul complex. 
\end{rem}

In Section 5 we develop the Fredholm theory for row-bounded sequences. We prove that the Fredholm index is continuous, and the essential Taylor-type spectrum \(\sigma_e(\mathbb T)\) is a bounded closed subset of \(\sigma(\mathbb T)\). The following result shows that in many cases, the joint spectrum and essential  joint spectrum coincide.

\begin{thm}\label{thm3.1.2}
Let \(\mathbb{T}\) be a row-bounded sequence of commuting operators on \(H\) such that 
\(\left\{\sum\limits_{i=1}^{k}T_i\otimes E_i^*\right\}_{k\geq1}\) converges in norm. Then
\[
\sigma_e(\mathbb{T})=\sigma(\mathbb{T}),
\]
which is a nonempty compact subset of \(\ell^2\).
\end{thm}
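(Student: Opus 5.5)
Since Fredholmness is weaker than invertibility, we always have $\sigma_e(\mathbb T)\subseteq\sigma(\mathbb T)$. By Remark \ref{rem:nonempty}(4), norm convergence gives $\sigma(\mathbb T)=\sigma_{SZ}(\mathbb T)$, a nonempty compact subset of $\ell^2$. It therefore suffices to show $\sigma(\mathbb T)\subseteq\sigma_e(\mathbb T)$: if $\mathbb T-\lambda$ is Fredholm, then it is invertible. Replacing $\mathbb T$ by $\mathbb T-\lambda$ (norm convergence is preserved, since $\sum\lambda_i E_i^*$ converges in norm for $\lambda\in\ell^2$), we may take $\lambda=0$ and assume $\mathbb T$ is Fredholm.

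Write $\partial=\partial_{\mathbb T}=\sum_i T_i\otimes E_i^*$, a norm limit on all of $\Lambda(H)$, and $\partial_N=\sum_{i\le N}T_i\otimes E_i^*$. The idea is a deformation/homotopy argument in the tail variables.
\begin{itemize}
\item[(a)] The Fredholm index is continuous and locally constant (Section 5). Also, a complex of the form $(T_1,\dots,T_N,\,\mu,\,T_{N+2},\dots)$ with a nonzero scalar $\mu$ in one slot is exact with uniform bounds. Indeed, $\mu^{-1}(I\otimes E_{N+1})$ is a contracting homotopy: $\partial\, h+h\,\partial=I$ by the anticommutation relations \eqref{anticommutationrelations}, and everything commutes with $I$.
\item[(b)] Using the norm convergence, choose $N$ with $\|\partial-\partial_N\|$ small. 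Then deform $\mathbb T$ along the path $\mathbb T^{(t)}=(T_1,\dots,T_N,(1-t)T_{N+1}+t\mu,(1-t)T_{N+2},\dots)$. Each $\mathbb T^{(t)}$ is commuting and row-bounded. Its boundary map is a norm-continuous perturbation of $\partial$, and the tail stays uniformly small except in the slot where the scalar is inserted.
\item[(c)] At $t=1$ the complex is exact, so its index is $0$ and all homology vanishes. By continuity of the index, $\operatorname{ind}\mathbb T=0$.
\end{itemize}

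An index-zero argument alone does not give exactness, so the real content is a stronger step: show directly that each homology group $H^k_{\mathbb T}$ is zero. The plan is to use the ``infinite room'' in the Koszul complex. Each homology class of a Fredholm complex can be represented by elements that are almost supported on finitely many of the $e_i$, while multiplication by $e_j$ for large $j$ gives a near-homotopy. Concretely, set $h_j=I\otimes E_j$. Then
$$\partial h_j+h_j\partial=T_j\otimes I,$$
and $T_j\to0$ in norm; this follows from norm convergence of the series, which forces $\|T_j\|\to0$. Hence multiplication by $T_j$ is zero on homology while $\|T_j\|\to0$.

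Next, compare with the truncated Koszul complex of $(T_1,\dots,T_N)$ tensored with the exterior algebra of the remaining variables, which is a Künneth-type product with an exact tail complex built from the small $T_j$. The goal is to show that the finite-dimensional homology, together with the uniform range bound (3) in Definition \ref{def:Fredholm}, forces $H^k=0$. A class would have to persist under a perturbation that is nullhomotopic via operators of norm tending to $0$, and the uniform closed-range estimate then rules it out, by a stability-of-homology lemma for small perturbations of complexes with closed ranges.

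The main obstacle is that last step: converting ``$T_j\otimes I$ is homotopic to $0$ with $\|T_j\|\to0$'' into vanishing of the finite-dimensional homology, while also obtaining the uniform constant $c$ for all degrees (condition (2) of Definition \ref{1and2invert}) rather than only beyond $N$. I would first try a perturbation argument. The Fredholm complex has closed ranges, and for large $j$ the operator $I\otimes E_j$ carries $\ker\partial_k$ nearly isometrically into a subspace that $\partial$ nearly inverts. That should yield a bounded inverse on homology of small norm, forcing the finite-dimensional $H^k$ to vanish. The uniform bound in low degrees then follows from the open mapping theorem applied to the finitely many remaining degrees.
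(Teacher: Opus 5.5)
\textbf{Gap: the decisive step, Fredholm implies invertible, is never proved.} Your reduction to showing that a Fredholm $\mathbb T-\lambda$ is invertible is correct. The index-zero computation in (a)--(c) also works, provided you take $|\mu|$ small: $\|\partial_{\mathbb T^{(1)}}-\partial_{\mathbb T}\|\le\|\partial-\partial_N\|+|\mu|$, so you need this below the radius in Corollary \ref{index continious}. But, as you say yourself, index zero does not give exactness, and the step that actually proves the theorem is left as an ``obstacle.'' The mechanism you sketch for it has no content. The identity $\partial h_j+h_j\partial=T_j\otimes I$ only says that the chain map $T_j\otimes I$ is null-homotopic. Since $T_j\otimes I\to0$ in norm anyway, this constrains nothing, because the zero map is null-homotopic on every complex. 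No stability lemma can extract $H^k_{\mathbb T}=0$ from it, and the claim that $I\otimes E_j$ ``nearly inverts'' $\partial$ on homology is not an argument.

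\textbf{The missing idea: the far-out variables \emph{create} homology rather than kill it.} The paper uses norm convergence to truncate. By Corollary \ref{index continious} and Lemma \ref{partial norm hatT}, Fredholmness survives small norm perturbations of $\partial_{\mathbb T-\lambda}$. So $(T_1-\lambda_1,\dots,T_n-\lambda_n,0,0,\dots)$ is Fredholm for $n$ large. Lemma \ref{Tn and T'} then forces $(T_1-\lambda_1,\dots,T_n-\lambda_n)$ to be Taylor invertible. The reason is that in the zero slots $I\otimes E_{n+k}$ anticommutes with both $\partial$ and $\partial^*$. Hence a nonzero $x\in\ker\partial_{\mathbb T,m}\ominus\operatorname{Im}\partial_{\mathbb T,m+1}$ would produce infinitely many mutually orthogonal classes $(I\otimes E_{n+k})x$. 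Thus $\lambda\notin\sigma_{SZ}(\mathbb T)=\sigma(\mathbb T)$ by Theorem \ref{thm3.1}.

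\textbf{Your $h_j$ can be repaired in this direction, without truncating.} Suppose $x\neq0$ satisfies $\partial_{\mathbb T,m}x=0=\partial_{\mathbb T,m+1}^*x$, and set $y_j=(I\otimes E_j)x$. Then:
\begin{itemize}
\item[(i)] $\partial^*y_j=-(I\otimes E_j)\partial^*_{\mathbb T,m+1}x=0$;
\item[(ii)] $\|\partial y_j\|=\|(T_j\otimes I)x\|\le\|T_j\|\,\|x\|\to0$;
\item[(iii)] $\|y_j\|^2=\|x\|^2-\|(I\otimes E_j^*)x\|^2\to\|x\|^2$;
\item[(iv)] $y_j\to0$ weakly.
\end{itemize}
This is impossible, because $\widehat{\mathbb T}$ and $\widehat{\mathbb T}^*$ are Fredholm (Theorem \ref{3equal}) and so admit no weakly null approximate kernel sequence with norms bounded below. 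Since Fredholmness gives closed ranges, every $H^k_{\mathbb T}$ therefore vanishes. Lemma \ref{lem3.6} then gives the uniform constant in all degrees at once. This variant only uses $\|T_j-\lambda_j\|\to0$.
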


We next turn to applications to the Drury–Arveson space $H^2$ and its quotient modules. For the natural sequence \((M_{z_1},M_{z_2},\cdots)\) of multiplication operators on $H^2$, it is known \cite{curv} that
\[
\sum_{i=1}^\infty M_{z_i}M_{z_i}^*=I-E_0,
\]
where \(E_0\) is the orthogonal projection from \(H^2\) onto the constants \(\mathbb C\). Hence \((M_{z_1},M_{z_2},\cdots)\) is a row-contraction.

Let \(\mathcal M\) be a submodule of \(H^2\otimes\mathbb C^N\), and let \(\mathbb S=(S_{z_1},S_{z_2},\cdots)\) be the compression of the sequence $(M_{z_1},M_{z_2},\cdots)$ on the quotient module \(H^2\otimes\mathbb C^N/\mathcal M\). Based on this framework, we obtain a Gleason-Richter-Sundberg type index formula, which connects the Fredholm index and  Arveson's curvature invariant.
It can be used to  compute \(\operatorname{ind}(\mathbb S-\lambda)\). To state the result, we first recall the curvature invariant for row contractions.

Let \(\mathbb T=(T_1,T_2,\cdots)\) be a row-contraction on a Hilbert space \(H\). The defect operator of \(\mathbb T\) is
\[
\Delta_{\mathbb T}
=
\left(I-\operatorname{SOT-}\lim_{k\to\infty}\sum_{i=1}^k T_iT_i^*\right)^{1/2}.
\]
For \(z=(z_1,z_2,\cdots)\in\ell^2\), set
\[
S(z)=\operatorname{SOT-}\lim_{m\to\infty}\sum_{i=1}^m \bar z_iT_i.
\]
Then for \(z\in\mathbb B\), \(I-S(z)\) is invertible. Define
\[
F(z)=\Delta_{\mathbb T}(I-S(z)^*)^{-1}(I-S(z))^{-1}\Delta_{\mathbb T}.
\]
 Suppose that $F(z)$ has finite rank for every $z\in\mathbb B$. 
For $m\ge1$, let $\sigma_m$ denote the normalized surface
measure on $\partial\mathbb B_m$.
For \(\sigma_m\)-a.e. \(z^{(m)}=(z_1,\cdots,z_m)\in\partial\mathbb B_m\), put
\[
\kappa_0^m(z^{(m)})
=
\lim_{r\to1^-}(1-r^2)\operatorname{trace}
F(rz_1,\cdots,rz_m,0,0,\cdots),
\]
and let
\[
\kappa_m(\mathbb T)
=
\int_{\partial\mathbb B_m}
\kappa_0^m(z^{(m)})\,d\sigma_m(z^{(m)}).
\]
The asymptotic curvature invariant of \(\mathbb T\) is defined by
\[
\kappa(\mathbb T)=\lim_{m\to\infty}\kappa_m(\mathbb T).
\]

For a submodule \(\mathcal M\) of \(H^2\otimes\mathbb C^N\), set
\[
\mathcal M_\lambda=\{f(\lambda):f\in\mathcal M\},\qquad \lambda\in\mathbb B.
\]
The fiber dimension of \(\mathcal M\) is
\[
\operatorname{fd}(\mathcal M)
=
\sup_{\lambda\in\mathbb B}\dim\mathcal M_\lambda,
\]
and the zero set \(Z(\mathcal M)\) is
\[
Z(\mathcal M)
=
\{\lambda\in\mathbb B:\dim\mathcal M_\lambda<\operatorname{fd}(\mathcal M)\}.
\]
We then have the following theorem.
\begin{thm}\label{7.1}
Let \(\mathcal{M}\) be a submodule of \(H^2\otimes \mathbb{C}^N\) and \(\mathbb{S} = (S_{z_1}, S_{z_2}, \dots)\). Then for every \(\lambda \in \mathbb{B} \setminus Z(\mathcal{M})\), \(\mathbb{S} - \lambda\) is Fredholm with
\[
H^0_{\mathbb{S} - \lambda} \cong \mathbb{C}^N / \mathcal{M}_\lambda\quad\text{and}\quad H^p_{\mathbb{S} - \lambda} = 0,~p \ge 1.
\]
In particular, \(\mathbb{B}\setminus Z(\mathcal{M}) \subset \mathbb{B} \setminus \sigma_e(\mathbb{S})\), and 
\[
\operatorname{ind}(\mathbb{S} - \lambda) = \kappa(\mathbb{S}) = N - \operatorname{fd}(\mathcal{M}).
\]
\end{thm}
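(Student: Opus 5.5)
Our plan is to compare the Koszul complex of $\mathbb S-\lambda$ with that of $M_z-\lambda$ on $H^2\otimes\mathbb C^N$ and on $\mathcal M$, using the short exact sequence of Hilbert modules
\[
0\to \mathcal M\to H^2\otimes\mathbb C^N\to (H^2\otimes\mathbb C^N)/\mathcal M\to 0 .
\]
This sequence induces a short exact sequence of Koszul complexes $K(M_z|_{\mathcal M}-\lambda)\to K(M_z-\lambda)\to K(\mathbb S-\lambda)$, since $\partial$ is built from SOT-limits of $T_i\otimes E_i^*$ and commutes with the inclusion and the quotient map. From it we get a long exact homology sequence. The first step is therefore to compute the Koszul homology of $M_z-\lambda$ on $H^2\otimes\mathbb C^N$. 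By the automorphism of $\mathbb B$ (in $\ell^2$) sending $\lambda$ to $0$, implemented by a unitary on $H^2$, we reduce to $\lambda=0$, as in the computation of $\sigma(M_{z_1},M_{z_2},\dots)=\bar{\mathbb B}$ in Section 7. For $\lambda=0$ the Koszul complex of the creation-type tuple on the symmetric Fock space $H^2$ is exact in degrees $\ge1$, with uniformly bounded homotopy. We obtain the homotopy from the identity $\sum M_{z_i}M_{z_i}^*=I-E_0$ together with the canonical commutation/anticommutation relations (\ref{anticommutationrelations}): $\partial\partial^*+\partial^*\partial$ acts on $\Lambda^k$ tensored with homogeneous degree-$m$ polynomials by a scalar of the form $(m+k)/(m+1)$, bounded above and below. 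Hence $H^0\cong\mathbb C^N$, $H^p=0$ for $p\ge1$, and condition (3) of Definition \ref{def:Fredholm} holds with a uniform constant.

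Next we treat $\mathcal M$ for $\lambda\notin Z(\mathcal M)$. Here the key claim is that $K(M_z|_{\mathcal M}-\lambda)$ is exact in degrees $\ge 1$ with uniform constants, and that
\[
H^0=\mathcal M/\textstyle\sum_i (z_i-\lambda_i)\mathcal M\cong \mathcal M_\lambda
\]
via evaluation. We plan to prove this using the local structure near a point of maximal fiber dimension. Choose $f_1,\dots,f_r\in\mathcal M$, with $r=\operatorname{fd}(\mathcal M)$, whose values at $\lambda$ form a basis of $\mathcal M_\lambda$. Then each $g\in\mathcal M$ vanishing at $\lambda$ should lie in the closed span of $(z_i-\lambda_i)\mathcal M$; this is a division (Gleason) problem in infinitely many variables, proved by finite-dimensional truncation with uniform estimates. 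Feeding this into the long exact sequence yields $H^p_{\mathbb S-\lambda}=0$ for $p\ge 1$ and
\[
0\to \mathcal M_\lambda\to\mathbb C^N\to H^0_{\mathbb S-\lambda}\to0 ,
\]
hence $H^0\cong\mathbb C^N/\mathcal M_\lambda$. The uniform constant in (3) for $\mathbb S-\lambda$ comes from the uniform constants for the other two complexes via a diagram chase (the open mapping theorem is applied levelwise, but the bound must be tracked explicitly). This gives Fredholmness with $\operatorname{ind}=N-\operatorname{fd}(\mathcal M)$, since $\dim\mathcal M_\lambda=\operatorname{fd}$ off $Z(\mathcal M)$.

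Finally, for $\kappa(\mathbb S)=N-\operatorname{fd}(\mathcal M)$ we use Arveson's finite-variable version of Greene–Richter–Sundberg applied to truncations. Compute $F(z)$ for $\mathbb S$: since $\Delta_{\mathbb S}$ is the compression of $E_0\otimes I$, one finds
\[
(1-|z|^2)\operatorname{trace}F(z)=N-\|P_{\mathcal M}k_z\|^2\text{-type terms}
\]
(the trace of $I_N$ minus the projection onto $\mathcal M$ evaluated on normalized kernels). For $z=(rz^{(m)},0,\dots)$, the relevant object is the slice submodule $\mathcal M$ restricted to $\mathbb B_m$. The radial limits then give $N-\operatorname{rank}$ of the boundary values of the inner multiplier of $\mathcal M\cap(H^2_m\otimes\mathbb C^N)$-type, which equal $N-\operatorname{fd}$ of the $m$-slice a.e. by GRS. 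Then $\kappa_m=N-\operatorname{fd}_m$, and $\operatorname{fd}_m\to\operatorname{fd}(\mathcal M)$ because the fiber dimension is attained at a generic point, whose truncations are eventually generic.

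The main obstacle is the second step: proving exactness with uniform bounds of the Koszul complex on $\mathcal M$ off $Z(\mathcal M)$ in infinitely many variables. A secondary but real difficulty is identifying the slice fiber dimension in the curvature step. For the second step we would first try to reduce, via a McCullough–Trent type inner-function representation $P_{\mathcal M}=M_\Phi M_\Phi^*$, to the free-module case handled in the first step.
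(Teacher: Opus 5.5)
Your free-module computation is fine: it is the paper's Proposition~\ref{spec H2}, although the identity there is $t\,\partial_{p+1}\partial_{p+1}^*+(t+1)\partial_p^*\partial_p=(t+p)I$ on $H_t\otimes\Lambda^p$ rather than a scalar Laplacian. The gap is exactly at the step you call the main obstacle, and that step is the whole theorem.

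\textbf{The key claim is equivalent to the theorem.} For $F=H^2\otimes\mathbb C^N$ the complex $K(M_z-\lambda)$ has $H^p=0$ for $p\ge1$ and $H^0\cong\mathbb C^N$ via evaluation at $\lambda$. The long exact sequence then shows that your key claim is equivalent to $H^p_{\mathbb S-\lambda}=0$ for $p\ge1$ together with $H^0_{\mathbb S-\lambda}\cong\mathbb C^N/\mathcal M_\lambda$. Your key claim is exactness of $K(M_z|_{\mathcal M}-\lambda)$ in degrees $\ge1$, plus injectivity of evaluation on its $H^0$, i.e.\ every $g\in\mathcal M$ with $g(\lambda)=0$ lies in $\operatorname{Im}\partial_{M_z|_{\mathcal M}-\lambda,1}$. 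So passing to $\mathcal M$ only relocates the homological content.

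Neither suggested route closes it. ``Finite-dimensional truncation with uniform estimates'' gives no mechanism for bounds uniform in the Koszul degree, which is where infinitely many variables bite. The McCullough--Trent map $M_\Phi:H^2\otimes E\to\mathcal M$ is only a partial isometry, in general with nonzero kernel, so it does not make $\mathcal M$ free.

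\textbf{The diagram chase fails as stated.} Moving uniform constants to $\mathbb S-\lambda$ by a chase breaks because the boundary map of $F$ is unbounded in $p$. Take $N=1$, $\mathcal M=\{f\in H^2:f(0)=0\}$ and $\lambda=\lambda_1e_1$ with $0<|\lambda_1|<1$, so $\lambda\notin Z(\mathcal M)=\{0\}$. Then $x=1\otimes e_2\wedge\cdots\wedge e_{p+1}\in\ker\partial_{\mathbb S-\lambda,p}$ has $\|x\|=1$ but $\|\partial_Fx\|=\sqrt p$. Comparing components in $H_1\otimes\Lambda^{p-1}$ shows that every lift of $x$ to a cycle of $K(F)$ has norm at least $\sqrt p/|\lambda_1|$. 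So a chase using only the constants of $K(\mathcal M)$ and $K(F)$ gives bounds of order $\sqrt p$, even though $\mathbb S-\lambda=(-\lambda_1,0,0,\dots)$ on $\mathbb C$ is invertible.

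\textbf{The idea you are missing.} The paper avoids both the submodule and the unbounded $\partial_F$.
\begin{enumerate}
\item After reducing to $\lambda=0$ with $\dim\mathcal M_0=\operatorname{fd}(\mathcal M)$, set $D=\mathcal M_0$. Let $\varphi$ be the normalized determinant of the $m\times m$ compression of the McCullough--Trent inner multiplier, so $\varphi(0)=1$.
\item A GRS-type lemma gives module maps $A:\mathcal M^\perp\to H^2\otimes D^\perp$ and $J:H^2\otimes D^\perp\to\mathcal M^\perp$. They satisfy $JA=S_\varphi$ and $AJ=M_\varphi\otimes I$, and are compatible with evaluation at $0$ (Lemma~\ref{AJ}).
\item $S_\varphi-I=S_{\varphi-1}$ is null-homotopic with a homotopy bounded uniformly in degree. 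Factor $\varphi-1=\sum_j z_j\eta_j$ with a bounded multiplier column (Leech--Douglas). The column--row property, in the form of Theorem~\ref{thm:main}, then makes $K_p=\sum_i S_{\eta_i}\otimes E_{i}$ uniformly bounded (Lemma~\ref{g0mult}).
\item The homotopy-transfer Lemma~\ref{lem:global} turns the contracting homotopy of the augmented free complex into one for the augmented complex of $\mathcal M^\perp$. This gives exactness, $H^0\cong D^\perp$ and the uniform constant at once.
\end{enumerate}

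\textbf{The curvature equality.} The paper does not re-prove this; it takes it from \cite[Theorem~1.2]{curv}. Your slicing sketch would need more than Greene--Richter--Sundberg. The compression of $P_{\mathcal M}$ to $H^2_m\otimes\mathbb C^N$ is in general not the projection onto an invariant subspace, so the sliced multiplier is merely contractive, not inner.
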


In \cite{index of Invariant Subspace}, Gleason, Richter and Sundberg obtained such an index theorem, which connects the Fredholm index, Arveson's curvature invariant and fibre bundle dimension in the finite-variable setting, and Theorem \ref{7.1} is an analogue to the Gleason-Richter-Sundberg index theorem in the infinite-variable setting.

The proof of Theorem \ref{7.1} relies on Hartz's column-row property\cite{Hartz2023}, which yields a characterization of column boundedness in terms of the boundedness of \(\partial_{\mathbb T}\). 

This paper is organized as follows. Section 2 constructs the infinite Koszul complex for row-bounded commuting sequences and studies the associated boundary operators. Section 3 develops the basic spectral theory, including invertibility criteria and boundedness, closedness, nonemptiness and compactness of the Taylor-type joint spectrum. Section 4 establishes the holomorphic functional calculus under the compact-set von Neumann inequality and proves a spectral mapping theorem for \(\sigma_{SZ}\). Section 5 introduces Fredholmness via the infinite Koszul complex and develops the Fredholm index and essential spectrum theory. Section 6 proves the equivalence between column boundedness and boundedness of the Koszul boundary operator on normalized complete Pick spaces. Section 7 applies the theory to the Drury–Arveson space and its quotient modules, and proves the infinite-variable Gleason–Richter–Sundberg type index theorem.

\section{The Koszul complex induced by $\mathbb T$}

\begin{thm}\label{prop 1.1}
Let $\mathbb T$ be a row-bounded sequence of commuting operators on $H$. Let $C>0$ such that $\sum\limits_{i=1}^\infty T_iT_i^*\le CI$. Then for $t\geq 1$, $\left\{\sum\limits_{i=1}^{n} T_{i} \otimes E_{i,t}^{*}\right\}_{n \geq 1}$ converges in SOT. Write
$$
\partial_{\mathbb{T}, t}=\operatorname{SOT-}\lim\limits _{n \rightarrow \infty} \sum\limits_{i=1}^{n} T_{i} \otimes E_{i,t}^{*},
$$ 
then $\|\partial_{\mathbb T, t}\|\leq \sqrt{tC}$. Moreover, the sequence of adjoints $\left\{\sum\limits_{i=1}^{n} T_{i}^* \otimes E_{i,t}\right\}_{n \geq 1}$  converges in SOT to $\partial_{\mathbb T, t}^*$.
\end{thm}

\begin{proof}
Let $\{e_i\}_{i\geq 1}$ be an orthonormal basis of $\ell^2$.  For $t\ge1$, define $C_t:\Lambda^t(H)\to \ell^2\otimes H\otimes\Lambda^{t-1}$ by
$$
C_tx=\sum_{i=1}^{\infty} e_i\otimes(I_H\otimes E_{i,t}^*)x.
$$
Notice that $\sum\limits_{i=1}^{\infty}E_{i,t}E_{i,t}^*=tI_{\Lambda^t}$, then we have
\[
\|C_tx\|^2
=\sum_{i=1}^{\infty}\|(I_H\otimes E_{i,t}^*)x\|^2
=\left\langle\left(I_H\otimes\sum_{i=1}^{\infty}E_{i,t}E_{i,t}^*\right)x,x\right\rangle
=t\|x\|^2,
\]
hence $\|C_t\|=\sqrt t$. Define $R_T:\ell^2\otimes H\to H$ by 
$$
R_T(e_i\otimes h_i)= T_ih_i.
$$ 
Since
$$
\left\|R_T\sum_{i=1}^{n}(e_i\otimes h_i)\right\|^2=\left\|\sum_{i=1}^{n}T_ih_i\right\|^2\leq C\sum_{i=1}^{n}\|h_i\|^2= C\left\|\sum_{i=1}^{n}(e_i\otimes h_i)\right\|^2,
$$
we have $\|R_T\|\le\sqrt C$. Let $P_n$ be the projection onto $\operatorname{span}\{e_1,\cdots,e_n\}$. For $x\in \Lambda^t(H)$, we have
\begin{equation}\label{sum n Tie}
\begin{aligned}
(R_T\otimes I_{\Lambda^{t-1}})(P_n\otimes I_{H\otimes\Lambda^{t-1}})C_tx
=&(R_T\otimes I_{\Lambda^{t-1}})(P_n\otimes I_{H\otimes\Lambda^{t-1}})\sum_{i=1}^{\infty} e_i\otimes(I_H\otimes E_{i,t}^*)x\\
=&(R_T\otimes I_{\Lambda^{t-1}})\sum_{i=1}^{n} e_i\otimes(I_H\otimes E_{i,t}^*)x\\
=&\sum_{i=1}^{n}T_i\otimes E_{i,t}^*x.
\end{aligned}
\end{equation}
Since $P_n\to I_{\ell^2}$ in SOT, by \eqref{sum n Tie} the sequence
$\left\{\sum\limits_{i=1}^{n}T_i\otimes E_{i,t}^*\right\}_{n\ge1}$
converges in SOT, and
\[
\partial_{\mathbb T,t}=(R_T\otimes I_{\Lambda^{t-1}})C_t,
\]
with
\[
\|\partial_{\mathbb T,t}\|
\le \|R_T\|\,\|C_t\|
\le \sqrt{tC}.
\]
Taking adjoints in \eqref{sum n Tie} yields
\begin{equation}\label{adj sum n Tie}
\sum_{i=1}^{n}T_i^*\otimes E_{i,t}=\left(\sum_{i=1}^{n}T_i\otimes E_{i,t}^*\right)^* =
C_t^*(P_n\otimes I_{H\otimes\Lambda^{t-1}})(R_T^*\otimes I_{\Lambda^{t-1}}).
\end{equation}
Since $P_n\to I_{\ell^2}$ in SOT, $\left\{\sum\limits_{i=1}^{n} T_{i}^* \otimes E_{i,t}\right\}_{n \geq 1}$ converges in SOT to $C_t^*(R_T^*\otimes I_{\Lambda^{t-1}})=\partial_{\mathbb T,t}^*$.
\end{proof}

Recall that for positive integer $d$, the antisymmetric Fock space over $\mathbb C^d$ is defined by 
 $$\Lambda_d=\bigoplus\limits_{k=0}^d\Lambda_d^k,$$ where $\Lambda_d^k$ is the antisymmetric $k$-fold tensor product of $\mathbb C^d$ with itself and $\Lambda_d^0=\mathbb C$. Let $H$ be a Hilbert space, and 
 $$
 \Lambda_d(H)=H\otimes \Lambda_d=\bigoplus\limits_{k=0}^d H\otimes\Lambda_d^k.
 $$
For simplicity, write $\Lambda_d^k(H)=H\otimes\Lambda_d^k$. Obviously, $\Lambda_d(H)$ admits a $\mathbb Z_2-$graded decomposition
$$
\Lambda_d(H)=\Lambda^{-}_d(H)\oplus \Lambda_d^+(H),
$$
where 
$$
\Lambda_d^{-}(H)=\bigoplus\limits_{k~odd}\Lambda_d^{k}(H),\quad \Lambda_d^{+}(H)=\bigoplus\limits_{k~even}\Lambda_d^{k}(H).
$$
As introduced in \cite{CR}, define $\widehat{T^{(d)}}\in B(\Lambda_d^{-}(H), \Lambda_d^{+}(H))$ by 
$$
\widehat{T^{(d)}}=(\partial_{T^{(d)}}+\partial_{T^{(d)}}^*)P_{\Lambda_d^{-}(H)},
$$
where $\partial_{T^{(d)}}=\sum\limits_{i=1}^{d}T_i\otimes E_i^*\in B(H\otimes\Lambda_d)$.  By \cite{Taylorspectrum} and \cite[Theorem 1]{CR}, 
$$
T^{(d)}~\text{is invertible}~\Leftrightarrow~\widehat{T^{(d)}}~ \text{is invertible}~\Leftrightarrow~ \operatorname{ker}\partial_{T^{(d)}}=\operatorname{Im}\partial_{T^{(d)}},
$$  
Similarly, $\Lambda(H)$ admits a $\mathbb Z_2$-graded decomposition as follows
$$
\Lambda^{-}(H)=\bigoplus\limits_{n=0}^{\infty}\Lambda^{2n+1}(H)\quad\text{and}\quad \Lambda^{+}(H)=\bigoplus\limits_{n=0}^{\infty}\Lambda^{2n}(H).
$$
We define
\[
\Lambda_{\mathrm{fin}}(H)
:= \bigcup_{N=0}^\infty \left(\bigoplus_{p=0}^N \Lambda^p(H)\right),
\]
and let
\[
\Lambda_{\mathrm{fin}}^{\pm}(H) := \Lambda_{\mathrm{fin}}(H) \cap \Lambda^{\pm}(H).
\]
Inspired by the definition of $\widehat{T^{(d)}}$, we can define 
$\widehat{\mathbb T}: \mathrm{Dom}(\widehat{\mathbb T})(\subseteq \Lambda^{+}(H)) \rightarrow \Lambda^{-}(H)$ by
$$
\widehat{\mathbb T}
\left(x_{0}, x_{2}, x_{4}, \cdots\right)
=
\left(
\partial_{\mathbb T,1}^{*} x_{0}+\partial_{\mathbb T,2} x_{2},\ 
\partial_{\mathbb T,3}^{*} x_{2}+\partial_{\mathbb T,4} x_{4},\ 
\partial_{\mathbb T,5}^{*} x_{4}+\partial_{\mathbb T,6} x_{6},\ \cdots
\right),
$$
where the domain $\mathrm{Dom}(\widehat{\mathbb T})$ is defined by
$$
\operatorname{Dom}(\widehat{\mathbb T})
=
\left\{
\left(x_{0}, x_{2}, \cdots\right) \in \Lambda^{+}(H)
\ \middle|\ 
\sum_{k=0}^{\infty}
\left\|
\partial_{\mathbb T,2k+1}^{*} x_{2k}
+
\partial_{\mathbb T,2k+2} x_{2k+2}
\right\|^{2}
<\infty
\right\},
$$
By Theorem \ref{prop 1.1}, $\Lambda_{\mathrm{fin}}^{+}(H)\subseteq \operatorname{Dom}(\widehat{\mathbb T})$. Since $\Lambda_{\mathrm{fin}}^{+}(H)$ is dense in $\Lambda^{+}(H)$, $\widehat{\mathbb T}$ is a densely defined operator.
Similarly, define $\partial_{\mathbb T}: \mathrm{Dom}(\partial_{\mathbb T})(\subseteq \Lambda(H))\to \Lambda(H)$ by 
$$
\partial_{\mathbb T}(x_0,x_1,\cdots)=(\partial_{\mathbb T,1}x_1,\partial_{\mathbb T,2}x_2,\cdots).
$$ 
where
\begin{equation}\label{D partial T}
\mathrm{Dom}\left(\partial_{\mathbb{T}}\right)=\left\{\left(x_{0}, x_{1}, \cdots\right) \in \Lambda(H)\, \left| \,\sum_{i=1}^{\infty}\left\|\partial_{\mathbb{T}, i} x_{i}\right\|^{2}<\infty\right.\right\}.
\end{equation}
By Theorem \ref{prop 1.1}, 
 $\Lambda_{\mathrm{fin}}(H) \subseteq \mathrm{Dom}\left(\partial_{\mathbb{T}}\right)$, and hence $\mathrm{Dom}\left(\partial_{\mathbb{T}}\right)$ is dense in $\Lambda(H)$.  Moreover, since  for all $n \geq 0$ $\partial_{\mathbb{T}, n} \partial_{\mathbb{T}, n+1}=0$, 
 $$
 \operatorname{Im}\left(\partial_{\mathbb{T}}\right) \subseteq \operatorname{ker}\left(\partial_{\mathbb{T}}\right).
 $$
 \begin{lem}\label{dom star}
 Let $\mathbb T$ be a row-bounded sequence of commuting operators on $H$. Then
\[
\mathrm{Dom}(\partial_{\mathbb T}^*)=\left\{y\in\Lambda(H)\,\middle|\,\sum_{i=1}^{\infty}\|\partial_{\mathbb T,i}^*y_{i-1}\|^2<\infty\right\}
\]
and
\[
\mathrm{Dom}(\widehat{\mathbb T}^*)
=
\left\{
y=(y_1,y_3,\dots)\in\Lambda^-(H)
\ \middle|\ 
\sum_{k=0}^{\infty}
\left\|
\partial_{\mathbb T,2k+1} y_{2k+1}
+
\partial_{\mathbb T,2k}^* y_{2k-1}
\right\|^{2}
<\infty
\right\}.
\]
Moreover,
$$
\partial_{\mathbb T}^*y=(0,\partial_{\mathbb T,1}^*y_0,\partial_{\mathbb T,2}^*y_1,\cdots),~\forall y\in \mathrm{Dom}(\partial_{\mathbb T}^*),
$$
and
$$
\widehat{\mathbb T}^*y
=
\left(
\partial_{\mathbb T,1} y_1,\ 
\partial_{\mathbb T,2}^* y_1+\partial_{\mathbb T,3}y_3,\ 
\partial_{\mathbb T,4}^* y_3+\partial_{\mathbb T,5}y_5,\ 
\cdots
\right),~\forall y\in \mathrm{Dom}(\widehat{\mathbb T}^*).
$$
 \end{lem}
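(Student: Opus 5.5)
We compute both adjoints directly from the definition of the adjoint of a densely defined operator, using only that each block $\partial_{\mathbb T,i}$ is bounded (Theorem \ref{prop 1.1}) and that $\Lambda_{\mathrm{fin}}(H)\subseteq\mathrm{Dom}(\partial_{\mathbb T})$ is dense. Write $D$ for the set on the right-hand side of the first display. For $y\in\Lambda(H)$, $y\in\mathrm{Dom}(\partial_{\mathbb T}^*)$ means that the functional $x\mapsto\langle\partial_{\mathbb T}x,y\rangle$ is bounded on $\mathrm{Dom}(\partial_{\mathbb T})$.

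\emph{Necessity.} Let $y\in\mathrm{Dom}(\partial_{\mathbb T}^*)$ and $z=\partial_{\mathbb T}^*y$. Test against $x\in\Lambda_{\mathrm{fin}}(H)$ supported in a single degree $i$, i.e. $x=(0,\dots,0,x_i,0,\dots)$. Then
\[
\langle x_i,z_i\rangle=\langle\partial_{\mathbb T}x,y\rangle=\langle\partial_{\mathbb T,i}x_i,y_{i-1}\rangle=\langle x_i,\partial_{\mathbb T,i}^*y_{i-1}\rangle .
\]
Hence $z_i=\partial_{\mathbb T,i}^*y_{i-1}$ for $i\ge1$. Taking $i=0$ gives $z_0=0$, since $\partial_{\mathbb T,0}=0$. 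Since $z\in\Lambda(H)$, $\sum_i\|\partial_{\mathbb T,i}^*y_{i-1}\|^2=\|z\|^2<\infty$, so $y\in D$ and the formula for $\partial_{\mathbb T}^*y$ holds.

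\emph{Sufficiency.} Let $y\in D$ and put $z=(0,\partial_{\mathbb T,1}^*y_0,\partial_{\mathbb T,2}^*y_1,\dots)\in\Lambda(H)$. For every $x\in\mathrm{Dom}(\partial_{\mathbb T})$,
\[
\langle\partial_{\mathbb T}x,y\rangle=\sum_{i\ge1}\langle\partial_{\mathbb T,i}x_i,y_{i-1}\rangle=\sum_{i\ge1}\langle x_i,\partial_{\mathbb T,i}^*y_{i-1}\rangle=\langle x,z\rangle .
\]
The first series converges absolutely by Cauchy--Schwarz, because $\partial_{\mathbb T}x\in\Lambda(H)$ by the definition of the domain. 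The second converges absolutely because both $x$ and $z$ are square-summable. The functional is therefore bounded by $\|z\|\,\|x\|$, so $y\in\mathrm{Dom}(\partial_{\mathbb T}^*)$ with $\partial_{\mathbb T}^*y=z$.

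\emph{The operator $\widehat{\mathbb T}$.} The argument is identical after fixing indices. The component of $\widehat{\mathbb T}x$ in $\Lambda^{2k+1}(H)$ is $\partial_{\mathbb T,2k+1}^*x_{2k}+\partial_{\mathbb T,2k+2}x_{2k+2}$. For $y=(y_1,y_3,\dots)$ we regroup
\[
\langle\widehat{\mathbb T}x,y\rangle=\sum_k\bigl(\langle x_{2k},\partial_{\mathbb T,2k+1}y_{2k+1}\rangle+\langle x_{2k+2},\partial_{\mathbb T,2k+2}^*y_{2k+1}\rangle\bigr)=\sum_k\langle x_{2k},\,\partial_{\mathbb T,2k+1}y_{2k+1}+\partial_{\mathbb T,2k}^*y_{2k-1}\rangle,
\]
with the convention $y_{-1}=0$. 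Testing on $x$ supported in the single degree $2k$ gives necessity and the formula for $\widehat{\mathbb T}^*y$, exactly as before; this agrees with the displayed formula.

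\emph{Main obstacle.} For sufficiency the regrouping must be justified on all of $\mathrm{Dom}(\widehat{\mathbb T})$, not only on finite vectors. Writing $w_k=\partial_{\mathbb T,2k+1}y_{2k+1}+\partial_{\mathbb T,2k}^*y_{2k-1}$, consider the partial sums up to $k=N$. The regrouped and original partial sums differ only by the boundary term $\langle x_{2N+2},\partial_{\mathbb T,2N+2}^*y_{2N+1}\rangle$, so we need this term to tend to $0$.

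One option is to bound it by $\|\partial_{\mathbb T,2N+2}\|\,\|x_{2N+2}\|\,\|y_{2N+1}\|\le\sqrt{(2N+2)C}\,\|x_{2N+2}\|\,\|y_{2N+1}\|$, but this need not vanish. If it fails, I would instead establish boundedness of the functional only on the dense core $\Lambda_{\mathrm{fin}}^{+}(H)$. On that core the two sums agree exactly, so $\langle\widehat{\mathbb T}x,y\rangle=\langle x,w\rangle$ there. I would then show that $\Lambda_{\mathrm{fin}}^{+}(H)$ is a core for $\widehat{\mathbb T}$, i.e. that truncations $x^{(N)}\to x$ satisfy $\widehat{\mathbb T}x^{(N)}\to\widehat{\mathbb T}x$. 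The only nonconvergent piece is the term $\partial_{\mathbb T,2N+3}^*x_{2N+2}$, which I would control by truncating along a subsequence of $N$ chosen so that this term is small. Making this rigorous is the delicate point of the proof; the same concern applies, more mildly, to $\partial_{\mathbb T}$.
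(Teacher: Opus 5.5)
Your computation of $\partial_{\mathbb T}^*$ matches the paper's argument and is complete. No reindexing occurs there, so the worry in your last sentence does not apply to $\partial_{\mathbb T}$.

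For $\widehat{\mathbb T}^*$ the paper only writes ``by the same reasoning''. You are right that sufficiency involves a regrouping, and that this reduces to showing $\langle x_{2N+2},\partial_{\mathbb T,2N+2}^*y_{2N+1}\rangle\to0$ for arbitrary $x\in\mathrm{Dom}(\widehat{\mathbb T})$. However, you do not establish this. The bound through $\|\partial_{\mathbb T,2N+2}\|\le\sqrt{(2N+2)C}$ fails, as you note. The fallback (a core argument with a suitably chosen subsequence of truncations) is only sketched, and you give no reason why such a subsequence should exist. As written, the inclusion of the displayed set into $\mathrm{Dom}(\widehat{\mathbb T}^*)$ is not proved.

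The missing ingredient is the orthogonality forced by $\partial_{\mathbb T,n}\partial_{\mathbb T,n+1}=0$:
\[
\langle\partial_{\mathbb T,n+1}a,\partial_{\mathbb T,n}^*b\rangle=\langle\partial_{\mathbb T,n}\partial_{\mathbb T,n+1}a,b\rangle=0 .
\]
The paper uses this in Lemma~\ref{lem:domain-hat-T}. Set $w_k=\partial_{\mathbb T,2k+1}y_{2k+1}+\partial_{\mathbb T,2k}^*y_{2k-1}$. Orthogonality gives $\|w_k\|^2=\|\partial_{\mathbb T,2k+1}y_{2k+1}\|^2+\|\partial_{\mathbb T,2k}^*y_{2k-1}\|^2$, and hence
\[
\bigl|\langle x_{2N+2},\partial_{\mathbb T,2N+2}^*y_{2N+1}\rangle\bigr|\le\|x_{2N+2}\|\,\|w_{N+1}\|\longrightarrow0,
\]
because $x\in\Lambda^+(H)$ and $\sum_k\|w_k\|^2<\infty$. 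Alternatively, working on the $x$ side, $|\langle\partial_{\mathbb T,2N+2}x_{2N+2},y_{2N+1}\rangle|\le\|(\widehat{\mathbb T}x)_{2N+1}\|\,\|y_{2N+1}\|\to0$.

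Thus the original and regrouped partial sums have the same limit. So $\langle\widehat{\mathbb T}x,y\rangle=\langle x,w\rangle$ on all of $\mathrm{Dom}(\widehat{\mathbb T})$, and no core argument is needed.

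The same fact also makes your fallback work, without any subsequence. Truncating $x$ after degree $2N$ changes $\widehat{\mathbb T}x$ by the tail plus the single term $\partial_{\mathbb T,2N+2}x_{2N+2}$. That term has norm at most $\|(\widehat{\mathbb T}x)_{2N+1}\|$, which tends to $0$.
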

 \begin{proof}
For \(y\in\mathrm{Dom}(\partial_{\mathbb T}^*)\), there exists $z\in \Lambda(H)$ such that
$$
\left\langle \partial_{\mathbb T}x, y\right\rangle=\left\langle x, z\right\rangle,~\forall x\in \mathrm{Dom}(\partial_{\mathbb T}).
$$
Hence for $i\geq 1$ and $x_i\in \Lambda^i(H)$, we have
$$
\left\langle x_i, \partial_{\mathbb{T},i}^*y_{i-1}\right\rangle=\left\langle \partial_{\mathbb{T},i}x_i, y_{i-1}\right\rangle=\left\langle \partial_{\mathbb T}x_i, y\right\rangle=\left\langle x_i, z\right\rangle=\left\langle x_i, z_i\right\rangle.
$$
Thus $z_i=\partial_{\mathbb{T},i}^*y_{i-1}$. It follows from $z\in \Lambda(H)$ that $\sum\limits_{i=1}^{\infty}\|\partial_{\mathbb T,i}^*y_{i-1}\|^2<\infty$. Conversely, suppose that $y\in \Lambda(H)$ satisfies $\sum\limits_{i=1}^{\infty}\|\partial_{\mathbb T,i}^*y_{i-1}\|^2<\infty$. Write
$$
z=(0,\partial_{\mathbb T,1}^*y_0,\partial_{\mathbb T,2}^*y_1,\cdots),
$$
Then $z\in \Lambda(H)$. Moreover, for any \(x\in\mathrm{Dom}(\partial_{\mathbb T})\), we have
$$
\left\langle x, z\right\rangle=\sum_{i=1}^{\infty}\langle x_i,\partial_{\mathbb T,i}^*y_{i-1}\rangle=\sum_{i=1}^{\infty}\langle \partial_{\mathbb T,i}x_i,y_{i-1}\rangle=\left\langle \partial_{\mathbb T}x, y\right\rangle.
$$
Thus \(y\in\mathrm{Dom}(\partial_{\mathbb T}^*)\), and 
$$
\partial_{\mathbb T}^*y=z=(0,\partial_{\mathbb T,1}^*y_0,\partial_{\mathbb T,2}^*y_1,\cdots).
$$
By the same reasoning, we have
\[
\mathrm{Dom}(\widehat{\mathbb T}^*)
=
\left\{
y=(y_1,y_3,\dots)\in\Lambda^-(H)
\ \middle|\ 
\sum_{k=0}^{\infty}
\left\|
\partial_{\mathbb T,2k+1} y_{2k+1}
+
\partial_{\mathbb T,2k}^* y_{2k-1}
\right\|^{2}
<\infty
\right\},
\]
with the convention \(y_{-1}=0\), and for \(y\in \mathrm{Dom}(\widehat{\mathbb T}^*)\),
\[
\widehat{\mathbb T}^*y
=
\left(
\partial_{\mathbb T,1} y_1,\ 
\partial_{\mathbb T,2}^* y_1+\partial_{\mathbb T,3}y_3,\ 
\partial_{\mathbb T,4}^* y_3+\partial_{\mathbb T,5}y_5,\ 
\cdots
\right).
\]
 \end{proof}

 \begin{lem}\label{lem:domain-hat-T}
 Let $\mathbb T$ be a row-bounded sequence of commuting operators on $H$. Then
\[
\operatorname{Dom}(\widehat{\mathbb T})
=
\operatorname{Dom}(\partial_{\mathbb T})
\cap
\operatorname{Dom}(\partial_{\mathbb T}^{*})
\cap
\Lambda^{+}(H),
\]
and
\[
\operatorname{Dom}(\widehat{\mathbb T}^*)
=
\operatorname{Dom}(\partial_{\mathbb T})
\cap
\operatorname{Dom}(\partial_{\mathbb T}^{*})
\cap
\Lambda^{-}(H).
\]
Moreover, for every \(x\in\operatorname{Dom}(\widehat{\mathbb T})\) and \(y\in\operatorname{Dom}(\widehat{\mathbb T}^*)\),
\begin{equation}\label{hat T x eq dx d*x}
\|\widehat{\mathbb T}x\|^{2}
=
\|\partial_{\mathbb T}x\|^{2}
+
\|\partial_{\mathbb T}^{*}x\|^{2},
\quad
\|\widehat{\mathbb T}^*y\|^{2}
=
\|\partial_{\mathbb T}y\|^{2}
+
\|\partial_{\mathbb T}^{*}y\|^{2}.
\end{equation}
\end{lem}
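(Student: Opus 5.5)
The plan is to compute the norm of each component of $\widehat{\mathbb T}x$ (and of $\widehat{\mathbb T}^*y$) and to observe that the cross term vanishes because $\partial_{\mathbb T,n}\partial_{\mathbb T,n+1}=0$. This is stated before the lemma and follows from commutativity of the $T_i$ together with the anticommutation relations \eqref{anticommutationrelations}.

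\emph{The even case.} Fix $x=(x_0,x_2,x_4,\dots)\in\Lambda^+(H)$ and $k\ge 0$. Expanding the $k$-th component of $\widehat{\mathbb T}x$ gives
\[
\|\partial_{\mathbb T,2k+1}^*x_{2k}+\partial_{\mathbb T,2k+2}x_{2k+2}\|^2
=\|\partial_{\mathbb T,2k+1}^*x_{2k}\|^2+\|\partial_{\mathbb T,2k+2}x_{2k+2}\|^2+2\,\mathrm{Re}\,\langle \partial_{\mathbb T,2k+1}^*x_{2k},\partial_{\mathbb T,2k+2}x_{2k+2}\rangle .
\]
All operators here are bounded by Theorem \ref{prop 1.1}. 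The cross term equals $\langle x_{2k},\partial_{\mathbb T,2k+1}\partial_{\mathbb T,2k+2}x_{2k+2}\rangle$, which is $0$. Hence each summand in the definition of $\mathrm{Dom}(\widehat{\mathbb T})$ splits into two nonnegative pieces. Summing over $k$, a series of nonnegative terms converges if and only if both of the following converge:
\[
\sum_k\|\partial_{\mathbb T,2k+2}x_{2k+2}\|^2 \quad\text{and}\quad \sum_k\|\partial_{\mathbb T,2k+1}^*x_{2k}\|^2 .
\]
Regard $x$ as an element of $\Lambda(H)$ with zero odd components. By the definition \eqref{D partial T} and Lemma \ref{dom star}, the first sum is exactly $\sum_i\|\partial_{\mathbb T,i}x_i\|^2$ and the second is $\sum_i\|\partial_{\mathbb T,i}^*x_{i-1}\|^2$, since all other terms vanish. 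So the two series converge precisely when $x\in\mathrm{Dom}(\partial_{\mathbb T})\cap\mathrm{Dom}(\partial_{\mathbb T}^*)$, which gives the first domain identity. The same computation, together with the explicit formulas for $\partial_{\mathbb T}x$ and $\partial_{\mathbb T}^*x$ from Lemma \ref{dom star}, gives $\|\widehat{\mathbb T}x\|^2=\|\partial_{\mathbb T}x\|^2+\|\partial_{\mathbb T}^*x\|^2$.

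\emph{The odd case.} Take $y=(y_1,y_3,\dots)\in\Lambda^-(H)$, with the convention $y_{-1}=0$, and use the formula for $\widehat{\mathbb T}^*$ in Lemma \ref{dom star}. The relevant summands are $\partial_{\mathbb T,2k+1}y_{2k+1}+\partial_{\mathbb T,2k}^*y_{2k-1}$, and the cross term is
\[
\langle \partial_{\mathbb T,2k+1}y_{2k+1},\partial_{\mathbb T,2k}^*y_{2k-1}\rangle=\langle \partial_{\mathbb T,2k}\partial_{\mathbb T,2k+1}y_{2k+1},y_{2k-1}\rangle=0 .
\]
For $k=0$ we have $\partial_{\mathbb T,0}=0$, so the cross term is trivially zero there as well. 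The same bookkeeping then yields the second domain identity and the second norm identity.

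There is no real obstacle in this argument. The only points needing care are the index bookkeeping, so that each $\partial_{\mathbb T,i}x_i$ and each $\partial_{\mathbb T,i}^*x_{i-1}$ is counted exactly once, and the justification that termwise splitting is legitimate, which holds because every term is nonnegative.
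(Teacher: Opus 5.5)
Your proof is correct and follows essentially the same route as the paper. The paper also uses $\partial_{\mathbb T,n}\partial_{\mathbb T,n+1}=0$ to get $\operatorname{Im}\partial_{\mathbb T,n+1}\perp\operatorname{Im}\partial_{\mathbb T,n}^*$, splits each component's norm into two squares, and reads off both domain identities and both norm identities from the descriptions in Lemma \ref{dom star}, with the same convention $y_{-1}=0$ in the odd case.
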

\begin{proof}
Since \(\partial_{\mathbb T,n}\partial_{\mathbb T,n+1}=0\) for all \(n\), we have \(\operatorname{Im}\partial_{\mathbb T,n+1} \perp \operatorname{Im}\partial_{\mathbb T,n}^*\). Thus, for any \(x_{2k} \in \Lambda^{2k}(H)\) and \(x_{2k+2} \in \Lambda^{2k+2}(H)\),
\begin{equation}\label{im d2k perp im d2k+1}
\left\| \partial_{\mathbb T,2k+1}^* x_{2k} + \partial_{\mathbb T,2k+2} x_{2k+2} \right\|^2
=
\| \partial_{\mathbb T,2k+1}^* x_{2k} \|^2 + \| \partial_{\mathbb T,2k+2} x_{2k+2} \|^2.
\end{equation}
Consequently, for \(x = (x_0, x_2, \dots) \in \Lambda^+(H)\), by Lemma \ref{dom star},
\[
x \in \operatorname{Dom}(\widehat{\mathbb T}) 
\iff \sum_{k=0}^\infty \| \partial_{\mathbb T,2k+1}^* x_{2k} \|^2 + \sum_{k=0}^\infty \| \partial_{\mathbb T,2k+2} x_{2k+2} \|^2 < \infty
\iff x \in \operatorname{Dom}(\partial_{\mathbb T}^*) \cap \operatorname{Dom}(\partial_{\mathbb T}).
\]
Moreover, for such \(x\), by \eqref{im d2k perp im d2k+1},
\[
\| \widehat{\mathbb T} x \|^2 = \sum_{k=0}^\infty \left\| \partial_{\mathbb T,2k+1}^* x_{2k} + \partial_{\mathbb T,2k+2} x_{2k+2} \right\|^2 = \| \partial_{\mathbb T}^* x \|^2 + \| \partial_{\mathbb T} x \|^2.
\]
For \(y \in \Lambda^-(H)\), write \(y = (y_1, y_3, \dots)\). Set \(y_{-1}=0\). Since \(\partial_{\mathbb T,2k}\partial_{\mathbb T,2k+1}=0\), we have \(\operatorname{Im}\partial_{\mathbb T,2k+1} \perp \operatorname{Im}\partial_{\mathbb T,2k}^*\). Thus
\begin{equation}\label{im d2k+1 perp im d2k}
\left\| \partial_{\mathbb T,2k+1} y_{2k+1} + \partial_{\mathbb T,2k}^* y_{2k-1} \right\|^2
=
\| \partial_{\mathbb T,2k+1} y_{2k+1} \|^2 + \| \partial_{\mathbb T,2k}^* y_{2k-1} \|^2.
\end{equation}
Consequently, for \(y \in \Lambda^-(H)\), by Lemma \ref{dom star},
\[
y \in \operatorname{Dom}(\widehat{\mathbb T}^*) 
\iff \sum_{k=0}^\infty \| \partial_{\mathbb T,2k}^* y_{2k-1} \|^2 + \sum_{k=0}^\infty \| \partial_{\mathbb T,2k+1} y_{2k+1} \|^2 < \infty
\iff y \in \operatorname{Dom}(\partial_{\mathbb T}^*) \cap \operatorname{Dom}(\partial_{\mathbb T}),
\]
and for such \(y\), by \eqref{im d2k+1 perp im d2k},
\[
\| \widehat{\mathbb T}^* y \|^2 = \sum_{k=0}^\infty  \left\| \partial_{\mathbb T,2k+1} y_{2k+1} + \partial_{\mathbb T,2k}^* y_{2k-1} \right\|^2=\| \partial_{\mathbb T} y \|^2 + \| \partial_{\mathbb T}^* y \|^2.
\]
\end{proof}
\begin{cor}\label{cor:hat-T-sum}
   Let $\mathbb T$ be a row-bounded sequence of commuting operators on $H$. Then
  \[
  \widehat{\mathbb T} = (\partial_{\mathbb T} + \partial_{\mathbb T}^*)\big|_{\operatorname{Dom}(\widehat{\mathbb T})}
  \quad\text{and}\quad
  \widehat{\mathbb T}^* = (\partial_{\mathbb T} + \partial_{\mathbb T}^*)\big|_{\operatorname{Dom}(\widehat{\mathbb T}^*)}.
  \]
\end{cor}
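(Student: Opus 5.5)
The plan is to read the corollary off the explicit formulas already obtained. Lemma \ref{dom star} gives $\partial_{\mathbb T}^*$ componentwise, and Lemma \ref{lem:domain-hat-T} identifies the domains. We only need to check that, on $\operatorname{Dom}(\widehat{\mathbb T})$, the operator $\partial_{\mathbb T}+\partial_{\mathbb T}^*$ makes sense and agrees with $\widehat{\mathbb T}$ component by component. The same check is then made for $\widehat{\mathbb T}^*$.

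\textbf{Step 1: domains.} By Lemma \ref{lem:domain-hat-T},
\[
\operatorname{Dom}(\widehat{\mathbb T})\subseteq \operatorname{Dom}(\partial_{\mathbb T})\cap\operatorname{Dom}(\partial_{\mathbb T}^*) = \operatorname{Dom}(\partial_{\mathbb T}+\partial_{\mathbb T}^*).
\]
So the restriction $(\partial_{\mathbb T}+\partial_{\mathbb T}^*)|_{\operatorname{Dom}(\widehat{\mathbb T})}$ is well defined. The analogous inclusion holds for $\operatorname{Dom}(\widehat{\mathbb T}^*)\subseteq\Lambda^-(H)$.

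\textbf{Step 2: comparing components on $\Lambda^+(H)$.} Take $x=(x_0,0,x_2,0,x_4,\dots)\in\operatorname{Dom}(\widehat{\mathbb T})$, viewed inside $\Lambda(H)$ with zero odd components. The definition of $\partial_{\mathbb T}$ gives
\[
\partial_{\mathbb T}x=(0,\partial_{\mathbb T,2}x_2,0,\partial_{\mathbb T,4}x_4,\dots).
\]
Lemma \ref{dom star} gives
\[
\partial_{\mathbb T}^*x=(0,\partial_{\mathbb T,1}^*x_0,0,\partial_{\mathbb T,3}^*x_2,\dots).
\]
Adding, the degree-$(2k+1)$ component of $(\partial_{\mathbb T}+\partial_{\mathbb T}^*)x$ is $\partial_{\mathbb T,2k+1}^*x_{2k}+\partial_{\mathbb T,2k+2}x_{2k+2}$, and all even components vanish. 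This is exactly $\widehat{\mathbb T}x$ under the identification $\Lambda^-(H)\subseteq\Lambda(H)$.

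\textbf{Step 3: comparing components on $\Lambda^-(H)$.} For $y=(y_1,y_3,\dots)\in\operatorname{Dom}(\widehat{\mathbb T}^*)$ the computation is the same. The sum $\partial_{\mathbb T}y+\partial_{\mathbb T}^*y$ has degree-$0$ component $\partial_{\mathbb T,1}y_1$. Its degree-$2k$ component, for $k\ge1$, is $\partial_{\mathbb T,2k}^*y_{2k-1}+\partial_{\mathbb T,2k+1}y_{2k+1}$. This matches the formula for $\widehat{\mathbb T}^*y$ in Lemma \ref{dom star}.

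The only point needing care is bookkeeping: $\Lambda^\pm(H)$ must be identified with subspaces of $\Lambda(H)$, and the index shift in $\partial_{\mathbb T}^*y=(0,\partial_{\mathbb T,1}^*y_0,\partial_{\mathbb T,2}^*y_1,\dots)$ must be tracked correctly. I do not expect any analytic obstacle, since Lemma \ref{lem:domain-hat-T} has already handled all the convergence issues.
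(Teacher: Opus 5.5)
Your proposal is correct and is the argument the paper intends; the paper states this corollary without proof because it follows directly from Lemma \ref{dom star} and Lemma \ref{lem:domain-hat-T}. The domain identities of Lemma \ref{lem:domain-hat-T} show that $\partial_{\mathbb T}+\partial_{\mathbb T}^*$ is defined on $\operatorname{Dom}(\widehat{\mathbb T})$ and on $\operatorname{Dom}(\widehat{\mathbb T}^*)$, and your degree-by-degree matching against the componentwise formulas of Lemma \ref{dom star} is accurate, including the index shifts.
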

To continue, we need some fundamental properties on $\partial_{\mathbb T}$ whose proofs are elementary, and we list them here for reader's convenience.

\begin{lem}\label{sup=}
	Let \(\mathbb T =(T_1,T_2,\ldots)\) be a sequence of commuting operators.
	For each \(N \geq 1\), 
	\begin{align}\label{norm*}
		\left\| \sum_{n=1}^N T_n \otimes E_n^* \right\|=\left\| \sum_{n=1}^N T_n \otimes E_n \right\|.
	\end{align}
\end{lem}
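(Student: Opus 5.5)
The plan is to produce a unitary on the exterior algebra that carries each creation operator $E_n$ to the corresponding annihilation operator $E_n^*$, up to a grading sign that does not affect the norm. Commutativity of the $T_n$ will not be used; only the anticommutation relations \eqref{anticommutationrelations} are needed.

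\emph{Step 1: reduce to finitely many modes.} Write $\ell^2=\mathbb C^N\oplus(\mathbb C^N)^\perp$. This gives the standard identification
\[
\Lambda\cong\Lambda_N\otimes\Lambda',
\]
where $\Lambda'$ is the antisymmetric Fock space over $\overline{\operatorname{span}}\{e_{N+1},e_{N+2},\dots\}$ and the identification is $e_S\wedge e_R\leftrightarrow e_S\otimes e_R$. Here $S\subseteq\{1,\dots,N\}$ is ordered increasingly, $R\subseteq\{N+1,N+2,\dots\}$ is finite, and $e_S=e_{s_1}\wedge\cdots\wedge e_{s_k}$. For $n\le N$ we have $e_n\wedge(e_S\wedge e_R)=(e_n\wedge e_S)\wedge e_R$. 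Hence under this identification $E_n=E_n^{(N)}\otimes I_{\Lambda'}$, where $E_n^{(N)}$ is the creation operator on $\Lambda_N$, and consequently $E_n^*=E_n^{(N)*}\otimes I_{\Lambda'}$. It follows that both sides of \eqref{norm*} equal the corresponding norms of
\[
\sum_{n=1}^N T_n\otimes E_n^{(N)*}\quad\text{and}\quad\sum_{n=1}^N T_n\otimes E_n^{(N)}
\]
on $H\otimes\Lambda_N$, since tensoring with $I_{\Lambda'}$ preserves norms. So it suffices to work on the finite-dimensional space $\Lambda_N$.

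\emph{Step 2: a Hodge-type unitary.} On $\Lambda_N$ define $J e_S=\varepsilon(S)\,e_{S^c}$ for every $S\subseteq[N]$. Here $S^c=[N]\setminus S$ and $\varepsilon(S)\in\{\pm1\}$ is the sign of the permutation listing $S$ increasingly followed by $S^c$ increasingly, i.e.
\[
e_S\wedge e_{S^c}=\varepsilon(S)\,e_1\wedge\cdots\wedge e_N.
\]
Since $J$ permutes an orthonormal basis up to signs, it is unitary. Let $\Gamma e_S=(-1)^{|S|}e_S$ be the grading unitary. The claim to verify is
\[
J E_n^{(N)}=\Gamma E_n^{(N)*}J \quad\text{(or the same relation with an extra global sign)} .
\]
This is checked on basis vectors $e_S$. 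Both sides vanish unless $n\notin S$. When $n\notin S$, both sides are multiples of $e_{S^c\setminus\{n\}}$, and one compares signs using $e_S\wedge e_{S^c}=\varepsilon(S)\,e_{[N]}$ together with the wedge identity for moving $e_n$ across the factors.

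\emph{Step 3: conclude.} Set $U=I_H\otimes J$ and $G=I_H\otimes\Gamma$, both unitary. Step 2 gives
\[
U\Big(\sum_{n=1}^N T_n\otimes E^{(N)}_n\Big)U^*=G\sum_{n=1}^N T_n\otimes E_n^{(N)*}.
\]
Taking norms, and using that $U$ and $G$ are unitary, yields \eqref{norm*}.

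\emph{Main obstacle.} The only real work is the sign bookkeeping in Step 2. If a fixed choice of $\varepsilon$ produces a sign that depends on $n$ as well as on $|S|$, I would absorb it by replacing $J$ with $JD$ or $DJ$, where $D$ is a diagonal sign unitary. Any relation of the form $JE_n=V E_n^*J$ with $V$ unitary and independent of $n$ suffices for the argument.
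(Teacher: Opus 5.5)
Your proposal is correct, and it is in substance the paper's argument. The paper uses $U_N=(E_1+E_1^*)\cdots(E_N+E_N^*)$, which is the same particle--hole (complement) map on the first $N$ modes, $e_S\wedge e_R\mapsto \pm e_{S^c}\wedge e_R$; that is, it equals your $J\otimes I_{\Lambda'}$ up to diagonal signs.

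What differs is how the intertwining relation is verified. Each $E_m+E_m^*$ is a self-adjoint unitary that anticommutes with $E_n$ and $E_n^*$ for $m\neq n$, and $(E_n+E_n^*)E_n^*(E_n+E_n^*)=E_n$. So the paper gets $(I_H\otimes U_N)\bigl(\sum_{n=1}^N T_n\otimes E_n^*\bigr)(I_H\otimes U_N^*)=(-1)^{N-1}\sum_{n=1}^N T_n\otimes E_n$ directly from the CAR relations on $\Lambda$, with a scalar sign. It needs no reduction to $\Lambda_N$, no grading operator, and no sign bookkeeping on basis vectors.

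Your basis-level route makes the combinatorics explicit instead, and the sign check you deferred does close. Write $e_{S\cup\{n\}}=\sigma\,e_n\wedge e_S$ and $e_{S^c}=\tau\,e_n\wedge e_{S^c\setminus\{n\}}$, then move $e_n$ past $e_S$; this gives $\varepsilon(S\cup\{n\})=(-1)^{|S|}\sigma\tau\,\varepsilon(S)$. Hence $JE_n^{(N)}=(-1)^{N-1}\Gamma E_n^{(N)*}J$. The sign is global, so the fallback with $D$ is never needed.
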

\begin{proof}
	For each \(N\geq 1\), let \(U_N=(E_1+E_1^*)\cdots(E_N+E_N^*)\). 
	By \eqref{anticommutationrelations}, it is easy to see that \(U_N\) is a unitary operator and
	\begin{align*}
		(I_H \otimes U_N)\left(\sum_{n=1}^N T_n \otimes E_n^*\right)(I_H \otimes U_N^*)
		=
		(-1)^{N-1}\left(\sum_{n=1}^N T_n \otimes E_n\right).
	\end{align*}
	Therefore, \eqref{norm*} holds.
\end{proof}

\begin{lem}\label{bound and SOT}
	Let \(\mathbb T\) be a sequence of commuting operators satisfying
	\[
	\sup_{N \geq 1} \left\| \sum_{n=1}^N T_n \otimes E_n^*\right\|<\infty.
	\]
	Then \(\mathbb T\) is both row bounded and column bounded.
	Moreover, the series \(\sum\limits_{n=1}^\infty T_n \otimes E_n^*\) and \(\sum\limits_{n=1}^\infty T_n \otimes E_n\) converge in the strong operator topology.
\end{lem}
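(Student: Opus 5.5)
Set $M=\sup_{N}\|\sum_{n=1}^N T_n\otimes E_n^*\|<\infty$. Throughout, $E_n^*$ annihilates the vacuum $e_\emptyset\in\Lambda^0$. The anticommutation relations \eqref{anticommutationrelations} give $E_n^*(e_m\wedge\eta)=\delta_{nm}\eta-e_m\wedge E_n^*\eta$, and hence $E_n^*e_m=\delta_{nm}e_\emptyset$.

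\emph{Column boundedness.} Test $A_N=\sum_{n=1}^N T_n\otimes E_n^*$ on the vectors $x=\sum_{m=1}^N h_m\otimes e_m\in H\otimes\Lambda^1$. Since $E_n^*e_m=\delta_{nm}e_\emptyset$, we get $A_Nx=\bigl(\sum_{n=1}^N T_nh_n\bigr)\otimes e_\emptyset$. Therefore
\[
\Bigl\|\sum_{n=1}^N T_nh_n\Bigr\|\le M\Bigl(\sum_{n=1}^N\|h_n\|^2\Bigr)^{1/2}.
\]
This says the row operator $(T_1,\dots,T_N)$ has norm at most $M$, so $\sum_{n=1}^N T_nT_n^*\le M^2I$ and $\mathbb T$ is row bounded.

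\emph{Row boundedness.} By Lemma \ref{sup=}, $\|\sum_{n=1}^N T_n\otimes E_n\|\le M$ as well. Apply this operator to $h\otimes e_\emptyset$ to get $\sum_{n=1}^N T_nh\otimes e_n$, whose norm is $(\sum_{n=1}^N\|T_nh\|^2)^{1/2}$. Hence $\sum_{n=1}^N T_n^*T_n\le M^2I$ for all $N$, which is column boundedness.

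\emph{SOT convergence.} Both partial-sum sequences are uniformly bounded, so by the usual density argument it suffices to prove convergence on a dense set. Take as dense set the finite sums of elementary tensors $h\otimes e_{i_1}\wedge\cdots\wedge e_{i_k}$.

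For $\sum_n T_n\otimes E_n$ on such a vector $h\otimes\eta$, the terms $T_nh\otimes e_n\wedge\eta$ with $n$ larger than every index occurring in $\eta$ are mutually orthogonal, and each has norm $\|T_nh\|$. So the tail is Cauchy precisely because $\sum_n\|T_nh\|^2<\infty$, which is the column boundedness just proved.

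For $\sum_n T_n\otimes E_n^*$ the series is even simpler: $E_n^*\eta=0$ whenever $n$ is not among the indices of $\eta$. On each basis vector the series is therefore eventually constant, and it converges trivially.

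Alternatively, once row boundedness is known, the $E^*$-series convergence is exactly Theorem \ref{prop 1.1} applied on each $\Lambda^t(H)$, combined with uniform boundedness across degrees.

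\emph{Main obstacle.} The only delicate point is passing from convergence on the dense set to strong convergence on all of $H\otimes\Lambda$. This needs the uniform bound $\sup_N\|\sum_{n\le N}T_n\otimes E_n\|\le M$ from Lemma \ref{sup=}, which makes the partial sums equicontinuous; the standard $\varepsilon/3$ argument then concludes.
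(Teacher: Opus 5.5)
Your proof is correct and follows essentially the same route as the paper. The row-boundedness test on $H\otimes\Lambda^1$ is identical. Your column-boundedness check, which applies $\sum T_n\otimes E_n$ (via Lemma~\ref{sup=}) to $h\otimes e_\emptyset$, is equivalent to the paper's test of $\sum T_n\otimes E_n^*$ on $x\otimes e_1\wedge\cdots\wedge e_N$. The SOT convergence is the same uniform-bound-plus-density argument; the paper phrases it with the projections $P_N$ and a dominated-convergence step on $P_N\xi$. One slip: your two section headings are swapped, since the argument under ``Column boundedness'' proves row boundedness and vice versa, though the conclusion you state in each section is correct.
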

\begin{proof}
	Let
	\[
	L=\sup_{N \geq 1} \left\| \sum_{n=1}^N T_n \otimes E_n^*\right\|<\infty.
	\]
	For any \(\zeta=\sum\limits_{j=1}^N x_j \otimes e_j \in H \otimes \Lambda^1\), we have
	\begin{align*}
		\left\|\sum_{n=1}^N T_nx_n\right\|
		=\left\|\left( \sum_{n=1}^{N}T_n\otimes E_n^* \right) \zeta\right\| 
		\leq L\left( \sum_{n=1}^N\|x_n\|^2\right)^{\frac{1}{2}}.
	\end{align*}
	Therefore, \(\mathbb T\) is row-bounded. 
	For any \(x \in H\) and \(\omega_N= e_1 \wedge \cdots \wedge e_N\), we have
	\begin{align}\label{column bounded}
		\sum_{n=1}^N \|T_nx\|^2
		= \left\| \sum_{n=1}^N T_nx \otimes E_n^*\omega_N \right\|^2
		=\left\|\left( \sum_{n=1}^{N}T_n\otimes E_n^* \right)(x \otimes \omega_N)\right\|^2
		\leq L^2 \|x\|^2.
	\end{align}
	Therefore, \(\mathbb T\) is column-bounded.
	
	For each $N\geq 1$, let $P_N$ be the orthogonal projection of $\Lambda(H)$ onto \(H\otimes\Lambda\bigl[\mathbb C^N \bigr]\).
	For any $M>N$ and  $\xi\in\Lambda(H)$, since 
	\[
	\left(\sum\limits_{n=N+1}^{M}T_n\otimes E_n^*\right)P_N=0,
	\] 
	we have
	\[
	\left\|\left(\sum_{n=N+1}^{M}T_n\otimes E_n^*\right)\xi\right\|
	=\left\|\left(\sum_{n=N+1}^{M}T_n\otimes E_n^*\right)(I_{\Lambda(H)}-P_N)\xi\right\|
	\leq2L\,\|(I_{\Lambda(H)}-P_N)\xi\|.
	\]
	Hence,  $\left\{\left(\sum\limits_{n=1}^{N}T_n\otimes E_n^*\right)\xi\right\}_{N \geq 1}$ is a Cauchy sequence.
	Thus \(\sum\limits_{n=1}^\infty T_n \otimes E_n^*\) converges in the strong operator topology.

	Let \(\mathcal I=\{I\subset\mathbb N:|I|<\infty\}\). 
	For each \(I=\{i_1<\cdots<i_p\}\in\mathcal I\), let \(e_I=e_{i_1}\wedge\cdots\wedge e_{i_p}\), with \(e_{\emptyset}=1\).
	For any \(\xi  \in\Lambda(H)\), write \(\xi = \sum\limits_{I \in \mathcal I}x_I\otimes e_I\) with \(\sum\limits_{I \in \mathcal I}\|x_I\|^2 = \|\xi\|^2 <\infty\). 
	Then for \(M>N\), we have 
	\begin{equation}\label{leqTn}
		\begin{aligned}
			\left\|\left(\sum_{n=N+1}^{M}T_n\otimes E_n\right)P_N\xi\right\|^2
			&\leq
			\left\|\left(\sum_{n=N+1}^{M}T_n\otimes E_n\right)\left(\sum\limits_{I \subset \{1,\ldots,N\}}x_I\otimes e_I\right)\right\|^2\\
			&=
			\sum_{I\subset\{1,\ldots,N\}}
			\sum_{n=N+1}^{M}\|T_nx_I\|^2\\
			&\leq
			\sum_{I\in\mathcal I}
			\sum_{n=N+1}^{\infty}\|T_nx_I\|^2.
		\end{aligned}
	\end{equation}
	For any \(I \in \mathcal I\), by \eqref{column bounded}, we have
	\begin{align}\label{to0}
		\sum\limits_{n=N+1}^{\infty}\|T_nx_I\|^2 \to 0, \quad N \to \infty
	\end{align}
	and 
	\[
	\sum_{I\in\mathcal I}\sum_{n=N+1}^{\infty}\|T_nx_I\|^2 \leq L^2\sum_{I \in \mathcal I}\|x_I\|^2 =L^2\|\xi\|^2<\infty.
	\] 
	Hence, by the dominated convergence theorem for series, we have 
	\begin{align}\label{conTn}
		\lim\limits_{N \to \infty} \sum\limits_{I\in\mathcal I}\sum\limits_{n=N+1}^{\infty}\|T_nx_I\|^2 
		=
		\sum\limits_{I\in\mathcal I} \left(\lim\limits_{N \to \infty} \sum\limits_{n=N+1}^{\infty}\|T_nx_I\|^2 \right) \overset{(*)}{=}0,
	\end{align}
	where \((*)\) is given by \eqref{to0}.
	By \eqref{leqTn}, \eqref{conTn} and Lemma~\ref{sup=}, we have
	\begin{align*}
		\left\|\left(\sum_{n=N+1}^{M}T_n\otimes E_n\right)\xi\right\|
		&\leq 
		\left\|\left(\sum_{n=N+1}^{M}T_n\otimes E_n\right)(I_{\Lambda(H)}-P_N)\xi\right\|
		+\left\|\left(\sum_{n=N+1}^{M}T_n\otimes E_n\right)P_N\xi\right\|\\
		&\leq
		2L\left\|(I_{\Lambda(H)}-P_N)\xi\right\|
		+ 
		\left(\sum_{I\in\mathcal I}\sum_{n=N+1}^{\infty}\|T_nx_I\|^2\right)^{\frac12} \longrightarrow 0.
	\end{align*}
	Therefore, \(\left\{\left(\sum\limits_{n=1}^N T_n\otimes  E_n\right)\xi \right\}_{N \geq 1}\) is a Cauchy sequence. 
	Hence \(\sum\limits_{n=1}^\infty T_n\otimes  E_n \) converges
	in the strong operator topology.
\end{proof}

\begin{lem}\label{partial norm hatT}
	The operator  $\partial_{\mathbb T}$ is bounded if and only if so is $\widehat{\mathbb T}$; moreover, in this case $$\left\|\partial_{\mathbb T}\right\|=\left\|\widehat{\mathbb T}\right\|.$$
\end{lem}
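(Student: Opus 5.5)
The approach is to compare both operators on the common dense core $\Lambda_{\mathrm{fin}}(H)$, using the orthogonality identities of Lemma \ref{lem:domain-hat-T}, and then pass to closures. By \eqref{hat T x eq dx d*x}, for every $x\in\operatorname{Dom}(\widehat{\mathbb T})$ we have $\|\widehat{\mathbb T}x\|^2=\|\partial_{\mathbb T}x\|^2+\|\partial_{\mathbb T}^*x\|^2$. So $\widehat{\mathbb T}$ controls both $\partial_{\mathbb T}$ and $\partial_{\mathbb T}^*$ on even vectors, and $\widehat{\mathbb T}^*$ does the same on odd vectors. Conversely, $\widehat{\mathbb T}$ is assembled from the blocks $\partial_{\mathbb T,k}$ and $\partial_{\mathbb T,k}^*$.

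\emph{First direction.} Suppose $\partial_{\mathbb T}$ is bounded, i.e.\ $\sup_k\|\partial_{\mathbb T,k}\|=:M<\infty$ and $\operatorname{Dom}(\partial_{\mathbb T})=\Lambda(H)$. Then $\partial_{\mathbb T}^*$ is bounded with the same norm, and $\operatorname{Dom}(\partial_{\mathbb T}^*)=\Lambda(H)$ by Lemma \ref{dom star}. Lemma \ref{lem:domain-hat-T} then gives $\operatorname{Dom}(\widehat{\mathbb T})=\Lambda^+(H)$ with $\widehat{\mathbb T}$ bounded. For the norm, use that $\partial_{\mathbb T}$ is block-diagonal in degrees, so $\|\partial_{\mathbb T}\|=\sup_k\|\partial_{\mathbb T,k}\|$. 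For $x=(x_0,x_2,\dots)$ the pieces $\partial^*_{\mathbb T,2k+1}x_{2k}$ and $\partial_{\mathbb T,2k+2}x_{2k+2}$ lie in the mutually orthogonal spaces $\operatorname{Im}\partial^*_{\mathbb T,2k+1}\subset(\ker\partial_{\mathbb T,2k+1})^{\perp}$ and $\operatorname{Im}\partial_{\mathbb T,2k+2}\subset\ker\partial_{\mathbb T,2k+1}$. I plan to exploit this more carefully than simply adding squares. Consider the bounded operator $D=\partial_{\mathbb T}+\partial_{\mathbb T}^*$, which is self-adjoint on $\Lambda(H)$. Then $D^2=\partial_{\mathbb T}\partial_{\mathbb T}^*+\partial_{\mathbb T}^*\partial_{\mathbb T}$ is block diagonal, with $k$-th block $\partial_{\mathbb T,k+1}\partial^*_{\mathbb T,k+1}+\partial^*_{\mathbb T,k}\partial_{\mathbb T,k}$. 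The two summands act on orthogonal pieces: on $\overline{\operatorname{Im}}\,\partial^*_{\mathbb T,k}$ the first vanishes, and on $\ker\partial_{\mathbb T,k}$ the second vanishes. Hence each block has norm $\max(\|\partial_{\mathbb T,k+1}\|^2,\|\partial_{\mathbb T,k}\|^2)$, and therefore $\|D\|^2=\|D^2\|=\sup_k\|\partial_{\mathbb T,k}\|^2=\|\partial_{\mathbb T}\|^2$. Since $D$ interchanges $\Lambda^+(H)$ and $\Lambda^-(H)$, we can write $D=\begin{pmatrix}0&\widehat{\mathbb T}^*\\ \widehat{\mathbb T}&0\end{pmatrix}$ by Corollary \ref{cor:hat-T-sum}, so $\|D\|=\max(\|\widehat{\mathbb T}\|,\|\widehat{\mathbb T}^*\|)=\|\widehat{\mathbb T}\|$. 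This gives $\|\widehat{\mathbb T}\|=\|\partial_{\mathbb T}\|$.

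\emph{Converse.} Suppose $\widehat{\mathbb T}$ is bounded on its dense domain. Then it extends to a bounded operator, and the identity \eqref{hat T x eq dx d*x}, applied to $x\in\Lambda^+_{\mathrm{fin}}(H)$, gives $\|\partial_{\mathbb T,2k+2}x_{2k+2}\|\le\|\widehat{\mathbb T}\|\|x_{2k+2}\|$ and $\|\partial^*_{\mathbb T,2k+1}x_{2k}\|\le\|\widehat{\mathbb T}\|\|x_{2k}\|$. So the even-indexed blocks $\partial_{\mathbb T,2k+2}$ and the odd-indexed blocks $\partial_{\mathbb T,2k+1}$ (via their adjoints) are all bounded by $\|\widehat{\mathbb T}\|$. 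Thus $\sup_k\|\partial_{\mathbb T,k}\|\le\|\widehat{\mathbb T}\|<\infty$, and consequently $\partial_{\mathbb T}$ is everywhere defined and bounded. Combining this with the first part yields equality of norms.

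\emph{Main obstacle.} The delicate step is the norm identity: naive estimates give only $\|\widehat{\mathbb T}\|\le\sqrt2\,\|\partial_{\mathbb T}\|$. To get equality I rely on the orthogonal decomposition $\Lambda^k(H)=\overline{\operatorname{Im}}\,\partial^*_{\mathbb T,k}\oplus\ker\partial_{\mathbb T,k}$, which makes $D^2$ a blockwise direct sum rather than a sum. I will check this carefully: the first summand kills $\overline{\operatorname{Im}}\,\partial^*_{\mathbb T,k}$ because $\partial_{\mathbb T,k}\partial_{\mathbb T,k+1}=0$, and both summands leave the decomposition invariant.
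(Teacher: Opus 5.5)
Your proof is correct and follows essentially the same route as the paper. For $\|\widehat{\mathbb T}\|\le\|\partial_{\mathbb T}\|$, both arguments rest on the fact that the two summands of $\partial_{\mathbb T}\partial_{\mathbb T}^*+\partial_{\mathbb T}^*\partial_{\mathbb T}$ are orthogonally supported; the paper expresses this as $\partial_{\mathbb T}\partial_{\mathbb T}^*\partial_{\mathbb T}^*\partial_{\mathbb T}=0$. For the converse, both use \eqref{hat T x eq dx d*x} on $\Lambda^+_{\mathrm{fin}}(H)$.

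The differences are cosmetic. Your blockwise computation of $D^2$ already gives equality in the first direction, whereas the paper gets only the inequality there. In the converse, you bound the odd blocks through the $\partial_{\mathbb T}^*$-part of $\widehat{\mathbb T}$, while the paper applies $\widehat{\mathbb T}^*$ to odd vectors.
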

\begin{proof}
Suppose that $\partial_{\mathbb T}$ is bounded, then by \eqref{hat T x eq dx d*x}, $\widehat{\mathbb T}$ is bounded. Then by Corollary \ref{cor:hat-T-sum} and $\partial_{\mathbb T}^2=0$,
\begin{equation}\label{hat T 2}
\left\|\widehat{\mathbb T}\right\|^2=\left\|\widehat{\mathbb T}\widehat{\mathbb T}^*\right\|\leq \|\partial_{\mathbb T}\partial_{\mathbb T}^*+\partial_{\mathbb T}^*\partial_{\mathbb T}\|.
\end{equation}
Since $\partial_{\mathbb T}\partial_{\mathbb T}^*$ and $\partial_{\mathbb T}^*\partial_{\mathbb T}$ are positive, and $\partial_{\mathbb T}\partial_{\mathbb T}^*\partial_{\mathbb T}^*\partial_{\mathbb T}=0$, we have
$$
\|\partial_{\mathbb T}\partial_{\mathbb T}^*+\partial_{\mathbb T}^*\partial_{\mathbb T}\|\leq \max\{\|\partial_{\mathbb T}\partial_{\mathbb T}^*\|,\|\partial_{\mathbb T}^*\partial_{\mathbb T}\|\}=\|\partial_{\mathbb T}\|^2.
$$
Thus by \eqref{hat T 2}, $\left\|\widehat{\mathbb T}\right\|\leq \|\partial_{\mathbb T}\|$.
Conversely, suppose that $\widehat{\mathbb{T}}$ is bounded, then $\widehat{\mathbb{T}}^*$ is also bounded. For $x\in \Lambda_{\mathrm{fin}}^+(H)$, by \eqref{hat T x eq dx d*x}, we have
$$
\|\partial_{\mathbb{T}}x\|^2\leq \|\widehat{\mathbb{T}}x\|^2,
$$
and for $y\in \Lambda_{\mathrm{fin}}^-(H)$,
$$ 
\|\partial_{\mathbb{T}}y\|^2\leq \|\widehat{\mathbb{T}}^*y\|^2.
$$
It follows from $\Lambda_{\mathrm{fin}}^-(H)\oplus\Lambda_{\mathrm{fin}}^+(H)$ is dense in $\Lambda(H)$ that $\partial_{\mathbb{T}}$ is bounded, and
$$
\|\partial_{\mathbb{T}}\|\leq \max\{\|\widehat{\mathbb{T}}\|,\|\widehat{\mathbb{T}}^*\|\}=\|\widehat{\mathbb{T}}\|.
$$
\end{proof}

\begin{lem}\label{partial closed}
 Both $\partial_{\mathbb{T}}$ and $\widehat{\mathbb{T}}$ are closed operators.
\end{lem}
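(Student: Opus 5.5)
We argue directly from the definition of closedness via sequences, using the fact that each component $\partial_{\mathbb T,i}$ is bounded (Theorem \ref{prop 1.1}). For $\partial_{\mathbb T}$, take a sequence $x^{(m)}=(x^{(m)}_0,x^{(m)}_1,\dots)\in\mathrm{Dom}(\partial_{\mathbb T})$ with $x^{(m)}\to x$ in $\Lambda(H)$ and $\partial_{\mathbb T}x^{(m)}\to z=(z_0,z_1,\dots)$ in $\Lambda(H)$. We must show $x\in\mathrm{Dom}(\partial_{\mathbb T})$ and $\partial_{\mathbb T}x=z$.

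The key step is a componentwise identification. Convergence in $\Lambda(H)=\bigoplus_n\Lambda^n(H)$ implies $x^{(m)}_i\to x_i$ in $\Lambda^i(H)$ for each $i$, and the $(i-1)$-th component of $\partial_{\mathbb T}x^{(m)}$ is $\partial_{\mathbb T,i}x^{(m)}_i$, which converges to $z_{i-1}$. Since $\partial_{\mathbb T,i}$ is bounded with $\|\partial_{\mathbb T,i}\|\le\sqrt{iC}$, we also have $\partial_{\mathbb T,i}x^{(m)}_i\to\partial_{\mathbb T,i}x_i$. Hence $z_{i-1}=\partial_{\mathbb T,i}x_i$ for every $i\ge1$. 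Then $\sum_i\|\partial_{\mathbb T,i}x_i\|^2=\sum_i\|z_{i-1}\|^2=\|z\|^2<\infty$, so $x\in\mathrm{Dom}(\partial_{\mathbb T})$ by \eqref{D partial T}, and $\partial_{\mathbb T}x=z$. This proves that $\partial_{\mathbb T}$ is closed.

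For $\widehat{\mathbb T}$ the argument is the same. Take $x^{(m)}\in\mathrm{Dom}(\widehat{\mathbb T})$ with $x^{(m)}\to x$ in $\Lambda^+(H)$ and $\widehat{\mathbb T}x^{(m)}\to w=(w_1,w_3,\dots)$. Each component of $\widehat{\mathbb T}x^{(m)}$ is
\[
\partial_{\mathbb T,2k+1}^*x^{(m)}_{2k}+\partial_{\mathbb T,2k+2}x^{(m)}_{2k+2},
\]
which involves only finitely many bounded operators. It therefore converges to $\partial_{\mathbb T,2k+1}^*x_{2k}+\partial_{\mathbb T,2k+2}x_{2k+2}$, and this limit must equal $w_{2k+1}$. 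Summing squares gives membership in $\mathrm{Dom}(\widehat{\mathbb T})$ and $\widehat{\mathbb T}x=w$.

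Alternatively, one can note that $\widehat{\mathbb T}=(\widehat{\mathbb T}^*)^*$-type arguments or Lemma \ref{dom star} (an adjoint of a densely defined operator is closed) give the closedness quickly. Indeed $\partial_{\mathbb T}$ is, up to a shift of indices, the adjoint of the operator $y\mapsto(0,\partial^*_{\mathbb T,1}y_0,\dots)$, whose domain contains $\Lambda_{\mathrm{fin}}(H)$. However, the direct sequential argument is simpler, and no real obstacle is expected; the only point requiring care is that domain membership follows from square-summability of the limiting components, which is exactly the norm of the limit.
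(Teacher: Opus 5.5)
Your proof is correct, and for $\partial_{\mathbb T}$ it is the same argument as the paper's. For $\widehat{\mathbb T}$ you take a more direct route than the paper.

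\textbf{Your route for $\widehat{\mathbb T}$.} You rerun the componentwise argument. It needs only two facts: each component $\partial_{\mathbb T,2k+1}^*x_{2k}+\partial_{\mathbb T,2k+2}x_{2k+2}$ is built from two bounded operators, and $\mathrm{Dom}(\widehat{\mathbb T})$ is by definition the set where these components are square-summable.

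\textbf{The paper's route.} The paper instead relies on Lemma \ref{lem:domain-hat-T}. The identity $\|\widehat{\mathbb T}z\|^2=\|\partial_{\mathbb T}z\|^2+\|\partial_{\mathbb T}^*z\|^2$ makes both $\{\partial_{\mathbb T}x^{(n)}\}$ and $\{\partial_{\mathbb T}^*x^{(n)}\}$ Cauchy. Closedness of $\partial_{\mathbb T}$ and of the adjoint $\partial_{\mathbb T}^*$ then places $x$ in $\mathrm{Dom}(\partial_{\mathbb T})\cap\mathrm{Dom}(\partial_{\mathbb T}^*)\cap\Lambda^+(H)=\mathrm{Dom}(\widehat{\mathbb T})$. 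Finally, Corollary \ref{cor:hat-T-sum} identifies $\widehat{\mathbb T}x$ with the limit.

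\textbf{Comparison.} Your version is shorter. It never uses the orthogonality $\operatorname{Im}\partial_{\mathbb T,2k+2}\perp\operatorname{Im}\partial_{\mathbb T,2k+1}^*$, so it does not depend on $\partial_{\mathbb T}^2=0$ or on commutativity at all. The paper's version ties closedness of $\widehat{\mathbb T}$ to that of $\partial_{\mathbb T}$ and $\partial_{\mathbb T}^*$ through the splitting $\widehat{\mathbb T}=(\partial_{\mathbb T}+\partial_{\mathbb T}^*)|_{\mathrm{Dom}(\widehat{\mathbb T})}$ into orthogonal summands. That structure is what is exploited again in Lemma \ref{lem3.6} and Theorem \ref{3equal}.

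\textbf{On your closing aside.} It is true that $\partial_{\mathbb T}$ is the adjoint of $y\mapsto(0,\partial_{\mathbb T,1}^*y_0,\partial_{\mathbb T,2}^*y_1,\dots)$ on its maximal domain, and no index shift is needed. However, this must be checked by computing that adjoint directly, as in Lemma \ref{dom star}. Appealing to $\widehat{\mathbb T}=(\widehat{\mathbb T}^*)^*$ would be circular, because for a densely defined operator that identity is equivalent to closedness. Your main argument does not use the aside, so correctness is unaffected.
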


\begin{proof}
Let
\[
        x^{(n)}=(x^{(n)}_0,x^{(n)}_1,\ldots)\in \mathrm{Dom}(\partial_T)
\]
satisfy
\[
        x^{(n)}\longrightarrow x=(x_0,x_1,\ldots)
        \quad\text{and}\quad
        \partial_{\mathbb{T}}x^{(n)}\longrightarrow
        y=(y_0,y_1,\ldots)
\]
in \(\Lambda(H)\). Then for each \(k\ge1\),
$
        x^{(n)}_k\longrightarrow x_k
$
in \(\Lambda^k(H)\), and
$
        \partial_{\mathbb{T},k}x^{(n)}_k
        \longrightarrow y_{k-1}
$
in \(\Lambda^{k-1}(H)\). By Theorem \ref{prop 1.1},
\[
        \partial_{\mathbb{T},k}x^{(n)}_k
        \longrightarrow
        \partial_{\mathbb{T},k}x_k.
\]
Hence for $k\geq 1$,
$
        y_{k-1}=\partial_{\mathbb{T},k}x_k
$.
Since \(y\in\Lambda(H)\),
\[
        \sum_{k=1}^{\infty}
        \|\partial_{\mathbb{T},k}x_k\|^2
        =
        \sum_{k=1}^{\infty}\|y_{k-1}\|^2
        =
        \|y\|^2<\infty.
\]
Thus \(x\in \mathrm{Dom}(\partial_\mathbb{T})\) and 
$$
        \partial_\mathbb{T}x=(\partial_{\mathbb{T},1}x_1, \partial_{\mathbb{T},2}x_2,\cdots)=(y_0,y_1,\cdots)=y.
$$
Therefore \(\partial_\mathbb{T}\) is closed. To prove that \(\widehat{\mathbb{T}}\) is also closed, let \(x^{(n)} \in \mathrm{Dom}(\widehat{\mathbb{T}})\) satisfy
\(
x^{(n)} \to x
\)
in \(\Lambda^+(H)\) and
\(
\widehat{\mathbb{T}}x^{(n)} \to y
\)
in \(\Lambda^-(H)\). By Lemma \ref{lem:domain-hat-T}, we have 
\[
\|\widehat{\mathbb{T}}(x^{(n)} - x^{(m)})\|^2 = \|\partial_{\mathbb{T}}(x^{(n)} - x^{(m)})\|^2 + \|\partial_{\mathbb{T}}^*(x^{(n)} - x^{(m)})\|^2.
\]
Thus \(\{\partial_{\mathbb{T}}x^{(n)}\}\) and \(\{\partial_{\mathbb{T}}^*x^{(n)}\}\) are Cauchy sequences. Let their limits be \(u\) and \(v\), respectively. Since \(\partial_{\mathbb{T}}\) and \(\partial_{\mathbb{T}}^*\) are closed, \(x \in \mathrm{Dom}(\partial_{\mathbb{T}}) \cap \mathrm{Dom}(\partial_{\mathbb{T}}^*)\) with \(\partial_{\mathbb{T}}x = u\) and \(\partial_{\mathbb{T}}^*x = v\). Hence by Lemma \ref{lem:domain-hat-T}, \(x \in \mathrm{Dom}(\widehat{\mathbb{T}})\). Then by Corollary \ref{cor:hat-T-sum},
$$
\widehat{\mathbb{T}}x=\partial_{\mathbb{T}}x+\partial_{\mathbb{T}}^*x=u+v=y,
$$
which implies that \(\widehat{\mathbb{T}}\) is closed.
\end{proof}

By the closed range theorem, $\operatorname{Im}\partial_{\mathbb T}$ is closed iff the following Property (CR) holds.

\noindent{\bf Property (CR)}\emph{ There exists a constant $c>0$ such that every $x\in\operatorname{Im}\partial_{\mathbb T}$ has $y\in \mathrm{Dom}(\partial_{\mathbb T})$ with $\partial_{\mathbb T}y=x$ and $\|y\|\le c\|x\|$. }

\begin{lem}\label{lem3.6}
  The following statements are equivalent.
\begin{itemize}\label{Imclosed}
  \item[(1)] $\operatorname{Im}\partial_{\mathbb{T}}$ is closed;
  \item[(2)] there exists a constant $c>0$ such that for each $k\geq 0$ and $x_{k}\in \operatorname{Im}\partial_{\mathbb{T},k+1}$, there is a $y_{k+1}\in \Lambda^{k+1}(H)$ such that $x_{k}=\partial_{\mathbb{T},k+1}y_{k+1}$ and $\|y_{k+1}\|\leq c\|x_{k}\|$;
  \item[(3)] there exists a constant $c>0$ such that for each $k\geq 1$ and $x_{k}\in \operatorname{Im}\partial_{\mathbb{T},k}^{*}$, there is a $y_{k-1}\in \Lambda^{k-1}(H)$ such that $x_{k}=\partial_{\mathbb{T},k}^{*}y_{k-1}$ and $\|y_{k-1}\|\leq c\|x_{k}\|$.
  \item[(4)] \(\operatorname{Im}\widehat{\mathbb T}\) is closed.
\end{itemize}
\end{lem}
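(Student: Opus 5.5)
We prove the chain (1)$\Leftrightarrow$(2), (1)$\Leftrightarrow$(3), (1)$\Leftrightarrow$(4). The guiding observation is that $\partial_{\mathbb T}$ is block-diagonal with respect to the grading $\Lambda(H)=\bigoplus_k\Lambda^k(H)$: it maps $\Lambda^{k+1}(H)$ into $\Lambda^k(H)$ componentwise, so $\operatorname{Im}\partial_{\mathbb T}\subseteq\bigoplus_k \operatorname{Im}\partial_{\mathbb T,k+1}$. Since $\partial_{\mathbb T}$ is closed (Lemma \ref{partial closed}), the closed range theorem shows that $\operatorname{Im}\partial_{\mathbb T}$ is closed iff Property (CR) holds, iff $\operatorname{Im}\partial_{\mathbb T}^*$ is closed.

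\emph{(2)$\Rightarrow$(1).} Let $x=(x_0,x_1,\dots)\in\overline{\operatorname{Im}\partial_{\mathbb T}}$. For each $k$, the $k$-th coordinate of $x$ lies in $\overline{\operatorname{Im}\partial_{\mathbb T,k+1}}$. Condition (2) says $\partial_{\mathbb T,k+1}$ is bounded below on $(\ker\partial_{\mathbb T,k+1})^\perp$ with constant $1/c$, uniformly in $k$. Hence each $\operatorname{Im}\partial_{\mathbb T,k+1}$ is closed, and we may choose $y_{k+1}$ with $\partial_{\mathbb T,k+1}y_{k+1}=x_k$ and $\|y_{k+1}\|\le c\|x_k\|$. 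Then $y=(0,y_1,y_2,\dots)$ lies in $\Lambda(H)$ with $\|y\|\le c\|x\|$. Moreover $\sum_k\|\partial_{\mathbb T,k+1}y_{k+1}\|^2=\|x\|^2<\infty$, so $y\in\mathrm{Dom}(\partial_{\mathbb T})$ and $\partial_{\mathbb T}y=x$.

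\emph{(1)$\Rightarrow$(2).} By (CR) there is a constant $c$. Given $x_k\in\operatorname{Im}\partial_{\mathbb T,k+1}$, regard it as the element of $\Lambda(H)$ supported in degree $k$; it lies in $\operatorname{Im}\partial_{\mathbb T}$, since it is $\partial_{\mathbb T}$ of something in $\Lambda^{k+1}(H)\subseteq\mathrm{Dom}(\partial_{\mathbb T})$. So there is $y$ with $\partial_{\mathbb T}y=x_k$ and $\|y\|\le c\|x_k\|$. Its $(k+1)$-component $y_{k+1}$ satisfies $\partial_{\mathbb T,k+1}y_{k+1}=x_k$ and $\|y_{k+1}\|\le\|y\|\le c\|x_k\|$.

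\emph{(1)$\Leftrightarrow$(3).} The same argument applies with $\partial_{\mathbb T}^*$ in place of $\partial_{\mathbb T}$, using the componentwise description of $\partial_{\mathbb T}^*$ in Lemma \ref{dom star}; this gives that $\operatorname{Im}\partial^*_{\mathbb T}$ is closed iff (3). The closed range theorem applied to the closed operator $\partial_{\mathbb T}$ then yields (1)$\Leftrightarrow$(3). The uniform constants can alternatively be transferred directly: on each level, a bounded-below constant $c$ for $\partial_{\mathbb T,k+1}$ on $(\ker)^\perp$ is the same as for $\partial_{\mathbb T,k+1}^*$ on $(\ker\partial^*_{\mathbb T,k+1})^\perp$.

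\emph{(1)$\Leftrightarrow$(4).} This is the main step; we work with the reduced minimum modulus. Since $\operatorname{Im}\partial_{\mathbb T,n+1}\perp\operatorname{Im}\partial^*_{\mathbb T,n}$, write $\widehat{\mathbb T}=(\partial_{\mathbb T}+\partial_{\mathbb T}^*)|_{\Lambda^+}$ (Corollary \ref{cor:hat-T-sum}). By Lemma \ref{lem:domain-hat-T} we have $\|\widehat{\mathbb T}x\|^2=\|\partial_{\mathbb T}x\|^2+\|\partial^*_{\mathbb T}x\|^2$, and
$$\ker\widehat{\mathbb T}=\ker\partial_{\mathbb T}\cap\ker\partial^*_{\mathbb T}\cap\Lambda^+(H).$$
Decompose $x\in\mathrm{Dom}(\widehat{\mathbb T})\ominus\ker\widehat{\mathbb T}$ levelwise, via the Hodge-type decomposition
$$\Lambda^k(H)=\overline{\operatorname{Im}\partial_{\mathbb T,k+1}}\oplus\overline{\operatorname{Im}\partial^*_{\mathbb T,k}}\oplus(\ker\partial_{\mathbb T,k}\cap\ker\partial^*_{\mathbb T,k+1}).$$
If (2) and (3) hold with constant $c$, then $\partial_{\mathbb T}$ is bounded below by $1/c$ on the $\overline{\operatorname{Im}\partial^*}$ parts and $\partial^*_{\mathbb T}$ is bounded below by $1/c$ on the $\overline{\operatorname{Im}\partial}$ parts. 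Hence $\|\widehat{\mathbb T}x\|\ge c^{-1}\|x\|$ on $(\ker\widehat{\mathbb T})^\perp$, so $\operatorname{Im}\widehat{\mathbb T}$ is closed.

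For the converse, suppose $\|\widehat{\mathbb T}x\|\ge\delta\|x\|$ on $(\ker\widehat{\mathbb T})^\perp\cap\mathrm{Dom}$. Since $\widehat{\mathbb T}^*$ also has closed range, we get the same estimate for $\widehat{\mathbb T}^*$ on $\Lambda^-$. Take $x_{k+1}\in(\ker\partial_{\mathbb T,k+1})^\perp$ in a single degree $k+1$. Then $x_{k+1}\in\overline{\operatorname{Im}\partial^*_{\mathbb T,k+1}}$, so $\partial^*_{\mathbb T}x_{k+1}=0$, and $x_{k+1}\perp\ker\widehat{\mathbb T}$ (or $\ker\widehat{\mathbb T}^*$, according to parity). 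This gives $\|\partial_{\mathbb T,k+1}x_{k+1}\|=\|\widehat{\mathbb T}^{(*)}x_{k+1}\|\ge\delta\|x_{k+1}\|$, which is (2) with $c=1/\delta$. The main obstacle is the domain bookkeeping in these infinite sums; the decomposition must be applied to elements of $\mathrm{Dom}$, which is harmless because each single-degree element lies in the domain and the estimates are proved levelwise.
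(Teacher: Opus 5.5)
Your proof is correct and follows essentially the same route as the paper. The steps match: (CR) with single-degree vectors for (1)$\Leftrightarrow$(2); the same argument applied to $\partial_{\mathbb T}^*$ plus the closed range theorem for (1)$\Leftrightarrow$(3); and the levelwise splitting into $\overline{\operatorname{Im}\partial_{\mathbb T}}$, $\overline{\operatorname{Im}\partial_{\mathbb T}^*}$ and harmonic parts, with single-degree test vectors, for (1)$\Leftrightarrow$(4). The differences are cosmetic. In (2)$\Rightarrow$(1) you show $\overline{\operatorname{Im}\partial_{\mathbb T}}\subseteq\operatorname{Im}\partial_{\mathbb T}$ directly rather than via (CR). In (4)$\Rightarrow$(1) you bound $\partial_{\mathbb T,k+1}$ below on $(\ker\partial_{\mathbb T,k+1})^\perp$, which yields (2) at once, whereas the paper bounds $\partial_{\mathbb T,k+1}^*$ below on $\overline{\operatorname{Im}\partial_{\mathbb T,k+1}}$ and then builds preimages through $(\partial_{\mathbb T,k+1}\partial_{\mathbb T,k+1}^*|_{\overline{\operatorname{Im}\partial_{\mathbb T,k+1}}})^{-1}$.
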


\begin{proof}

$(1)\Rightarrow(2)$. Fix $k\ge0$ and $x_k\in\operatorname{Im}\partial_{\mathbb T,k+1}$. Let 
\begin{equation}\label{x def}
x = (0,0,\cdots,0,
    \mathord{\underbrace{x_k}_{\mathclap{\text{$k$-th}}}},
    0,\cdots)
\end{equation}
Then $x\in\operatorname{Im}\partial_{\mathbb T}$. By (CR), choose $y\in \mathrm{Dom}(\partial_{\mathbb T})$ with $\partial_{\mathbb T}y=x$ and $\|y\|\le c\|x\|$. For $k\geq 0$, Set $y_{k}=P_{\Lambda^{k}(H)}y$, then 
$$
x=\partial_{\mathbb{T}}y=(\partial_{\mathbb T,1}y_1,\partial_{\mathbb T,2}y_2,\cdots)
$$
By \eqref{x def}, $\partial_{\mathbb T,k+1}y_{k+1}=x_k$ and 
$$
\|y_{k+1}\|\le \|y\|\le c\|x\|=c\|x_k\|,
$$
Thus (2) holds.

$(2)\Rightarrow(1)$. Let $x=(x_0,x_1,\cdots)\in\operatorname{Im}\partial_{\mathbb T}$. For each $k$, choose $y_{k+1}$ as in (2). Then $y=(0,y_1,y_2,\ldots)\in \mathrm{Dom}(\partial_{\mathbb T})$, $\partial_{\mathbb T}y=x$, and $\|y\|\le c\|x\|$. Hence (CR) holds, so $\operatorname{Im}\partial_{\mathbb T}$ is closed.

Applying the reasoning used to prove $(1)\Leftrightarrow(2)$ to $\partial_{\mathbb T}^*$ shows (3) $\Leftrightarrow$ $\operatorname{Im}\partial_{\mathbb T}^*$ closed. By the closed range theorem, $\operatorname{Im}\partial_{\mathbb T}$ closed iff $\operatorname{Im}\partial_{\mathbb T}^*$ closed, so $(1)\Leftrightarrow(3)$.

$(1)\Rightarrow(4)$. Assume (1) holds. By the closed range theorem, \(\operatorname{Im}\partial_{\mathbb T}\) and \(\operatorname{Im}\partial_{\mathbb T}^*\) are closed, hence
\begin{equation}\label{Lambda+ frac}
\begin{aligned}
\Lambda^+(H)
=&(\operatorname{Im}\partial_{\mathbb T}\cap\Lambda^+)\oplus (\ker \partial_{\mathbb T}^*\cap\Lambda^+)\\
=&(\operatorname{Im}\partial_{\mathbb T}\cap\Lambda^+)\oplus ((\operatorname{Im}\partial_{\mathbb T}^*\oplus (\ker \partial_{\mathbb T}^*\cap (\operatorname{Im}\partial_{\mathbb T}^*)^\perp))\cap\Lambda^+)\\
=&(\operatorname{Im}\partial_{\mathbb T}\cap\Lambda^+)\oplus ((\operatorname{Im}\partial_{\mathbb T}^*\oplus (\ker \partial_{\mathbb T}^*\cap \ker\partial_{\mathbb T}))\cap\Lambda^+)\\
=&(\operatorname{Im}\partial_{\mathbb T}\cap\Lambda^+)
\oplus
(\operatorname{Im}\partial_{\mathbb T}^*\cap\Lambda^+)
\oplus
(\ker\partial_{\mathbb T}\cap\ker\partial_{\mathbb T}^*\cap\Lambda^+)\\
\overset{(*)}{=}&(\operatorname{Im}\partial_{\mathbb T}\cap\Lambda^+)
\oplus
(\operatorname{Im}\partial_{\mathbb T}^*\cap\Lambda^+)
\oplus \ker\widehat{\mathbb T}.
\end{aligned}
\end{equation}
where $(*)$ follows from \eqref{hat T x eq dx d*x}. Therefore for \(x\in\operatorname{Dom}(\widehat{\mathbb T})\ominus\ker\widehat{\mathbb T}\), we write \(x=a+b\) with \(a\in\operatorname{Im}\partial_{\mathbb T}\cap\Lambda^+\) and \(b\in\operatorname{Im}\partial_{\mathbb T}^*\cap\Lambda^+\). Since \(\partial_{\mathbb T}^2=0\), we have \(\partial_{\mathbb T}a=0\) and \(\partial_{\mathbb T}^*b=0\), hence
\begin{equation}\label{dTx}
\partial_{\mathbb T}x=\partial_{\mathbb T}b,
\quad
\partial_{\mathbb T}^*x=\partial_{\mathbb T}^*a.
\end{equation}
By (1) and the closed range theorem, there is \(c>0\) such that
\(\|\partial_{\mathbb T}b\|\ge c\|b\|\) for \(b\in\operatorname{Im}\partial_{\mathbb T}^*\cap\Lambda^+\), and
\(\|\partial_{\mathbb T}^*a\|\ge c\|a\|\) for \(a\in\operatorname{Im}\partial_{\mathbb T}\cap\Lambda^+\).
Therefore, by Lemma \ref{lem:domain-hat-T} and \eqref{dTx},
\[
\|\widehat{\mathbb T}x\|^2
=
\|\partial_{\mathbb T}x\|^2+\|\partial_{\mathbb T}^*x\|^2
=
\|\partial_{\mathbb T}b\|^2+\|\partial_{\mathbb T}^*a\|^2
\ge
c^2(\|a\|^2+\|b\|^2)
=
c^2\|x\|^2.
\]
Thus \(\operatorname{Im}\widehat{\mathbb T}\) is closed.

$(4)\Rightarrow(1)$. Assume (4) holds. Then \(\operatorname{Im}\widehat{\mathbb T}^*\) is also closed. Fix \(n\ge0\) and take 
$$
x\in\overline{\operatorname{Im}\partial_{\mathbb T,2n+1}}\subset\operatorname{Dom}(\widehat{\mathbb T}). 
$$
Then \(x\in\ker\partial_{\mathbb T,2n}\) and \(x\perp\ker\partial_{\mathbb T}^*\). By \eqref{hat T x eq dx d*x}, 
$$
\ker\widehat{\mathbb T}= \ker\partial_{\mathbb T}^*\cap \ker\partial_{\mathbb T}\cap\Lambda^+(H).
$$
Hence $x\perp\ker\widehat{\mathbb T}$. By the closed range theorem, there is \(c_1>0\) such that \(\|\widehat{\mathbb T}x\|\ge c_1\|x\|\). Hence
\begin{equation}\label{odd 3}
\|\partial_{\mathbb T,2n+1}^*x\|^2=\|\widehat{\mathbb T}x\|^2\ge c_1^2\|x\|^2,
\quad
x\in\overline{\operatorname{Im}\partial_{\mathbb T,2n+1}}.
\end{equation}
Next, take 
$$
y\in\overline{\operatorname{Im}\partial_{\mathbb T,2n+2}}\subset\operatorname{Dom}(\widehat{\mathbb T}^*).
$$
Then \(y\in\ker\partial_{\mathbb T,2n+1}\) and \(y\perp\ker\partial_{\mathbb T}^*\). By \eqref{hat T x eq dx d*x}, 
$$
\ker\widehat{\mathbb T}^*= \ker\partial_{\mathbb T}^*\cap \ker\partial_{\mathbb T}\cap\Lambda^-(H).
$$
Hence $y\perp\ker\widehat{\mathbb T}^*$. By the closed range theorem, there is \(c_2>0\) such that \(\|\widehat{\mathbb T}^*y\|\ge c_2\|y\|\). Hence
\begin{equation}\label{even 3}
\|\partial_{\mathbb T,2n+2}^*y\|^2=\|\widehat{\mathbb T}^*y\|^2\ge c_2^2\|y\|^2,
\quad
y\in\overline{\operatorname{Im}\partial_{\mathbb T,2n+2}},
\end{equation}
Let $c=\min\{c_1,c_2\}$. By \eqref{odd 3} and \eqref{even 3}, $\partial_{\mathbb T,n}\partial_{\mathbb T,n}^*|_{\overline{{\operatorname{Im}\partial_{\mathbb T,n}}}}$ is invertible for all $n\geq 1$, with
\begin{equation}\label{dd* on im}
\left\|(\partial_{\mathbb T,n}\partial_{\mathbb T,n}^*|_{\overline{{\operatorname{Im}\partial_{\mathbb T,n}}}})^{-1}\right\|\leq c^{-2}.
\end{equation}
For each $n\geq 0$ and $x_{n}\in \operatorname{Im}\partial_{\mathbb{T},n+1}$, take
$$
y_{n+1}=\partial_{\mathbb T,n+1}^*(\partial_{\mathbb T,n+1}\partial_{\mathbb T,n+1}^*|_{\overline{{\operatorname{Im}\partial_{\mathbb T,n+1}}}})^{-1}x_n.
$$
Then $\partial_{\mathbb T,n+1}y_{n+1}=x_n$, and
$$
\begin{aligned}
\|y_{n+1}\|^2
=&\left\langle \partial_{\mathbb T,n+1}^*(\partial_{\mathbb T,n+1}\partial_{\mathbb T,n+1}^*|_{\overline{{\operatorname{Im}\partial_{\mathbb T,n+1}}}})^{-1}x_n,\partial_{\mathbb T,n+1}^*(\partial_{\mathbb T,n+1}\partial_{\mathbb T,n+1}^*|_{\overline{{\operatorname{Im}\partial_{\mathbb T,n+1}}}})^{-1}x_n\right\rangle\\
=&\left\langle (\partial_{\mathbb T,n+1}\partial_{\mathbb T,n+1}^*|_{\overline{{\operatorname{Im}\partial_{\mathbb T,n+1}}}})^{-1}x_n, x_n\right\rangle\\
\leq &\left\|(\partial_{\mathbb T,n+1}\partial_{\mathbb T,n+1}^*|_{\overline{{\operatorname{Im}\partial_{\mathbb T,n+1}}}})^{-1}\right\|\|x_n\|^2\\
\leq &c^{-2}\|x_n\|^2.
\end{aligned}
$$
Hence (2) holds, and therefore (1) holds.
\end{proof}

Let $E$ be a nonzero Hilbert space. We will prove that, for most finitely generated submodules of the Drury-Arveson module $H^2\otimes E$, the operator $\partial_{\mathbb T}$ is unbounded. To prove this,  we need some preparations. Let $\mathrm{Mult}(H^2)$ denote the multiplier algebra of $H^2$. For a submodule $\mathcal{M}\subset H^{2}\otimes E$ and $\phi \in\mathrm{Mult}(H^2)$, define \(S_\phi\in B(\mathcal{M}^\perp)\) by
	\[
	S_\phi^{\mathcal{M}^\perp} f = P_{\mathcal{M}^\perp}(M_\phi\otimes I)f,~ f\in \mathcal{M}^\perp.
	\]
	For $\mu\in \mathbb{B}\setminus\{0\}$, let $ \varphi_{\mu}$ be the standard M\"obius automorphism of $\mathbb{B}$, given by
	$$
	\varphi_{\mu}(z)=\frac{-\sqrt{1-|\mu|^{2}}|\mu|^{2} z+\left(\sqrt{1-|\mu|^{2}}-1\right)\langle z, \mu \rangle \mu+|\mu|^{2} \mu}{|\mu|^{2}(1-\langle z, \mu \rangle)}.
	$$
	Define
	$v_{\mu}:H^{2}\to H^2$ by
	$
	v_{\mu} f=f \circ \varphi_{\mu} k_{\mu},
	$
	where $k_{\mu}=\frac{K_{\mu}}{\left\|K_{\mu}\right\|}$ is the normalized reproducing kernel of $H^{2}$.
	Then similar to \cite[Page 199]{Gu}, $v_{\mu}$ is a unitary operator, and $v_{\mu}^2=I$. We have the following lemma.
\begin{lem}\label{vmuM}
	Let \(\mu\in\mathbb B\setminus\{0\}\), and let \(\mathcal M\subset H^2\otimes E\) be a submodule. Put
	\[
	\widetilde{\mathcal M}=(v_\mu\otimes I_E)\mathcal M.
	\]
	Then the following statements hold.
	\begin{enumerate}
		\item[(i)] For every \(i\geq 1\), \(z_i\circ\varphi_\mu\in\mathrm{Mult}(H^2)\) and $M_{z_i\circ\varphi_\mu}=v_\mu M_{z_i}v_\mu$.
		\item[(ii)] \(\widetilde{\mathcal M}\) is a submodule of \(H^2\otimes E\).
		\item[(iii)] \(P_{\widetilde{\mathcal M}}=(v_\mu\otimes I_E)P_{\mathcal M}(v_\mu\otimes I_E)\).
		\item[(iv)] For every \(i\geq 1\),
		\[
		S_{z_i}^{\widetilde{\mathcal M}^\perp}
		=
		(v_\mu\otimes I_E)\,S_{z_i\circ\varphi_\mu}^{\mathcal M^\perp}\,(v_\mu\otimes I_E).
		\]
        \item[(v)] If \(\mathcal M\) is finitely generated, then so is
\(\widetilde{\mathcal M}\).
	\end{enumerate}
\end{lem}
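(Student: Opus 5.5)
The plan is to use two standard facts about the Möbius map. First, $\varphi_\mu$ is an involutive automorphism of $\mathbb B$. Second, $v_\mu$ is a self-adjoint unitary: it is unitary with $v_\mu^2=I$, hence $v_\mu^*=v_\mu$. Everything then reduces to a formal computation with the multiplier identity in (i). I would prove (i) first and derive (ii)--(v) from it.

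For (i), I would compute directly on a function $f\in H^2$. Using $\varphi_\mu\circ\varphi_\mu=\mathrm{id}$ and $v_\mu^2=I$, a pointwise calculation gives
\[
v_\mu M_{z_i}v_\mu f=(z_i\circ\varphi_\mu)\,f .
\]
To see this, let $g=v_\mu f=(f\circ\varphi_\mu)k_\mu$. Then $M_{z_i}g=z_i\,(f\circ\varphi_\mu)\,k_\mu$. Writing $h=z_i(f\circ\varphi_\mu)$, we get $v_\mu(hk_\mu)=(h\circ\varphi_\mu)\,(k_\mu\circ\varphi_\mu)\,k_\mu$. Since $v_\mu^2=I$, applied to $h$ alone the identity $v_\mu^2 h=h$ forces $(k_\mu\circ\varphi_\mu)\,k_\mu\equiv1$. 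The result is therefore $(z_i\circ\varphi_\mu)f$. Because the left side $v_\mu M_{z_i}v_\mu$ is a bounded operator on $H^2$, multiplication by $z_i\circ\varphi_\mu$ maps $H^2$ boundedly into $H^2$. So $z_i\circ\varphi_\mu\in\mathrm{Mult}(H^2)$ and $M_{z_i\circ\varphi_\mu}=v_\mu M_{z_i}v_\mu$.

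For (ii), a submodule is a closed subspace invariant under every $M_{z_j}\otimes I_E$. Closedness of $\widetilde{\mathcal M}$ follows because $v_\mu\otimes I_E$ is unitary. For invariance, (i) with the roles swapped gives $M_{z_j}=v_\mu M_{z_j\circ\varphi_\mu}v_\mu$. Hence
\[
(M_{z_j}\otimes I)\widetilde{\mathcal M}=(v_\mu\otimes I)(M_{z_j\circ\varphi_\mu}\otimes I)\mathcal M .
\]
It therefore suffices that $\mathcal M$ is invariant under $M_\phi\otimes I$ for every multiplier $\phi$, applied here to $\phi=z_j\circ\varphi_\mu$. This is the main point needing care. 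I would argue that $z_j\circ\varphi_\mu$ is a rational function with denominator $1-\langle z,\mu\rangle$, and $\|\langle z,\mu\rangle\|_{\mathrm{Mult}}=|\mu|<1$. The Neumann series $\sum_k\langle z,\mu\rangle^k$ therefore converges in multiplier norm. Each partial product is a polynomial in finitely many $z_i$ composed with $\langle z,\mu\rangle=\sum\bar\mu_i z_i$, whose row operator converges in norm on $H^2$. Thus $M_{z_j\circ\varphi_\mu}$ is a norm limit of operators lying in the norm-closed algebra generated by the $M_{z_i}$, and the closed subspace $\mathcal M$ is invariant under it.

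Part (iii) is immediate: if $U$ is unitary, then $UP_{\mathcal M}U^*$ is the projection onto $U\mathcal M$, and $U^*=U$ here. For (iv), I would use (iii) to write $P_{\widetilde{\mathcal M}^\perp}=(v_\mu\otimes I)P_{\mathcal M^\perp}(v_\mu\otimes I)$, which also shows $\widetilde{\mathcal M}^\perp=(v_\mu\otimes I)\mathcal M^\perp$. Then
\[
S^{\widetilde{\mathcal M}^\perp}_{z_i}=(v_\mu\otimes I)P_{\mathcal M^\perp}(v_\mu M_{z_i}v_\mu\otimes I)(v_\mu\otimes I)|_{\widetilde{\mathcal M}^\perp},
\]
and substituting (i) yields the claim. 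For (v), suppose $\mathcal M$ is the closed submodule generated by $f_1,\dots,f_m$. I would show $\widetilde{\mathcal M}$ is generated by $(v_\mu\otimes I)f_k$. Indeed, $(v_\mu\otimes I)\bigl((p\otimes I)f_k\bigr)=(p\circ\varphi_\mu)\cdot(v_\mu\otimes I)f_k$, and $p\circ\varphi_\mu$ is a multiplier lying in the closed algebra by the argument in (ii). This gives one inclusion. The reverse inclusion follows by symmetry, since $v_\mu^2=I$.
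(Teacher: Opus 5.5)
Your proposal is correct and follows essentially the same route as the paper. The paper also gets (i) from the pointwise identity $k_\mu\,(k_\mu\circ\varphi_\mu)\equiv 1$ together with $\varphi_\mu\circ\varphi_\mu=\mathrm{id}$, and derives (ii)--(v) formally from (i) and the fact that $v_\mu$ is a self-adjoint unitary. The one small difference is that the paper checks $k_\mu\,(k_\mu\circ\varphi_\mu)\equiv 1$ by direct computation, rather than extracting it from $v_\mu^2=I$ as you do; this avoids leaning on a fact that is usually proved from that very identity. Your Neumann-series argument that $\mathcal M$ is invariant under $M_{z_j\circ\varphi_\mu}\otimes I_E$ supplies a step that the paper's proof of (ii) uses without comment.
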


\begin{proof}
	(i). Notice that
	\begin{equation}\label{kmukumphi}
		k_\mu(z) k_\mu(\varphi_\mu(z)) = \frac{1-\|\mu\|^2}{(1- \langle z,\mu\rangle)(1- \langle \varphi_\mu(z),\mu\rangle) }=1.
	\end{equation}
	Then for \(f\in H^2\), by \eqref{kmukumphi},
	\[
	(v_\mu M_{z_i}v_\mu f)(z) =k_\mu(z)( M_{z_i}v_\mu f)(\varphi_\mu (z))=k_\mu(z)\varphi_{\mu,i} (z)k_\mu(\varphi_\mu (z))f\bigl(\varphi_\mu(\varphi_\mu (z))\bigr)
	=\varphi_{\mu,i}(z) f(z).
	\]
	Thus (i) holds.
	
	\noindent(ii). For any $i\geq 1$, by (i),
	\[
	\begin{aligned}
		(M_{z_i}\otimes I_E)\widetilde{\mathcal M}
		&=(M_{z_i}\otimes I_E)(v_\mu\otimes I_E)\mathcal M\\
		&=(v_\mu\otimes I_E)(M_{z_i\circ\varphi_\mu}\otimes I_E)\mathcal M\\
		&\subset(v_\mu\otimes I_E)\mathcal M
		=\widetilde{\mathcal M},
	\end{aligned}
	\]
	Thus (ii) holds.
	
	\noindent(iii). Since \(v_\mu\otimes I_E\) is unitary and \((v_\mu\otimes I_E)^2=I\),
	\[
	P_{\widetilde{\mathcal M}}
	=P_{(v_\mu\otimes I_E)\mathcal M}
	=(v_\mu\otimes I_E)P_{\mathcal M}(v_\mu\otimes I_E)^*=(v_\mu\otimes I_E)P_{\mathcal M}(v_\mu\otimes I_E).
	\]
	
	\noindent(iv). Let \(g\in\mathcal M^\perp\), then by (iii), $(v_\mu\otimes I_E)g\in\widetilde{\mathcal M}^\perp$. Hence for $i\geq 1$,
	$$
	\begin{aligned}
		S_{z_i}^{\widetilde{\mathcal M}^\perp}(v_\mu\otimes I_E)g
		&=P_{\widetilde{\mathcal M}^\perp}(M_{z_i}\otimes I_E)(v_\mu\otimes I_E)g\\
		&\overset{(*)}{=}(v_\mu\otimes I_E)P_{\mathcal M^\perp}(v_\mu\otimes I_E)
		(M_{z_i}\otimes I_E)(v_\mu\otimes I_E)g\\
		&\overset{(**)}{=}(v_\mu\otimes I_E)P_{\mathcal M^\perp}(M_{z_i\circ\varphi_\mu}\otimes I_E)g\\
		&=(v_\mu\otimes I_E)S_{z_i\circ\varphi_\mu}^{\mathcal M^\perp}g,
	\end{aligned}
	$$
	where $(*)$ is from (iii) and $(**)$ is from (i). Therefore for $i\geq 1$,
	$$
	S_{z_i}^{\widetilde{\mathcal M}^\perp}(v_\mu\otimes I_E)=(v_\mu\otimes I_E)S_{z_i\circ\varphi_\mu}^{\mathcal M^\perp}.
	$$
	Then by \((v_\mu\otimes I_E)^2=I\), (iv) holds.

\noindent(v). Let
\[
\mathcal M
=
\overline{\left\{\sum_{j=1}^N M_{h_j}f_j:\ h_j\in\mathrm{Mult}(H^2)\right\}}.
\]
For \(h\in\mathrm{Mult}(H^2)\), by (i) and the unitarity of \(v_\mu\),
\[
v_\mu M_h v_\mu=M_{h\circ\varphi_\mu},
\]
with \(h\circ\varphi_\mu\in\mathrm{Mult}(H^2)\). Thus
\[
(v_\mu\otimes I_E)\mathcal M
=
\overline{\left\{\sum_{j=1}^N M_{\psi_j}(v_\mu\otimes I_E)f_j:
\ \psi_j\in\mathrm{Mult}(H^2)\right\}}.
\]
Therefore \(\widetilde{\mathcal M}\) is finitely generated by
\((v_\mu\otimes I_E)f_1,\cdots,(v_\mu\otimes I_E)f_N\).
\end{proof}

\begin{prop}\label{prop1.2}
	Let \(\mathcal M\) be a finitely generated submodule of \(H^2\otimes E\), and \(\mathbb{S}=\left(S_{z_1}^{{\mathcal M}^\perp}, S_{z_2}^{{\mathcal M}^\perp}, \cdots\right)\). Suppose that there is a \(\mu\in \mathbb{B}\), such that
	\begin{equation}\label{codim}
		\operatorname{codim}\{f(\mu)|f\in \mathcal M\}\geq 1.
	\end{equation}
	Then both \(\partial_\mathbb{S}\) and \(\widehat{\mathbb{S}}\) are unbounded.
\end{prop}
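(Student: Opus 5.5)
By Lemma~\ref{partial norm hatT} it suffices to show that $\partial_{\mathbb S}$ is unbounded. The plan is to argue by contradiction: assume $\partial_{\mathbb S}$ is bounded, extract a column-boundedness estimate for $\mathbb S$, and contradict it using a vector in $\mathcal M^\perp$ coming from the codimension condition \eqref{codim}.

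\textbf{Step 1: a bounded $\partial_{\mathbb S}$ forces column-boundedness.} Fix $x\in\mathcal M^\perp$, put $\omega_N=e_1\wedge\cdots\wedge e_N$, and apply $\partial_{\mathbb S,N}$ to $x\otimes\omega_N$. Since $E_n^*\omega_N=0$ for $n>N$, this gives $\sum_{n\le N}S_{z_n}x\otimes E_n^*\omega_N$. The vectors $E_n^*\omega_N$, $n\le N$, are pairwise orthogonal unit vectors, as in \eqref{column bounded} in Lemma~\ref{bound and SOT}. Hence
\[
\sum_{n=1}^N\|S_{z_n}x\|^2=\|\partial_{\mathbb S}(x\otimes\omega_N)\|^2\le\|\partial_{\mathbb S}\|^2\|x\|^2
\]
for every $N$. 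So boundedness of $\partial_{\mathbb S}$ yields $\sum_{n}\|S_{z_n}x\|^2\le \|\partial_{\mathbb S}\|^2\|x\|^2$ for all $x\in\mathcal M^\perp$. It therefore suffices to find one $x\in\mathcal M^\perp$ with $\sum_n\|P_{\mathcal M^\perp}(z_n x)\|^2=\infty$.

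\textbf{Step 2: reduction to $\mu=0$ and the choice of $x$.} I would use Lemma~\ref{vmuM} to move $\mu$ to the origin. The space $\widetilde{\mathcal M}=(v_\mu\otimes I_E)\mathcal M$ is a finitely generated submodule by (ii) and (v). Moreover $(v_\mu f)(0)=k_\mu(0)f(\varphi_\mu(0))=k_\mu(0)f(\mu)$ with $k_\mu(0)\ne0$, so $\widetilde{\mathcal M}_0$ is a nonzero multiple of $\mathcal M_\mu$, and $\widetilde{\mathcal M}_0$ also has codimension $\ge1$. By (iv), $\mathbb S$ is unitarily equivalent to the compression of $(M_{z_i\circ\varphi_\mu})_i$ to $\widetilde{\mathcal M}^\perp$. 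Now pick a unit vector $e\in E$ orthogonal to $\widetilde{\mathcal M}_0$; then $1\otimes e\in\widetilde{\mathcal M}^\perp$. Writing $\varphi_\mu$ as $\mu$ plus an invertible linear map applied to $z$ times a nonvanishing scalar multiplier, each $z_i\circ\varphi_\mu$ differs from a bounded invertible linear recombination of the coordinates $z_j$ only by controlled terms. So it should suffice to show $\sum_i\|P_{\widetilde{\mathcal M}^\perp}(z_i\otimes e)\|^2=\infty$.

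\textbf{Step 3 (main obstacle): the finitely generated submodule only absorbs finitely many degree-one directions.} Let $g_1,\dots,g_r$ generate $\widetilde{\mathcal M}$; each satisfies $g_j(0)\perp e$. Compare the degree-one homogeneous part of $\widetilde{\mathcal M}$ paired against $z_i\otimes e$. For $f=\sum_j\phi_j g_j$ we have
\[
\langle f,z_i\otimes e\rangle=\sum_j\phi_j(0)\,\langle (g_j)_1,z_i\otimes e\rangle ,
\]
where $(g_j)_1$ denotes the degree-one part of $g_j$. The other cross term, $(\phi_j)_1 g_j(0)$, vanishes in this pairing because $g_j(0)\perp e$.

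Consequently, the functionals $f\mapsto\langle f,z_i\otimes e\rangle$ on $\widetilde{\mathcal M}$ factor through the finite-dimensional data $(\phi_j(0))_j$. I then expect to show that $P_{\widetilde{\mathcal M}}$ maps $\overline{\operatorname{span}}\{z_i\otimes e\}$ into a space of dimension at most $r$. Making this rigorous, including controlling the closure and the fact that the $\phi_j(0)$ are not bounded by $\|f\|$, is the step I expect to be hardest. If it holds, then $\|P_{\widetilde{\mathcal M}}(z_i\otimes e)\|\to0$ in the $\ell^2$ sense on all but an $r$-dimensional subspace. This gives $\|P_{\widetilde{\mathcal M}^\perp}(z_i\otimes e)\|^2\ge 1-\epsilon_i$ with $\sum_i\epsilon_i\le r$, so $\sum_i\|S_{z_i}(1\otimes e)\|^2=\infty$, contradicting Step 1.

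The transfer through $\varphi_\mu$ in Step 2 is the secondary technical point. If it becomes messy, I would instead run Step 3 directly at $\mu$ with $x=K_\mu\otimes e$ and the directions $(z_i-\mu_i)K_\mu\otimes e$.
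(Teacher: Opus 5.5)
Your proposal is correct and follows the paper's proof. The $\omega_N$ estimate is \eqref{eq:2.6}. Your pairing identity is the paper's Case~1 observation $Q_\zeta(pf_j)=p(0)Q_\zeta f_j$. Your M\"obius transfer through Lemma~\ref{vmuM} is Case~2 run in the mirror direction: you decompose $z_i\circ\varphi_\mu$ on $\widetilde{\mathcal M}^\perp$ rather than on $\mathcal M^\perp$.

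The step you flagged as hardest closes in one line and needs no bound on the $\phi_j(0)$. The projection $Q_e$ onto $H_1\otimes\mathbb C e$ is bounded and maps the dense set of combinations $\sum_j\phi_jg_j$ into the finite-dimensional, hence closed, span of $Q_eg_1,\dots,Q_eg_r$. So $Q_eP_{\widetilde{\mathcal M}}$ and its adjoint $P_{\widetilde{\mathcal M}}Q_e$ have rank at most $r$. Then $\sum_i\|P_{\widetilde{\mathcal M}}(z_i\otimes e)\|^2$ is the squared Hilbert--Schmidt norm of a contraction of rank at most $r$, hence at most $r$; the paper instead pairs this finite-rank operator with the weakly null sequence $z_i\otimes\zeta$.
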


\begin{proof}

	Since \(\{E_{i,t}^*(e_1\wedge\cdots\wedge e_t):1\le i\le t\}\) is orthonormal, for \(x\in \mathcal M^\perp\),
	\begin{equation}
		\|\partial_{\mathbb{S},t}(x\otimes e_1\wedge\cdots\wedge e_t)\|^2
		=
		\left\|\sum_{i=1}^t S_{z_i}^{{\mathcal M}^\perp} x\otimes E_{i,t}^*(e_1\wedge\cdots\wedge e_t)\right\|^2
		=
		\sum_{i=1}^t\|S_{z_i}^{{\mathcal M}^\perp} x\|^2. \label{eq:2.6}
	\end{equation}
	To prove \(\partial_{\mathbb{S}}\) and \(\widehat{\mathbb S}\) are unbounded, by Lemma \ref{partial norm hatT} and \eqref{eq:2.6}, it suffices to prove that there exists \(x\in \mathcal M^\perp\) with \(\sum\limits_{i=1}^{\infty}\|S_{z_i}^{{\mathcal M}^\perp} x\|^2=\infty\).\\
	\noindent Case 1. \(\mu=0\).
	
	By \eqref{codim}, there exists a unit \(\zeta\in E\) with \(\langle f(0),\zeta\rangle=0\) for all \(f\in \mathcal M\), then \(1\otimes\zeta\in \mathcal M^\perp\). Let \(Q_\zeta\) be the projection onto \(H_1\otimes\mathbb C\zeta\), where \(H_1\) is the space of homogeneous functions of degree \(1\). Suppose \(\mathcal M=[f_1,\cdots,f_n]\), then for any polynomial \(p\) and any \(1\leq j\leq n\),
	\begin{equation}
		Q_\zeta(pf_j)=p(0)Q_\zeta f_j. \label{eq:2.7}
	\end{equation}
	Hence 
	$$
	Q_\zeta \mathcal M\subset \operatorname{span}\{Q_\zeta f_1,\cdots,Q_\zeta f_n\},
	$$
	which implies that \(Q_\zeta P_{\mathcal M}|_{H_1\otimes\mathbb C\zeta}\) has finite rank. Then from \(z_i\otimes\zeta\to0\) weakly,
	\[
	\|P_{\mathcal M}(z_i\otimes\zeta)\|^2
	=\langle P_{\mathcal M}(z_i\otimes\zeta),z_i\otimes\zeta\rangle
	=\langle Q_\zeta P_{\mathcal M}(z_i\otimes\zeta),z_i\otimes\zeta\rangle
	\to0.
	\]
	Therefore
	\begin{equation}\label{Si1zeta}
		\|S_{z_i}^{{\mathcal M}^\perp} (1\otimes\zeta)\|^2=\|P_{\mathcal M^\perp}(z_i\otimes\zeta)\|^2
		=1-\|P_{\mathcal M}(z_i\otimes\zeta)\|^2\to1,
	\end{equation}
	which implies that \(\sum\limits_{i=1}^\infty\|S_{z_i}^{{\mathcal M}^\perp} (1\otimes\zeta)\|^2=\infty\).\\ 
	Case 2. $\mu\neq 0.$
	
	By \eqref{codim}, there exists a unit \(\zeta\in E\) such that \(\langle f(\mu),\zeta\rangle=0\) for all \(f\in \mathcal M\). Then \(k_\mu\otimes\zeta\in \mathcal M^\perp\). Let
	\[
	\widetilde{\mathcal M}=(v_\mu\otimes I_E)\mathcal M.
	\]
	By Lemma \ref{vmuM}, \(\widetilde{\mathcal M}\) is a finitely generated submodule of \(H^2\otimes E\). Moreover, for every \(f\in \mathcal M\), 
$$
\begin{aligned}
\langle ((v_\mu\otimes I_E)f)(0),\zeta\rangle
=&\langle (v_\mu\otimes I_E)f,1\otimes\zeta\rangle\\
=&\langle f,(v_\mu\otimes I_E)(1\otimes\zeta)\rangle\\
=&\langle f,k_\mu\otimes \zeta\rangle\\
=&\sqrt{1-\|\mu\|^2}\langle f(\mu),\zeta\rangle\\
=&0.
\end{aligned}
$$
	Hence \(\operatorname{codim}\{f(0)\mid f\in\widetilde{\mathcal M}\}\geq 1\). Applying Case 1 to $\widetilde{\mathcal M}$, we obtain constants \(c_1,N_1>0\) such that for all \(i>N_1\),
	$
	\|S_{z_i}^{\widetilde{\mathcal M}^\perp}(1\otimes\zeta)\|>c_1.
	$
	Thus by Lemma \ref{vmuM}(iv),
	\begin{equation}\label{sziphi>c1}
		\|S_{z_i\circ\varphi_\mu}^{\mathcal M^\perp}(k_\mu\otimes\zeta)\|
		=\|S_{z_i}^{\widetilde{\mathcal M}^\perp}(1\otimes\zeta)\|>c_1,\quad \forall i\geq N_1.
	\end{equation}
	Notice that
	\begin{equation}\label{zicircvarphi}
		z_i\circ\varphi_\mu
	=\frac{-\sqrt{1-|\mu|^2}}{1-\langle z,\mu\rangle}z_i
	+\frac{(\sqrt{1-|\mu|^2}-1)\langle z,\mu\rangle\mu_i+|\mu|^2\mu_i}{|\mu|^2(1-\langle z,\mu\rangle)}.
	\end{equation}
Since $\lim\limits_{i\to\infty}\mu_i=0$, by \eqref{sziphi>c1}, there exists \(N_2>N_1\) such that for all \(i\geq N_2\),
$$
   \left\|S^{{\mathcal M}^\perp}_{f_{\mu,i}}(k_\mu\otimes\zeta)\right\|\leq \frac{1}{2}c_1< \frac{1}{2}\|S_{z_i\circ\varphi_\mu}^{\mathcal M^\perp}(k_\mu\otimes\zeta)\|,
$$
where $f_{\mu,i}=\frac{\left(\sqrt{1-|\mu|^{2}}-1\right)\langle z, \mu \rangle\mu_{i}+|\mu|^{2} \mu_{i}}{|\mu|^{2}(1-\langle z, \mu \rangle)}.$ Hence by \eqref{zicircvarphi},
\begin{equation}\label{szisk>0.5}
\left\|S_{\frac{\sqrt{1-|\mu|^2}z_i}{1-\langle z,\mu\rangle}}^{{\mathcal M}^\perp}(k_\mu\otimes\zeta)\right\|> \frac{1}{2}\|S_{z_i\circ\varphi_\mu}^{\mathcal M^\perp}(k_\mu\otimes\zeta)\|,\quad \forall i\geq N_2.
\end{equation}
By \eqref{szisk>0.5}, for $i>N_2$,
	$$
	\|S_{z_i\circ\varphi_\mu}^{\mathcal M^\perp}(k_\mu\otimes\zeta)\|< 2\sqrt{1-|\mu|^{2}}\|S_{\frac{z_i}{1-\langle z,\mu\rangle}}^{\mathcal M^\perp}(k_\mu\otimes\zeta)\|=2\sqrt{1-|\mu|^{2}}\|S_{K_\mu}^{\mathcal M^\perp}S_{z_i}^{\mathcal M^\perp}(k_\mu\otimes\zeta)\|,
	$$
	which implies that
	\begin{equation}\label{szigesksphi}
		\|S_{z_i}^{\mathcal M^\perp}(k_\mu\otimes\zeta)\|
		>\frac{1}{2\sqrt{1-|\mu|^{2}}}\|S_{K_\mu}\|^{-1}\|S_{z_i\circ\varphi_\mu}^{\mathcal M^\perp}(k_\mu\otimes\zeta)\|, \quad \forall i\geq N_2.
	\end{equation}
	Therefore, for \(i\geq N_2\), by \eqref{sziphi>c1} and \eqref{szigesksphi},
	\[
	\|S_{z_i}^{\mathcal M^\perp}(k_\mu\otimes\zeta)\|
	> \frac{c_1}{2\sqrt{1-|\mu|^{2}}\|S_{K_\mu}\|}.
	\]
	Consequently,
	$
	\sum\limits_{i=1}^\infty\|S_{z_i}^{\mathcal M^\perp}(k_\mu\otimes\zeta)\|^2=\infty.
	$

	\end{proof}

The condition \eqref{codim} in Proposition~\ref{prop1.2} is not superfluous.
In fact, the following example exhibits a proper submodule $M$ without \eqref{codim} and $S_{z_j}\neq0$ for all $j$, for which both $\partial_{\mathbb S}$ and $\widehat{\mathbb S}$ are bounded. Firstly we need a lemma.
\begin{lem}\label{lem:bounded-partial-normal}
Let $\mathbb T=(T_1,T_2,\cdots)$ be a row‑bounded sequence of commuting normal operators on $H$.
Then $\partial_{\mathbb T}$ is a bounded operator on $\Lambda(H)$ and
\[
\|\partial_{\mathbb T}\|=\left\|\sum_{i=1}^\infty T_iT_i^*\right\|^{1/2}.
\]
\end{lem}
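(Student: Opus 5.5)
The strategy is to compute $\partial_{\mathbb T}^*\partial_{\mathbb T}+\partial_{\mathbb T}\partial_{\mathbb T}^*$ explicitly on the dense subspace $\Lambda_{\mathrm{fin}}(H)$ using the anticommutation relations \eqref{anticommutationrelations}. Normality is used through Fuglede's theorem: since the $T_i$ commute and are normal, $T_i^*T_j=T_jT_i^*$ for all $i,j$.

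\emph{Step 1 (finite truncation).} Set $D_n=\sum_{i=1}^n T_i\otimes E_i^*$ on $\Lambda(H)$. Expanding, and using $E_iE_j^*+E_j^*E_i=\delta_{ij}I$ together with $T_iT_j^*=T_j^*T_i$, we get
\[
D_nD_n^*+D_n^*D_n=\sum_{i,j\le n}T_iT_j^*\otimes E_i^*E_j+\sum_{i,j\le n}T_j^*T_i\otimes E_jE_i^*=\sum_{i=1}^n T_iT_i^*\otimes I_\Lambda .
\]
Write $A_n=\sum_{i\le n}T_iT_i^*$. Since $D_n$ is bounded, for every $x\in\Lambda(H)$ we have
\[
\|D_nx\|^2+\|D_n^*x\|^2=\langle (A_n\otimes I)x,x\rangle\le \|A\|\,\|x\|^2,
\]
where $A=\operatorname{SOT-}\lim A_n$. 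This limit exists as an increasing, bounded sequence of positive operators by row-boundedness. In particular $\|D_n\|\le\|A\|^{1/2}$ for all $n$.

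\emph{Step 2 (passage to the limit).} By Theorem \ref{prop 1.1}, on each $\Lambda^t(H)$ we have $D_n|_{\Lambda^t(H)}\to\partial_{\mathbb T,t}$ in SOT. Hence for $x\in\Lambda_{\mathrm{fin}}(H)$, $D_nx\to\partial_{\mathbb T}x$, and therefore $\|\partial_{\mathbb T}x\|\le\|A\|^{1/2}\|x\|$. Since $\partial_{\mathbb T}$ is closed (Lemma \ref{partial closed}) and bounded on the dense set $\Lambda_{\mathrm{fin}}(H)$, it is everywhere defined and bounded, with $\|\partial_{\mathbb T}\|\le\|A\|^{1/2}$. (Alternatively, the uniform bound $\sup_n\|D_n\|<\infty$ also gives SOT convergence of $D_n$ on all of $\Lambda(H)$ via Lemma \ref{bound and SOT}.)

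\emph{Step 3 (reverse inequality).} Restrict to degree $0$: for $h\in H=\Lambda^0(H)$, $\partial_{\mathbb T}^*h=\partial_{\mathbb T,1}^*h=\sum_i T_i^*h\otimes e_i$. Hence
\[
\|\partial_{\mathbb T}^*h\|^2=\sum_i\|T_i^*h\|^2=\langle Ah,h\rangle .
\]
Taking the supremum over unit vectors $h$ gives $\|\partial_{\mathbb T}\|^2=\|\partial_{\mathbb T}^*\|^2\ge\|A\|$. Combined with Step 2, this yields $\|\partial_{\mathbb T}\|=\|A\|^{1/2}=\bigl\|\sum_{i=1}^\infty T_iT_i^*\bigr\|^{1/2}$.

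The main obstacle is the identity in Step 1. It requires care with the signs of $E_i^*E_j$ versus $E_jE_i^*$ for $i\ne j$: those off-diagonal terms cancel only because $T_iT_j^*=T_j^*T_i$, which is exactly where Fuglede's theorem enters. The rest is routine bookkeeping of domains for the unbounded-a-priori operator $\partial_{\mathbb T}$.
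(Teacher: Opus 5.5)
Your proof is correct and follows essentially the same route as the paper. Fuglede's theorem plus the anticommutation relations give the identity $\sum_{i\le n}T_iT_i^*\otimes I$ for the truncated Laplacians, which yields the upper bound, and the degree-zero computation $\|\partial_{\mathbb T,1}^*h\|^2=\langle \sum_i T_iT_i^*h,h\rangle$ yields the lower bound. The only difference is the order of operations: you bound the truncations uniformly on all of $\Lambda(H)$ before taking limits, whereas the paper first passes to the SOT limit in the identity degree by degree, so your version needs only SOT convergence of $D_nx$ rather than of products.
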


\begin{proof}
Since each \(T_i\) is normal, Fuglede's theorem gives that for all $i,j\in\mathbb N$, \(T_iT_j^*=T_j^*T_i\). 
For each \(N\ge 1\), let \(\partial_{\mathbb T}^{(N)} = \sum\limits_{i=1}^N T_i \otimes E_i^*\). For \(n\ge 0\), a direct computation on \(\Lambda^n(H)\) gives
\begin{equation}\label{laplacian-truncated}
\partial_{\mathbb T,n}^{(N)*}\partial_{\mathbb T,n}^{(N)} + \partial_{\mathbb T,n+1}^{(N)}\partial_{\mathbb T,n+1}^{(N)*}
= \sum_{i,j=1}^N T_i^*T_j \otimes (E_iE_j^* + E_j^*E_i)\Big|_{\Lambda^n(H)}
= \sum_{i=1}^N T_iT_i^* \otimes I_{\Lambda^n}.
\end{equation}
By Theorem 2.1, we have
\[
\operatorname{SOT-}\lim_{N\to\infty}\partial_{\mathbb T,n}^{(N)} = \partial_{\mathbb T,n},
\qquad
\operatorname{SOT-}\lim_{N\to\infty}\left(\partial_{\mathbb T,n}^{(N)}\right)^* = \partial_{\mathbb T,n}^*.
\]
Thus passing to the SOT limit in \eqref{laplacian-truncated} yields
\begin{equation}\label{laplacian-normal}
\partial_{\mathbb T,n}^*\partial_{\mathbb T,n} + \partial_{\mathbb T,n+1}\partial_{\mathbb T,n+1}^* = \sum_{i=1}^\infty T_iT_i^* \otimes I_{\Lambda^n}.
\end{equation}
Let \(C = \left\|\sum\limits_{i=1}^\infty T_iT_i^*\right\|\). For any \(x\in\Lambda^{n+1}(H)\), we have
\[
\|\partial_{\mathbb T,n+1} x\|^2
\le \langle (\partial_{\mathbb T,n+1}^*\partial_{\mathbb T,n+1} + \partial_{\mathbb T,n+2}\partial_{\mathbb T,n+2}^*) x, x\rangle
= \left\langle \left(\sum_{i=1}^\infty T_iT_i^*\right)\otimes I\, x, x\right\rangle
\le C\|x\|^2 .
\]
Hence \(\sup\limits_{t\ge 1}\|\partial_{\mathbb T,t}\|\le\sqrt{C}\), which implies \(\partial_{\mathbb T}\) is bounded and \(\|\partial_{\mathbb T}\|\leq \sqrt{C}\).
Conversely, by \eqref{laplacian-normal} and \(\partial_{\mathbb{T},0}=0\), we have
\[
\|\partial_{\mathbb T,1}\partial_{\mathbb T,1}^*\| = \left\|\sum_{i=1}^\infty T_iT_i^*\right\| = C.
\]
Hence
\[
\|\partial_{\mathbb T}\|\geq \|\partial_{\mathbb T,1}\|=\sqrt{\|\partial_{\mathbb T,1}\partial_{\mathbb T,1}^*\|}=\sqrt{C}.
\]
\end{proof}

By \cite[Corollary 1.2]{Malman} (with \(w(r)=\frac12(1-r)^{1/4}\)), there exist a nonconstant singular inner function \(\theta\) and a sequence of polynomials \(\{p_n\}\) such that, for every \(z\in\mathbb D\),
\begin{equation}\label{theta z pn z}
\lim_{n\to\infty}\theta(z)p_n(z)=1
\quad\text{and}\quad
|\theta(z)p_n(z)|\le 4(1-|z|)^{-1/4}.
\end{equation}
For \(\theta\) chosen as above, the following two lemmas hold. For \(m\ge1\), let \(A_m^2\) denote the weighted Bergman space on \(\mathbb D\) with norm
\[
\|f\|_{A_m^2}^2=\int_{\mathbb D}|f(z)|^2(1-|z|^2)^{m-1}\,dA(z).
\]
where \(dA\) denotes the normalized area measure on \(\mathbb{D}\).
\begin{lem}\label{inner theta}
  $\overline{\theta\mathbb C[z]}^{A_m^2}=A_m^2$.
\end{lem}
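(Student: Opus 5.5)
We aim to show that the closed subspace $\mathcal N:=\overline{\theta\mathbb C[z]}^{A_m^2}$ is all of $A_m^2$. Since polynomials are dense in $A_m^2$ and $\mathcal N$ is invariant under multiplication by $z$, it suffices to prove that $1\in\mathcal N$. Indeed, if $1\in\mathcal N$, then $z$-invariance gives $\mathbb C[z]\subset\mathcal N$, and therefore $\mathcal N=A_m^2$. The natural candidates approximating $1$ are the functions $\theta p_n\in\theta\mathbb C[z]$ supplied by \eqref{theta z pn z}.

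\textbf{Step 1: uniform norm bound.} We first bound $\{\theta p_n\}$ in $A_m^2$. By \eqref{theta z pn z},
\[
\|\theta p_n\|_{A_m^2}^2\le 16\int_{\mathbb D}(1-|z|)^{-1/2}(1-|z|^2)^{m-1}\,dA(z).
\]
This integral is finite because, for $m\ge1$, the integrand is dominated by $C(1-|z|)^{-1/2}$ and $\int_0^1(1-r)^{-1/2}\,dr<\infty$.

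\textbf{Step 2: weak convergence to $1$.} A bounded sequence in a Hilbert space has weakly convergent subsequences. Point evaluations are continuous on $A_m^2$, and $\theta p_n\to1$ pointwise, so every weak subsequential limit must equal $1$. Hence $\theta p_n\to1$ weakly in $A_m^2$.

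\textbf{Step 3: conclusion.} The subspace $\mathcal N$ is norm closed and convex, hence weakly closed. Each $\theta p_n$ lies in $\mathcal N$, so the weak limit $1$ also lies in $\mathcal N$, and the reduction above finishes the proof.

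An alternative to Steps 2–3 is to obtain norm convergence directly by dominated convergence: the dominating function $16(1-|z|)^{-1/2}$ is integrable against the weight $(1-|z|^2)^{m-1}\,dA$, so $\theta p_n\to1$ in $A_m^2$. Either route closes the argument.

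The only point requiring care is verifying integrability of the dominating bound in Step 1. This is exactly where the choice $w(r)=\tfrac12(1-r)^{1/4}$ matters, since it makes the squared bound $(1-|z|)^{-1/2}$ integrable. Once this is checked, the remaining steps are routine.
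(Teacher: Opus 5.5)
Your proof is correct and is essentially the paper's argument: the paper takes exactly your ``alternative'' route, bounding $|\theta p_n-1|^2(1-|z|^2)^{m-1}$ by $C\bigl((1-|z|)^{-1/2}+1\bigr)(1-|z|^2)^{m-1}$, applying dominated convergence to get $\theta p_n\to 1$ in $A_m^2$, and concluding from $1\in\overline{\theta\mathbb C[z]}^{A_m^2}$. Your primary weak-convergence route (uniform norm bound, continuity of point evaluations, weak closedness of closed subspaces) is a harmless variant; it needs only $\sup_n\|\theta p_n\|_{A_m^2}<\infty$ rather than a pointwise dominating function.
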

\begin{proof}
By \eqref{theta z pn z}, there exists a constant \(C>0\), independent of \(n\) and \(z\), such that
\[
|\theta p_n-1|^2(1-|z|^2)^{m-1}
\le C\bigl((1-|z|)^{-1/2}+1\bigr)(1-|z|^2)^{m-1}.
\]
Since
\[
((1-r)^{-1/2}+1)(1-r^2)^{m-1}\in L^1(0,1),
\]
the dominated convergence theorem gives
\[
\lim_{n\to\infty}\|\theta p_n-1\|_{A_m^2}^2
=
\int_{\mathbb D}\lim_{n\to\infty}|\theta p_n-1|^2(1-|z|^2)^{m-1}\,dA(z)
=0.
\]
Thus \(1\in\overline{\theta\mathbb C[z]}^{A_m^2}\), which implies that
$$
\overline{\theta\mathbb C[z]}^{A_m^2}=A_m^2.
$$
\end{proof}
 Let
\[
\mathbb N^{(\mathbb N)}=\left\{(\beta_1,\beta_2,\cdots):\beta_j\in\mathbb N,\ \#\{j:\beta_j\ne0\}<\infty\right\}.
\]
For $\beta\in \mathbb N^{(\mathbb N)}$ and $z\in\ell^2$, write $|\beta|=\sum\limits_{j\ge1}^\infty\beta_j$, $\beta!=\prod\limits_{j\ge1}^\infty\beta_j!$ and $z^\beta=\prod\limits_{j\ge1}^\infty z_j^{\beta_j}$. 
\begin{lem}\label{thetaz1}
Let \(\{e_k\}_{k=1}^N\) be the standard orthonormal basis of \(\mathbb C^N\), and
$$
M=[\theta(z_1)\otimes e_1,\cdots,\theta(z_1)\otimes e_N]
$$ 
be a submodule of $H^2\otimes \mathbb{C}^N$.
Then 
$$
S_{z_1}\neq 0,\quad S_{z_i}=0,~\forall i\geq 2.
$$
\end{lem}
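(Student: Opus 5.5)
The plan is to reduce to the scalar submodule $M_0=[\theta(z_1)]\subset H^2$. The generators $\theta(z_1)\otimes e_k$ live in orthogonal coordinate copies, so $M=M_0\otimes\mathbb C^N$ and $M^\perp=M_0^\perp\otimes\mathbb C^N$. Hence $S_{z_i}=S^{M_0^\perp}_{z_i}\otimes I_{\mathbb C^N}$, and it suffices to prove both claims for $M_0$. Throughout I use the Drury–Arveson norm $\|z^\alpha\|^2=\alpha!/|\alpha|!$ and the fact that the monomials are orthogonal.

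\emph{Step 1: $S_{z_i}=0$ for $i\ge2$.} Since $S_{z_i}=P_{M_0^\perp}M_{z_i}|_{M_0^\perp}$, it is enough to show $z_iH^2\subset M_0$. Because $M_{z_i}$ is bounded and polynomials are dense, it suffices to show $z_1^a w\in M_0$ for every $a\ge0$ and every monomial $w=z^\gamma$ in $z_2,z_3,\ldots$ with $m=|\gamma|\ge1$.

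The key computation is that for $g(z)=\sum_k g_k z^k$ one has
\[
\|g(z_1)w\|_{H^2}^2=\sum_k|g_k|^2\frac{k!\,\gamma!}{(k+m)!}
=\frac{\gamma!}{(m-1)!}\,\|g\|_{A_m^2}^2,
\]
since $\|z^k\|_{A_m^2}^2=\int_{\mathbb D}|z|^{2k}(1-|z|^2)^{m-1}dA=k!(m-1)!/(k+m)!$. Thus $g\mapsto g(z_1)w$ is a multiple of an isometry from $A_m^2$ into $H^2$.

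By Lemma \ref{inner theta}, $\theta p_n\to1$ in $A_m^2$. Multiplication by $z^a$ is bounded on $A_m^2$, so also $z^a\theta p_n\to z^a$ in $A_m^2$. Therefore
\[
\theta(z_1)\,z_1^ap_n(z_1)\,w\longrightarrow z_1^aw \quad\text{in } H^2.
\]
The left-hand side lies in $M_0$, because $z_1^ap_n(z_1)w$ is a polynomial. Since $M_0$ is closed, $z_1^aw\in M_0$. (One should check that $\theta(z_1)\in H^2$ and that polynomial multiples of $\theta(z_1)$ can be computed via the $A_m^2$ identity; this holds because $\theta(z_1)$ has Hardy norm, which is its $H^2$ norm on the $z_1$-slice.)

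\emph{Step 2: $S_{z_1}\neq0$.} Decompose $H^2=\bigoplus_\gamma \mathcal H_\gamma$, where $\mathcal H_\gamma=\overline{\operatorname{span}}\{z_1^kz^\gamma\}$ and $\gamma$ ranges over multi-indices in the variables $z_2,z_3,\ldots$. Multiplication by $\theta(z_1)$ maps each $\mathcal H_\gamma$ into itself. The slice $\mathcal H_0$, consisting of functions of $z_1$ alone, is isometric to the Hardy space $H^2(\mathbb D)$.

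Let $P_0$ be the projection onto $\mathcal H_0$. For a polynomial $p$ we get $P_0(\theta(z_1)p)=\theta(z_1)(P_0p)(z_1)$. Hence $P_0M_0\subset\theta H^2(\mathbb D)$, and so the model space $K_\theta=H^2(\mathbb D)\ominus\theta H^2(\mathbb D)$, viewed inside $\mathcal H_0$, is contained in $M_0^\perp$. Because $\theta$ is a nonconstant singular inner function, $K_\theta$ is infinite dimensional.

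Now suppose $S_{z_1}=0$. Then $z_1K_\theta\subset M_0\cap\mathcal H_0$, and by the slice computation $M_0\cap\mathcal H_0\subset\theta H^2(\mathbb D)$. This says the compressed shift on $K_\theta$ is zero. However, its kernel consists of those $f\in K_\theta$ with $zf\in\theta H^2$; this kernel has dimension at most one (it is spanned by $\bar z(\theta-\theta(0))$-type functions), which contradicts $\dim K_\theta=\infty$. Hence $S_{z_1}\neq0$.

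The main obstacle is Step 1, namely the norm identity that transfers the weighted Bergman approximation of Lemma \ref{inner theta} into $H^2$ approximation on each slice $\{g(z_1)w\}$. Step 2 is standard Hardy-space model theory once the slice decomposition is in place.
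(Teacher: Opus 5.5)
Your proof is correct and follows essentially the paper's route: the same slice identity $\|g(z_1)z^\gamma\|^2_{H^2}=\frac{\gamma!}{(m-1)!}\|g\|^2_{A_m^2}$ combined with Lemma~\ref{inner theta} places every $z_1^a z^\gamma$ ($|\gamma|\ge1$) in the submodule, and $S_{z_1}\neq0$ is reduced in the same way to the impossibility of $zK_\theta\subset\theta H^2(\mathbb D)$ — the paper refutes this with the explicit vector $1-\overline{\theta(0)}\theta\in K_\theta$, whereas you use a kernel count for the compressed shift. One correction to your parenthetical: a singular inner $\theta$ has $\theta(0)\neq0$, and if $f\in K_\theta$ with $zf=\theta h$ then $h=z\bar\theta f\in H^2\cap\overline{H^2}$ is a constant $c$ with $c\,\theta(0)=0$, so that kernel is actually $\{0\}$ and does not contain $\bar z(\theta-\theta(0))$; your bound $\dim\le 1$, and hence your contradiction, still stand.
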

\begin{proof}
For every fixed $\beta=(0,\beta_2,\beta_3,\cdots)\in\mathbb N^{(\mathbb N)}\setminus\{0\}$, set $m=|\beta|\geq 1$. For each $n\ge0$, by Lemma \ref{inner theta}, there are polynomials $p_k$ with
$$
\|\zeta^n-p_k(\zeta)\theta(\zeta)\|_{A_{m}^2}\to0.
$$
Notice that
\[
\|z_1^n z^\beta\|_{H^2}^2=\frac{\beta!}{(m-1)!}\|\zeta^n\|_{A_{m}^2}^2.
\]
Then
\[
\|z_1^n z^\beta-p_k(z_1)\theta(z_1)z^\beta\|_{H^2}^2
=\frac{\beta!}{(m-1)!}\|\zeta^n-p_k(\zeta)\theta(\zeta)\|_{A_{m}^2}^2\to0,
\]
hence $z_1^n z^\beta\otimes e_j\in M$ for each $1\leq j\leq N$. Then for $f\in M^\perp$,
$$
\langle f,z_1^n z^\beta\otimes e_j\rangle=0,~\forall \beta=(0,\beta_2,\cdots)\in\mathbb N^{(\mathbb N)}\setminus\{0\},~1\leq j\leq N,
$$
which implies that $f=\sum\limits_{j=1}^{N}\sum\limits_{n=0}^{\infty}\langle f,z_1^n \otimes e_j\rangle z_1^n\otimes e_j$, i.e. $f\in H_{z_1}^2(\mathbb{D})\otimes \mathbb{C}^N$, where \(H_{z_1}^2(\mathbb D)\) denotes the Hardy space in the variable \(z_1\). Hence 
\begin{equation}\label{M perp in H z1}
M^\perp\subseteq H_{z_1}^2(\mathbb D)\otimes\mathbb C^N. 
\end{equation}
Let \(P_{z_1}\) be the orthogonal projection of \(H^2\otimes\mathbb C^N\) onto 
\(H_{z_1}^2(\mathbb D)\otimes\mathbb C^N\). By \eqref{M perp in H z1}, for 
\(f\in H_{z_1}^2(\mathbb D)\otimes\mathbb C^N\) we have
\begin{equation}\label{M perp eq Pz1 M}
f\in M^\perp
\quad\Longleftrightarrow\quad
f\perp P_{z_1} M.
\end{equation}
Moreover, it is easy to see that, 
\begin{equation}\label{Pz1 M}
P_{z_1} M=\theta H_{z_1}^2(\mathbb D)\otimes\mathbb C^N.
\end{equation}
Therefore by \eqref{M perp in H z1}, \eqref{M perp eq Pz1 M}, and \eqref{Pz1 M},
\[
M^\perp
=
\bigl(H_{z_1}^2(\mathbb D)\otimes\mathbb C^N\bigr)
\ominus
\bigl(\theta H_{z_1}^2(\mathbb D)\otimes\mathbb C^N\bigr)
=
\bigl(H_{z_1}^2(\mathbb D)\ominus \theta H_{z_1}^2(\mathbb D)\bigr)\otimes\mathbb C^N.
\]
Thus \(S_{z_i}=0\) for all \(i\ge2\). 

Now we show that \(S_{z_1}\ne0\). Suppose \(S_{z_1}=0\). Let 
$$
K_\theta=H_{z_1}^2(\mathbb D)\ominus \theta H_{z_1}^2(\mathbb D).
$$ 
Then by \(S_{z_1}=0\),
\begin{equation}\label{zKtheta in H2}
M_{z_1}K_\theta\subseteq \theta H_{z_1}^2(\mathbb D).
\end{equation}
Take \(f=1-\overline{\theta(0)}\theta\in K_\theta\). Since \(\theta\) is nonconstant, \(f\neq0\). By \eqref{zKtheta in H2}, \(z_1 f\in \theta H_{z_1}^2(\mathbb D)\), which implies that
\[
\frac{z_1}{\theta} = z_1\overline{\theta(0)}+\frac{z_1 f}{\theta} \in H_{z_1}^2(\mathbb D).
\]
Hence \(\theta\) divides \(z_1\) in \(H_{z_1}^2(\mathbb D)\). This forces \(\theta=cz_1\) for some \(c\neq0\), which contradicts the fact that \(\theta\) is a singular inner function. Therefore, \(S_{z_1}\ne0\).
\end{proof}
At the end of this section, we will give the following example to illustrate that $\partial \mathbb T$ could be bounded. 
\begin{exam}\label{exam:empty-zero-bounded}
		Let $N\in \mathbb N_+$. There exists a proper finitely generated submodule
		$\mathcal M\subset H^2\otimes \mathbb C^N$ with
		\[
		\operatorname{codim}_{\mathbb C^N}
		\{f(\mu):f\in\mathcal M\}=0,
		\qquad \mu\in\mathbb B,
		\]
		such that $\partial_{\mathbb S}$ and $\widehat{\mathbb S} $ are bounded. In fact, choose $\theta$ as in \eqref{theta z pn z}. Let
		
		$$
		g(z)=\theta\left(\sum_{j=1}^\infty \sqrt{\frac{1}{2^j}} z_j\right),~z\in\mathbb{B}
		$$
and
$$
		\mathcal M=[g \otimes e_1,\ldots,g  \otimes e_N]\subset H^2\otimes\mathbb C^N.
		$$
For $z\in \mathbb{B}$, since $\left|\sum\limits_{i=1}^\infty \sqrt{\frac{1}{2^j}} z_j\right|<1$, $g(z)\neq 0$, then
		\[
		\{f(z):f\in\mathcal M\}
		=
		\operatorname{span}\{g(z)e_k:1\leq k\leq N\}
		=
		\mathbb C^N,
		\]
		and hence
		\[
		\operatorname{codim}_{\mathbb C^N}\{f(z):f\in\mathcal M\}=0,
		\]
Write $a=\left(\sqrt{\frac{1}{2}},\sqrt{\frac{1}{2^2}},\cdots\right)$.
		Choose a unitary operator $V\in B(\ell^2)$ such that $Ve_1=a$.
		Define $U_V:H^2\to H^2$ by \((U_Vf)(z)=f(Vz)\), then \(U_V\) is unitary on \(H^2\).
		Since
		\[
		\sum_{j=1}^\infty a_j(Vz)_j
		=
		\langle Vz,a\rangle
		=
		z_1,
		\]
we have
$$
\begin{aligned}
		(U_V\otimes I_{\mathbb{C}^N})\mathcal M
=&[(U_Vg\otimes e_1),\cdots,(U_Vg\otimes e_N)]\\
=&\left[\theta\left(\sum_{j=1}^\infty a_j(Vz)_j\right)\otimes e_1,\cdots,\theta\left(\sum_{j=1}^\infty a_j(Vz)_j\right)\otimes e_N\right]\\
=&[\theta(z_1) \otimes e_1,\ldots,\theta(z_1) \otimes e_N].
\end{aligned}
$$
Write $\mathcal{M}'=(U_V\otimes I_{\mathbb{C}^N})\mathcal M$. Let 
$$
S'_{z_i}=P_{\mathcal{M}'^\perp}(M_{z_i}\otimes I_{\mathbb{C}^N})P_{\mathcal{M}'^\perp}.
$$
By Lemma \ref{thetaz1},
		\(S'_{z_i}=0\) for all \( i\geq 2\) and \(S'_{z_1}\neq 0\).
Write
\[
(Vz)_j = \langle Vz, e_j\rangle=  \sum_{k=1}^\infty \langle e_k, V^* e_j\rangle z_k.
\]
Then we have
\begin{equation}\label{USU=ajS}
\begin{aligned}
(U_V\otimes I_{\mathbb{C}^N}) S_{z_j} (U_V\otimes I_{\mathbb{C}^N})^*
&= (U_V\otimes I_{\mathbb{C}^N}) P_{\mathcal M^\perp}(M_{z_j}\otimes I_{\mathbb{C}^N})P_{\mathcal M^\perp}(U_V\otimes I_{\mathbb{C}^N})^* \\
&\overset{(*)}{=} P_{\mathcal{M}'^\perp} (U_V\otimes I_{\mathbb{C}^N}) (M_{z_j}\otimes I_{\mathbb{C}^N}) (U_V\otimes I_{\mathbb{C}^N})^* P_{\mathcal{M}'^\perp}\\
&\overset{(\square)}{=} P_{\mathcal{M}'^\perp} (M_{(Vz)_j}\otimes I_{\mathbb{C}^N}) P_{\mathcal{M}'^\perp} \\
&= P_{\mathcal{M}'^\perp} (M_{ \sum\limits_{k=1}^\infty \langle e_k, V^* e_j\rangle z_k}\otimes I_{\mathbb{C}^N}) P_{\mathcal{M}'^\perp}\\
&= \sum_{k\ge 1} \langle e_k, V^* e_j\rangle S'_{z_k} \\
&= a_j S'_{z_1},
\end{aligned}
\end{equation}
where $(*)$ is from $(U_V\otimes I_{\mathbb{C}^N})P_\mathcal{M}=P_{\mathcal{M}'}(U_V\otimes I_{\mathbb{C}^N})$ and $(\square)$ is from $U_VM_{z_j}=M_{(Vz)_j}U_V$.
By Lemma \ref{lem:bounded-partial-normal}, $\left\|
			\sum\limits_{j=1}^\infty a_jI_H\otimes E_j^*
			\right\|\leq 1$. Then by \eqref{USU=ajS},
		\[
		\begin{aligned}
			\|\partial_{\mathbb S}\|
			&=
			\left\|
			\sum_{j=1}^\infty
			(U_V\otimes I_{\mathbb{C}^N})^*(a_jS'_{z_1})(U_V\otimes I_{\mathbb{C}^N})\otimes E_j^*
			\right\| \\
			&=
			\left\|((U_V\otimes I_{\mathbb{C}^N})^*\otimes I_\Lambda)(S'_{z_1}\otimes I_\Lambda)\left(
			\sum_{j=1}^\infty a_jI_H\otimes E_j^*
			\right)
			((U_V\otimes I_{\mathbb{C}^N})\otimes I_\Lambda)\right\|\\ 
			&\leq 1
		\end{aligned}
		\]
		Hence by Lemma~\ref{partial norm hatT}, both $\partial_{\mathbb S}$ and $\widehat {\mathbb S}$ are bounded.
\end{exam}
\section{The properties of the spectrum $\sigma(\mathbb T)$}
In this section, we study basic properties of the spectrum $\sigma(\mathbb T)$.
\subsection{Invertibility criteria for row-bounded sequences}
In this subsection, we will give some criteria for the invertibility of row-bounded sequences. 
\begin{prop}\label{Iinvert iff uniform homotopy}
\label{prop:uniform-homotopy}
Let
	\[ 
	(X,\partial_X): \quad 0=X_{-1} \xleftarrow{\partial_{X,0}} X_0 \xleftarrow{\partial_{X,1}} X_1 \xleftarrow{\partial_{X,2}} X_2 \xleftarrow{\partial_{X,3}} \cdots 
	\] 
	be a Hilbert complex, where \(\{X_p\}_{p \geq -1}\) are Hilbert spaces, \( \partial_{X,p}\in B(X_p,X_{p-1})\) for \(p\geq0\). Then the following statements are equivalent.
\begin{itemize}
  \item [(1)] There exist uniformly bounded operators \(h_p:X_p\to X_{p+1},\ p\ge0\), with $h_{-1}=0$, such that
\[
\partial_{X,p+1}h_p + h_{p-1}\partial_{X,p} = I_{X_p}, \ p\ge0;
\]
  \item [(2)] There exists $c> 0$ such that, for every $p\geq 0$
  and every $x_p\in\ker\partial_{X,p}$, there is a $y_{p+1}\in X_{p+1}$ with
  \[
  \partial_{X,p+1}y_{p+1}=x_p,
  \qquad
  \|y_{p+1}\|\leq c\|x_p\|.
  \]
\end{itemize}
\end{prop}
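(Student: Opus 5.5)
We prove the two implications separately; the substance is in (2)$\Rightarrow$(1), where the contracting homotopy is built from minimal-norm solutions.

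\emph{(1)$\Rightarrow$(2).} Let $M=\sup_p\|h_p\|<\infty$. Given $x_p\in\ker\partial_{X,p}$, apply the homotopy identity to $x_p$:
\[
x_p=\partial_{X,p+1}h_px_p+h_{p-1}\partial_{X,p}x_p=\partial_{X,p+1}h_px_p .
\]
Set $y_{p+1}=h_px_p$. Then $\partial_{X,p+1}y_{p+1}=x_p$ and $\|y_{p+1}\|\le M\|x_p\|$, so (2) holds with $c=M$ (or $c=\max\{M,1\}$ if one insists on $c>0$ when $M=0$).

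\emph{(2)$\Rightarrow$(1).} First, (2) gives $\ker\partial_{X,p}=\operatorname{Im}\partial_{X,p+1}$ for every $p$. Hence $\operatorname{Im}\partial_{X,p+1}$ is closed, and $X_p=\ker\partial_{X,p}\oplus(\ker\partial_{X,p})^\perp$.

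Let $A_{p+1}=\partial_{X,p+1}|_{(\ker\partial_{X,p+1})^\perp}$, regarded as a map $(\ker\partial_{X,p+1})^\perp\to\ker\partial_{X,p}$. It is a bounded bijection. Its inverse has norm at most $c$: for $x\in\ker\partial_{X,p}$ pick $y$ as in (2) and project $y$ onto $(\ker\partial_{X,p+1})^\perp$; this does not change $\partial_{X,p+1}y$ and does not increase the norm.

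Define
\[
h_p=A_{p+1}^{-1}P_{\ker\partial_{X,p}}:X_p\to X_{p+1},
\]
so that $\|h_p\|\le c$ uniformly in $p$.

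Now verify the homotopy identity. For $x\in\ker\partial_{X,p}$:
\[
\partial_{X,p+1}h_px=x,\qquad h_{p-1}\partial_{X,p}x=0,
\]
so the identity holds.

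For $x\in(\ker\partial_{X,p})^\perp$:
\begin{itemize}
\item $h_px=0$.
\item $\partial_{X,p}x\in\operatorname{Im}\partial_{X,p}\subset\ker\partial_{X,p-1}$, so $h_{p-1}\partial_{X,p}x=A_p^{-1}\partial_{X,p}x$.
\item Since $x\in(\ker\partial_{X,p})^\perp$, we have $A_p^{-1}\partial_{X,p}x=x$.
\end{itemize}
So the identity holds here too. For $p=0$ we use $h_{-1}=0$ and $\partial_{X,0}=0$, so $\ker\partial_{X,0}=X_0$ and only the first case occurs.

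The only delicate point is the bound $\|A_{p+1}^{-1}\|\le c$ uniformly in $p$, and the projection argument above handles it directly.
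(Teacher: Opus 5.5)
Your proof is correct and follows the paper's route: your $h_p=A_{p+1}^{-1}P_{\ker\partial_{X,p}}$ is the same operator as the paper's $h_p=\partial_{X,p+1}^*\bigl(\partial_{X,p+1}\partial_{X,p+1}^*|_{\ker\partial_{X,p}}\bigr)^{-1}P_{\ker\partial_{X,p}}$ (both send $x\in\ker\partial_{X,p}$ to the minimal-norm solution of $\partial_{X,p+1}y=x$), and the verification via the splitting $X_p=\ker\partial_{X,p}\oplus(\ker\partial_{X,p})^\perp$ is the same. The only difference is how the bound $\|h_p\|\le c$ is obtained: you project a solution onto $(\ker\partial_{X,p+1})^\perp$, whereas the paper derives it from the lower bound $\|\partial_{X,p+1}^*x\|\ge c^{-1}\|x\|$ on $\ker\partial_{X,p}$.
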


\begin{proof}
$(1)\Rightarrow(2)$: It suffices to take \(c= \sup\limits_{k\geq 0}\|h_k\|\) and \(y_{p+1}= h_px_p\).\\
$(2)\Rightarrow(1)$: 
For every $x\in\ker\partial_{X,p}$, by (2), there exists
$y\in X_{p+1}$ such that
$\partial_{X,p+1}y=x$ and $\|y\|\leq c\|x\|$.
Hence
\[
\|x\|^2
=\bigl|\langle y,\partial_{X,p+1}^*x\rangle\bigr|
\leq c\|x\|\,\|\partial_{X,p+1}^*x\|,
\]
and consequently
\[
\bigl\langle
\partial_{X,p+1}\partial_{X,p+1}^*x,x
\bigr\rangle
=\|\partial_{X,p+1}^*x\|^2
\geq c^{-2}\|x\|^2.
\]
Thus 
$\left.
\partial_{X,p+1}\partial_{X,p+1}^*
\right|_{\ker\partial_{X,p}}$
is invertible on $\ker\partial_{X,p}$, with
\begin{align}\label{dd*norm}
\left\|
\left(
\left.
\partial_{X,p+1}\partial_{X,p+1}^*
\right|_{\ker\partial_{X,p}}
\right)^{-1}
\right\|
\leq c^2.
\end{align}
Define $h_{-1}=0$ and
\[
h_p
=
\partial_{X,p+1}^*
\left(
\left.
\partial_{X,p+1}\partial_{X,p+1}^*
\right|_{\ker\partial_{X,p}}
\right)^{-1}
P_{\ker\partial_{X,p}},\ p\geq0.
\]
By \eqref{dd*norm}, for $x\in\ker\partial_{X,p}$,
$$
\begin{aligned}
	\|h_p x\|^2 
	&= \langle \partial_{X,p+1}^* \left(\partial_{X,p+1}\partial_{X,p+1}^*|_{\ker\partial_{X,p}}\right)^{-1}x, \partial_{X,p+1}^* \left(\partial_{X,p+1}\partial_{X,p+1}^*|_{\ker\partial_{X,p}}\right)^{-1}x \rangle \\
	&= \langle \partial_{X,p+1}\partial_{X,p+1}^* \left(\partial_{X,p+1}\partial_{X,p+1}^*|_{\ker\partial_{X,p}}\right)^{-1}x, \left(\partial_{X,p+1}\partial_{X,p+1}^*|_{\ker\partial_{X,p}}\right)^{-1}x\rangle \\
	&=\langle x, \left(\partial_{X,p+1}\partial_{X,p+1}^*|_{\ker\partial_{X,p}}\right)^{-1}x\rangle \\
	&\le \|x\|\,\|\left(\partial_{X,p+1}\partial_{X,p+1}^*|_{\ker\partial_{X,p}}\right)^{-1}x\| \\
	&\le c^2\|x\|^2.
\end{aligned}
$$
Therefore $\sup\limits_{p\geq0}\|h_p\|\leq c$.
For $p\geq1$, by (2), we have \(\operatorname{ran}\partial_{X,p}
=\ker\partial_{X,p-1}\).
Hence
\begin{align}\label{ker*}
\ker \partial_{X,p}^* = (\operatorname{ran} \partial_{X,p})^\perp =(\ker \partial_{X,p-1})^\perp.
\end{align}
Obviously,  $\operatorname{ran} \partial^*_{X,p+1}$ is closed.
Then by \eqref{ker*},
\[
(\ker \partial_{X,p})^\perp = \operatorname{ran} \partial^*_{X,p}=\partial^*_{X,p}\bigl((\ker \partial_{X,p-1})^\perp + \ker \partial_{X,p-1}\bigr)=\partial^*_{X,p}\ker \partial_{X,p-1}.
\]
Thus, for every $x\in X_p$, there exists
$y\in\ker\partial_{X,p-1}$ such that
\(P_{(\ker\partial_{X,p})^\perp}x=\partial_{X,p}^*y\).
Then
\begin{align*}
&(\partial_{X,p+1}h_p + h_{p-1}\partial_{X,p})x\\
=& (\partial_{X,p+1}h_p + h_{p-1}\partial_{X,p})\left(P_{(\ker\partial_{X,p})^\perp}x+P_{\ker\partial_{X,p}x}\right)\\
 =& h_{p-1}\partial_{X,p}P_{(\ker\partial_{X,p})^\perp}x +\partial_{X,p+1}h_pP_{\ker\partial_{X,p}} x\\ 
 =&\partial_{X,p}^*
 \left(
 \left.\partial_{X,p}\partial_{X,p}^*
 \right|_{\ker\partial_{X,p-1}}
 \right)^{-1}
 \partial_{X,p}\partial_{X,p}^*y +\partial_{X,p+1}\partial_{X,p+1}^* 
 \left(\partial_{X,p+1}\partial_{X,p+1}^*|_{\ker\partial_{X,p}}\right)^{-1}
 P_{\ker\partial_{X,p}}x\\
 =&\partial_{X,p}^*y+P_{\ker\partial_{X,p}}x\\
 =&P_{(\ker\partial_{X,p})^\perp}x +P_{\ker\partial_{X,p}}x =x.
\end{align*}
Moreover, for $p=0$, we have
\[
\partial_{X,1}h_0 + h_{-1}\partial_{X,0} = \partial_{X,1}h_0 
=\partial_{X,1}\partial_{X,1}^*(\partial_{X,1}\partial_{X,1}^* | _{\ker\partial_{X,0}})^{-1}P_{\ker \partial_{X,0}}=P_{\ker \partial_{X,0}} =P_{X_0}.
\]

\end{proof}

\begin{cor}\label{TS=1}
Let $\mathbb{T}$ be a row-bounded sequence of commuting operators. If there exists a row-bounded sequence of commuting operators $\mathbb{S}=\left(S_{1}, S_{2},\cdots \right)$ on $H$ such that $\|\partial_\mathbb{S}\|$ is bounded, $S_{i} T_{j}=T_{j} S_{i}$ for $i, j\geq 1$ and 
$$
\operatorname{SOT}-\lim\limits_{n\to\infty}\sum\limits_{i=1}^{n} T_{i} S_{i}=I,
$$
then $\mathbb{T}$ is invertible.
\end{cor}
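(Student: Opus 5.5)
The strategy is the classical Taylor homotopy argument. We build a contracting homotopy from $\partial_{\mathbb S}^*$, or more precisely from the "creation" part of $\mathbb S$, and then appeal to Proposition \ref{Iinvert iff uniform homotopy}. Since $\partial_{\mathbb S}$ is bounded, Lemma \ref{sup=} and Lemma \ref{bound and SOT} (applied to $\mathbb S$) give that $\sup_N\|\sum_{i=1}^N S_i\otimes E_i\|<\infty$. They also give that
\[
\delta_{\mathbb S}:=\operatorname{SOT-}\lim_{N\to\infty}\sum_{i=1}^N S_i\otimes E_i
\]
exists as a bounded operator on $\Lambda(H)$ mapping $\Lambda^p(H)$ into $\Lambda^{p+1}(H)$. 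We set $h_p=\delta_{\mathbb S}|_{\Lambda^p(H)}$, so that $\sup_p\|h_p\|\le\|\delta_{\mathbb S}\|<\infty$.

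The key step is the identity $\partial_{\mathbb T,p+1}h_p+h_{p-1}\partial_{\mathbb T,p}=I_{\Lambda^p(H)}$. At the truncated level we use the relations \eqref{anticommutationrelations} and $S_iT_j=T_jS_i$:
\[
\Big(\sum_{i=1}^N T_i\otimes E_i^*\Big)\Big(\sum_{j=1}^N S_j\otimes E_j\Big)+\Big(\sum_{j=1}^N S_j\otimes E_j\Big)\Big(\sum_{i=1}^N T_i\otimes E_i^*\Big)
=\sum_{i,j=1}^N T_iS_j\otimes(E_i^*E_j+E_jE_i^*)=\sum_{i=1}^N T_iS_i\otimes I_\Lambda .
\]
By hypothesis the right-hand side converges in SOT to $I$. (Restricted to each $\Lambda^p(H)$ it equals $(\sum_{i\le N}T_iS_i)\otimes I$, and it is uniformly bounded there because $\|\sum_{i\le N} T_iS_i\|$ is bounded by the uniform boundedness principle.)

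The main obstacle is passing to the limit on the left-hand side, because products of SOT-convergent sequences need care. Writing $A_N=\sum_{i\le N}T_i\otimes E_i^*$ and $B_N=\sum_{j\le N}S_j\otimes E_j$, we have $A_N\to\partial_{\mathbb T}$ strongly on each $\Lambda^{p}(H)$ with $\|A_N|_{\Lambda^p(H)}\|\le\sqrt{pC}$ uniformly in $N$ (Theorem \ref{prop 1.1}), and $B_N\to\delta_{\mathbb S}$ strongly with uniformly bounded norms. Products of uniformly bounded SOT-convergent sequences converge in SOT. Hence $A_NB_N\to\partial_{\mathbb T,p+1}h_p$ and $B_NA_N\to h_{p-1}\partial_{\mathbb T,p}$ strongly on $\Lambda^p(H)$. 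Since we work degree by degree, only the norm bound of $A_N$ on the fixed degrees $p$ and $p+1$ is needed, so the growth $\sqrt{pC}$ causes no trouble. Note that the uniform bound on $h_p$ in $p$ is what the boundedness of $\partial_{\mathbb S}$ provides. For $p=0$ the term $h_{-1}\partial_{\mathbb T,0}$ vanishes, which matches the convention $h_{-1}=0$.

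Finally, Proposition \ref{Iinvert iff uniform homotopy} turns the uniformly bounded homotopy into condition (2) of that proposition. That condition gives both exactness at every degree (so $H^n_{\mathbb T}=0$) and the uniform constant $c=\sup_p\|h_p\|$ required in Definition \ref{1and2invert}. Hence $\mathbb T$ is invertible.
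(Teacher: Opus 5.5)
Your proposal is correct and follows essentially the same route as the paper. The paper also takes the bounded creation-type operator $\sum_{i\ge1}S_i\otimes E_i$ (via Lemma~\ref{sup=} and Lemma~\ref{bound and SOT}), derives $\partial_{\mathbb T,p+1}h_p+h_{p-1}\partial_{\mathbb T,p}=I_{\Lambda^p(H)}$ from the anticommutation relations and $S_iT_j=T_jS_i$, and concludes by Proposition~\ref{Iinvert iff uniform homotopy}; your truncation and SOT-limit argument just makes rigorous the passage to infinite sums that the paper carries out formally.
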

\begin{proof}
By Lemma~\ref{sup=} and Lemma~\ref{bound and SOT}, the operator
\[
\widetilde\partial_{\mathbb S}=\sum_{i\ge1}S_i\otimes E_i
\]
is bounded, with $\|\widetilde\partial_{\mathbb S}\|=\|\partial_{\mathbb S}\|$. Notice that
\[
\partial_{\mathbb T,p+1}\widetilde\partial_{\mathbb S}
=\sum_{i,k}T_kS_i\otimes E_k^*E_i
=\sum_{i,k}S_iT_k\otimes(\delta_{ik}I-E_iE_k^*)
=\sum_{i}S_iT_i\otimes I-\widetilde\partial_{\mathbb S}\partial_{\mathbb T,p},
\]
we have
\begin{equation}\label{eq:chain-identity}
\partial_{\mathbb T,p+1}\widetilde\partial_{\mathbb S}+\widetilde\partial_{\mathbb S}\partial_{\mathbb T,p}
=I_{\Lambda^p(H)}
\end{equation}
Then by Proposition \ref{prop:uniform-homotopy}, $\mathbb{T}$ is invertible.
\end{proof}

Recall that an unbounded operator ${\cal{T}}: \mathrm{Dom}(\mathcal{T})(\subseteq X)\to Y$ is invertible, if and only if $\cal T$ is a closed operator and ${\cal T}^{-1}:Y\to \mathrm{Dom}(\mathcal{T})$ is bounded. As an analogue to Curto\cite{CR}, we have the following result. 

\begin{prop}\label{thm3.2}
The following statements are equivalent.
\begin{itemize}
  \item[(1)] $\mathbb{T}$ is invertible;
  \item[(2)] $\operatorname{ker} \partial_{\mathbb{T}}= \operatorname{Im} \partial_{\mathbb{T}}$;
  \item[(3)] $\widehat{\mathbb{T}}$ is invertible.
\end{itemize}
\end{prop}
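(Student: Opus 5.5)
We prove $(1)\Rightarrow(2)\Rightarrow(3)\Rightarrow(1)$, using Lemma~\ref{lem3.6} and the decomposition \eqref{Lambda+ frac} as the main tools. Throughout, $\ker\partial_{\mathbb T}$ and $\operatorname{Im}\partial_{\mathbb T}$ are taken for the closed, densely defined operator $\partial_{\mathbb T}$ on its domain \eqref{D partial T} (Lemma~\ref{partial closed}).

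\emph{$(1)\Rightarrow(2)$.} Condition (1) of Definition~\ref{1and2invert} says $\ker\partial_{\mathbb T,k}=\operatorname{Im}\partial_{\mathbb T,k+1}$ for every $k$. Condition (2) gives the uniform constant $c$. Take $x=(x_0,x_1,\dots)\in\ker\partial_{\mathbb T}$. Then $x_k\in\ker\partial_{\mathbb T,k}$ for each $k$, so we may choose $y_{k+1}$ with $\partial_{\mathbb T,k+1}y_{k+1}=x_k$ and $\|y_{k+1}\|\le c\|x_k\|$. Put $y=(0,y_1,y_2,\dots)$. Then $\|y\|\le c\|x\|$, and $\sum_k\|\partial_{\mathbb T,k+1}y_{k+1}\|^2=\|x\|^2<\infty$, so $y\in\operatorname{Dom}(\partial_{\mathbb T})$ and $\partial_{\mathbb T}y=x$. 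This gives $\ker\partial_{\mathbb T}\subseteq\operatorname{Im}\partial_{\mathbb T}$, and the reverse inclusion is already known.

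\emph{$(2)\Rightarrow(3)$.} The kernel of a closed operator is closed, so (2) makes $\operatorname{Im}\partial_{\mathbb T}$ closed. By Lemma~\ref{lem3.6} and the closed range theorem, $\operatorname{Im}\partial_{\mathbb T}^*$ is closed as well. The decomposition \eqref{Lambda+ frac} then applies, and its last summand $\ker\partial_{\mathbb T}\cap\ker\partial_{\mathbb T}^*$ vanishes, because $\ker\partial_{\mathbb T}\cap\ker\partial_{\mathbb T}^*=\operatorname{Im}\partial_{\mathbb T}\cap(\operatorname{Im}\partial_{\mathbb T})^\perp=0$. Hence $\ker\widehat{\mathbb T}=0$, and likewise $\ker\widehat{\mathbb T}^*=0$ by the identity for $\ker\widehat{\mathbb T}^*$ established in the proof of Lemma~\ref{lem3.6}. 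By Lemma~\ref{lem3.6}(4), $\operatorname{Im}\widehat{\mathbb T}$ is closed, so $\operatorname{Im}\widehat{\mathbb T}=(\ker\widehat{\mathbb T}^*)^\perp=\Lambda^-(H)$. Thus $\widehat{\mathbb T}$ is a closed (Lemma~\ref{partial closed}) bijection onto $\Lambda^-(H)$. Either the closed graph theorem, or the lower bound $\|\widehat{\mathbb T}x\|\ge c\|x\|$ from the proof of $(1)\Rightarrow(4)$ in Lemma~\ref{lem3.6}, shows that the inverse is bounded.

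\emph{$(3)\Rightarrow(1)$.} Since $\widehat{\mathbb T}$ is invertible, $\widehat{\mathbb T}^*$ is invertible, so $\ker\widehat{\mathbb T}=\ker\widehat{\mathbb T}^*=0$ and $\operatorname{Im}\widehat{\mathbb T}$ is closed. By Lemma~\ref{lem3.6}, $\operatorname{Im}\partial_{\mathbb T}$ is closed, and statement (2) of that lemma supplies a uniform constant $c$ for preimages inside each $\operatorname{Im}\partial_{\mathbb T,k+1}$. It therefore remains only to show exactness, $\ker\partial_{\mathbb T,k}=\operatorname{Im}\partial_{\mathbb T,k+1}$.

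Take $x\in\ker\partial_{\mathbb T,k}\ominus\operatorname{Im}\partial_{\mathbb T,k+1}$. Then $x\perp\operatorname{Im}\partial_{\mathbb T,k+1}$, so $\partial_{\mathbb T,k+1}^*x=0$. Viewing $x$ in the homogeneous piece $\Lambda^k(H)$, it lies in $\operatorname{Dom}(\partial_{\mathbb T})\cap\operatorname{Dom}(\partial_{\mathbb T}^*)$, since only finitely many components are nonzero. Therefore $x\in\ker\partial_{\mathbb T}\cap\ker\partial_{\mathbb T}^*$, which by \eqref{hat T x eq dx d*x} lies in $\ker\widehat{\mathbb T}$ or $\ker\widehat{\mathbb T}^*$ according to the parity of $k$. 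Both kernels are zero, so $x=0$. Combining exactness with Lemma~\ref{lem3.6}(2) yields Definition~\ref{1and2invert}.

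The main point needing care is the domain bookkeeping in $(2)\Rightarrow(3)$. We must check that the orthogonal decomposition \eqref{Lambda+ frac} is valid for the unbounded operator, which holds because Lemma~\ref{lem:domain-hat-T} identifies $\operatorname{Dom}(\widehat{\mathbb T})$ with $\operatorname{Dom}(\partial_{\mathbb T})\cap\operatorname{Dom}(\partial_{\mathbb T}^*)\cap\Lambda^+(H)$. We must also check that surjectivity of $\widehat{\mathbb T}$ follows from the vanishing of $\ker\widehat{\mathbb T}^*$. For the latter, the plan is to use the closed range theorem for closed, densely defined operators, exactly as in Lemma~\ref{lem3.6}.
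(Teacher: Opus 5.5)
Your proof is correct and follows the paper's route. In $(1)\Rightarrow(2)$ you assemble degree-wise preimages using the uniform constant, which is exactly what the paper extracts from Lemma~\ref{lem3.6}. Your $(2)\Rightarrow(3)$ is identical to the paper's: $\ker\widehat{\mathbb T}=\ker\partial_{\mathbb T}\cap\ker\partial_{\mathbb T}^*\cap\Lambda^+(H)=0$, the same holds for $\widehat{\mathbb T}^*$, and $\operatorname{Im}\widehat{\mathbb T}$ is closed; the decomposition \eqref{Lambda+ frac} you invoke is not actually needed there.

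The only variation is in $(3)\Rightarrow(1)$. The paper applies the lower bounds of $\widehat{\mathbb T}$ and $\widehat{\mathbb T}^*$ directly on $\ker\partial_{\mathbb T,n}$, which makes $\partial_{\mathbb T,n+1}\partial_{\mathbb T,n+1}^*|_{\ker\partial_{\mathbb T,n}}$ invertible with uniformly bounded inverse. The choice $y_{n+1}=\partial_{\mathbb T,n+1}^*(\partial_{\mathbb T,n+1}\partial_{\mathbb T,n+1}^*|_{\ker\partial_{\mathbb T,n}})^{-1}x_n$ then gives exactness and the constant in one step. You instead take the constant from Lemma~\ref{lem3.6} $(4)\Rightarrow(2)$ and derive exactness separately from $\ker\widehat{\mathbb T}=\ker\widehat{\mathbb T}^*=0$. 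Both arguments are valid.
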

\begin{proof}
(1)$\Rightarrow$(2): By Lemma $\ref{lem3.6}$, $\operatorname{Im}\partial_{\mathbb{T}}$ is closed. Notice that
$$
\bigcup_{N=1}^{\infty}\left(\bigoplus\limits_{k=1}^{N} \operatorname{Im} \partial_{\mathbb{T}, k}\right) \subseteq \operatorname{Im} \partial_{\mathbb{T}} \subseteq \bigoplus\limits_{k=1}^{\infty} \operatorname{Im} \partial_{\mathbb{T}, k},
$$
and 
$$
\overline{\bigcup\limits_{N=1}^{\infty}\left(\bigoplus\limits_{k=1}^{N} \operatorname{Im} \partial_{\mathbb{T}, k}\right)}=\bigoplus\limits_{k=1}^{\infty} \operatorname{Im} \partial_{\mathbb{T}, k},
$$
hence
$
\operatorname{Im}\partial_{\mathbb{T}}=\bigoplus\limits_{k=1}^{\infty}\operatorname{Im}\partial_{\mathbb{T},k}.
$
Since $\mathbb{T}$ is invertible, we have 
$
\sum\limits_{k=0}^{\infty}\operatorname{dim}H^{k}_{\mathbb T}=0,
$
which implies that $\operatorname{ker}\partial_{\mathbb{T},k}=\operatorname{Im}\partial_{\mathbb{T},k+1}$ for $k\geq 0.$
Hence
$$
\operatorname{ker}\partial_{\mathbb{T}}=\bigoplus\limits_{k=0}^{\infty}\operatorname{ker}\partial_{\mathbb{T},k}=\bigoplus\limits_{k=1}^{\infty}\operatorname{Im}\partial_{\mathbb{T},k}=\operatorname{Im}\partial_{\mathbb{T}}.
$$
(2)\(\Rightarrow\)(3): 
Assume \(\ker \partial_{\mathbb{T}} = \operatorname{Im} \partial_{\mathbb{T}}\). Then \(\operatorname{Im} \partial_{\mathbb{T}}\) is closed. Hence by Lemma \ref{lem3.6} \(\operatorname{Im}\widehat{\mathbb{T}}\) is closed. By \eqref{hat T x eq dx d*x},
\begin{equation}\label{ker T eq 0}
\ker\widehat{\mathbb{T}}
=
\ker\partial_{\mathbb{T}}\cap\ker\partial_{\mathbb{T}}^*\cap\Lambda^+(H)
=
\operatorname{Im}\partial_{\mathbb{T}}\cap(\operatorname{Im}\partial_{\mathbb{T}})^\perp\cap\Lambda^+(H)
=
\{0\}.
\end{equation}
Applying the same argument to \(\widehat{\mathbb{T}}^*\), we obtain \(\ker\widehat{\mathbb{T}}^*=\{0\}\). Hence
\begin{equation}\label{im T eq -}
\operatorname{Im}\widehat{\mathbb{T}}=(\ker\widehat{\mathbb{T}}^*)^\perp=\Lambda^-(H).
\end{equation}
Therefore by \eqref{ker T eq 0} and \eqref{im T eq -}, \(\widehat{\mathbb{T}}\) is invertible.\\
(3)\(\Rightarrow\)(1): 
Assume \(\widehat{\mathbb{T}}\) is invertible. Then \(\widehat{\mathbb T}^*\) is also invertible. Hence there exists $c>0$ such that
\begin{equation}\label{hat T x ge c}
\|\widehat{\mathbb{T}}x\|^2\geq c\|x\|^2,~\forall x\in \operatorname{Dom}(\widehat{\mathbb{T}}),
\end{equation}
and
\begin{equation}\label{hat T x ge c2}
\|\widehat{\mathbb{T}}^*x\|^2\geq c\|x\|^2,~\forall x\in \operatorname{Dom}(\widehat{\mathbb{T}}^*).
\end{equation}
Fix \(p\ge0\) and take \(x_{2p}\in\ker\partial_{\mathbb T,2p}\subset \operatorname{Dom}(\widehat{\mathbb{T}})\). Then by \eqref{hat T x ge c},
\begin{equation}\label{d2p}
\|\partial_{\mathbb T,2p+1}^*x_{2p}\|^2
=
\|\widehat{\mathbb{T}}x_{2p}\|^2
\ge
c\|x_{2p}\|^2.
\end{equation}
Similarly, for \(x_{2p+1}\in\ker\partial_{\mathbb T,2p+1}\subset \operatorname{Dom}(\widehat{\mathbb{T}}^*)\), by \eqref{hat T x ge c2} we have
\begin{equation}\label{d2p+1}
\|\partial_{\mathbb T,2p+2}^*x_{2p+1}\|^2
=
\|\widehat{\mathbb T}^*x_{2p+1}\|^2
\ge
c\|x_{2p+1}\|^2.
\end{equation}
Hence by \eqref{d2p} and \eqref{d2p+1}, for all $n\geq 0$, $\partial_{\mathbb T,n+1}\partial_{\mathbb T,n+1}^*|_{\ker\partial_{\mathbb T,n}}$ is invertible on \(\ker\partial_{\mathbb T,n}\), with 
$$
\left\|(\partial_{\mathbb T,n+1}\partial_{\mathbb T,n+1}^*|_{\ker\partial_{\mathbb T,n}})^{-1}\right\|\leq c^{-1}.
$$
For \(x_n\in\ker\partial_{\mathbb T,n}\), set
\[
y_{n+1}=\partial_{\mathbb T,n+1}^*(\partial_{\mathbb T,n+1}\partial_{\mathbb T,n+1}^*|_{\ker\partial_{\mathbb T,n}})^{-1}x_n.
\]
Then
\[
\partial_{\mathbb T,n+1}y_{n+1}=x_n,~\text{and}~\|y_{n+1}\|^2\le c^{-1}\|x_n\|^2.
\]
Thus $\mathbb{T}$ is invertible.
\end{proof}
Recall that for any row-bounded sequence $\mathbb{T}$, \(\Lambda_{\mathrm{fin}}^+(H)\) is dense in \(\operatorname{Dom}(\widehat{\mathbb{T}})\) with respect to the graph norm. Therefore by \cite[Theorem 7.11]{MS}, we have the following corollary.
\begin{cor}\label{invert continious}
  Let $\mathbb{T}=(T_1,\cdots,T_n,\cdots)$ be a row-bounded sequence on $H$. If $\mathbb{T}$ is invertible, then there exists an $\varepsilon>0$ such that for every row-bounded sequence $\mathfrak{T}=(\mathfrak{T}_{1}, \mathfrak{T}_{2}, \cdots)$ satisfying 
  $$
  \left\|\left(\widehat{\mathbb{T}}-\widehat{\mathfrak{T}}\right)x\right\|<\varepsilon\|x\|,~\forall x\in \Lambda_{\mathrm{fin}}^+(H),
  $$
  $\mathfrak{T}$ is invertible.
\end{cor}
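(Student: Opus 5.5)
The plan is to reduce the statement to the stability of invertibility for closed operators under relatively bounded (here, genuinely bounded) perturbations, applied to $\widehat{\mathbb T}$. By Proposition \ref{thm3.2}, $\mathbb T$ is invertible if and only if the closed operator $\widehat{\mathbb T}$ (closed by Lemma \ref{partial closed}) is invertible, i.e.\ has a bounded inverse $\widehat{\mathbb T}^{-1}:\Lambda^-(H)\to\operatorname{Dom}(\widehat{\mathbb T})$. So it suffices to show that $\widehat{\mathfrak T}$ is invertible whenever the difference $\widehat{\mathbb T}-\widehat{\mathfrak T}$ is small on $\Lambda_{\mathrm{fin}}^+(H)$, and then apply Proposition \ref{thm3.2} to $\mathfrak T$.

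First, I will set $\varepsilon=\|\widehat{\mathbb T}^{-1}\|^{-1}$. The hypothesis says $D:=(\widehat{\mathbb T}-\widehat{\mathfrak T})|_{\Lambda_{\mathrm{fin}}^+(H)}$ has norm $<\varepsilon$, so $D$ extends to a bounded operator $\bar D$ on $\Lambda^+(H)$ with $\|\bar D\|\le\varepsilon'<\varepsilon$. Next, I will identify $\widehat{\mathfrak T}$ with $\widehat{\mathbb T}-\bar D$ on a common domain. Since $\Lambda_{\mathrm{fin}}^+(H)$ is a core for $\widehat{\mathbb T}$ (graph-norm density, as recalled before the corollary) and also a core for $\widehat{\mathfrak T}$ (same reason applied to $\mathfrak T$), the closure of $\widehat{\mathbb T}|_{\Lambda_{\mathrm{fin}}^+}-D$ equals $\widehat{\mathbb T}-\bar D$ on $\operatorname{Dom}(\widehat{\mathbb T})$. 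This operator is closed, because a closed operator minus a bounded one is closed. The closure of $\widehat{\mathfrak T}|_{\Lambda_{\mathrm{fin}}^+}$ is $\widehat{\mathfrak T}$. These two operators agree on the common core, so they coincide: $\operatorname{Dom}(\widehat{\mathfrak T})=\operatorname{Dom}(\widehat{\mathbb T})$ and $\widehat{\mathfrak T}=\widehat{\mathbb T}-\bar D$. To justify the closure statement, I will check that graph norms of the two restrictions are equivalent on $\Lambda_{\mathrm{fin}}^+$, using $\|Dx\|\le\varepsilon\|x\|$. I expect this core/closure identification to be the main technical point; everything else is standard.

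Finally, I will write $\widehat{\mathfrak T}=\bigl(I-\bar D\widehat{\mathbb T}^{-1}\bigr)\widehat{\mathbb T}$. Since $\|\bar D\widehat{\mathbb T}^{-1}\|<1$, a Neumann series shows that $I-\bar D\widehat{\mathbb T}^{-1}$ is invertible on $\Lambda^-(H)$. Hence $\widehat{\mathfrak T}$ is a bijection from $\operatorname{Dom}(\widehat{\mathbb T})$ onto $\Lambda^-(H)$ with bounded inverse $\widehat{\mathbb T}^{-1}(I-\bar D\widehat{\mathbb T}^{-1})^{-1}$. This is precisely the content of \cite[Theorem 7.11]{MS}. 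Proposition \ref{thm3.2} then gives that $\mathfrak T$ is invertible.
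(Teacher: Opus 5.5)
Your route is the paper's route. Proposition~\ref{thm3.2} reduces the question to bounded invertibility of the closed operator $\widehat{\mathbb T}$; $\Lambda^+_{\mathrm{fin}}(H)$ is a graph-norm core; and one then uses stability of invertibility of closed operators under small bounded perturbations. The paper simply cites this last fact from [MS, Theorem 7.11], whereas you prove it with a Neumann series. Your core argument, giving $\operatorname{Dom}(\widehat{\mathfrak T})=\operatorname{Dom}(\widehat{\mathbb T})$ and $\widehat{\mathfrak T}=\widehat{\mathbb T}-\bar D$, is correct and fills in exactly the step the paper leaves implicit. It needs the truncation argument for $\mathfrak T$ as well, and that works because Lemma~\ref{lem:domain-hat-T} gives $\sum_k\|\partial_{\mathfrak T,2k+2}x_{2k+2}\|^2<\infty$ on the domain.

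The one real error is the choice $\varepsilon=\|\widehat{\mathbb T}^{-1}\|^{-1}$. The hypothesis $\|Dx\|<\varepsilon\|x\|$ for each $x$ only yields $\|\bar D\|\le\varepsilon$, because a supremum of ratios each below $\varepsilon$ can equal $\varepsilon$. So you only get $\|\bar D\widehat{\mathbb T}^{-1}\|\le 1$, and the Neumann series is not available. In fact the conclusion is false for this $\varepsilon$, as the following example shows.

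Take $H=\ell^2$, $\mathbb T=(I,0,0,\ldots)$ and $\mathfrak T=(A,0,0,\ldots)$ with $A=\operatorname{diag}(1/n)$. Then $\widehat{\mathbb T}$ is the restriction of $I\otimes(E_1+E_1^*)$. This is a unitary from $\Lambda^+(H)$ onto $\Lambda^-(H)$, since $E_1+E_1^*$ is self-adjoint with square $I$, so your $\varepsilon$ equals $1$. Moreover $\widehat{\mathbb T}-\widehat{\mathfrak T}$ is the restriction of $(I-A)\otimes(E_1+E_1^*)$, so $\|(\widehat{\mathbb T}-\widehat{\mathfrak T})x\|=\|((I-A)\otimes I)x\|<\|x\|$ for every $x\neq0$. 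Yet $\operatorname{Im}\partial_{\mathfrak T,1}=\operatorname{ran}A\neq H$, so $H^0_{\mathfrak T}\neq0$ and $\mathfrak T$ is not invertible.

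The repair is immediate: take any $\varepsilon<\|\widehat{\mathbb T}^{-1}\|^{-1}$, for instance $\varepsilon=\frac12\|\widehat{\mathbb T}^{-1}\|^{-1}$. Then $\|\bar D\|\le\varepsilon$ gives $\|\bar D\widehat{\mathbb T}^{-1}\|\le\frac12$, and the rest of your proof goes through unchanged.
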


\begin{lem}\label{Tn and T'}
For fixed $n>0$, the following statements are equivalent.
\begin{itemize}
  \item[(1)] $T^{(n)}=(T_{1}, \cdots ,T_{n})$ is invertible;
  \item[(2)] $\mathfrak{T}^{(n)}=(T_{1}, \cdots , T_{n}, 0 , 0 ,\cdots)$ is invertible;
  \item[(3)] $\mathfrak{T}^{(n)}=(T_{1}, \cdots , T_{n}, 0 , 0 ,\cdots)$ is Fredholm.
\end{itemize}
\end{lem}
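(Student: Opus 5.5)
The core is a structural identification of the infinite Koszul complex of $\mathfrak T^{(n)}$. Split $\ell^2=\mathbb C^n\oplus\ell^2_{>n}$, where $\ell^2_{>n}=\overline{\operatorname{span}}\{e_{n+1},e_{n+2},\dots\}$. Then there is a unitary identification
\[
\Lambda\cong\Lambda_n\otimes\Lambda(\ell^2_{>n}),\qquad \Lambda^k\cong\bigoplus_{j+l=k}\Lambda_n^j\otimes\Lambda^l(\ell^2_{>n}).
\]
Under it, $E_i$ for $i\le n$ corresponds to $E_i\otimes\Gamma$ or $E_i\otimes I$, depending on the sign convention with the grading operator $\Gamma$. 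Since $T_i=0$ for $i>n$, the SOT limit in Theorem \ref{prop 1.1} is just the finite sum, and $\partial_{\mathfrak T^{(n)}}=\partial_{T^{(n)}}\otimes I_{\Lambda(\ell^2_{>n})}$, up to the grading sign, which does not affect kernels, images or norms. So $K(\mathfrak T^{(n)})$ is the direct sum, over $l\ge0$, of copies of the finite Koszul complex $K(T^{(n)})$ tensored with $\Lambda^l(\ell^2_{>n})$ and shifted in degree by $l$. Consequently
\[
H^k_{\mathfrak T^{(n)}}\cong\bigoplus_{j+l=k}H^j_{T^{(n)}}\otimes\Lambda^l(\ell^2_{>n}).
\]
This identification is the main bookkeeping step. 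The boundary map is a finite sum of bounded operators, so there are no domain issues.

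\textbf{(1)$\Rightarrow$(2).} If $T^{(n)}$ is Taylor invertible, the finite Koszul complex is exact with bounded maps. By Proposition \ref{prop:uniform-homotopy}, applied to the finite complex padded by zeros, there are bounded homotopies $h_0,\dots,h_{n-1}$ with $\partial h+h\partial=I$. Then $h\otimes I$ (with the same grading sign convention) gives a contracting homotopy of $K(\mathfrak T^{(n)})$, bounded by $\max_j\|h_j\|$ uniformly in degree. Proposition \ref{prop:uniform-homotopy} in the other direction then yields conditions (1) and (2) of Definition \ref{1and2invert}.

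\textbf{(2)$\Rightarrow$(3).} This is immediate from the definitions: take any $N$, since all homology vanishes and the uniform constant exists.

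\textbf{(3)$\Rightarrow$(1).} This is the step I expect to be the real content. Suppose $\mathfrak T^{(n)}$ is Fredholm with threshold $N$, and suppose some $H^j_{T^{(n)}}\neq0$ with $0\le j\le n$. Since $\ell^2_{>n}$ is infinite dimensional, $\Lambda^l(\ell^2_{>n})$ is nonzero, indeed infinite dimensional, for every $l$. Taking $l=N+1$, the displayed formula gives $H^{j+N+1}_{\mathfrak T^{(n)}}\neq0$, contradicting condition (2) of Definition \ref{def:Fredholm}. Hence every finite homology group $H^j_{T^{(n)}}$ vanishes, so $T^{(n)}$ is Taylor invertible. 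Here we use that for a finite tuple, vanishing of all homology already gives invertibility, because the boundaries are bounded and exactness forces closed range.

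The remaining care point is the homology decomposition itself. It requires that $\operatorname{Im}(\partial\otimes I)$ and $\ker(\partial\otimes I)$ decompose over the orthogonal sum in $l$. This holds because $\partial_{\mathfrak T^{(n)}}$ preserves each summand $\Lambda_n\otimes\Lambda^l(\ell^2_{>n})$, and tensoring with a fixed Hilbert space preserves kernels and images as algebraic quotients. For the contradiction we only need that a nonzero class $[x]$ produces a nonzero class $[x\otimes\omega]$ for a unit $\omega\in\Lambda^{N+1}(\ell^2_{>n})$. This is checked directly: if $x\otimes\omega=\partial y$, then applying $I\otimes\langle\cdot,\omega\rangle$ gives $x\in\operatorname{Im}\partial_{T^{(n)}}$.
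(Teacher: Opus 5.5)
Your proof is correct, and its skeleton is the paper's. The paper's subspaces $W_{i_1\cdots i_m}$ are exactly your summands $H\otimes\Lambda_n\otimes\mathbb C\,e_{i_1}\wedge\cdots\wedge e_{i_m}$, and on each of them $\partial_{\mathfrak T^{(n)}}$ is a unitary copy of $\partial_{T^{(n)}}$. If you write the indices $\le n$ first, there is no grading sign at all, since $E_i^*(\xi\wedge\eta)=(E_i^*\xi)\wedge\eta$ for $i\le n$.

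For (1)$\Rightarrow$(2), the paper solves $\partial y=x$ blockwise with the fixed constant $c_n$ and then appeals to Proposition~\ref{thm3.2}. Your tensored homotopy $h\otimes I$ is the same idea, routed through Proposition~\ref{prop:uniform-homotopy}.

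The genuine difference is in (3)$\Rightarrow$(1).
\begin{itemize}
\item[(a)] The paper first deduces from Fredholmness that $\operatorname{Im}\partial_{T^{(n)}}$ is closed. It then picks $0\neq x\in\ker\partial_{T^{(n)},m}\ominus\operatorname{Im}\partial_{T^{(n)},m+1}$. It shows that the vectors $(I_H\otimes E_{n+k})x$, $k\ge1$, are mutually orthogonal, lie in $\ker\partial_{\mathfrak T^{(n)},m+1}$, and are orthogonal to $\operatorname{Im}\partial_{\mathfrak T^{(n)},m+2}$. Hence $H^{m+1}_{\mathfrak T^{(n)}}$ is infinite-dimensional.
\item[(b)] You instead move a single nonzero class into degree $j+N+1>N$ by wedging with a fixed $\omega\in\Lambda^{N+1}(\ell^2_{>n})$. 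You certify that it is nontrivial with the slice map $I\otimes\langle\cdot,\omega\rangle$, which intertwines the boundary maps.
\end{itemize}
Your route needs no closed-range input and uses the vanishing clause (2) of Definition~\ref{def:Fredholm}. The paper's route uses only finite-dimensionality, and gives the stronger conclusion that the homology is already infinite-dimensional in degree $m+1$. Your slice argument, applied to orthonormal $\omega_k\in\Lambda^1(\ell^2_{>n})$, would give that too, again without closed ranges.

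One caveat concerns your displayed K\"unneth-type isomorphism and the assertion that tensoring with a Hilbert space preserves images as algebraic quotients. These are literally true only when the ranges of $\partial_{T^{(n)}}$ are closed. Otherwise $\operatorname{Im}(\partial\otimes I)$ is strictly smaller than the square-summable families with entries in $\operatorname{Im}\partial$. Nothing in your argument depends on more than the slice check, however, and that check is sound.
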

\begin{proof}
  $(\mathit{2})\Rightarrow (\mathit{3})$ is obvious.\\
   $(\mathit{1})\Rightarrow(\mathit{2})$: By Proposition $\ref{thm3.2}$ and $\operatorname{Im} \partial_{\mathfrak{T}^{(n)}}\subseteq \operatorname{ker} \partial_{\mathfrak{T}^{(n)}}$,  it's sufficient to prove $\operatorname{ker} \partial_{\mathfrak{T}^{(n)}}\subseteq\operatorname{Im} \partial_{\mathfrak{T}^{(n)}}$. For $m\ge 0$ and a strictly increasing sequence $i_1<\cdots<i_m$ of integers $>n$, set
\[
W_{i_{1}\cdots i_{m}}=\overline{\operatorname{span}\Bigl\{
f\otimes e_{j_{1}}\wedge\cdots\wedge e_{j_{k}}\wedge e_{i_{1}}\wedge\cdots\wedge e_{i_{m}} :
1\le j_{1}<\cdots<j_{k}\le n,\; 0\le k\le n,\; f\in H \Bigr\}},
\]
where for $m=0$ the sequence is empty and $W_{i_{1}\cdots i_{m}}$ is understood as
$H\otimes\Lambda_n\subseteq \Lambda(H)$.
Then
\[
\Lambda(H)=\bigoplus\limits_{\substack{n<i_{1}<\cdots<i_{m} \\ 0\le m<\infty}} W_{i_{1}\cdots i_{m}} .
\]
Hence for $x\in \Lambda \left( H \right)$, write
\begin{equation}\label{x W decom}
x=\sum\limits_{n< i_{1} < \cdots < i_{m}, 0\leq m < \infty} P_{W_{i_{1} \cdots i_{m}}}x,
\end{equation}
where $ P_{W_{i_{1} \cdots i_{m}}}$ is the orthogonal projection from $\Lambda(H)$ onto ${W_{i_{1} \cdots i_{m}}}$. Notice that 
$$
\|\partial_{\mathfrak{T}^{(n)}}\|=\left\|\sum_{i=1}^{n}T_i\otimes E_i^*\right\|\leq \sum_{i=1}^{n}\|T_i\|<\infty.
$$
Then for all $x\in \operatorname{ker} \partial_{\mathfrak{T}^{(n)}}$, by \eqref{x W decom},
\begin{equation}\label{ker mathcal T}
  \sum\limits_{n< i_{1} < \cdots < i_{m}, 0\leq m < \infty} \partial_{\mathfrak{T}^{(n)}} P_{W_{i_{1} \cdots i_{m}}}x=\partial_{\mathfrak{T}^{(n)}}\sum\limits_{n< i_{1} < \cdots < i_{m}, 0\leq m < \infty} P_{W_{i_{1} \cdots i_{m}}}x=\partial_{\mathfrak{T}^{(n)}}x=0.
\end{equation}
It is easy to see that $W_{i_{1} \cdots i_{m}}$ is a reduced subspace of $\partial_{\mathfrak{T}^{(n)}} $ , hence by $(\ref{ker mathcal T})$,
\begin{equation}
P_{W_{i_{1} \cdots i_{m}}} x \in \operatorname{ker} \partial_{\mathfrak{T}^{(n)}}, ~\forall n<i_{1}<\cdots<i_{m},~m\geq 0\label{e2.1}
\end{equation}
For fixed $m\geq 0$ and $ n<i_{1}<\cdots<i_{m}$, define unitary maps $U_{i_{1} \cdots i_{m}}: W_{i_{1} \cdots i_{m}} \rightarrow H \otimes \Lambda_n$ by
$$
U:\,f \otimes e_{j_{1}} \wedge \cdots \wedge e_{j_{k}} \wedge e_{i_{1}} \wedge \cdots \wedge e_{i_{m}} \mapsto f \otimes e_{j_{1}} \wedge \cdots \wedge e_{j_{k}},\quad\text{for }m\geq 1
$$ 
and for $m=0$ the map is the identity on $H\otimes\Lambda_n$. It is easy to see that
\begin{equation}\label{e2.2}
  U_{i_{1} \cdots i_{m}}^{-1} \partial_{T^{(n)}} U_{i_{1} \cdots i_{m}} x=\partial_{\mathfrak{T}^{(n)}} x, ~\forall x \in W_{i_{1} \cdots i_{m}}.
\end{equation}
Then by $(\ref{e2.1})$ and $(\ref{e2.2})$ we have $U_{i_{1} \cdots i_{m}} P_{W_{i_1 \cdot i_{m}}} x \in \operatorname{ker} \partial_{T^{(n)}}$. So for $m\geq 0$, from the fact that $T^{(n)}$ is invertible, there is a constant $c_{n}>0$ and $y_{i_{1} \cdots i_{m}}^{\prime} \in H\otimes \Lambda_n$ such that
$$
  U_{i_{1} \cdots i_{m}} P_{W_{i_{1}\cdots i_{m}}} x=\partial_{T^{(n)}} y_{i_{1} \cdots i_{m}}^{\prime} ~\text{and} ~
 \left\|y_{i_{1} \cdots i_{m}}^{\prime}\right\| \leq c_{n}\left\|U_{i_{1} \cdots i_{m}} P_{W_{i_{1} \cdots i_{m}} }x\right\|.
$$
 Hence by $(\ref{e2.2})$,
\begin{equation}\label{e2.3}
  \partial_{\mathfrak{T}^{(n)}} U_{i_{1} \cdots i_{m}}^{-1} y_{i_{1} \cdots i_{m}}^{\prime}=U_{i_{1} \cdots i_{m}}^{-1} \partial_{T^{(n)}} y_{i_{1} \cdots i_{m}}^{\prime}=U_{i_{1} \cdots i_{m}}^{-1} U_{i_{1} \cdots i_{m}} P_{W_{i_{1} \cdots i_{m}}} x=P_{W_{i_{1} \cdots i_{m}}} x,
\end{equation}
and
\begin{equation}\label{e2.4}
  \left\|U_{i_{1} \cdots i_{m}}^{-1} y_{i_{1} \cdots i_{m}}^{\prime}\right\|=\left\|y_{i_{1} \cdots i_{m}}^{\prime}\right\| \leq c_n\left\|U_{i_{1} \cdots i_{m}} P_{W_{i_{1} \cdots i_{m}}} x\right\|=c_n\left\|P_{W_{i_{1} \cdots i_{m}}} x\right\|.
\end{equation}
Let 
$$
y=\sum\limits_{n< i_{1} < \cdots < i_{m}, 0\leq m < \infty} U_{i_{1} \cdots i_{m}}^{-1} y_{i_{1} \cdots i_{m}}^{\prime},
$$
by $(\ref{e2.3})$, $(\ref{e2.4})$ and \eqref{x W decom}, we have $\|y\|\leq c_n\|x\|$ and $x=\partial_{\mathfrak{T}^{(n)}}y$, which implies that $$\ker \partial_{\mathfrak{T}^{(n)}}\subseteq \operatorname{Im}\partial_{\mathfrak{T}^{(n)}}.$$\\
$(\mathit{3})\Rightarrow(\mathit{1})$: From $(\mathit{3})$, we have $\operatorname{Im}\partial_{\mathfrak{T}^{(n)}}$ is closed. Notice that $\partial_{T^{(n)}}=\partial_{\mathfrak{T}^{(n)}}|_{H\otimes\Lambda_n}$, hence $\operatorname{Im}\partial_{T^{(n)}}$ is closed.
Suppose that $T^{(n)}$ is not invertible, then there is a non-negative integer $m\leq n$ such that $\operatorname{ker} \partial_{T^{(n)},m} \neq \operatorname{Im} \partial_{T^{(n)},m+1}$. Hence there exists a nonzero $x\in \operatorname{ker}\partial_{T^{(n)},m}\ominus\operatorname{Im}\partial_{T^{(n)},m+1}$. We claim that
\begin{equation}\label{Hk decom}
  \left(I_{H} \otimes E_{n+k}\right) x \in \operatorname{ker} \partial_{\mathfrak{T}^{(n)},m+1} \ominus \operatorname{Im} \partial_{\mathfrak{T}^{(n)} , m+2}, ~\forall k\geq 1.
\end{equation}
In fact, by $E_iE_j^*=-E_j^*E_i$ for $i\neq j$, we have
$$
\partial_{\mathfrak{T}^{(n)},m+1}\left(I_{H} \otimes E_{n+k}\right) x=-\left(I_{H} \otimes E_{n+k}\right) \partial_{\mathfrak{T}^{(n)},m}  x=-\left(I_{H} \otimes E_{n+k}\right) \partial_{T^{(n)},m}x=0.
$$
which implies that 
$$
\left(I_{H} \otimes E_{n+k}\right) x \in \operatorname{ker} \partial_{\mathfrak{T}^{(n)},m+1}.
$$
Since $x\perp \operatorname{Im}\partial_{T^{(n)},m+1}$, we have $\partial_{T^{(n)},m+1}^*x=0$. Notice that $$\partial_{T^{(n)},m+1}^*x=\left(\sum\limits_{i=1}^{n}T_i^*\otimes E_i\right)x=\partial_{\mathfrak{T}^{(n)},m+1}^*x.$$
Hence $$\partial_{\mathfrak{T}^{(n)},m+1}^*\left(I_{H} \otimes E_{n+k}\right)x=-\left(I_{H} \otimes E_{n+k}\right)\partial_{\mathfrak{T}^{(n)},m+1}^*x=0,$$ which implies that
$$
\left(I_{H} \otimes E_{n+k}\right) x\perp \operatorname{Im} \partial_{\mathfrak{T}^{(n)} , m+2}.
$$
The claim is proved. Since $E_{n+k} E_{n+j}^{*}+E_{n+j}^{*} E_{n+k}=0, \forall j \neq k, ~j,k\geq 1$, we have for $1\leq j<k$,
$$
\begin{aligned}
\left\langle\left(I_{H} \otimes E_{n+k}\right) x,\left(I_{H} \otimes E_{n+j}\right) x\right\rangle 
& =\left\langle\left(I_{H} \otimes E_{n+j}\right)^{*}\left(I_{H} \otimes E_{n+k}\right) x, x\right\rangle\\
& =-\left\langle\left(I_{H} \otimes E_{n+k}\right)\left(I_{H} \otimes E_{n+j}^{*}\right) x, x\right\rangle \\
& =0.
\end{aligned}
$$
So by \eqref{Hk decom}, $\operatorname{dim} \operatorname{ker} \partial_{\mathfrak{T}^{(n)}, m+1}/\operatorname{Im} \partial_{\mathfrak{T}^{(n)},m+2}=\infty$, which contradicts to $(\mathit{3})$.
\end{proof}
Let $\mathbb{T}$ be a row-bounded sequence of commuting operators. If there exists an $n$ such that
  $(T_1,\cdots,T_n)$ is Taylor invertible,  set
  \[
  \mathbb{T}^{(n)}=(T_1,\cdots,T_n,0,0,\cdots).
  \]
Then by Lemma \ref{Tn and T'}, \(\mathbb{T}^{(n)}\) is invertible. Hence by Proposition \ref{thm3.2}, \(\widehat{\mathbb{T}^{(n)}}\) is invertible. Since
\[
\partial_{\mathbb{T}^{(n)}}\partial_{\mathbb{T}^{(n)}}^* + \partial_{\mathbb{T}^{(n)}}^*\partial_{\mathbb{T}^{(n)}} = {\widehat{\mathbb{T}^{(n)}}}{}^*{\widehat{\mathbb{T}^{(n)}}} \oplus {\widehat{\mathbb{T}^{(n)}}}{\widehat{\mathbb{T}^{(n)}}}{}^*
\]
with respect to the orthogonal decomposition \(\Lambda(H) = \Lambda^+(H) \oplus \Lambda^-(H)\), it follows that \(\partial_{\mathbb{T}^{(n)}}\partial_{\mathbb{T}^{(n)}}^* + \partial_{\mathbb{T}^{(n)}}^*\partial_{\mathbb{T}^{(n)}}\) is invertible on \(\Lambda(H)\). For simplicity, set
  $$
     \Delta_{\mathbb{T}^{(n)}}=\partial_{\mathbb{T}^{(n)}}\partial_{\mathbb{T}^{(n)}}^*+\partial_{\mathbb{T}^{(n)}}^*\partial_{\mathbb{T}^{(n)}},
  $$
  hence for $p\geq 0$,
  \begin{equation}\label{DeltaTn}
     \Delta_{\mathbb{T}^{(n)},p}:=\Delta_{\mathbb{T}^{(n)}}|_{\Lambda^p(H)}=
     \partial_{\mathbb{T}^{(n)},{p+1}}\partial_{\mathbb{T}^{(n)},{p+1}}^*+\partial_{\mathbb{T}^{(n)},p}^*\partial_{\mathbb{T}^{(n)},p}.
  \end{equation}
  For $p<0$, write $\Lambda^p(H)=0$ and $\Delta_{\mathbb{T}^{(n)},p}=0$.
  Then we have the following Lemma.

\begin{lem}\label{N nilpotent}
  Let $\mathbb{T}$ be a row-bounded sequence of commuting operators, and suppose that there exists an $n$ such that
  $(T_1,\cdots,T_n)$ is Taylor invertible. For each $p\geq 0$, set
  \begin{equation}\label{def Np}
  N_p=(\partial_{\mathbb{T},p+1}-\partial_{\mathbb{T}^{(n)},p+1})\partial_{\mathbb{T}^{(n)},p+1}^*\Delta_{\mathbb{T}^{(n)},p}^{-1} + \partial_{\mathbb{T}^{(n)},p}^*\Delta_{\mathbb{T}^{(n)},p-1}^{-1} (\partial_{\mathbb{T},p}-\partial_{\mathbb{T}^{(n)},p})\in B(\Lambda^p(H)).
  \end{equation}
  Then $$N_p^{n+1}=0.$$
\end{lem}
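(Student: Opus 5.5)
The approach is to find a grading on $\Lambda(H)$, with only $n+1$ nonzero pieces, that $N_p$ raises by exactly one step; nilpotency of order $n+1$ then follows immediately.

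\emph{Setup.} Put $A=\partial_{\mathbb T^{(n)}}=\sum_{i=1}^n T_i\otimes E_i^*$, which is a bounded operator on $\Lambda(H)$. Put $D=\partial_{\mathbb T}-\partial_{\mathbb T^{(n)}}$, which on each $\Lambda^{p}(H)$ is the SOT-limit $\sum_{i>n}T_i\otimes E_i^*$ by Theorem \ref{prop 1.1}. Finally put $\Delta=\Delta_{\mathbb T^{(n)}}=AA^*+A^*A$, which is invertible by the discussion preceding the lemma. With these notations $N_p$ is the restriction to $\Lambda^p(H)$ of
$$N=D\,A^*\Delta^{-1}+A^*\Delta^{-1}D .$$
We should check that $A^*\Delta_{p-1}^{-1}$ and $A^*\Delta_p^{-1}$ match the pieces appearing in \eqref{def Np}. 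This holds because $A$ and $A^*$ shift the exterior degree by $\mp1$ while $\Delta$ preserves it. So $N_p\in B(\Lambda^p(H))$, and $N_p^{n+1}=N^{n+1}|_{\Lambda^p(H)}$.

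\emph{A second grading.} For $0\le k\le n$, let $Q_k$ be the orthogonal projection of $\Lambda(H)$ onto
$$V_k=\overline{\operatorname{span}}\{f\otimes e_I:\ |I\cap\{1,\dots,n\}|=k\},$$
so that $\Lambda(H)=\bigoplus_{k=0}^n V_k$; this is the same kind of decomposition as the spaces $W_{i_1\cdots i_m}$ in the proof of Lemma \ref{Tn and T'}. We then check how each building block interacts with this grading:
\begin{itemize}
\item For $i\le n$, $E_i^*$ maps $V_k$ into $V_{k-1}$. Hence $A(V_k)\subset V_{k-1}$ and $A^*(V_k)\subset V_{k+1}$.
\item For $i>n$, $E_i^*$ does not change the number of indices in $\{1,\dots,n\}$. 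Hence each partial sum $\sum_{i=n+1}^{N}T_i\otimes E_i^*$ commutes with every $Q_k$. Passing to the SOT-limit on each $\Lambda^p(H)$, $D$ commutes with every $Q_k$.
\item $AA^*$ and $A^*A$ map each $V_k$ into itself, so $\Delta$ commutes with every $Q_k$. Since $\Delta$ is invertible, $\Delta^{-1}$ also commutes with every $Q_k$.
\end{itemize}

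\emph{Conclusion.} From these facts, $A^*\Delta^{-1}$ maps $V_k$ into $V_{k+1}$, and so do both $DA^*\Delta^{-1}$ and $A^*\Delta^{-1}D$. Hence $N(V_k)\subset V_{k+1}$, with the convention $V_{n+1}=0$. Applying $N$ $n+1$ times pushes every $V_k$ past $V_n$, so $N^{n+1}=0$. Restricting to $\Lambda^p(H)$ gives $N_p^{n+1}=0$.

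The only point requiring care is the commutation of $D$ with the $Q_k$. Here $D$ is defined only as an SOT-limit on each $\Lambda^p(H)$, so we cannot treat it as a finite sum. It is, however, harmless: commutation with a fixed bounded projection passes to SOT-limits, since $Q_kD_Nx\to Q_kDx$ and $D_NQ_kx\to DQ_kx$ for each $x$, where $D_N=\sum_{i=n+1}^{N}T_i\otimes E_i^*$. The other point is the invertibility of $\Delta$, which was already established before the statement, so it can be taken as given.
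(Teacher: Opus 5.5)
Your proposal is correct and takes essentially the same approach as the paper. Your $V_k$ is exactly the paper's $H\otimes\Lambda^k[\mathcal E_n]\otimes\Lambda[\mathcal E_n^\perp]$, which the paper intersects with $\Lambda^p(H)$ to get $\mathcal K_{k,p}$, and the paper likewise shows that $N_p$ raises this index by one. Your SOT-limit justification that $\partial_{\mathbb T}-\partial_{\mathbb T^{(n)}}$ preserves the grading fills in a step the paper simply asserts.
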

\begin{proof}
  Let 
  $$
  \mathcal{E}_n=\operatorname{span}\{e_i:1\leq i\leq n\}.
  $$ 
  Then $\Lambda(H)$ can be decomposed as
  $$
  \Lambda(H)
  = H\otimes \Lambda[\mathcal{E}_n]\otimes \Lambda[\mathcal{E}_n^\perp]
  = \bigoplus_{r=0}^n H \otimes \Lambda^r[\mathcal{E}_n] \otimes \Lambda[\mathcal{E}_n^\perp],
  $$
  where $\Lambda[\mathcal{E}_n]$ (resp. $\Lambda[\mathcal{E}_n^\perp]$) is the antisymmetric Fock space over $\mathcal{E}_n$ (resp. $\mathcal{E}_n^\perp$), and $\Lambda^r[\mathcal{E}_n]$ is the antisymmetric  $r$-fold tensor product of $\mathcal{E}_n$ with itself for $0\leq r\leq n$.
For \(p, r \in \mathbb{Z}\), define
\begin{equation}\label{Kr<0}
  \mathcal{K}_{r,p}=
  \begin{cases}
    (H \otimes \Lambda^r[\mathcal{E}_n] \otimes \Lambda[\mathcal{E}_n^\perp])\cap \Lambda^p(H), & p \ge 0 \text{ and } 0 \le r \le n, \\[1ex]
    \{0\}, & p<0 \text{ or } r<0 \text{ or } r>n.
  \end{cases}
\end{equation}
  Then it is easy to see that for each $p\geq 0$ and $r\in\mathbb{Z}$,
  \begin{equation}\label{basic K}
  \partial_{\mathbb{T}^{(n)},p}(\mathcal{K}_{r,p})\subseteq \mathcal{K}_{r-1,p-1},~\partial_{\mathbb{T}^{(n)},p}^*(\mathcal{K}_{r,p-1})\subseteq \mathcal{K}_{r+1,p}.
  \end{equation}
  Hence by \eqref{DeltaTn} and \eqref{basic K}, $\Delta_{\mathbb{T}^{(n)},p}\mathcal{K}_{r,p}\subset \mathcal{K}_{r,p}$.
  Then $P_{\mathcal{K}_{r,p}}\Delta_{\mathbb{T}^{(n)},p}=\Delta_{\mathbb{T}^{(n)},p}P_{\mathcal{K}_{r,p}}$, which implies that
  \begin{equation}\label{d invert K}
    \Delta_{\mathbb{T}^{(n)},p}^{-1}\mathcal{K}_{r,p}\subseteq\mathcal{K}_{r,p}.
  \end{equation}
  By the definition of $\mathcal{K}_{r,p}$,
  $$
   (\partial_{\mathbb{T},p}-\partial_{\mathbb{T}^{(n)},p})(\mathcal{K}_{r,p})=\left(\sum_{i=n+1}^{\infty}T_i\otimes E_i^*\right)|_{\Lambda^p(H)}(\mathcal{K}_{r,p})\subseteq \mathcal{K}_{r,p-1}.
  $$
  Thus by \eqref{def Np}, \eqref{basic K}and \eqref{d invert K}, $N_p\mathcal{K}_{r,p}\subseteq \mathcal{K}_{r+1,p}$. Then by \eqref{Kr<0}, for $0\leq r\leq n$,
  $$
  N_p^{n+1}\mathcal{K}_{r,p}=\{0\}.
  $$
  It follows from $\Lambda^p(H)=\bigoplus\limits_{r=0}^n\mathcal{K}_{r,p}$ that $N_p^{n+1}=0$.
\end{proof}

\begin{lem}\label{finite invertible to I invertible}
  Let $\mathbb{T}$ be a row-bounded sequence of commuting operators. If there exists an $n$ such that
  $(T_1,\cdots,T_n)$ is Taylor invertible, then $K(\mathbb{T})$ is exact.
  Moreover, under the same hypotheses, if $\partial_{\mathbb{T}}$ is bounded on
  $\Lambda(H)$, then $\mathbb{T}$ is invertible.
\end{lem}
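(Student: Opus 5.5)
The plan is a perturbation argument: build a homotopy-like operator from the Taylor invertibility of $(T_1,\dots,T_n)$ and correct it with the nilpotent $N_p$ of Lemma~\ref{N nilpotent}.

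\textbf{Step 1: the chain homotopy for the truncated sequence.} Write $\partial=\partial_{\mathbb T}$, $\partial^{(n)}=\partial_{\mathbb T^{(n)}}$, and $D_p=\partial_{\mathbb T,p}-\partial^{(n)}_{p}=\bigl(\sum_{i>n}T_i\otimes E_i^*\bigr)|_{\Lambda^p(H)}$. Since $\Delta_{\mathbb T^{(n)}}$ commutes with $\partial^{(n)}$ and $\partial^{(n)*}$, the operators
\[
h_p=\partial^{(n)*}_{p+1}\Delta^{-1}_{\mathbb T^{(n)},p}:\Lambda^p(H)\to\Lambda^{p+1}(H)
\]
satisfy $\partial^{(n)}_{p+1}h_p+h_{p-1}\partial^{(n)}_p=I$ (the usual Hodge identity). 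The operators $h_p$ are uniformly bounded in $p$, because $\Delta_{\mathbb T^{(n)}}$ is invertible on all of $\Lambda(H)$ and $\partial^{(n)}$ is bounded, with norm at most $\sum_{i\le n}\|T_i\|$. Replacing $\partial^{(n)}$ by $\partial$ then gives
\[
\partial_{\mathbb T,p+1}h_p+h_{p-1}\partial_{\mathbb T,p}=I+N_p ,
\]
with $N_p$ exactly as in \eqref{def Np}.

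\textbf{Step 2: inverting the correction.} By Lemma~\ref{N nilpotent}, $N_p^{n+1}=0$, so $I+N_p$ is invertible with $(I+N_p)^{-1}=\sum_{k=0}^{n}(-N_p)^k$. Next I check that $N$ is a chain map, i.e. $\partial_{\mathbb T,p}N_p=N_{p-1}\partial_{\mathbb T,p}$. Set $A_p=\partial_{\mathbb T,p+1}h_p+h_{p-1}\partial_{\mathbb T,p}$. Using $\partial^2=0$,
\[
\partial_{\mathbb T,p}A_p=\partial_{\mathbb T,p}h_{p-1}\partial_{\mathbb T,p}=A_{p-1}\partial_{\mathbb T,p}.
\]
Hence $(I+N)^{-1}$ commutes with $\partial$ as well. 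Define $g_p=h_p(I+N_p)^{-1}$. Then
\[
\partial_{\mathbb T,p+1}g_p+g_{p-1}\partial_{\mathbb T,p}=\bigl(\partial_{\mathbb T,p+1}h_p+h_{p-1}\partial_{\mathbb T,p}\bigr)(I+N_p)^{-1}=I .
\]
Consequently every $x\in\ker\partial_{\mathbb T,p}$ satisfies $x=\partial_{\mathbb T,p+1}g_px$, so $K(\mathbb T)$ is exact. This step needs only that each $N_p$ is bounded, which holds because each $\partial_{\mathbb T,p}$ is bounded by Theorem~\ref{prop 1.1}.

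\textbf{Step 3: the uniform bound.} For invertibility I need $\sup_p\|g_p\|<\infty$ and then apply Proposition~\ref{Iinvert iff uniform homotopy}. We have $\|h_p\|\le C_0$ uniformly in $p$. If $\partial_{\mathbb T}$ is bounded, then $\|D_p\|\le\|\partial_{\mathbb T}\|+\|\partial^{(n)}\|$, so $\|N_p\|\le M$ uniformly in $p$. Since the nilpotency index $n+1$ does not depend on $p$, we get $\|(I+N_p)^{-1}\|\le\sum_{k=0}^n M^k$, and hence $\sup_p\|g_p\|<\infty$. Proposition~\ref{Iinvert iff uniform homotopy} then gives condition (2) of Definition~\ref{1and2invert}, and the exactness from Step 2 gives condition (1), so $\mathbb T$ is invertible.

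The main obstacle is the chain-map identity for $N$ in Step 2, which has to be verified from the definition \eqref{def Np} together with commuting $\Delta^{-1}$ past $\partial^{(n)}$. I handle it through the formula $N_p=A_p-I$, so no index-by-index computation is needed. A secondary point is the uniformity in $p$ in Step 3, where the fixed nilpotency index $n+1$ is essential.
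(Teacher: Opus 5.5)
Your proposal is correct and follows the paper's proof essentially step for step: your $h_p$ and $g_p$ are the paper's $A_p=\partial_{\mathbb T^{(n)},p+1}^*\Delta_{\mathbb T^{(n)},p}^{-1}$ and $B_p=A_p\sum_{j=0}^{n}(-N_p)^j$. The chain-map identity $\partial_{\mathbb T,p}(I+N_p)=(I+N_{p-1})\partial_{\mathbb T,p}$ is derived from $\partial_{\mathbb T}^2=0$ in the same way, and the uniform bound uses the same estimate on $\sup_p\|N_p\|$ together with the fixed nilpotency index $n+1$. The only cosmetic difference is the last step: you pass through Proposition~\ref{prop:uniform-homotopy}, whereas the paper reads off $x=\partial_{\mathbb T,p+1}(B_px)$ with $\|B_px\|\le c\|x\|$ directly.
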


\begin{proof}
  Notice that
  \begin{equation}\label{partial delta commu}
  \begin{aligned}
  \partial_{\mathbb{T}^{(n)},p}\Delta_{\mathbb{T}^{(n)},p}
  &=\partial_{\mathbb{T}^{(n)},p}(\partial_{\mathbb{T}^{(n)},{p+1}}\partial_{\mathbb{T}^{(n)},{p+1}}^*+\partial_{\mathbb{T}^{(n)},p}^*\partial_{\mathbb{T}^{(n)},p})\\
  &=\partial_{\mathbb{T}^{(n)},p}\partial_{\mathbb{T}^{(n)},p}^*\partial_{\mathbb{T}^{(n)},p}\\
  &=(\partial_{\mathbb{T}^{(n)},p}\partial_{\mathbb{T}^{(n)},p}^*+\partial_{\mathbb{T}^{(n)},p-1}^*\partial_{\mathbb{T}^{(n)},p-1})\partial_{\mathbb{T}^{(n)},p}\\
  &=\Delta_{\mathbb{T}^{(n)},p-1}\partial_{\mathbb{T}^{(n)},p}.
  \end{aligned}
  \end{equation}
  Hence $\partial_{\mathbb{T}^{(n)},p}\Delta_{\mathbb{T}^{(n)},p}^{-1}=\Delta_{\mathbb{T}^{(n)},p-1}^{-1}\partial_{\mathbb{T}^{(n)},p}$. Then
  \begin{equation}\label{dna+adn=I}
  \begin{aligned}
  &\partial_{\mathbb{T}^{(n)},p+1}\partial_{\mathbb{T}^{(n)},p+1}^*\Delta_{\mathbb{T}^{(n)},p}^{-1}+\partial_{\mathbb{T}^{(n)},p}^*\Delta_{\mathbb{T}^{(n)},p-1}^{-1} \partial_{\mathbb{T}^{(n)},p}\\
  =&\partial_{\mathbb{T}^{(n)},p+1}\partial_{\mathbb{T}^{(n)},p+1}^*\Delta_{\mathbb{T}^{(n)},p}^{-1}+\partial_{\mathbb{T}^{(n)},p}^*\partial_{\mathbb{T}^{(n)},p}\Delta_{\mathbb{T}^{(n)},p}^{-1} \\
  =&\Delta_{\mathbb{T}^{(n)},p}\Delta_{\mathbb{T}^{(n)},p}^{-1}=I_{\Lambda^p(H)}.
  \end{aligned}
  \end{equation}
  For simplicity, write $A_p=\partial_{\mathbb{T}^{(n)},p+1}^*\Delta_{\mathbb{T}^{(n)},p}^{-1}$. By \eqref{dna+adn=I},
  \begin{equation}\label{da+ad=I+N1}
  \begin{aligned}
  \partial_{\mathbb{T},p+1} A_p + A_{p-1} \partial_{\mathbb{T},p} 
  &= (\partial_{\mathbb{T}^{(n)},p+1}+\partial_{\mathbb{T},p+1}-\partial_{\mathbb{T}^{(n)},p+1})A_p + A_{p-1}(\partial_{\mathbb{T}^{(n)},p}+\partial_{\mathbb{T},p}-\partial_{\mathbb{T}^{(n)},p}) \\
  &= (\partial_{\mathbb{T}^{(n)},p+1}A_p+A_{p-1}\partial_{\mathbb{T}^{(n)},p})+(\partial_{\mathbb{T},p+1}-\partial_{\mathbb{T}^{(n)},p+1})A_p+ A_{p-1}(\partial_{\mathbb{T},p}-\partial_{\mathbb{T}^{(n)},p})\\
  &= I_{\Lambda^p(H)} + N_p .
  \end{aligned}
  \end{equation}
  Since  $\partial_{\mathbb{T},p}\partial_{\mathbb{T},p+1}=0$, by \eqref{da+ad=I+N1},
  \begin{equation}\label{partialT I+N commute} 
  \begin{aligned}
  \partial_{\mathbb{T},p}(I_{\Lambda^p(H)} + N_p)
  &=\partial_{\mathbb{T},p}A_{p-1}\partial_{\mathbb{T},p}\\
  &=(\partial_{\mathbb{T},p}A_{p-1}+A_{p-2}\partial_{\mathbb{T},p-1})\partial_{\mathbb{T},p}\\
  &=(I_{\Lambda^{p-1}(H)} + N_{p-1})\partial_{\mathbb{T},p} .
  \end{aligned}
   \end{equation}
  By Lemma \ref{N nilpotent}, $N_p^{n+1}=0$. hence 
  \begin{equation}\label{I+Nsum=I}
  (I_{\Lambda^p(H)} + N_p)\sum_{j=0}^{n}(-N_p)^j=\sum_{j=0}^{n}(-N_p)^j(I_{\Lambda^p(H)} + N_p)=I_{\Lambda^p(H)}.
  \end{equation}
  Then by \eqref{partialT I+N commute} and \eqref{I+Nsum=I},
  \begin{equation}\label{sum Np commute}
  \begin{aligned}
  \partial_{\mathbb{T},p}\sum_{j=0}^{n}(-N_{p})^j
  =&\sum_{j=0}^{n}(-N_{p-1})^j(I_{\Lambda^{p-1}(H)}+N_{p-1})\partial_{\mathbb{T},p}\sum_{j=0}^{n}(-N_{p})^j\\
  =&\sum_{j=0}^{n}(-N_{p-1})^j\partial_{\mathbb{T},p}(I_{\Lambda^{p}(H)}+N_{p})\sum_{j=0}^{n}(-N_{p})^j\\
  =&\sum_{j=0}^{n}(-N_{p-1})^j\partial_{\mathbb{T},p}.
  \end{aligned}
  \end{equation}
  For $p\geq 0$, define $B_p:\Lambda^{p}(H)\to \Lambda^{p+1}(H)$ by
  $
  B_{p} = A_p\sum\limits_{j=0}^{n}(-N_p)^j .
  $
  It follows from \eqref{I+Nsum=I} and \eqref{sum Np commute} that
  \[
  \partial_{\mathbb{T},p+1}B_p+B_{p-1}\partial_{\mathbb{T},p}
  = (\partial_{\mathbb{T},p+1}A_p+A_{p-1}\partial_{\mathbb{T},p})\sum_{j=0}^{n}(-N_p)^j\\
  \overset{(*)}{=} I_{\Lambda^p(H)},
  \]
  where $(*)$ follows from \eqref{da+ad=I+N1} and \eqref{I+Nsum=I}. Thus for any $p\geq 0$ and $x\in \ker\partial_{\mathbb{T},p}$, we have
  $$
  x=(\partial_{\mathbb{T},p+1}B_p+B_{p-1}\partial_{\mathbb{T},p})x=\partial_{\mathbb{T},p+1}(B_px).
  $$
  Hence $\ker \partial_{\mathbb{T},p} = \operatorname{Im} \partial_{\mathbb{T},p+1}$ for every $p\ge 0$, i.e.\
  $K(\mathbb{T})$ is exact.

  Moreover, if $\partial_{\mathbb{T}}$ is bounded, then by \eqref{def Np}
  $$
  \sup_{p\geq0} \|N_p\|
  \leq 2\|\partial_{\mathbb{T}}-\partial_{\mathbb{T}^{(n)}}\|\,\|\partial_{\mathbb{T}^{(n)}}^*\|\sup_{p\geq 0}\|\Delta_{\mathbb{T}^{(n)},p}^{-1}\|
  \leq 2\|\partial_{\mathbb{T}}-\partial_{\mathbb{T}^{(n)}}\|\,\|\partial_{\mathbb{T}^{(n)}}^*\|\|\Delta_{\mathbb{T}^{(n)}}^{-1}\|
  <\infty
  $$
  Hence 
  $$
  \begin{aligned}
  \sup_{p\geq0}\|B_p\|
  &\leq \sup_{p\geq0}\|\partial_{\mathbb{T}^{(n)},p+1}^*\|\|\Delta_{\mathbb{T}^{(n)},p}^{-1}\|\left\|\sum_{j=0}^{n}(-N_{p})^j\right\|\\
  &\leq \|\partial_{\mathbb{T}^{(n)}}^*\|\|\Delta_{\mathbb{T}^{(n)}}^{-1}\|\sum_{j=0}^{n}\sup_{p\geq0}\|N_{p}\|^j
  <\infty,
  \end{aligned}
  $$
  which implies that  $\|B_px\|\leq c\|x\|$ for some $c>0$ (independent of $x$). Therefore
  $\mathbb{T}$ is invertible.
\end{proof}

It is worth noting that for a row-bounded sequence $\mathbb{T}$ with $\|\partial_{\mathbb{T}}\|=\infty$, the exactness of the Koszul complex $K(\mathbb{T})$ does not necessarily imply that $\mathbb{T}$ is invertible. For a bounded open set $\Omega\subset \mathbb C^2$, define
\[
W^{1,2}(\Omega)
=
\{f\in L^2(\Omega):\ 
\partial_{z_i}f,\ \partial_{\bar z_i}f\in L^2(\Omega),\ i=1,2\},
\]
and
\[
W^{1,\infty}(\Omega)
=
\{f\in L^\infty(\Omega):\ 
\partial_{z_i}f,\ \partial_{\bar z_i}f\in L^\infty(\Omega),\ i=1,2\}.
\]
\begin{exam}
Let $\Omega = \{z = (z_1, z_2) \in \mathbb{C}^2 : 1 < |z| < 2\}$, $X = W^{1,2}(\Omega)$, and $Y = L^2(\Omega; \mathbb{C}^2)$. Define the bounded operator $B: X \to Y$ by $Bf = \bar{\partial}f = (\frac{\partial f}{\partial \bar{z}_1}, \frac{\partial f}{\partial \bar{z}_2})$. Then
\[
B(z_i f) = z_i Bf, \qquad B1 = 0.
\]
Set $H = X \oplus \Big(\bigoplus\limits_{j \ge 1} Y_j\Big)$ with $Y_j = Y$, and write elements as $(f, u_1, u_2, \cdots)$. For $1\leq i\leq 2$, define $A_i:H\to H$ by
\[
A_i(f, u_1, u_2, \cdots) = (z_i f, z_i u_1, z_i u_2, \cdots).
\]
For $j\geq 1$, define $R_j:H\to H$ by
\[
R_j(f, u_1, u_2, \cdots) = \left(0, \cdots, \underbrace{\frac{1}{\sqrt{j}} Bf}_{Y_j}, 0, \cdots \right).
\]
Consider $\mathbb{T} = (A_1, A_2, R_1, R_2, \cdots)$. Since $A_1 A_2 = A_2 A_1$, $A_i R_j = R_j A_i$, and $R_j R_k = 0$, $\mathbb{T}$ is commuting. Moreover, $R_j R_j^* = \operatorname{diag}\left(0, \cdots, \underbrace{\frac{1}{j} B B^*}_{Y_j}, \cdots \right)$, so
\[
\left\| \sum_{j > N} R_j R_j^* \right\| = \frac{\|B\|^2}{N + 1} \to 0.
\]
Thus $\mathbb{T}$ is row-bounded.

We first show that $(A_1, A_2)$ is Taylor invertible. Let $b_i(z) = \frac{\bar{z}_i}{|z_1|^2 + |z_2|^2}$ for $i = 1, 2$, then $b_i \in C^1(\Omega)\cap W^{1,\infty}(\Omega)$. Hence
\[
b_i,\ \partial_{z_k}b_i,\ \partial_{\bar z_k}b_i\in L^\infty(\Omega),
\qquad k=1,2.
\]
By the Leibniz rule, the multiplication operator \(M_{b_i}\) is bounded on
\(W^{1,2}(\Omega)\) and \(L^2(\Omega)\). Define $C_i:H\to H$ by
$$
C_i(f, u_1, u_2, \cdots) = (b_i f, b_i u_1, b_i u_2, \cdots).
$$
Then it is easy to see that $A_i C_j = C_j A_i$ and $ A_1 C_1 + A_2 C_2 = I_H$, which implies that $(A_1, A_2)$ is Taylor invertible. By Lemma \ref{finite invertible to I invertible}, the Koszul complex $K(\mathbb T)$ is exact.

Next, we prove that $\mathbb{T}$ is not Taylor invertible. For $N\geq 3$, Set $\omega_N = e_3 \wedge \cdots \wedge e_{N+2}$ and $x_N = (1, 0, 0, \cdots) \otimes \omega_N$. Since $B1 = 0$, we have $\partial_{\mathbb{T},N} x_N = 0$ and $\|x_N\| = \|1\|_X$. 

Assume \(\mathbb{T}\) is Taylor invertible. Then there exist a constant \(C > 0\) independent of \(N\) and \(y_N \in \Lambda^{N+1}(H)\) such that
\begin{equation}\label{dyneqxn}
\partial_{\mathbb{T},N+1} y_N = x_N, \qquad \|y_N\| \le C \|1\|_X.
\end{equation}
Let \(P_X:H\to X\) and \(P_{Y_j}:H\to Y_j\) be the orthogonal projections. 
For a unit vector \(\zeta\in\Lambda\), let \(p_\zeta:\Lambda\to\mathbb C\zeta\) be the orthogonal projection defined by \(p_\zeta(\xi)=\langle\xi,\zeta\rangle\zeta\), and set \(P_\zeta = I_H\otimes p_\zeta\). 
For \(i=1,2\), write
\[
h_{i,N}\otimes e_i\wedge \omega_N
=
P_{e_i\wedge \omega_N}y_N,
\]
and set
$
f_{i,N}=P_X h_{i,N}\in X.
$
Applying \(P_X\otimes p_{\omega_N}\) to 
\(\partial_{\mathbb T,N+1}y_N=x_N\) gives
\begin{equation}\label{zf1+zf2=1}
z_1 f_{1,N}+z_2 f_{2,N}=1.
\end{equation}
Moreover, by \eqref{dyneqxn},
\begin{equation}\label{f1+f2<1}
\|f_{1,N}\|_X^2 + \|f_{2,N}\|_X^2 \le \|y_N\|^2 \le C^2 \|1\|_X^2.
\end{equation}
For \(1 \le j \le N\), let \(\omega_{N,j} = E_{j+2}^* \omega_N\). Applying 
\(P_{Y_j}\otimes p_{e_1\wedge \omega_{N,j}}\) and 
\(P_{Y_j}\otimes p_{e_2\wedge \omega_{N,j}}\) to 
\(\partial_{\mathbb{T},N+1} y_N = x_N\), respectively, gives
\begin{equation}\label{Bf-zg=0}
-\frac{1}{\sqrt{j}} B f_{1,N} - z_2 g_{j,N} = 0, \qquad -\frac{1}{\sqrt{j}} B f_{2,N} + z_1 g_{j,N} = 0,
\end{equation}
where \(g_{j,N} \in Y_j\) is determined by
\[
g_{j,N}\otimes e_1\wedge e_2\wedge \omega_{N,j}
=
\bigl(P_{Y_j}\otimes p_{e_1\wedge e_2\wedge \omega_{N,j}}\bigr)y_N.
\]
Hence by \eqref{Bf-zg=0} and \eqref{f1+f2<1},
\[
\begin{aligned}
\left( \sum_{j=1}^{N} \frac{1}{j} \right) (\|B f_{1,N}\|_Y^2 + \|B f_{2,N}\|_Y^2)
&=\sum_{j=1}^{N} \int_\Omega |z|^2 |g_{j,N}(z)|^2 dV(z)\\
&\leq  \sum_{j=1}^{N}  4 \|g_{j,N}\|_Y^2 \\
&\le 4 \|y_N\|^2 \le 4 C^2 \|1\|_X^2,
\end{aligned}
\]
which implies that 
\begin{equation}\label{bf1=bf2=0}
\lim\limits_{N\to\infty} B f_{1,N}=\lim\limits_{N\to\infty} B f_{2,N}=0
\end{equation}
By \eqref{f1+f2<1}, we can take weakly convergent subsequences $f_{i, N_\nu} \rightharpoonup f_i$ in $X$ for \(i = 1, 2\). Since \(B\) is bounded, by \eqref{bf1=bf2=0} we have
\[
\bar\partial f_i= B f_i= 0, ~\forall ~i=1,2.
\]
Thus $f_1, f_2$ are holomorphic on $\Omega$. Moreover by \eqref{zf1+zf2=1}, $z_1f_1+z_2f_2=1$ on $\Omega$. Then by Hartogs' extension theorem it is a contradiction. Therefore, \(\mathbb{T}\) is not invertible.

\end{exam}

\subsection{Fundamentals on $\sigma(\mathbb{T})$}
\begin{thm}\label{bounded closed}
  Let $\mathbb{T}$ be a sequence of commuting operators on $H$ satisfying 
  \[
  \sum_{i=1}^{\infty}T_iT_i^* \le C^2 I
  \]
  for some $C>0$. Then $\sigma(\mathbb{T})$ is a bounded closed set. Moreover, 
  \[
  \sigma(\mathbb{T}) \subset C\overline{\mathbb{B}}.
  \]
\end{thm}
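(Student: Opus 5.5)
Two things must be shown: for $\lambda\in\ell^2$ with $\|\lambda\|>C$ the sequence $\mathbb T-\lambda$ is invertible, and the complement of $\sigma(\mathbb T)$ in $\ell^2$ is open. Together they give $\sigma(\mathbb T)\subset C\overline{\mathbb B}$ and closedness.

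\emph{Boundedness.} Fix $\lambda\in\ell^2$ with $\|\lambda\|>C$. The plan is to apply Corollary \ref{TS=1} with the scalar sequence $S_i=\mu_i I$, chosen so that $\sum_i(\lambda_i-T_i)S_i=I$. Put $S(\lambda)=\operatorname{SOT-}\lim_m\sum_{i=1}^m\bar\lambda_iT_i$ (this uses the row $R_T$ from Theorem \ref{prop 1.1}, so $\|S(\lambda)\|\le C\|\lambda\|$). Then $\sum_i\lambda_i(\lambda_i\ldots)$ does not work directly, so I will use a Neumann-type construction. Set
\[
A=\sum_i(\lambda_i I-T_i)\bar\lambda_i=\|\lambda\|^2 I-\sum_i\bar\lambda_iT_i .
\]
Here $\|\sum\bar\lambda_iT_i\|\le\|R_T\|\|\lambda\|\le C\|\lambda\|<\|\lambda\|^2$, so $A$ is invertible and $A^{-1}$ commutes with all $T_j$. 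Take $S_i=\bar\lambda_iA^{-1}$. The $S_i$ commute with each other and with the $T_j$, and $\sum_i(\lambda_i-T_i)S_i=AA^{-1}=I$ in SOT.

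The remaining hypothesis of Corollary \ref{TS=1} is that $\partial_{\mathbb S}$ is bounded. Since $\mathbb S=(\bar\lambda_iA^{-1})$ consists of commuting normal multiples of one operator, I factor $\partial_{\mathbb S}=(A^{-1}\otimes I)\sum_i\bar\lambda_i I\otimes E_i^*$. By Lemma \ref{lem:bounded-partial-normal}, applied to the commuting normal sequence $(\bar\lambda_i I)$, the second factor has norm $\|\lambda\|$. Hence $\|\partial_{\mathbb S}\|\le\|A^{-1}\|\|\lambda\|<\infty$, and Corollary \ref{TS=1} gives that $\mathbb T-\lambda$ is invertible. It remains to check that $\mathbb T-\lambda$ is itself row-bounded. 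This holds because $(\lambda_iI)$ is row-bounded with bound $\|\lambda\|^2$, and the sum of two rows is bounded by the triangle inequality for $R$.

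\emph{Closedness.} Suppose $\mathbb T-\lambda$ is invertible and $\mu\in\ell^2$. The difference of the hat operators is $\widehat{\mathbb T-\mu}-\widehat{\mathbb T-\lambda}=\widehat{(\lambda-\mu)I}$ on $\Lambda^+_{\rm fin}(H)$. This is the hat operator of the scalar sequence $((\lambda_i-\mu_i)I)$. By Lemma \ref{lem:bounded-partial-normal} together with Lemma \ref{partial norm hatT}, its norm equals $\|\lambda-\mu\|$. Therefore, for $\|\lambda-\mu\|<\varepsilon$, Corollary \ref{invert continious} shows that $\mathbb T-\mu$ is invertible. So the resolvent set is open in $\ell^2$ and $\sigma(\mathbb T)$ is closed.

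The main obstacle is the linearity identity $\widehat{\mathbb T-\mu}=\widehat{\mathbb T-\lambda}+\widehat{(\lambda-\mu)I}$. It must be checked on $\Lambda^+_{\rm fin}(H)$, where each boundary map is a SOT limit of finite sums; these limits are additive by Theorem \ref{prop 1.1}. Once that is settled, the perturbation estimate is immediate.
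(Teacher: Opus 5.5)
Your proof is correct. The closedness half is the same as the paper's, which simply invokes Corollary \ref{invert continious}. You supply the detail the paper leaves implicit: on $\Lambda^+_{\rm fin}(H)$ the perturbation $\widehat{\mathbb T-\mu}-\widehat{\mathbb T-\lambda}=\widehat{(\lambda-\mu)I}$ has norm $\|\lambda-\mu\|$, by Lemmas \ref{lem:bounded-partial-normal} and \ref{partial norm hatT}.

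The boundedness half takes a genuinely different and much more elementary route. The paper's argument runs as follows:
\begin{itemize}
\item it rescales to $\mathbb T_\varepsilon=(C+\varepsilon)^{-1}\mathbb T$ and checks that this is a pure row contraction;
\item it models $\mathbb T_\varepsilon$ as the compression of $(M_{z_1},M_{z_2},\ldots)$ to a quotient module of $H^2\otimes E$;
\item for $\lambda\notin\overline{\mathbb B}$ it picks $N$ with $\sum_{i\le N}|\lambda_i|^2>1$ and solves $\sum_{i\le N}f_i(z_i-\lambda_i)=1$ in $\mathrm{Mult}(H^2_N)$ by the corona theorem for the Drury--Arveson space;
\item it lifts the $f_i$ to $\mathrm{Mult}(H^2)$ via Lemma \ref{embed}, compresses, and applies Corollary \ref{TS=1};
\item finally it intersects over $\varepsilon$.
\end{itemize}
You instead solve the B\'ezout equation directly at the operator level, with $S_i=\bar\lambda_iA^{-1}$ and $A=\|\lambda\|^2I-\sum_i\bar\lambda_iT_i$. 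This $A$ is invertible by a Neumann series as soon as $\|\lambda\|>C$. Boundedness of $\partial_{\mathbb S}$ is then immediate from the factorization $(A^{-1}\otimes I)\sum_i\bar\lambda_iI\otimes E_i^*$.

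Your route avoids the model theory, the corona theorem and the $\varepsilon$-limit, and yields $\sigma(\mathbb T)\subset C\overline{\mathbb B}$ in one step. In fact your $A^{-1}$ is just the functional-calculus image of the explicit solution $f_i=\bar\lambda_i/(\|\lambda\|^2-\langle z,\lambda\rangle)$ of the paper's B\'ezout problem, so the corona theorem is not needed there. For this statement the paper's route buys nothing extra, although it previews the quotient-module machinery used in Section 7.

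Two cosmetic points. The sentence beginning ``Then $\sum_i\lambda_i(\lambda_i\ldots)$ does not work directly'' is garbled and should be deleted. Also, Corollary \ref{TS=1} is phrased as $\sum_iT_iS_i=I$. So you should either apply it to $\lambda-\mathbb T$, which is the sequence appearing in the definition of $\sigma(\mathbb T)$, or replace $S_i$ by $-S_i$ when applying it to $\mathbb T-\lambda$.
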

Before proving Theorem $\ref{bounded closed}$, we need some lemmas. For a row-bounded sequence of commuting operators $\mathbb{T}$, define
\[
\sigma_r(\mathbb T)
=\bigl\{\lambda\in\ell^2:\operatorname{Im}\partial_{\mathbb T-\lambda,1}
\ne\Lambda^0(H)\bigr\}.
\]
Then $\sigma_r(\mathbb{T})\subset\sigma(\mathbb{T})$. We have the following lemma.
\begin{lem}\label{normal contrac}
  Let $\mathbb{T}$ be a row-bounded sequence of commuting operators on $H$. Suppose that $T_{i} T_{j}^{*}=T_{j}^{*} T_{i}$, $\forall i, j\geq 1$. Then $\sigma(\mathbb{T})=\sigma_{r}(\mathbb{T})$.
\end{lem}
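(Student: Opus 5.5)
Since $\sigma_r(\mathbb T)\subset\sigma(\mathbb T)$ always holds, it suffices to show that $\lambda\notin\sigma_r(\mathbb T)$ implies $\lambda\notin\sigma(\mathbb T)$. The plan is to replace $\mathbb T$ by $\mathbb T-\lambda$ and reduce to the case $\lambda=0$. This reduction is harmless: the hypothesis $T_iT_j^*=T_j^*T_i$ is preserved, because $(T_i-\lambda_i)(T_j-\lambda_j)^*$ and $(T_j-\lambda_j)^*(T_i-\lambda_i)$ differ only by $T_iT_j^*-T_j^*T_i=0$. Row-boundedness is also preserved, since $\lambda\in\ell^2$. So we assume $\partial_{\mathbb T,1}:\Lambda^1(H)\to H$ is surjective and aim to prove that $\mathbb T$ is invertible.

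\textbf{Step 1: the Laplacian.} Put $D=\operatorname{SOT-}\lim_k\sum_{i=1}^kT_iT_i^*$. By Theorem \ref{prop 1.1}, $D=\partial_{\mathbb T,1}\partial_{\mathbb T,1}^*$. Surjectivity of $\partial_{\mathbb T,1}$ together with the open mapping theorem gives that $\partial_{\mathbb T,1}^*$ is bounded below, so $D$ is invertible on $H$.

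Next I repeat the computation in the proof of Lemma \ref{lem:bounded-partial-normal}. That argument used normality only through Fuglede's theorem, i.e.\ through the relations $T_iT_j^*=T_j^*T_i$, which are exactly our hypothesis. Using the anticommutation relations \eqref{anticommutationrelations} on the truncations and passing to the SOT limit via Theorem \ref{prop 1.1}, this yields
\[
\partial_{\mathbb T,n}^*\partial_{\mathbb T,n}+\partial_{\mathbb T,n+1}\partial_{\mathbb T,n+1}^*=D\otimes I_{\Lambda^n},\qquad n\ge 0 .
\]
As in that lemma, this also shows $\sup_n\|\partial_{\mathbb T,n}\|\le\|D\|^{1/2}$.

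\textbf{Step 2: the homotopy.} The hypothesis gives that $D$ commutes with every $T_j$ and every $T_j^*$. For instance, $T_jT_iT_i^*=T_iT_jT_i^*=T_iT_i^*T_j$, and the case of $T_j^*$ follows by taking adjoints. Hence $D\otimes I$ commutes with $\partial_{\mathbb T,n}$ and $\partial_{\mathbb T,n}^*$ for every $n$, and therefore so does $(D\otimes I)^{-1}=D^{-1}\otimes I$. Define
\[
h_p=\partial_{\mathbb T,p+1}^*(D^{-1}\otimes I_{\Lambda^p}),\qquad h_{-1}=0 .
\]
Then
\[
\partial_{\mathbb T,p+1}h_p+h_{p-1}\partial_{\mathbb T,p}=\bigl(\partial_{\mathbb T,p+1}\partial_{\mathbb T,p+1}^*+\partial_{\mathbb T,p}^*\partial_{\mathbb T,p}\bigr)(D^{-1}\otimes I)=I_{\Lambda^p(H)} .
\]
The operators $h_p$ are uniformly bounded, with $\sup_p\|h_p\|\le\|D\|^{1/2}\|D^{-1}\|$. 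Proposition \ref{prop:uniform-homotopy} therefore gives exactness with uniform control of preimages, which is precisely invertibility of $\mathbb T$ in the sense of Definition \ref{1and2invert}. Alternatively, one can invoke Corollary \ref{TS=1} with $S_i=T_i^*D^{-1}$.

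The main point needing care is Step 1: justifying the Laplacian identity through the SOT limits, in particular the passage to the limit in products. I would handle this exactly as in Lemma \ref{lem:bounded-partial-normal}. On each fixed $\Lambda^n(H)$, $\partial^{(N)}_{\mathbb T,n}$ converges in SOT and $\partial^{(N)*}_{\mathbb T,n}$ converges in SOT by Theorem \ref{prop 1.1}, and both families are uniformly bounded in norm. Products of uniformly bounded SOT-convergent sequences converge in SOT, so the limit can be taken term by term.
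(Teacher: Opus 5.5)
Your proposal is correct and follows the paper's core argument: from $\lambda\notin\sigma_r(\mathbb T)$ you get that $D=\partial_{\mathbb T-\lambda,1}\partial_{\mathbb T-\lambda,1}^*$ is invertible, and you derive the Laplacian identity $\partial_{n}^*\partial_{n}+\partial_{n+1}\partial_{n+1}^*=D\otimes I$ exactly as in Lemma \ref{lem:bounded-partial-normal}. The only difference is the last step: the paper derives the lower bound $\|\partial_{\mathbb T-\lambda,n+1}^*y\|^2\ge c\|y\|^2$ on $\ker\partial_{\mathbb T-\lambda,n}$ and concludes from that, whereas you use the additional (valid) fact that $D$ commutes with all $T_j,T_j^*$ to write the explicit homotopy $h_p=\partial_{p+1}^*(D^{-1}\otimes I)$ and then invoke Proposition \ref{prop:uniform-homotopy}, which makes the uniform control of preimages fully explicit.
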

\begin{proof}
it suffices to prove that $\sigma(\mathbb{T}) \subseteq \sigma_{r}(\mathbb{T})$. Suppose that 
$\lambda \notin \sigma_{r}(\mathbb{T}),$ which implies that
$\partial_{\mathbb{T}-\lambda, 1} \partial_{\mathbb{T}-\lambda, 1}^{*}$ is invertible, then there is a constant $c>0$, such that for $ x \in H,$
\begin{equation}\label{ddstarx}
\left\langle\partial_{\mathbb{T}-\lambda, 1} \partial_{\mathbb{T}-\lambda, 1}^{*} x, x\right\rangle=\left\|\partial_{\mathbb{T}-\lambda, 1}^{*} x\right\|^{2} \geq c\|x\|^{2}.
\end{equation}
For $n \geq 0$, let 
$$
y=\sum\limits_{1 \leq i_{1}<\cdots<i_{n}} f_{i_{1} \cdots i_{n}} \otimes e_{i_{1}} \wedge \cdots \wedge e_{i_{n}} \in \operatorname{ker} \partial_{\mathbb{T}-\lambda, n}.
$$
Notice that for \(n \geq 1\),
\begin{equation}\label{all n eq 1}
\begin{aligned}
& \partial_{\mathbb{T}-\lambda, n}^{*} \partial_{\mathbb{T}-\lambda, n}+\partial_{\mathbb{T}-\lambda, n+1} \partial_{\mathbb{T}-\lambda, n+1}^{*} \\
= & \left(\sum_{i=1}^{\infty} \left(T_{i}^{*}-\bar{\lambda}_{i}\right) \otimes E_{i}\right)\left(\sum_{j=1}^{\infty} \left(T_{j}-\lambda_{j}\right) \otimes E_{j}^{*}\right)+\left(\sum_{j=1}^{\infty}\left( T_{j}-\lambda_{j}\right) \otimes E_{j}^{*}\right)\left(\sum_{i=1}^{\infty}\left(T_{i}^{*}-\bar{\lambda}_{i}\right) \otimes E_{i}\right)\bigg|_{\Lambda^n(H)} \\
= & \left(\sum_{i=1}^{\infty}\left(T_{i}^{*}-\bar{\lambda}_{i}\right)\left(T_{i}-\lambda_{i}\right) \otimes E_{i} E_{i}^{*}+\sum_{i \neq j}\left(T_{i}^{*}-\bar{\lambda}_{i}\right)\left(T_{j}-\lambda_{j}\right) \otimes E_{i} E_{j}^{*}\right.\\&
\left.+  \sum_{i=1}^{\infty}\left(T_{i}-\lambda_{i}\right)\left(T_{i}^{*}-\bar{\lambda}_{i}\right) \otimes E_{i}^{*} E_{i}+\sum_{i \neq j}\left(T_{j}-\lambda_{j}\right)\left(T_{i}^{*}-\bar{\lambda}_{i}\right) \otimes E_{j}^{*} E_{i}\right)\bigg|_{\Lambda^n(H)} \\
= & \sum_{i=1}^{\infty}\left(T_{i}-\lambda_{i}\right)\left(T_{i}^{*}-\bar{\lambda}_{i}\right) \otimes\left(E_{i} E_{i}^{*}+E_{i}^{*} E_{i}\right)+\sum_{i \neq j}\left(T_{i}^{*}-\bar{\lambda}_{i}\right)\left(T_{j}-\lambda_{j}\right) \otimes\left(E_{i} E_{j}^{*}+E_{j}^{*} E_{i}\right)\bigg|_{\Lambda^n(H)} \\
= & \sum_{i=1}^{\infty}\left(T_{i}-\lambda_{i}\right)\left(T_{i}^{*}-\bar{\lambda}_{i}\right) \otimes I\bigg|_{\Lambda^n(H)} \\
= & \partial_{\mathbb{T}-\lambda, 1} \partial_{\mathbb{T}-\lambda, 1}^{*} \otimes I\bigg|_{\Lambda^n(H)}.
\end{aligned}
\end{equation}
Hence By \eqref{all n eq 1} and \eqref{ddstarx},
$$
\begin{aligned}
&\left\|\partial_{\mathbb{T}-\lambda, n+1}^{*} y\right\|^{2} \\& =\left\langle\partial_{\mathbb{T}-\lambda, n+1}^{*} y, \partial_{\mathbb{T}-\lambda, n+1}^{*} y\right\rangle \\
& =\left\langle\left(\partial_{\mathbb{T}-\lambda, n}^{*} \partial_{\mathbb{T}-\lambda, n}+\partial_{\mathbb{T}-\lambda, n+1} \partial_{\mathbb{T}-\lambda, n+1}^{*}\right) y, y\right\rangle \\
& =\left\langle\left(\partial_{\mathbb{T}-\lambda, 1} \partial_{\mathbb{T}-\lambda, 1}^{*} \otimes I\right) \sum_{1\leq i_{1}<\cdots<i_{n}} f_{i_{1} \cdots i_{n}} \otimes e_{i_{1}} \wedge \cdots \wedge e_{i_{n}}, \sum_{1\leq i_{1}<\cdots<i_{n}} f_{i_{1} \cdots i_{n}} \otimes e_{i_{1}} \wedge \cdots \wedge e_{i_{n}}\right\rangle \\
& =\sum_{1\leq i_{1}<\cdots<i_{n}}\left\langle\partial_{\mathbb{T}-\lambda, 1} \partial_{\mathbb{T}-\lambda, 1}^{*} f_{i_{1} \cdots i_{n}}, f_{i_{1} \cdots i_{n}}\right\rangle \\
& \geq \sum_{1\leq i_{1}<\cdots<i_{n}} c\left\|f_{i_{1} \cdots i_{n}}\right\|^{2} \\
& =c\|y\|^{2}.
\end{aligned}
$$
which implies that $\mathbb{T}-\lambda$ is invertible, i.e. $\lambda\notin \sigma(\mathbb{T})$.
\end{proof}

For a reproducing kernel Hilbert space \(\mathcal{H}\), let \(\mathrm{Mult}(\mathcal{H})\) denote the multiplier algebra of \(\mathcal{H}\).
For \(t >0\), let \(A_{n,t}^{2}\) be the reproducing kernel Hilbert space on \(\mathbb{B}_{n}\) with kernel \(K(z,w)=\frac{1}{(1-\left\langle z,w\right\rangle)^t}\). It is not difficult to see that
\begin{align}\label{normAnt}
\|z^{\alpha}\|_{A_{n,t}^{2}}^{2}=\frac{\alpha!\,\Gamma(t)}{\Gamma(|\alpha|+t)}.
\end{align}
Let $H_{n}^{2}$ be the Arveson space in $n$ variables. 
Notice that $H_{n}^2=A_{n,1}^{2}$. 
Then we have the following Lemma.
\begin{lem}\label{embed}
  Let \(n \geq 1\). If $f \in \mathrm{Mult}\left(H_{n}^{2}\right)$, then $f\in \mathrm{Mult}\left(H^{2}\right)$ and \(\|f\|_{\mathrm{Mult}(H^2)}= \|f\|_{\mathrm{Mult}(H_n^2)}\).
\end{lem}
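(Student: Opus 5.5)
The plan is to use the natural isometric embedding of $H_n^2$ into $H^2$ together with the commuting-coordinates structure of the Drury--Arveson space. A function $f$ on $\mathbb B_n$ is regarded as a function on $\mathbb B$ via $f\circ\pi_n$, and $H^2$ is decomposed according to the splitting of variables $z=(z',z'')$ with $z'=(z_1,\dots,z_n)$ and $z''=(z_{n+1},z_{n+2},\dots)$. The upper bound $\|f\|_{\mathrm{Mult}(H^2)}\le\|f\|_{\mathrm{Mult}(H_n^2)}$ is the substantial part. The reverse inequality is easy: $H_n^2$ sits isometrically in $H^2$ as the closed span of monomials in $z_1,\dots,z_n$, which is invariant under $M_f$, so restriction gives $\|f\|_{\mathrm{Mult}(H_n^2)}\le\|f\|_{\mathrm{Mult}(H^2)}$.

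For the upper bound, I first use the monomial norms $\|z^\beta\|^2_{H^2}=\beta!/|\beta|!$. For a fixed multi-index $\gamma$ in the variables $z''$ with $|\gamma|=m$, I consider the closed subspace
\[
\mathcal H_\gamma=\overline{\operatorname{span}}\{z^{\alpha}z^\gamma:\alpha\in\mathbb N^n\}.
\]
This gives the decomposition $H^2=\bigoplus_\gamma\mathcal H_\gamma$, and each $\mathcal H_\gamma$ is invariant under multiplication by $f(z')$. Next, by \eqref{normAnt},
\[
\|z^\alpha z^\gamma\|^2=\frac{\alpha!\,\gamma!}{(|\alpha|+m)!}=\frac{\gamma!}{m!}\cdot\frac{\alpha!\,\Gamma(m+1)}{\Gamma(|\alpha|+m+1)}=\frac{\gamma!}{m!}\|z^\alpha\|^2_{A^2_{n,m+1}} .
\]
Consequently, the map $g(z')z^\gamma\mapsto g$ is, up to a constant, a unitary $\mathcal H_\gamma\to A^2_{n,m+1}$ that intertwines $M_f$. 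It follows that $\|f\|_{\mathrm{Mult}(H^2)}=\sup_{m\ge0}\|f\|_{\mathrm{Mult}(A^2_{n,m+1})}$.

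It therefore remains to show that $\|f\|_{\mathrm{Mult}(A^2_{n,t})}\le\|f\|_{\mathrm{Mult}(H^2_n)}$ for every integer $t\ge1$. I will do this with the standard positivity argument. Set $\|f\|_{\mathrm{Mult}(H_n^2)}\le1$, which is equivalent to
\[
\frac{1-f(z)\overline{f(w)}}{1-\langle z,w\rangle}\succeq0 .
\]
Multiplying this by the positive kernel $(1-\langle z,w\rangle)^{-(t-1)}$ and applying the Schur product theorem gives
\[
\frac{1-f(z)\overline{f(w)}}{(1-\langle z,w\rangle)^{t}}\succeq0,
\]
that is, $\|f\|_{\mathrm{Mult}(A^2_{n,t})}\le1$. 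Combining this with the previous paragraph yields the equality of norms.

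The main obstacle is getting the orthogonal decomposition and the norm identification exactly right, in particular checking that the constant $\gamma!/m!$ depends only on $\gamma$, so that multiplication by $f$ acts on each block as on $A^2_{n,m+1}$. A smaller point is convergence: I must verify that $M_f$ preserves $\mathcal H_\gamma$ for a general multiplier $f$ and not only for polynomials. I will handle this by noting that $fz^\alpha z^\gamma=(fz^\alpha)z^\gamma$, where $fz^\alpha\in A^2_{n,m+1}$ by the multiplier bound, and that the unitary correspondence carries this product into $\mathcal H_\gamma$, with $\mathcal H_\gamma$ closed.
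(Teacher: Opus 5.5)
Your proof is correct, but it takes a different route from the paper. The paper does not decompose $H^2$. It quotes \cite[Lemma~6.2]{AHMR2022} for the finite-variable identity $\|f\|_{\mathrm{Mult}(H_m^2)}=\|f\|_{\mathrm{Mult}(H_n^2)}$ for every $m\ge n$. It then estimates $\|fg\|_{H^2}=\|fg\|_{H_m^2}$ for $g$ in the dense subspace $\bigcup_{m\ge n}H_m^2$, extends by density, and treats the reverse inequality as obvious.

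You instead split $H^2=\bigoplus_\gamma\mathcal H_\gamma$ over monomials in the extra variables. You identify $\mathcal H_\gamma$ with $A^2_{n,|\gamma|+1}$; your constant $\gamma!/m!$ is right and depends only on $\gamma$. The Schur product theorem with the positive kernel $(1-\langle z,w\rangle)^{-(t-1)}$ then bounds multiplication by $f$ on each $A^2_{n,t}$ by $\|f\|_{\mathrm{Mult}(H_n^2)}$, and the block $\gamma=0$ is $H_n^2$ itself.

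What your route buys:
\begin{itemize}
\item It is self-contained and in effect reproves the cited lemma (restrict $\gamma$ to the variables $z_{n+1},\dots,z_m$).
\item It shows structurally why the norm cannot grow: $M_f$ on $H^2$ is unitarily equivalent to a direct sum of multiplication operators on the spaces $A^2_{n,|\gamma|+1}$.
\end{itemize}
The paper's version is shorter only because the finite-variable content is outsourced.

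When you write it up, make the gluing step explicit. The blockwise operator $X=\bigoplus_\gamma\bigl(\text{multiplication by } f \text{ on } \mathcal H_\gamma\bigr)$ is bounded by your estimates. Continuity of point evaluations applied to $h=\sum_\gamma h_\gamma$ then gives $(Xh)(z)=f(z)h(z)$. This is what actually shows $f\in\mathrm{Mult}(H^2)$ with $M_f=X$; the paper's density step needs the same observation.
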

\begin{proof}
	For any \(m \geq n\), by \cite[Lemma 6.2]{AHMR2022},  we have \(f \in \mathrm{Mult}(H_{m}^2)\) and 
	\begin{equation}\label{+meq}
	\|f\|_{\mathrm{Mult}(H_{m}^2)} = \|f\|_{\mathrm{Mult}(H_{n}^2)}.
	\end{equation}
	For every $g\in\bigcup\limits_{m=n}^{\infty}H_{m}^2$, choose $m\geq n$ such that
	$g\in H_m^2$. Then, by \eqref{+meq},
	\begin{align*}
		\|fg\|_{H^2}=
		\|fg\|_{H_m^2}
		\leq \|f\|_{\mathrm{Mult}(H_m^2)}\|g\|_{H_m^2}
		=\|f\|_{\mathrm{Mult}(H_n^2)}\|g\|_{H^2}.
	\end{align*}
	Notice that \(\bigcup\limits_{m=n}^{\infty}H_{m}^2\) is dense in \(H^2\).
	Then \(f\in\mathrm{Mult}(H^2)\) and \(\|f\|_{\mathrm{Mult}(H^2)}
	\leq\|f\|_{\mathrm{Mult}(H_n^2)}\). 
	It is easy to see that \(\|f\|_{\mathrm{Mult}(H^2)}
	\geq\|f\|_{\mathrm{Mult}(H_n^2)}\). Therefore, \(\|f\|_{\mathrm{Mult}(H^2)}
	=\|f\|_{\mathrm{Mult}(H_n^2)}\). 
\end{proof}

With the preparations above, we now prove Theorem \ref{bounded closed}.
\vskip1.5mm
\noindent$Proof~of~Theorem~\ref{bounded closed}.$  By Corollary $\ref{invert continious}$, we have $\sigma(\mathbb{T})$ is closed. Fix $\varepsilon>0$, and write $\mathbb{T}_{\varepsilon}=(\frac{1}{C+\varepsilon}T_1,\frac{1}{C+\varepsilon}T_2,\cdots)$. Then by Definition 1.2, it's easy to verify that
\begin{equation}\label{si T eq si T e}
\sigma\left(\mathbb{T}\right)=(C+\varepsilon) \sigma(\mathbb{T}_{\varepsilon}).
\end{equation}
Let $\phi_{\varepsilon}(A)=\sum\limits_{i=1}^\infty \left(\frac{1}{C+\varepsilon}T_i\right)A\left(\frac{1}{C+\varepsilon}T_i\right)^*$ for $A\in B(H)$. Then
$$
\phi_{\varepsilon}^{n}(I)=\phi_{\varepsilon}^{n-1}\left(\sum_{i=1}^{\infty} \frac{1}{(C+\varepsilon)^2}T_{i} T_{i}^{*}\right) \leq \phi_{\varepsilon}^{n-1}(\frac{C^2}{(C+\varepsilon)^2}I) = \frac{C^2}{(C+\varepsilon)^2} \phi_{\varepsilon}^{n-1}(I).
$$
So $\lim\limits _{n \rightarrow \infty} \phi_{\varepsilon}^{n}(I)=0$. By \cite{curv}, $\mathbb{T}_{\varepsilon}$ is unitarily equivalent to the row-bounded sequence 
$$
\mathbb{S}=\left(S_{z_1}^{\mathcal{M}^{\perp}}, S_{z_2}^{\mathcal{M}^{\perp}}, \cdots\right),
$$
which acts on a quotient module $\mathcal{M}^{\perp}$ of $H^2\otimes E$.  We claim that $\sigma(\mathbb{S}) \subseteq \bar{\mathbb{B}}$. In fact, for $ \lambda=\left(\lambda_{1}, \lambda_{2}, \cdots\right) \in l^{2} \backslash \bar{\mathbb{B}}$, there is an $ N \in \mathbb{N} $ such that $\sum\limits_{i=1}^{N}\left|\lambda_{i}\right|^{2}>1$. By \cite[Theorem~1]{CSW}, there exist $f_1,\cdots,f_N\in \mathrm{Mult}\left(H_{N}^{2} \right)$, such that
$$
\sum_{i=1}^{N}(M_{f_i}\otimes I)(M_{z_i-\lambda_i}\otimes I)=I_{H_{N}^{2} \otimes E}.
$$
By Lemma $\ref{embed}$,
$$
\sum_{i=1}^{N}(M_{f_i}\otimes I)(M_{z_i-\lambda_i}\otimes I)=I_{H^{2} \otimes E}.
$$
Therefore
$$
\sum_{i=1}^{N}P_{\mathcal{M}^\perp}(M_{f_i}\otimes I)P_{\mathcal{M}^\perp}(S_{z_i}^{\mathcal{M}^{\perp}}-\lambda_iI_{\mathcal{M}^\perp})=I_{\mathcal{M}^\perp}.
$$
By Corollary  $\ref{TS=1}$, $\lambda \notin \sigma(\mathbb{S})$, which implies that $\sigma(\mathbb{S})\subseteq \overline{\mathbb{B}}$. The claim is proved. Hence
$$
\sigma(\mathbb{T}_{\varepsilon})\subseteq\sigma(\mathbb{S})\subseteq \overline{\mathbb{B}}.
$$
Then by \eqref{si T eq si T e}, we have
$$
\sigma(\mathbb{T}) = \bigcap_{n=1}^{\infty}\left(C+\frac{1}{n}\right)\sigma(\mathbb{T}_{\frac{1}{n}}) \subseteq \bigcap_{n=1}^{\infty}\left(C+\frac{1}{n}\right) \cdot \bar{\mathbb{B}}=C \cdot \bar{\mathbb{B}}.
$$
$\hfill\square$
\begin{rem}\label{rem31}
\begin{itemize}
\item [(1)]It is worth noting that $\sigma(\mathbb{T})$ and $\sigma_{SZ}(\mathbb{T})$ are not always equal. For example, let $H$ be an infinite-dimensional separable Hilbert space, $\left\{e_{i}\right\}_{i \geq 1}$ be an orthonormal basis of $H$. Let 
$$
T_{i} x=\left\langle x, e_{i}\right\rangle e_{i},~ \forall~ i \geq 1, ~x\in H.
$$
Then $\sigma_{SZ}(\mathbb{T}) \neq \sigma(\mathbb{T})$. In fact, obviously $\left(T_{1}, \cdots, T_{n}\right)$ is not invertible for all $n\geq 1$, which implies that $0\in\sigma_{SZ}(\mathbb{T})$. Since $T_{i} T_{j}^{*}=T_{j}^{*} T_{i}, \forall i, j$ and $\partial_{\mathbb{T}, 1} \partial_{\mathbb{T}, 1}^{*}=\sum\limits_{k=1}^{\infty} T_{k} T_{k}^{*}=I$, by Lemma $\ref{normal contrac}$, $T$ is invertible, which implies that $0\notin \sigma(\mathbb{T})$.
  \item [(2)]  For every nonempty bounded closed subset $E$ of $l^{2}$, there exists a row-bounded sequence of commuting operators $\mathbb{T}$ such that $\sigma(\mathbb{T})=E$. In fact, for a bounded subset $E$ of $l^{2}$, there exists a constant $C>0$ such that $E \subseteq C \cdot \bar{\mathbb{B}}$. Let 
\begin{equation}\label{example 2.7.2}
  H=L^{2}(E, d \mu) ~\text{and}~ \mathbb{T}=\left(M_{z_{1}}, M_{z_{2}}, \cdots\right),
\end{equation}
where $\mu$ is the counting measure. For $f \in H$, we have
$$
\left\langle\sum_{i=1}^{\infty} M_{z_{i}} M_{z_{i}}^{*} f, f\right\rangle=\sum_{i=1}^{\infty} \int_{E}\left|\bar{z}_{i} f\right|^{2} d \mu=\int_{E} \sum_{i=1}^{\infty}\left|\bar{z}_{i} f\right|^{2} d \mu=\int_{E}|f|^{2} \sum_{i=1}^{\infty}\left|\bar{z}_{i}\right|^{2} d \mu \leq C^2\left\langle f, f\right\rangle,
$$
so $\mathbb{T}=\left(M_{z_{1}}, M_{z_{2}}, \cdots\right)$ is a row-bounded sequence. Notice that $M_{z_{i}} M_{z_{j}}^{*}=M_{z_{j}}^{*} M_{z_{i}}, \forall i, j$, by Lemma \ref{normal contrac}, we have $\sigma(\mathbb{T})=\sigma_{r}(\mathbb{T})$. Since $E$ is closed and $\mu$ is the counting measure, we have
$$
\begin{aligned}
\lambda \in \sigma_{r}(\mathbb{T}) & \Leftrightarrow 0 \in \sigma\left(\sum_{i=1}^{\infty}\left(M_{z_{i}}-\lambda_{i} I\right)\left(M_{z_{i}}^{*}-\bar{\lambda}_{i} I\right)\right) \\
& \Leftrightarrow \lambda \in\left\{\lambda \in l^{2} \mid \forall \varepsilon>0, \mu\left(\left\{x \in E: \sum_{i=1}^{\infty}\left|\lambda_{i}-x_{i}\right|^{2}<\varepsilon\right\}\right)>0\right\} \\
& \Leftrightarrow \lambda \in E.
\end{aligned}
$$
  \item [(3)] For a commuting d-tuple $(T_1,\cdots,T_d)$, by \cite{Taylorspectrum}, Taylor proved the spectral projection theorem. But it is false in infinitely many operators case. For example, Let $x_{i}=\left(\frac{1}{i}, \delta_{2, i}, \delta_{3, i}, \cdots\right)$, $i\geq 1$, where $\delta_{i, j}$ is the Kronecker symbol. Set $E=\left\{x_{i}\right\}_{i \geq 1}$. Obviously $\pi_{1}E=\left\{\frac{1}{i}\right\}_{i\geq 1}$ is not a compact set. By (2) there exists $\mathbb{T}$ such that $\sigma(\mathbb{T})=E$. Hence 
$$
\pi_1\sigma(\mathbb{T})\neq \sigma(T_1),
$$
where $\pi_1$ is the projection from $l^2$ onto $\mathbb{C}$. Moreover, since $\pi_1(\sigma(\mathbb{T}))$ is not compact, it follows that $\sigma(\mathbb{T})$ is not compact in $(l^2,\tau_p)$.
\end{itemize}
\end{rem}

\begin{thm}\label{thm:normconv-nonemptyI}
Let \(\mathbb T\) be a row-bounded sequence of commuting operators. Suppose that there exists \(J\subseteq\mathbb N\) such that
$
\sum\limits_{j\notin J}T_jT_j^*
$ and
$
\sum\limits_{j\in J}T_j^*T_j
$
both converge in operator norm. Then \(\sigma_{SZ}(\mathbb T)\subseteq\sigma(\mathbb T)\), and consequently \(\sigma(\mathbb T)\neq\emptyset\).
\end{thm}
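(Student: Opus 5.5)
The approach is to prove the contrapositive. Fix $\lambda\in\ell^2\setminus\sigma(\mathbb T)$; we want some $n$ with $(T_1-\lambda_1,\dots,T_n-\lambda_n)$ Taylor invertible, which gives $\lambda\notin\sigma_{SZ}(\mathbb T)$. Nonemptiness of $\sigma(\mathbb T)$ then follows from nonemptiness of $\sigma_{SZ}(\mathbb T)$, which the introduction quotes from \cite[Theorem 2.3]{SZ1974}.

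\emph{Reduction to $\lambda=0$.} First check that the hypothesis survives translation. Since $\lambda\in\ell^2$, both $\sum_{j\notin J,\,j>n}|\lambda_j|^2$ and $\sum_{j\in J,\,j>n}|\lambda_j|^2$ tend to $0$. Write $(T_j-\lambda_j)(T_j-\lambda_j)^*$ through the row operator $R$ of the proof of Theorem \ref{prop 1.1}, applied to the tail family. By the triangle inequality for row norms, the tail of $\sum_{j\notin J}(T_j-\lambda_j)(T_j-\lambda_j)^*$ then has norm at most $\bigl(\|\sum_{j\notin J,j>n}T_jT_j^*\|^{1/2}+(\sum_{j\notin J,j>n}|\lambda_j|^2)^{1/2}\bigr)^2\to0$. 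The column sums over $J$ are handled the same way. So it suffices to show: if $\mathbb T$ is invertible, then some $(T_1,\dots,T_n)$ is invertible.

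\emph{Main step.} Assume $\mathbb T$ is invertible; by Proposition \ref{thm3.2}, $\widehat{\mathbb T}$ is invertible. Let $\mathbb T^{(n)}=(T_1,\dots,T_n,0,0,\dots)$. On $\Lambda_{\mathrm{fin}}^+(H)$ we have, by Corollary \ref{cor:hat-T-sum},
\[
\widehat{\mathbb T}-\widehat{\mathbb T^{(n)}}=(D_n+D_n^*)\big|_{\Lambda^+_{\mathrm{fin}}(H)},\qquad D_n=\sum_{j>n}T_j\otimes E_j^* .
\]
Split $D_n=D_n^{J^c}+D_n^{J}$ according to whether $j\notin J$ or $j\in J$. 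I will aim for the uniform estimates
\[
\|D_n^{J^c}\|\le c\,\Bigl\|\sum_{j\notin J,\,j>n}T_jT_j^*\Bigr\|^{1/2},\qquad
\|D_n^{J}\|\le c\,\Bigl\|\sum_{j\in J,\,j>n}T_j^*T_j\Bigr\|^{1/2},
\]
with $c$ independent of the degree in $\Lambda(H)$. For the column part, Lemma \ref{sup=} transfers the problem to the creation sum $\sum_{j\in J,j>n}T_j\otimes E_j$. I would expand $B^*B$ using the anticommutation relations \eqref{anticommutationrelations}, the same way as in \eqref{laplacian-truncated} and the proof of Lemma \ref{normal contrac}: the diagonal part gives $\sum T_j^*T_j\otimes I$, and the off-diagonal part needs to be controlled. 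For the row part, the analogous expansion of $D_n^{J^c}(D_n^{J^c})^*$ yields $\sum T_jT_j^*\otimes I$ minus cross terms.

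Granting these bounds, $\|(\widehat{\mathbb T}-\widehat{\mathbb T^{(n)}})x\|<\varepsilon\|x\|$ on $\Lambda^+_{\mathrm{fin}}(H)$ for $n$ large. Corollary \ref{invert continious} then says $\mathbb T^{(n)}$ is invertible, and Lemma \ref{Tn and T'} gives that $(T_1,\dots,T_n)$ is Taylor invertible.

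\emph{Expected obstacle.} The hard part is the degree-uniform estimate. The crude bound $\|\partial_{\mathbb T,t}\|\le\sqrt{tC}$ from Theorem \ref{prop 1.1} grows with $t$, and in the anticommutator expansion the cross terms come out as $T_iT_j^*\otimes E_jE_i^*$. These are not manifestly of one sign, because $T_i$ and $T_j^*$ need not commute. My first attempt would be to write each cross term as $-YY^*$ (or $-Y^*Y$) for a suitable mixed creation/annihilation operator $Y$, so that the Laplacian-type identity gives the inequality in the right direction. If that fails, the fallback is to avoid operator norms: estimate $\|D_nx\|$ directly for homogeneous $x\in\Lambda^p(H)$, using the row-operator factorization $(R\otimes I)C_t$ restricted to indices $j>n$. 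In that factorization the row (respectively column) tail norm is the small factor. The aim would be to show that the factor $\sqrt t$ can be absorbed for vectors in $\operatorname{Dom}(\widehat{\mathbb T})$ via the lower bound $\|\widehat{\mathbb T}x\|\ge c\|x\|$.
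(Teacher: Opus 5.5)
Your reduction to $\lambda=0$ and your closing use of Corollary \ref{invert continious} and Lemma \ref{Tn and T'} are fine. The gap is the main step: it needs the perturbation $\widehat{\mathbb T}-\widehat{\mathbb T^{(n)}}$ to be small, but under the hypotheses it need not even be bounded, so the perturbation route cannot be repaired. The hypotheses control only the tails of $\sum_{j\notin J}T_jT_j^*$ and $\sum_{j\in J}T_j^*T_j$, not $D_n=\sum_{j>n}T_j\otimes E_j^*$.

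Here is an explicit obstruction. Take $H=\ell^2$ with orthonormal basis $f_0,f_1,\dots$, put $T_jx=j^{-1/2}\langle x,f_0\rangle f_j$, and let $J=\emptyset$. Then $T_iT_j=0$, and $\sum_jT_jT_j^*=\sum_j j^{-1}\langle\cdot,f_j\rangle f_j$ converges in norm (its tail after $n$ has norm $1/(n+1)$), so every hypothesis holds. For the unit vector $x=f_0\otimes e_{n+1}\wedge\cdots\wedge e_{n+2s}\in\Lambda^+_{\mathrm{fin}}(H)$ we have $D_n^*x=0$ and
\[
\bigl\|(\widehat{\mathbb T}-\widehat{\mathbb T^{(n)}})x\bigr\|^2=\|D_nx\|^2=\sum_{k=1}^{2s}\frac{1}{n+k}\longrightarrow\infty\qquad(s\to\infty).
\]
This matches Lemma \ref{bound and SOT}: a bounded $D_n$ would force $\sum_{j>n}\|T_jf_0\|^2<\infty$. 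So no bound $\|D_n^{J^c}\|\le c\,\|\sum_{j\notin J,\,j>n}T_jT_j^*\|^{1/2}$ can hold, whatever the cross terms do, and Corollary \ref{invert continious} never applies.

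Your fallback fails on the same example. Let $\lambda=(\lambda_1,0,0,\dots)$ with $\lambda_1\neq0$. Then $\mathbb T-\lambda$ is invertible by Corollary \ref{TS=1}, since $T_1^2=0$. The tail perturbation still acts on the same $x$ as $D_n$, and a direct computation gives $\|\widehat{\mathbb T-\lambda}\,x\|^2=\|D_nx\|^2+|\lambda_1|^2$. Hence, for every $n$, the growth of $\|D_nx\|$ cannot be absorbed into $\|\widehat{\mathbb T-\lambda}\,x\|$ with a constant below $1$, which any relative perturbation argument needs. Yet here $(T_1-\lambda_1,\dots,T_n-\lambda_n)$ is trivially invertible.

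The missing idea is to test invertibility on well-chosen vectors instead of bounding an operator. The paper argues by contradiction with $\lambda\in\sigma_{SZ}(\mathbb T)\setminus\sigma(\mathbb T)$. Proposition \ref{Iinvert iff uniform homotopy} gives a bounded $h$ with $\partial_{\mathbb T-\lambda}h+h\partial_{\mathbb T-\lambda}=I$. Hence $\|\zeta\|^2\le2\|h\|^2\bigl(\|\partial_{\mathbb T-\lambda}\zeta\|^2+\|\partial_{\mathbb T-\lambda}^*\zeta\|^2\bigr)$ for $\zeta\in\Lambda_{\mathrm{fin}}(H)$. Since $\lambda\in\sigma_{SZ}(\mathbb T)$, there are unit vectors $\xi_N\in H\otimes\Lambda[\mathbb C^N]$ on which the $N$-variable Koszul operator and its adjoint nearly vanish (sum of squares at most $N^{-2}$). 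Test the inequality on $\zeta_{N,M}=\xi_N\wedge\bigwedge_{j\in J\cap(N,M]}e_j$. There the contributions of the indices $j>N$ are mutually orthogonal, so no cross terms appear:
\begin{itemize}
\item indices $j\in J\cap(N,M]$ contribute only $\|(T_j-\lambda_j)\xi_N\|^2$, a column tail;
\item indices $j\notin J$ contribute only $\|(T_j-\lambda_j)^*\xi_N\|^2$, a row tail;
\item indices $j\in J$, $j>M$ contribute row terms that tend to $0$ as $M\to\infty$ for fixed $N$.
\end{itemize}
Taking $N$ large and then $M\to\infty$ contradicts the lower bound. Your split into rows off $J$ and columns on $J$ is exactly the right bookkeeping. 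It just has to be applied to these vectors, where the indefinite cross terms you worried about vanish by orthogonality.
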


\begin{proof}
Assume that $\sigma_{SZ}(\mathbb{T})\nsubseteq\sigma(\mathbb{T})$. Choose $\lambda \in \sigma_{SZ}(\mathbb{T})$ such that $\lambda \notin \sigma(\mathbb{T})$. Since $\lambda \notin \sigma(\mathbb{T})$, by Proposition \ref{Iinvert iff uniform homotopy}, there exists $h\in B(\Lambda(H))$ satisfying $\partial_{\mathbb{T}-\lambda} h + h \partial_{\mathbb{T}-\lambda} = I$. Therefore, for $\zeta\in D(\partial_{\mathbb{T}-\lambda})\cap D(\partial_{\mathbb{T}-\lambda}^*) $,
\begin{equation}\label{zeta C control}
\|\zeta\|^2 
\le (\|h\| (\|\partial_{\mathbb{T}-\lambda} \zeta\| + \|\partial_{\mathbb{T}-\lambda}^* \zeta\|))^2
\le 2\|h\|^2 ( \|\partial_{\mathbb{T}-\lambda} \zeta\|^2 + \|\partial_{\mathbb{T}-\lambda}^* \zeta\|^2 ),
\end{equation}
Moreover from $\lambda \in \sigma_{SZ}(\mathbb{T})$, we have for every $N\geq 1$, $(T_1-\lambda_1I,\cdots,T_N-\lambda_NI)$ is not Taylor invertible. Thus, there exists a unit vector $\xi_N \in H \otimes \Lambda [\mathbb{C}^N]$ such that
\begin{equation}\label{T-lambdaN le N-2}
\left\|\left(\sum_{j=1}^{N}(T_j-\lambda_jI)\otimes E_j^*\right) \xi_N\right\|^2 + \left\|\left(\sum_{j=1}^{N}(T_j-\lambda_jI)^*\otimes E_j\right) \xi_N\right\|^2 \le N^{-2}.
\end{equation}
Let $F_{N,M} = J \cap \{N+1, \cdots, M\}$, $\eta_{N,M} = \bigwedge\limits_{j \in F_{N,M}} e_j$, and $\zeta_{N,M} = \xi_N \wedge \eta_{N,M}$. Then $\|\zeta_{N,M}\| = \|\xi_N\| = 1$. 
Notice that for $N<k<k'$,
\begin{equation}\label{orthT1}
\left\langle \left(\sum_{j=1}^{N}(T_j-\lambda_j I)\otimes E_j^*\right)\xi_N\wedge\eta_{N,M},\,
((T_k-\lambda_k I)\otimes E_k^*)\xi_N\wedge\eta_{N,M}\right\rangle=0.
\end{equation}
and
\begin{equation}\label{orthT2}
\left\langle ((T_k-\lambda_k I)\otimes E_k^*)\xi_N\wedge\eta_{N,M},\,
((T_{k'}-\lambda_{k'} I)\otimes E_{k'}^*)\xi_N\wedge\eta_{N,M}\right\rangle=0.
\end{equation}
Therefore, by \eqref{orthT1} and \eqref{orthT2},
\begin{equation}\label{partialT}
\begin{aligned}
&\|\partial_{\mathbb T-\lambda}\zeta_{N,M}\|^2\\
=&\left\|\sum_{j\ge1}\big((T_j-\lambda_j I)\otimes E_j^*\big)(\xi_N\wedge\eta_{N,M})\right\|^2\\
=&
\left\|\left(\sum_{j=1}^{N}(T_j-\lambda_j I)\otimes E_j^*\right)(\xi_N\wedge\eta_{N,M})\right\|^2
+\sum_{j>N}\left\|((T_j-\lambda_j I)\otimes I)(\xi_N\wedge(E_j^*\eta_{N,M}))\right\|^2\\
=&\left\|\left(\sum_{j=1}^{N}(T_j-\lambda_j I)\otimes E_j^*\right)\xi_N\right\|^2\|\eta_{N,M}\|^2
+\sum_{j>N}\|((T_j-\lambda_j I)\otimes I)\xi_N\|^2\|E_j^*\eta_{N,M}\|^2\\
=&\left\|\left(\sum_{j=1}^{N}(T_j-\lambda_j I)\otimes E_j^*\right)\xi_N\right\|^2
+\sum_{j\in F_{N,M}}\|((T_j-\lambda_j I)\otimes I)\xi_N\|^2\\
=&\left\|\left(\sum_{j=1}^{N}(T_j-\lambda_j I)\otimes E_j^*\right)\xi_N\right\|^2
+\sum_{\substack{j>N\\ j\in J,\, j\le M}}\|((T_j-\lambda_j I)\otimes I)\xi_N\|^2,
\end{aligned}
\end{equation}
Similarly, for $N<k<k'$,
\begin{equation}\label{orthTstar1}
\left\langle \left(\sum_{j=1}^{N}(T_j-\lambda_j I)\otimes E_j^*\right)^*\xi_N\wedge\eta_{N,M},\,
((T_k-\lambda_k I)\otimes E_k^*)^*\xi_N\wedge\eta_{N,M}\right\rangle=0.
\end{equation}
and
\begin{equation}\label{orthTstar2}
\left\langle ((T_k-\lambda_k I)\otimes E_k^*)^*\xi_N\wedge\eta_{N,M},\,
((T_{k'}-\lambda_{k'} I)\otimes E_{k'}^*)^*\xi_N\wedge\eta_{N,M}\right\rangle=0.
\end{equation}
Therefore, by \eqref{orthTstar1}, and \eqref{orthTstar2},
\begin{equation}\label{partialTstar}
\begin{aligned}
&\|\partial_{\mathbb T-\lambda}^*\zeta_{N,M}\|^2\\
=&\left\|\sum_{j\ge1}\big((T_j-\lambda_j I)^*\otimes E_j\big)(\xi_N\wedge\eta_{N,M})\right\|^2\\
=&
\left\|\left(\sum_{j=1}^{N}(T_j-\lambda_j I)^*\otimes E_j\right)(\xi_N\wedge\eta_{N,M})\right\|^2
+\sum_{j>N}\left\|((T_j-\lambda_j I)\otimes I)^*(\xi_N\wedge(E_j\eta_{N,M}))\right\|^2\\
=&\left\|\left(\sum_{j=1}^{N}(T_j-\lambda_j I)^*\otimes E_j\right)\xi_N\right\|^2\|\eta_{N,M}\|^2
+\sum_{j>N}\|((T_j-\lambda_j I)\otimes I)^*\xi_N\|^2\|E_j\eta_{N,M}\|^2\\
=&\left\|\left(\sum_{j=1}^{N}(T_j-\lambda_j I)^*\otimes E_j\right)\xi_N\right\|^2
+\sum_{\substack{j>N\\ j\notin F_{N,M}}}\|((T_j-\lambda_j I)\otimes I)^*\xi_N\|^2\\
=&\left\|\left(\sum_{j=1}^{N}(T_j-\lambda_j I)^*\otimes E_j\right)\xi_N\right\|^2
+\sum_{\substack{j>N\\ j\notin J}}\|((T_j-\lambda_j I)\otimes I)^*\xi_N\|^2
+\sum_{\substack{j>M\\ j\in J}}\|((T_j-\lambda_j I)\otimes I)^*\xi_N\|^2,
\end{aligned}
\end{equation}
Then by \eqref{zeta C control}, \eqref{partialT} and \eqref{partialTstar},
\begin{equation}\label{partialzeta frac}
\begin{aligned}
&\frac{1}{2\|h\|^2}\\
=&\frac{1}{2\|h\|^2}\|\zeta_{N,M}\|^2\\
\leq&\|\partial_{\mathbb{T}-\lambda} \zeta_{N,M}\|^2 + \|\partial_{\mathbb{T}-\lambda}^* \zeta_{N,M}\|^2 \\
=& \left\| \left(\sum_{j=1}^{N}(T_j-\lambda_jI)\otimes E_j^*\right) \xi_N \right\|^2
 + \left\| \left(\sum_{j=1}^{N}(T_j-\lambda_jI)^*\otimes E_j\right) \xi_N  \right\|^2 \\
& + \sum_{\substack{j>N\\ j\in J,\, j\le M}} \|\big( (T_j-\lambda_j I) \otimes I \big)\xi_N\|^2
 + \sum_{\substack{j>N\\ j\notin J}} \|\big( (T_j-\lambda_j I)^* \otimes I \big)\xi_N\|^2
 + \sum_{\substack{j>M\\ j\in J}} \|\big( (T_j-\lambda_j I)^* \otimes I \big)\xi_N\|^2.
\end{aligned}
\end{equation}
Hence by \eqref{T-lambdaN le N-2} and \eqref{partialzeta frac},
\begin{equation}\label{j>M ge j>N}
\begin{aligned}
  &\sum_{\substack{j>M\\ j\in J}} \|\big( (T_j-\lambda_j I)^* \otimes I \big)\xi_N\|^2\\
  &\geq \frac{1}{2\|h\|^2}-N^{-2}-
  \sum_{\substack{j>N\\ j\in J,\, j\le M}} \|\big( (T_j-\lambda_j I) \otimes I \big)\xi_N\|^2
 -\sum_{\substack{j>N\\ j\notin J}} \|\big( (T_j-\lambda_j I)^* \otimes I \big)\xi_N\|^2.
 \end{aligned}
\end{equation}
Since both
$
\sum\limits_{j\notin J}T_jT_j^*
~\text{and}~
\sum\limits_{j\in J}T_j^*T_j
$
converge in operator norm, we have
\begin{equation}\label{2 and 3 to 0}
\lim\limits_{N\to\infty}
\left\|\sum_{\substack{j > N \\ j \notin J}} (T_j - \lambda_j I)(T_j - \lambda_j I)^* + \sum_{\substack{j > N \\ j \in J}} (T_j - \lambda_j I)^* (T_j - \lambda_j I)\right\|=0.
\end{equation}
Hence by \eqref{j>M ge j>N} and \eqref{2 and 3 to 0}, there exists an $N_0>0$ such that for all $N>N_0$ and $M>N$,
$$
  \sum_{\substack{j>M\\ j\in J}} \|\big( (T_j-\lambda_j I)^* \otimes I \big)\xi_N\|^2>\frac{1}{4\|h\|^2},
$$
which contradicts to $\sum\limits_{j \in J} (T_j - \lambda_j I)(T_j - \lambda_j I)^*$ converges in the strong operator topology. Therefore $\sigma_{SZ}(\mathbb{T}) \subseteq \sigma(\mathbb{T})$. 
\end{proof}
\begin{lem}\cite[Theorem 2.3]{SZ1974}\label{project sz}
  Let \(\mathbb T=\{T_j\}_{j\ge1}\) be a sequence of commuting operators, which may not be row-bounded. For any non-empty set $I\subset\mathbb{N}^+$, define
  $
  \mathbb{T}^{(I)}=\{T_j\}_{j\ge1,j\in I}
  $
  and $\pi_I$ be the projection from $\mathbb{C}^{\infty}$ onto $\mathbb{C}^I$. Then 
  $$
  \sigma_{SZ}(\mathbb{T}^{(I)})=\pi_I\sigma_{SZ}(\mathbb{T}).
  $$
\end{lem}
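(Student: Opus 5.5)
The plan is to prove the two inclusions $\pi_I\sigma_{SZ}(\mathbb T)\subseteq\sigma_{SZ}(\mathbb T^{(I)})$ and $\sigma_{SZ}(\mathbb T^{(I)})\subseteq\pi_I\sigma_{SZ}(\mathbb T)$ separately. The tool for both is the finite-variable projection property of the Taylor spectrum from \cite{Taylorspectrum}: for commuting tuples, the spectrum of a subtuple is the coordinate projection of the spectrum of the full tuple. Throughout, $\sigma_{SZ}$ of a sequence indexed by $I$ means the set of $\mu\in\mathbb C^I$ such that every finite initial segment $(\mu_j)_{j\in I,\,j\le n}$ lies in the Taylor spectrum of the corresponding finite subtuple. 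Since the statement allows non-row-bounded $\mathbb T$, I will work in $\mathbb C^\infty$ with the product topology and use no $\ell^2$ structure.

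\emph{First inclusion.} Take $\lambda\in\sigma_{SZ}(\mathbb T)$ and a finite set $F\subseteq I$. Choose $n\ge\max F$. Then $(\lambda_1,\dots,\lambda_n)\in\sigma(T_1,\dots,T_n)$. Taylor's projection theorem, applied after reordering coordinates (the Taylor spectrum is equivariant under permutations of the tuple), gives $(\lambda_j)_{j\in F}\in\sigma((T_j)_{j\in F})$. Hence $\pi_I\lambda\in\sigma_{SZ}(\mathbb T^{(I)})$.

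\emph{Second inclusion.} This is the main step and needs a compactness argument. Fix $\mu\in\sigma_{SZ}(\mathbb T^{(I)})$. For each $n$, let
\[
K_n=\{\lambda\in\mathbb C^\infty:\ (\lambda_1,\dots,\lambda_n)\in\sigma(T_1,\dots,T_n),\ \lambda_j=\mu_j\ \text{for } j\in I\cap\{1,\dots,n\}\}.
\]
We need $\bigcap_n K_n\neq\emptyset$, since any point of the intersection lies in $\sigma_{SZ}(\mathbb T)$ and projects to $\mu$.
\begin{enumerate}
\item \emph{Nonemptiness.} By Taylor's projection theorem, $(\mu_j)_{j\in I,\,j\le n}$ lies in the projection of $\sigma(T_1,\dots,T_n)$ onto the coordinates in $I\cap\{1,\dots,n\}$. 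So it lifts to a point $(\lambda_1,\dots,\lambda_n)\in\sigma(T_1,\dots,T_n)$, and any extension of this point lies in $K_n$. Hence each $K_n\neq\emptyset$.
\item \emph{Decreasing.} Taylor's projection theorem also gives $\pi_{n+1,n}\sigma(T_1,\dots,T_{n+1})\subseteq\sigma(T_1,\dots,T_n)$, so $K_{n+1}\subseteq K_n$.
\item \emph{Compactness.} It is cleaner to work inside $\prod_j D_j$ with $D_j=\{|z|\le\|T_j\|\}$, which is compact by Tychonoff. Taylor spectra are compact and contained in $\prod_{j\le n}D_j$, so $K_n\cap\prod_j D_j$ is closed, nonempty and decreasing. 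The finite intersection property then gives a point in the intersection.
\end{enumerate}

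The main obstacle is the lifting in step 1 for finite $n$. Here it is essential that $\mu$ is fixed and that the full finite Taylor projection theorem is available at each stage, not just the compatibility of the finite projections. With both in hand, the rest is routine bookkeeping.
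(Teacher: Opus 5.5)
The paper does not prove this lemma; it only cites \cite[Theorem 2.3]{SZ1974}. Your argument is correct and is essentially the standard proof of that cited result: Taylor's finite projection property gives both inclusions, and surjectivity comes from a nested nonempty compact sets (inverse-limit) argument in $\prod_j D_j$. Two points are used implicitly and should be stated. When $I\cap\{1,\dots,n\}=\emptyset$, the lifting in step 1 needs $\sigma(T_1,\dots,T_n)\neq\emptyset$, which is Taylor's nonemptiness theorem for $H\neq 0$. Also, extending the lifted point by zeros keeps it in $\prod_j D_j$ because $\sigma(T_1,\dots,T_n)\subseteq\prod_{j\le n}D_j$, which follows from the single-coordinate projections.
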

In general, \(\sigma(\mathbb T)\) and \(\sigma_{SZ}(\mathbb T)\) need not coincide.
However, for a row-bounded sequence \(\mathbb{T}\) of commuting operators on \(H\) such that 
\(\left\{\sum\limits_{i=1}^{k}T_i\otimes E_i^*\right\}_{k\geq1}\) converges in norm, we prove the following theorem.
\begin{thm}\label{thm3.1}
Let \(\mathbb{T}\) be a row-bounded sequence of commuting operators such that the sequence
\(\left\{\sum\limits_{i=1}^{k}T_i\otimes E_i^*\right\}_{k\geq1}\) converges in norm. Then
\[
\sigma(\mathbb{T})=\sigma_{SZ}(\mathbb{T}),
\]
which is a nonempty compact subset of \(\ell^2\).
\end{thm}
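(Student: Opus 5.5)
We prove the two inclusions separately and then deduce compactness and nonemptiness. Throughout, write $\partial^{(k)}_{\mathbb T}=\sum_{i=1}^k T_i\otimes E_i^*$, viewed either on $\Lambda(H)$ or on $\Lambda_k(H)$. Norm convergence gives $\|\partial_{\mathbb T}-\partial^{(k)}_{\mathbb T}\|\to0$; in particular $\partial_{\mathbb T}$ is bounded. For $\lambda\in\ell^2$ the same holds for $\mathbb T-\lambda$, because $\|\sum_{i>k}\lambda_i I\otimes E_i^*\|\le(\sum_{i>k}|\lambda_i|^2)^{1/2}\to0$ by Lemma \ref{lem:bounded-partial-normal}.

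\emph{Inclusion $\sigma(\mathbb T)\subseteq\sigma_{SZ}(\mathbb T)$.} Let $\lambda\notin\sigma_{SZ}(\mathbb T)$. Then some $(T_1-\lambda_1,\dots,T_n-\lambda_n)$ is Taylor invertible. Since $\partial_{\mathbb T-\lambda}$ is bounded, Lemma \ref{finite invertible to I invertible} gives that $\mathbb T-\lambda$ is invertible, i.e.\ $\lambda\notin\sigma(\mathbb T)$.

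\emph{Inclusion $\sigma_{SZ}(\mathbb T)\subseteq\sigma(\mathbb T)$.} This is the main step. Suppose $\lambda\in\sigma_{SZ}(\mathbb T)$ but $\mathbb T-\lambda$ is invertible.
\begin{itemize}
\item By Proposition \ref{thm3.2}, $\widehat{\mathbb T-\lambda}$ is invertible, and it is now a bounded operator (Lemma \ref{partial norm hatT}).
\item Consider $\mathfrak T^{(k)}=(T_1-\lambda_1,\dots,T_k-\lambda_k,0,0,\dots)$. We have $\|\widehat{\mathbb T-\lambda}-\widehat{\mathfrak T^{(k)}}\|\le\|\partial_{\mathbb T-\lambda}-\partial_{\mathfrak T^{(k)}}\|\to0$; this again uses Lemma \ref{partial norm hatT}, applied to the difference sequence, which is row-bounded.
\item By Corollary \ref{invert continious}, or simply by stability of invertibility of bounded operators, $\mathfrak T^{(k)}$ is invertible for large $k$.
\item Lemma \ref{Tn and T'} then shows that $(T_1-\lambda_1,\dots,T_k-\lambda_k)$ is Taylor invertible, contradicting $\lambda\in\sigma_{SZ}(\mathbb T)$.
\end{itemize}
Alternatively, norm convergence of $\partial^{(k)}$ yields norm convergence of $\sum_j T_jT_j^*$ (taking $J=\emptyset$), so Theorem \ref{thm:normconv-nonemptyI} applies directly. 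I would use the first route, since it is self-contained.

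\emph{Compactness and nonemptiness.} By Theorem \ref{bounded closed}, $\sigma(\mathbb T)$ is bounded and closed in $\ell^2$. Nonemptiness follows from \cite[Theorem 2.3]{SZ1974}: $\sigma_{SZ}(\mathbb T)$ is the inverse limit of the nonempty compact sets $\sigma(T_1,\dots,T_n)$, which form a consistent projective system by the Taylor projection property. This inverse limit is nonempty, and its elements lie in $\ell^2$ because they lie in $C\overline{\mathbb B}$.

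The remaining task, which I expect to be the delicate point, is total boundedness in the norm topology. Norm convergence gives $\varepsilon_k:=\sup_{\|\cdot\|}$ of the tail $\|\sum_{i>k}T_i\otimes E_i^*\|\to0$. For $\lambda\in\sigma(\mathbb T)$ I would bound the tail $\sum_{i>k}|\lambda_i|^2$ in terms of $\varepsilon_k$. The idea is that if $\sum_{i>k}|\lambda_i|^2>4\varepsilon_k^2$, then the tail part $\sum_{i>k}(T_i-\lambda_i)\otimes E_i^*$ is a small perturbation of the scalar tuple $-(\lambda_i)_{i>k}$. That scalar tuple is invertible with a uniform bound: use Corollary \ref{TS=1} with $S_i=-\bar\lambda_i/\sum_{j>k}|\lambda_j|^2$, so $\|\partial_{\mathbb S}\|=(\sum_{j>k}|\lambda_j|^2)^{-1/2}$, which also bounds the homotopy. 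Perturbing the homotopy by a Neumann series then makes $\mathbb T-\lambda$ invertible, a contradiction; the first $k$ coordinates do not affect invertibility once the tail block is contractible.

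Thus $\sigma(\mathbb T)\subseteq\{\lambda:\sum_{i>k}|\lambda_i|^2\le4\varepsilon_k^2\}$ for every $k$. Combined with boundedness of the first $k$ coordinates, $\sigma(\mathbb T)$ is a closed, totally bounded subset of $\ell^2$, hence compact. The Neumann-perturbation step is where care with the homotopy constant $\|h\|\lesssim(\sum_{j>k}|\lambda_j|^2)^{-1/2}$ is needed: the estimate is $\varepsilon_k\|h\|<1/2$.
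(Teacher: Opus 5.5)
Your proof is correct, but it differs from the paper's in two of its three steps.

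The inclusion $\sigma(\mathbb T)\subseteq\sigma_{SZ}(\mathbb T)$ is the paper's argument.

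For $\sigma_{SZ}(\mathbb T)\subseteq\sigma(\mathbb T)$, the paper takes what you list as the alternative. Norm convergence of the partial sums forces norm convergence of $\sum_j T_jT_j^*$, and Theorem~\ref{thm:normconv-nonemptyI} with $J=\emptyset$ (proved via approximate null vectors $\xi_N\wedge\eta_{N,M}$) finishes. Your truncation-plus-stability argument is the one the paper uses later for Theorem~\ref{thm3.1.2}. It is shorter, but it uses the full hypothesis, whereas the paper's route needs only norm convergence of $\sum_jT_jT_j^*$.

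For compactness, the paper first establishes $\sigma=\sigma_{SZ}$. It then combines the projection property (Lemma~\ref{project sz}) with the Clou\^atre--Hartz estimate for finite Taylor spectra to get $\sum_{i=n}^{n+p}|\lambda_i|^2\le\|\sum_{i=n}^{n+p}T_iT_i^*\|$. Your homotopy perturbation stays inside the Koszul complex and works directly on $\sigma(\mathbb T)$, with no need for $\sigma=\sigma_{SZ}$ first.

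The step you flag as delicate is automatic. Since $\partial^2=0$, the operator $D=\partial h+h\partial$ always satisfies $\partial D=\partial h\partial=D\partial$. Hence $hD^{-1}$ is a uniformly bounded homotopy as soon as $\|I-D\|<1$.

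Your estimate can also be sharpened. Take $h=-r^{-2}\sum_{i>k}\bar\lambda_i I\otimes E_i$ with $r^2=\sum_{i>k}|\lambda_i|^2$. The CAR relations give exactly $D=\bigl(I-r^{-2}\sum_{i>k}\bar\lambda_iT_i\bigr)\otimes I$, and $\|\sum_{i>k}\bar\lambda_iT_i\|\le r\,\|\sum_{i>k}T_iT_i^*\|^{1/2}$. So your method yields the same bound $\sum_{i>k}|\lambda_i|^2\le\|\sum_{i>k}T_iT_i^*\|$ as the paper. It holds for every row-bounded commuting $\mathbb T$, with no factor $4$ and no use of $\varepsilon_k$. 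For $k=0$ it even recovers the inclusion $\sigma(\mathbb T)\subseteq C\overline{\mathbb B}$ of Theorem~\ref{bounded closed}, without the Drury--Arveson model and corona argument.
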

\begin{proof}
Since $\left\{\sum\limits_{i=1}^{k}T_{i}\otimes E_{i}^*\right\}_{k\geq1}$ converges in norm,  we have
$
\partial_{\mathbb{T}}
$
is bounded.
Notice that for \(h\in H\), we have
\begin{equation}\label{ddstar=ttstar}
\left(\sum\limits_{i=1}^{k}T_{i}\otimes E_{i,1}^*\right)\left(\sum\limits_{i=1}^{k}T_{i}\otimes E_{i,1}^*\right)^*h
=\left(\sum\limits_{i=1}^{k}T_{i}\otimes E_{i,1}^*\right)\sum_{i=1}^k T_i^*h\otimes e_i
=\left(\sum_{i=1}^k T_iT_i^*\right)h.
\end{equation}
Hence from $\left\{\sum\limits\limits_{i=1}^{k}T_{i}\otimes E_{i}^*\right\}_{k\geq1}$ converges in norm, by \eqref{ddstar=ttstar},
$
\sum\limits_{i=1}^\infty T_iT_i^*
$
converges in operator norm. Then by Theorem \ref{thm:normconv-nonemptyI}, $\sigma_{SZ}(\mathbb{T})\subseteq\sigma(\mathbb{T})$.

Conversely, suppose that $\lambda \notin \sigma_{SZ}(\mathbb{T})$, then there is an $n>0$ such that $(\mathbb{T}-I_{\lambda})^{(n)}$ is invertible. By Lemma \ref{lem:bounded-partial-normal}, 
$$
\|\partial_{\mathbb{T}-I_{\lambda}}\|\leq \|\partial_{\mathbb{T}}\|+\|\partial_{I_{\lambda}}\|\leq \|\partial_{\mathbb{T}}\|+\|\lambda\|  <\infty.
$$
It follows from Lemma \ref{finite invertible to I invertible} that $\mathbb{T}-I_{\lambda}$ is invertible. Therefore
$
\sigma(\mathbb{T}) \subseteq \sigma_{SZ}(\mathbb{T}).
$
Hence 
\begin{equation}\label{sz=e=si}
\sigma_{SZ}(\mathbb{T})=\sigma(\mathbb{T}).
\end{equation}
By \cite[Theorem 2.3]{SZ1974}, $\sigma_{SZ}(\mathbb{T})\neq\emptyset$. Then by \eqref{sz=e=si}, $\sigma(\mathbb{T})\neq \emptyset$. At last we prove that $\sigma(\mathbb{T})$ is a compact subset of $\ell^2$.
By Lemma \ref{project sz} and \eqref{sz=e=si}, for every $n,p>0$ and $\lambda\in \sigma(\mathbb{T})$, 
$$
(\lambda_n,\cdots,\lambda_{n+p})\in \sigma(T_n,\cdots,T_{n+p}).
$$
Hence by \cite[Lemma 5.3]{ClouatreHartz2018}, we have
\begin{equation}\label{spec radi}
 \sum_{i=n}^{n+p}|\lambda_i|^2\leq \left\|\sum\limits_{i=n}^{n+p}T_iT_i^*\right\|.
\end{equation}
Similar to \eqref{ddstar=ttstar}, 
\begin{equation}\label{ddstar leq partial}
\left\|\sum\limits_{i=n}^{n+p}T_iT_i^*\right\|=\left\|\sum_{i=n}^{n+p}T_i\otimes E_{i,1}^*\right\|^2\leq \left\|\sum_{i=n}^{\infty}T_i\otimes E_i^*\right\|^2.
\end{equation}
Hence by \eqref{spec radi} and \eqref{ddstar leq partial}.
\begin{equation}\label{tail converge}
  \lim\limits_{n\to\infty}\sup\limits_{\lambda\in \sigma(\mathbb{T})}\sum_{i=n}^{\infty}|\lambda_i|^2=0.
\end{equation}
It follows from Theorem \ref{bounded closed} and \eqref{tail converge} that $\sigma(\mathbb{T})$ is a compact subset of $\ell^2$.
\end{proof} 
\begin{rem}
The converse of Theorem \ref{thm3.1} also holds: for any nonempty compact subset \(K\) of \(\ell^2\), there exists a row-bounded sequence \(\mathbb{T}\) of commuting operators such that \(\sigma(\mathbb{T})=K\) and \(\sum\limits_{i=1}^\infty T_i\otimes E_i^*\) converges in norm. Indeed, as in Remark \ref{rem31}(2), let \(H=L^2(K,d\mu)\) and \(\mathbb{T}=(M_{z_1},M_{z_2},\cdots)\), where \(\mu\) is the counting measure. Then \(\sigma(\mathbb{T})=K\). Since \(K\) is compact in \(\ell^2\),
\[
\lim_{N\to\infty}\sup_{z\in K}\sum_{i>N}|z_i|^2=0.
\]
For \(N<M\), by Lemma \ref{lem:bounded-partial-normal},
\[
\left\|\sum_{i=N+1}^M T_i\otimes E_i^*\right\|^2
\leq
\left\|\sum_{i=N+1}^M T_iT_i^*\right\|
=
\sup_{z\in K}\sum_{i=N+1}^M |z_i|^2
\le
\sup_{z\in K}\sum_{i>N}|z_i|^2
\to 0.
\]
which implies that \(\sum\limits_{i=1}^\infty T_i\otimes E_i^*\) converges in norm.
\end{rem}
\begin{thm}
  Let $\mathbb{T}$ be a row‑bounded sequence of commuting operators. If for $1\le p\le 2$,
$
\sum\limits_{i=1}^\infty \|T_i\|^p<\infty,
$
then $\sigma(\mathbb T)$ is a nonempty compact subset of $\ell^p$.
\end{thm}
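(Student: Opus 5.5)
The plan is to reduce to Theorem \ref{thm3.1} by showing that $p$-summability with $1\le p\le 2$ forces norm convergence of $\sum_i T_i\otimes E_i^*$. Once we have that, Theorem \ref{thm3.1} gives $\sigma(\mathbb T)=\sigma_{SZ}(\mathbb T)$, and this set is a nonempty compact subset of $\ell^2$. It then remains to upgrade compactness from $\ell^2$ to $\ell^p$ by a coordinatewise bound on spectral points.

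\emph{Step 1: norm convergence.} Since $\|E_i^*\|\le 1$, for $N<M$ we have
\[
\Big\|\sum_{i=N+1}^M T_i\otimes E_i^*\Big\|\le \sum_{i=N+1}^M\|T_i\|.
\]
This is only useful if $\sum_i\|T_i\|<\infty$, i.e.\ $p=1$. For $1<p\le 2$ I need something better than the triangle inequality, which I expect to be the main obstacle.

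My first attempt is to group the indices into dyadic-type blocks and bound each block via Lemma \ref{sup=} and the anticommutation relations. For an anticommuting family, $\|\sum_i a_i\otimes E_i^*\|^2$ is controlled by $\|\sum a_ia_i^*\|+\|\sum a_i^*a_i\|$ up to cross terms. Because the $T_i$ commute, the cross terms $T_iT_j^*\otimes E_i^*E_j$ do not cancel automatically, so I would try to estimate them by $\sum_i\|T_i\|^2$. That yields $\|\sum_{N<i\le M}T_i\otimes E_i^*\|\lesssim(\sum_{N<i\le M}\|T_i\|^2)^{1/2}$, and this tail tends to $0$ because $\ell^p\subset\ell^2$ for $p\le 2$.

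If this estimate fails, the fallback is to avoid norm convergence entirely. I would rerun the argument of Theorem \ref{thm:normconv-nonemptyI} with $J=\emptyset$. Norm convergence of $\sum_i T_iT_i^*$ follows from $\|\sum_{i>N}T_iT_i^*\|\le\sum_{i>N}\|T_i\|^2\to 0$, which gives $\sigma_{SZ}(\mathbb T)\subseteq\sigma(\mathbb T)$ and hence nonemptiness. I would then get the reverse inclusion from Lemma \ref{finite invertible to I invertible}, using the exactness part together with the uniform bound on $\|\partial_{\mathbb T}\|$ from Step 1.

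\emph{Step 2: coordinate bound.} Let $\lambda\in\sigma(\mathbb T)=\sigma_{SZ}(\mathbb T)$. By Lemma \ref{project sz}, $\lambda_i\in\sigma(T_i)$ for each $i$, so $|\lambda_i|\le\|T_i\|$. Hence $\sigma(\mathbb T)$ is contained in the set
\[
\Big\{\lambda:\ |\lambda_i|\le\|T_i\|\ \text{for all } i\Big\},
\]
and this set is a compact subset of $\ell^p$. Indeed, it is bounded by $(\sum\|T_i\|^p)^{1/p}$, and its tails are uniformly small since $\sup_\lambda\sum_{i>n}|\lambda_i|^p\le\sum_{i>n}\|T_i\|^p\to0$.

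\emph{Step 3: closedness in $\ell^p$.} The set $\sigma(\mathbb T)$ is closed in $\ell^2$ by Theorem \ref{bounded closed}, and convergence in $\ell^p$ implies convergence in $\ell^2$ because $p\le 2$. So $\sigma(\mathbb T)$ is closed in $\ell^p$. Being a closed subset of the compact set from Step 2, it is compact in $\ell^p$, and it is nonempty by Step 1.
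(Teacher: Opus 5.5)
\textbf{There is a genuine gap: the Step 1 estimate is false for $1<p\le 2$.} For commuting operators, $\|\sum_{i\in F}T_i\otimes E_i^*\|$ is not controlled by $(\sum_{i\in F}\|T_i\|^2)^{1/2}$.

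Here is a counterexample. Take anticommuting self-adjoint unitaries $\gamma_1,\dots,\gamma_n$ (Clifford generators) on a space $K$, and set $T_i(h_0,h_1)=(0,\gamma_i h_0)$ on $K\oplus K$.
\begin{itemize}
\item All products $T_iT_j$ vanish, so the $T_i$ commute, and $\|\sum_i T_i\otimes E_i^*\|=\|\sum_i \gamma_i\otimes E_i^*\|$.
\item The operators $A_i=\gamma_i\otimes E_i^*$ satisfy $A_i^2=0$ and $A_i^*A_i+A_iA_i^*=I$.
\item The $A_i$ doubly commute, because the two sign changes (from the $\gamma$'s and from the $E^*$'s) cancel.
\item Hence they generate a copy of $M_2^{\otimes n}$ with $A_i\mapsto\sigma_i^-$. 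So $\|\sum_i A_i\|$ equals the norm of the total spin-lowering operator, which is at least $n/2$. Meanwhile $(\sum_i\|T_i\|^2)^{1/2}=\sqrt n$.
\end{itemize}
Now place such blocks, of size $2^k$ and scaled by $k2^{-k}$, on orthogonal summands. This gives a row-bounded commuting sequence with $\sum_i\|T_i\|^p<\infty$ for every $p>1$, but with $\partial_{\mathbb T}$ unbounded.

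Consequences for your argument when $1<p\le 2$:
\begin{itemize}
\item You cannot invoke Theorem \ref{thm3.1}.
\item You cannot use the bounded-$\partial$ half of Lemma \ref{finite invertible to I invertible}, so your fallback for $\sigma(\mathbb T)\subseteq\sigma_{SZ}(\mathbb T)$ also fails.
\item Exactness of $K(\mathbb T)$ alone does not give invertibility; see the example following Lemma \ref{finite invertible to I invertible}.
\item Step 2 as written depends on $\sigma(\mathbb T)=\sigma_{SZ}(\mathbb T)$, so it is unsupported.
\end{itemize}
For $p=1$ your argument is fine, since the triangle inequality does give norm convergence.

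\textbf{The fix: you never need $\sigma(\mathbb T)=\sigma_{SZ}(\mathbb T)$.} Bound the coordinates of points of $\sigma(\mathbb T)$ directly. Suppose $|\lambda_j|>\|T_j\|$. Then $T_j-\lambda_j$ is invertible. Apply Corollary \ref{TS=1} with $\mathbb S=(0,\dots,0,(T_j-\lambda_j)^{-1},0,\dots)$. This $\mathbb S$ commutes with every $T_i$ and has trivially bounded $\partial_{\mathbb S}$, so $\mathbb T-\lambda$ is invertible. Hence $\sigma(\mathbb T)\subseteq\{\lambda:|\lambda_j|\le\|T_j\|\ \text{for all } j\}$, and your $\ell^p$-compactness argument for this set applies unchanged.

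The rest of your plan then goes through:
\begin{itemize}
\item Nonemptiness follows from the first half of your fallback: Theorem \ref{thm:normconv-nonemptyI} with $J=\emptyset$, using norm convergence of $\sum_i T_iT_i^*$.
\item Step 3 (closedness in $\ell^p$) is correct as stated.
\end{itemize}
With this repair, your argument is exactly the paper's proof.
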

\begin{proof}
Fix $1\leq p\leq 2$.
Since $\sum\limits_{j=1}^{\infty}\|T_i\|^p<\infty$ and $p\leq 2$, we have $\sum\limits_{j=1}^{\infty}\|T_i\|^2\leq M\sum\limits_{j=1}^{\infty}\|T_i\|^p<\infty$ for some $M>0$. Hence $\sum\limits_{j=1}^\infty T_j T_j^*$ converges in operator norm, which implies that $\mathbb{T}$ is row-bounded. By Theorem \ref{thm:normconv-nonemptyI}, we have 
\begin{equation}\label{siT neq empty}
\sigma(\mathbb{T}) \neq \emptyset.
\end{equation}
Let 
$$
K_{\mathbb{T}} = \{z \in \ell^p : |z_j| \le \|T_j\|, \forall j\geq 1\}.
$$ 
Since $\sup\limits_{z \in K_{\mathbb{T}} } \sum\limits_{j > N} |z_j|^p \le \sum\limits_{j > N} \|T_j\|^p \to 0$ and $\pi_N(K_{\mathbb{T}} )$ is compact for $N\geq 1$, it follows that $K_{\mathbb{T}}$ is compact in $\ell^p$. By Corollary \ref{TS=1}, we have 
\begin{equation}\label{siT sub KT}
\sigma(\mathbb{T})\subset K_{\mathbb{T}}\subset\ell^p.
\end{equation}
By Theorem \ref{bounded closed}, $\sigma(\mathbb{T})$ is closed in $\ell^2$, which implies that $\sigma(\mathbb{T})$ is closed in $\ell^p$. Thus by \eqref{siT neq empty} and \eqref{siT sub KT}, $\sigma(\mathbb{T})$ is a non-empty compact subset of $\ell^p$.
\end{proof}

The following example shows that the $\sigma(\mathbb T)$ of a row-bounded sequence $\mathbb T$ of commuting operators may be empty. 
Before constructing the example, we need some preliminary results.

For \(z \in \mathbb D\), define
\[
f_1(z)=\frac{\sqrt{1-z}}{\sqrt{2-z}} 
\qquad \text{and} \qquad 	
f_n(z)
=
\frac{\sqrt{2^{1-n}}\sqrt{1-z}}
{\sqrt{1-z+2^{1-n}}\sqrt{1-z+2^{2-n}}},
\qquad n\geq2,
\]
where all square roots are taken with respect to the principal branch.	
Clearly, \(f_n\in A(\mathbb D)\) for every \(n\geq 1\). 	
Let \(H=L^2(0,1)\), and let \(V\in B(H)\) be the Volterra
operator defined by
\[
(Vh)(x)=\int_0^x h(t)\,dt,
\qquad h\in H.
\]
Since \(\sigma(V)=\{0\}\) (see \cite[Section~1]{RansfordTsedenbayar2024}), we have \(I+V\) is invertible.
Thus we can define $$C=(I-V)(I+V)^{-1}.$$
	By \cite[Theorem~2.1]{Khadkhuu2015}, we have \(\|C\|=1\).
Notice that $f_i\in A(\overline{\mathbb D})$, there is a sequence of polynomials $p_n$ such that 
$$
\lim\limits_{n\to\infty}\sup\limits_{z\in\mathbb D}|f_i(z)-p_n(z)|=0.
$$
By von Neumann inequality, $p_n(C)$ converges in norm topology, and define 
$$
f_i(C)=\lim\limits_{n\to\infty} p_n(C).
$$
Then we have the following Lemma.

\begin{lem}\label{DNLN}
	For each \(N\geq 1\), 
	\begin{align}\label{sqrt2}
		\left\|\sum_{n=1}^N f_n(C)\otimes E_n^*\right\| \leq \sqrt2.
	\end{align}
	and
	\begin{equation}\label{SOT}
		\operatorname{SOT}\!-\!
		\sum_{n=1}^{\infty} \bigl(f_n(C)\bigr) ^2
		=
		I.
	\end{equation} 
\end{lem}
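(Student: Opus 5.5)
The plan is to reduce both assertions to scalar facts about the functions $f_n$ on $\overline{\mathbb D}$. For \eqref{SOT} this uses a telescoping identity; for \eqref{sqrt2} it uses a unitary dilation of the contraction $C$.

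\emph{Step 1 (telescoping).} Put $w=1-z$ and $a_n=2^{1-n}$. A direct computation gives
\[
f_n(z)^2=\frac{w}{w+a_n}-\frac{w}{w+2a_n}\ (n\ge2),\qquad f_1(z)^2=\frac{w}{w+1}.
\]
Since $2a_n=a_{n-1}$, summing yields
\[
\sum_{n=1}^N f_n(z)^2=\frac{1-z}{1-z+2^{1-N}}.
\]
The polynomial functional calculus of $C$ extends to the disc algebra and is multiplicative. Hence
\[
\sum_{n=1}^N f_n(C)^2=A(A+\varepsilon_N)^{-1},\qquad A=I-C,\ \varepsilon_N=2^{1-N}.
\]
Here $A+\varepsilon_N$ is invertible because $\sigma(C)=\{1\}$; indeed $C-I=-2V(I+V)^{-1}$ is quasinilpotent. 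Therefore
\[
I-\sum_{n\le N} f_n(C)^2=\varepsilon_N(A+\varepsilon_N)^{-1}.
\]

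\emph{Step 2 (SOT convergence).} Since $\|C\|=1$, the operator $A$ is accretive. For accretive $A$ one has $\|\varepsilon(A+\varepsilon)^{-1}\|\le 1$ for every $\varepsilon>0$. For $x=Ay$ in the range of $A$,
\[
\varepsilon(A+\varepsilon)^{-1}Ay=\varepsilon y-\varepsilon^2(A+\varepsilon)^{-1}y,
\]
and this has norm at most $2\varepsilon\|y\|\to0$. The range of $A=2V(I+V)^{-1}$ equals the range of $V$, which contains all $C^1$ functions vanishing at $0$ and is therefore dense in $L^2(0,1)$. The uniform bound together with density gives $\varepsilon_N(A+\varepsilon_N)^{-1}\to0$ strongly, which proves \eqref{SOT}.

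\emph{Step 3 (norm bound via dilation).} Let $U$ be the minimal unitary dilation of $C$. Then $g(C)=P_Hg(U)|_H$ for every $g$ in the disc algebra, and $H$ is semi-invariant for the disc-algebra calculus of $U$. Consequently $\sum_{n\le N}f_n(C)\otimes E_n^*$ is the compression of $\sum_{n\le N}f_n(U)\otimes E_n^*$ to $H\otimes\Lambda$. The operators $f_n(U)$ are commuting normal operators, so Lemma \ref{lem:bounded-partial-normal} (applied to the finite sequence padded by zeros) gives
\[
\Bigl\|\sum_{n\le N}f_n(C)\otimes E_n^*\Bigr\|^2\le\Bigl\|\sum_{n\le N}|f_n|^2(U)\Bigr\|\le\sup_{|z|=1}\sum_{n\ge1}|f_n(z)|^2 .
\]
So \eqref{sqrt2} reduces to the scalar estimate $\sum_n|f_n(z)|^2\le2$ on $\overline{\mathbb D}$.

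\emph{Step 4 (scalar estimate; main obstacle).} The telescoping identity of Step 1 controls $f_n^2$, not $|f_n|^2$, so a direct estimate is needed here. Writing $w=1-z$ with $\operatorname{Re}w\ge0$ and $w\ne0$ (at $z=1$ every $f_n$ vanishes), one has $|w+a|^2\ge|w|^2+a^2$. This gives
\[
|f_n(z)|^2\le\frac{a_n|w|}{\sqrt{(|w|^2+a_n^2)(|w|^2+4a_n^2)}}=\phi(a_n/|w|),\qquad \phi(t)=\frac{t}{\sqrt{(1+t^2)(1+4t^2)}}.
\]
The term $n=1$ obeys a similar bound. The sum $\sum_n\phi(2^{1-n}s)$ over a dyadic grid is to be bounded by $2$ uniformly in $s>0$. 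I would first try a comparison with $\int_0^\infty\phi(t)\,dt/(t\ln2)$. If that is not sharp enough, I would fall back on splitting the sum at $a_n\approx|w|$ and summing the two geometric tails separately. Getting the constant down to exactly $2$, rather than the easy bound $3$, is the delicate point of the whole proof. Any uniform constant would still give the later consequences, namely row-boundedness and SOT convergence via Lemma \ref{bound and SOT}.
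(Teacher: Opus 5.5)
\textbf{Gap: the constant $\sqrt2$ is never proved.} Steps 1--3 are sound and follow the paper's outline: telescoping for the SOT identity, and reduction of the norm bound to $\sup_{|z|=1}\sum_n|f_n(z)|^2$. In Step 3 your Sz.-Nagy dilation combined with Lemma~\ref{lem:bounded-partial-normal} is a legitimate substitute for the operator-valued von Neumann inequality the paper cites. In Step 2 your accretivity/density argument is a self-contained substitute for the cited Abel-ergodic result.

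The trouble is Step 4. You never prove $\sum_n|f_n(z)|^2\le 2$, which is the whole quantitative content of \eqref{sqrt2}. You only list strategies (an integral comparison, or splitting the dyadic sum) and concede the constant is unresolved. Your remark that any uniform constant would serve the later applications does not prove the lemma as stated.

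\textbf{The missing idea.} Your inequality $|w+a|^2\ge|w|^2+a^2$ is pointwise sharper, but it destroys the telescoping structure. That is why you end up facing a non-telescoping sum of $\phi(2^{1-n}s)$. Weaken it one step further. With $w=1-z$, $\operatorname{Re}w\ge0$, $a>0$ and $a_n=2^{1-n}$,
\[
|w+a|^2\ \ge\ |w|^2+a^2\ \ge\ \tfrac12\bigl(|w|+a\bigr)^2 .
\]
For $n\ge2$ this gives
\[
|f_n(z)|^2\le\frac{2a_n|w|}{(|w|+a_n)(|w|+2a_n)}=2\Bigl(\frac{|w|}{|w|+a_n}-\frac{|w|}{|w|+2a_n}\Bigr),
\]
and for $n=1$ it gives $|f_1(z)|^2\le 2|w|/(|w|+1)$. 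Since $2a_n=a_{n-1}$, the sum telescopes exactly as $\sum f_n^2$ does in your Step 1:
\[
\sum_{n=1}^N|f_n(z)|^2\le\frac{2|w|}{|w|+2^{1-N}}\le 2 .
\]
This is precisely the paper's argument, and it yields the constant $2$ (hence $\sqrt2$) with no dyadic-sum analysis. With this replacement for Step 4, your proof is complete.
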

\begin{proof}
	
	For each \(z \in \overline{\mathbb D}\) and any \(a >0\), we have
	\begin{align*}
		|1-z+a|^2 \geq |1-z|^2+a^2 \geq \frac{(|1-z|+a)^2}{2}.
	\end{align*}
	Then
	\begin{equation}\label{sumfn}
		\begin{aligned}
			\sum_{n=1}^N |f_n(z)|^2 
			&=
			\frac{|1-z|}{|1-z+1|}+\sum_{n=2}^N \frac{2^{1-n}|1-z|}{|1-z+2^{1-n}||1-z+2^{2-n}|}\\
			&\leq
			\frac{2|1-z|}{|1-z|+1}+ 2 \sum_{n=2}^N \frac{ 2^{1-n}|1-z|}{(|1-z|+2^{1-n})(|1-z|+2^{2-n})}\\
			&=
			\frac{2|1-z|}{|1-z|+1}+2\sum_{n=2}^N \left(  \frac{|1-z|}{|1-z|+2^{1-n}} - \frac{|1-z|}{|1-z|+2^{2-n}}\right)\\
			&=
			\frac{2|1-z|}{|1-z|+2^{1-N}}\leq 2.
		\end{aligned}
	\end{equation}
	Hence, by the operator-valued von Neumann inequality
	\cite[Theorem~1.1, $(1)\Rightarrow(1'')$]{BallEtAl2008} and \eqref{sumfn}, we obtain
	\begin{align*}
		\left\|\sum_{n=1}^N f_n(C)\otimes E_n^*\right\|^2
		&\leq 
		\sup_{\zeta \in \mathbb T}\left\|\sum_{n=1}^{N}f_n(\zeta)E_n^*\right\|^2\\
		&=
		\sup_{\zeta \in \mathbb T}\left\|\left(\sum_{n=1}^{N}f_n(\zeta)E_n^*\right)^*\left(\sum_{n=1}^{N}f_n(\zeta)E_n^*\right)\right\|\\
		&\leq
			\sup_{\zeta \in \mathbb T}\left\|\left(\sum_{n=1}^{N}f_n(\zeta)E_n^*\right)^*\left(\sum_{n=1}^{N}f_n(\zeta)E_n^*\right)+\left(\sum_{n=1}^{N}f_n(\zeta)E_n^*\right)\left(\sum_{n=1}^{N}f_n(\zeta)E_n^*\right)^*\right\| \\
		&=
		\sup_{\zeta \in \mathbb T}\left\| \sum_{n,m=1}^N f_n(\zeta)\overline{f_m(\zeta)} (E_nE_m^*+E_m^*E_n)\right\|\\
		&\overset{(*)}{=}
		\sup_{\zeta \in \mathbb T} \left(\sum_{n=1}^N |f_n(\zeta)|^2\right)\leq2,
	\end{align*}
	where \((*)\) is given by \(E_nE_m^* +E_m^*E_n =\delta_{nm}I_{\Lambda}\).
	This proves \eqref{sqrt2}.
	
	For each \(N\geq 1\), since
	\begin{equation*}
		\sum_{n=1}^Nf_n^2(z)
		=
		\frac{1-z}{2-z} + \sum_{n=2}^N \left( \frac{1-z}{1-z+2^{1-n}} -\frac{1-z}{1-z+2^{2-n}}  \right)
		=
		\frac{1-z}{1-z+2^{1-N}},
	\end{equation*}  
	we have
	\begin{equation}\label{sumfnC}
		\sum_{n=1}^N\bigl(f_n(C)\bigr)^2
		=
		(I-C){\left(I-C+2^{1-N}I  \right)^{-1}}
		=
		I-(1-r_N)(I-r_NC)^{-1},
	\end{equation}
	where \(r_N=(1+2^{1-N})^{-1}\).
	Since \(C\) is a contraction, for \(0<r<1\), we have
	\[
	\left\|(1-r)(I-rC)^{-1}\right\|=\left\|(1-r)\sum_{k=0}^\infty r^k C^k\right\| \leq (1-r)\sum_{k=0}^\infty r^k=1.
	\]
	Moreover,  a direct computation shows that 
		\[
		(V^*h)(x)=\int_x^1 h(t)dt,\qquad h\in H,
		\] 
		and hence \(V^*\) is injective.
	Then
	\[
	\overline{(I-C)H}=\overline{2V(I+V)^{-1} H}=\overline{VH}=(\ker V^*)^{\perp} =H.
	\]
	Thus, by \cite[Proposition~5.4]{CohenCunyEisnerLin2020},
	\begin{align}\label{CCEL}
		\operatorname{SOT}\!-\!\lim_{N\to \infty} (1-r_N)(I-r_NC)^{-1}
		= \operatorname{SOT}\!-\!\lim_{r\to 1^-} (1-r)(I-rC)^{-1}=0.
	\end{align}
	Therefore, by \eqref{sumfnC} and \eqref{CCEL}, we obtain \eqref{SOT}.
\end{proof}

\begin{exam}\label{empty spec}
	Let \(\mathbb T =(f_1(C), f_2(C),\ldots)\).
	Then \(\sigma( \mathbb T)= \emptyset\).
\end{exam}
\begin{proof}
	For any \(\lambda =(\lambda_1,\lambda_2,\ldots) \neq 0\),  we have
	\begin{align*}
		&\left\|\sum_{n=1}^N (T_n-\lambda_nI_H) \otimes E_n^*\right\| \\
		\leq&
		\left\|\sum_{n=1}^N T_n \otimes E_n^*\right\| + \left\|\sum_{n=1}^N \lambda_nI_H \otimes E_n^*\right\| \\
		\overset{(*)}{\leq}&
		\sqrt2 + \left\|\left( \sum_{n=1}^N \lambda_nI_H \otimes E_n^*  \right)\left( \sum_{n=1}^N \lambda_nI_H \otimes E_n^*  \right)^*+ \left( \sum_{n=1}^N \lambda_nI_H \otimes E_n^*  \right)^*\left( \sum_{n=1}^N \lambda_nI_H \otimes E_n^*  \right) \right\|^{\frac{1}{2}}\\
		=& \sqrt2 + \left(\sum_{n=1}^N|\lambda_n|^2\right)^{\frac12}\\
		\leq&
		\sqrt2 + \|\lambda\|,
	\end{align*}
	where \((*)\) is given by \eqref{sqrt2}.
		Then by Lemma~\ref{bound and SOT}, \(\mathbb T-\lambda\) is row bounded.
		Choose \(k \geq 1\) such that \(\lambda_k \neq 0\).
	Since \(\sigma(V)=\{0\}\), we have \(\sigma(C)=\{1\}\). 
	Notice that \(f_k^2\) extends holomorphically to a neighborhood of \(1\). 
	Therefore,
	\[
	\left\{\lambda^2:\lambda\in\sigma\bigl(f_k(C)\bigr)\right\}
	=\sigma\bigl(f_k^2(C)\bigr)
	=(f_k^2)\bigl(\sigma(C)\bigr)
	=\{0\}.
	\]
	Hence \(\sigma(T_k)=\{0\}\). 
	Consequently, \(T_k -\lambda_k\) is invertible.
	By Corollary~\ref{TS=1} with \(\mathbb S = (0,\ldots,0,(T_k-\lambda_k)^{-1}, 0,\ldots)\), we have \(\mathbb T -\lambda\) is invertible.
	Hence \(\sigma(\mathbb T) \subset \{0\}\). 
	It remains to show that \(\mathbb T\) is invertible.
	
	By Lemma~\ref{bound and SOT} and \eqref{sqrt2}, \(\mathbb T\) is row-bounded and \(\sum\limits_{n=1}^\infty T_n \otimes E_n^*\) converges in the strong operator topology. 
	Since 
		\[
		\partial_{\mathbb T} = \operatorname{SOT}\!-\!\lim_{N\to\infty} \sum\limits_{n=1}^{N}T_n\otimes E_n^*,
		\]
		by \eqref{sqrt2} again, we have \(\|\partial_{\mathbb T}\| \leq \sqrt2\).
	Moreover, it follows from \eqref{SOT} that
	\[
	\operatorname{SOT}\!-\!\lim_{N\to\infty}
	\sum_{n=1}^{N}T_n^2
	=
	\operatorname{SOT}\!-\!\lim_{N\to\infty}
	\sum_{n=1}^{N}\left(f_n(C)\right)^2
	=I.
	\]
	Therefore, Corollary~\ref{TS=1} implies that \(\mathbb T\) is invertible.
\end{proof}

\section{Holomorphic functional calculus vs. the compact-set von Neumann inequality}
In this section, we prove Theorem~\ref{thm:weighted-polynomial-calculus}, Proposition~\ref{Prop:vNinequtocompactness} and establish the corresponding spectral mapping theorem.
We begin with some preliminary results.
Let \(c_0 = \{(z_i)_{i \geq 1} : z_i \in \mathbb C, z_i \to 0\}\) and  \((\mathbb R_{+})^{\mathbb N} = \{(r_i)_{i \geq 1} : r_i \in \mathbb R_{+}, i \geq 1\}\).
For $\mathbf d=(d_i)_{i\geq1}\in c_0\cap(\mathbb R_{+})^{\mathbb N}$, define
\[
\mathcal F_{\mathbf d}=
\left\{
z\in\ell^2:
\sum_{i=1}^{\infty}\frac{|z_i|^2}{d_i^2}\leq1
\right\}.
\]
\begin{lem}\label{lem:compact-ellipsoid}
	Let $Q$ be a nonempty compact set of $\ell^2$.
	Then there exists $\mathbf d \in c_0\cap(\mathbb R_{+})^{\mathbb N}$ such that $Q\subset\mathcal F_{\mathbf d}$.
	Moreover, $\mathcal F_{\mathbf d}$ is a compact set of $\ell^2$.
\end{lem}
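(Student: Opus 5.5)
The plan has two parts: first build the sequence \(\mathbf d\) from the uniform tail decay of a compact set, then show \(\mathcal F_{\mathbf d}\) is compact by the same tail criterion.

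For the inclusion \(Q\subset\mathcal F_{\mathbf d}\), I use the standard characterization of compact subsets of \(\ell^2\): a bounded closed set \(Q\) is compact if and only if
\[
\lim_{N\to\infty}\sup_{z\in Q}\sum_{i>N}|z_i|^2=0 .
\]
Only the necessity direction is needed here. It follows from total boundedness: cover \(Q\) by finitely many \(\varepsilon\)-balls and control the tails of the finitely many centres.

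\begin{itemize}
\item Choose a strictly increasing sequence \(0=N_0<N_1<N_2<\cdots\) with \(\sup_{z\in Q}\sum_{i>N_k}|z_i|^2\le 4^{-k}\) for \(k\ge1\).
\item Put \(R=\sup_{z\in Q}\|z\|^2<\infty\).
\item Define \(d_i\) blockwise. For \(N_0<i\le N_1\) set \(d_i^2=2(R+1)\). For \(N_k<i\le N_{k+1}\) with \(k\ge1\) set \(d_i^2=2^{-k}\).
\end{itemize}

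Then \(d_i>0\) and \(d_i\to0\), so \(\mathbf d\in c_0\cap(\mathbb R_+)^{\mathbb N}\). For \(z\in Q\) we estimate
\[
\sum_i \frac{|z_i|^2}{d_i^2}\le \frac{R}{2(R+1)}+\sum_{k\ge1}2^{k}\sum_{N_k<i\le N_{k+1}}|z_i|^2\le \frac12+\sum_{k\ge1}2^{k}4^{-k}.
\]
The last sum equals \(1\), so this bound is \(3/2\), which is too big. To fix it, I replace \(4^{-k}\) by \(8^{-k}\) in the choice of \(N_k\). The tail sum then becomes \(\sum_{k\ge1} 4^{-k}=1/3\), and the total is at most \(1/2+1/3<1\). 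Hence \(Q\subset\mathcal F_{\mathbf d}\).

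For compactness of \(\mathcal F_{\mathbf d}\), I check the same tail criterion, now in the sufficiency direction.

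\begin{itemize}
\item \textbf{Bounded:} every \(z\in\mathcal F_{\mathbf d}\) satisfies \(\|z\|^2\le \sup_i d_i^2\).
\item \textbf{Closed:} each map \(z\mapsto\sum_{i\le M}|z_i|^2/d_i^2\) is continuous, so \(\mathcal F_{\mathbf d}\) is an intersection of closed sets.
\item \textbf{Uniform tails:} for \(z\in\mathcal F_{\mathbf d}\),
\[
\sum_{i>N}|z_i|^2\le \Bigl(\sup_{i>N}d_i^2\Bigr)\sum_{i>N}\frac{|z_i|^2}{d_i^2}\le \sup_{i>N}d_i^2\to0 .
\]
\end{itemize}

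Sufficiency of the criterion is proved as follows. Given \(\varepsilon>0\), pick \(N\) with all tails below \(\varepsilon\). The projection \(\pi_N\mathcal F_{\mathbf d}\) is a bounded subset of \(\mathbb C^N\), so it is totally bounded; a finite \(\varepsilon\)-net for it extends by zeros to a \(2\varepsilon\)-net for \(\mathcal F_{\mathbf d}\). Thus \(\mathcal F_{\mathbf d}\) is complete and totally bounded, hence compact.

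The main obstacle is essentially bookkeeping: the constants in the blockwise choice of \(d_i\) have to make the weighted sum at most \(1\), and the \(8^{-k}\) adjustment above handles this.
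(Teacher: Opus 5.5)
Your proof is correct and follows the paper's route. Both arguments choose $N_k$ from the uniform tail decay of $Q$, take $d_i$ constant on the blocks $(N_k,N_{k+1}]$ and geometrically decaying, and deduce compactness of $\mathcal F_{\mathbf d}$ from boundedness, closedness and the uniform tail bound $\sum_{i>N}|z_i|^2\le\sup_{i>N}d_i^2$. The only difference is bookkeeping: the paper normalizes the tail bound as $(\sup_{z\in Q}\|z\|)^2 4^{-j}$ and sets $d_i=\sqrt2\,R\,2^{-j/2}$ on every block, including $j=0$, so the weighted sum is at most $1$ directly, without your $8^{-k}$ adjustment.
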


\begin{proof}
	Since $Q$ is compact, we have
	$\sup\limits_{z\in Q}\|(I-P_n)z\|\to0$ as $n\to\infty$.
	Hence there exist integers
	$0=N_0<N_1<N_2<\cdots$ such that
	\begin{align}\label{eq:4.28}
		\sup_{z\in Q}\sum_{i>N_j}|z_i|^2
		=\sup_{z\in Q}\|(I-P_{N_j})z\|^2
		\leq \left( \sup\limits_{z\in Q}\|z\|\right)^2\,4^{-j},
		\ j\geq0.
	\end{align}
	Write $R=\max\Bigl\{1,\sup\limits_{z\in Q}\|z\|\Bigr\}$. 
	For every fixed \(j\geq0\), set \(	d_i=\sqrt{2}\,R\,2^{-\frac j2},\ N_j<i\leq N_{j+1}\).
	Then it is easy to see that $d=(d_i)_{i \geq 1} \in c_0 \cap (\mathbb R_{+})^{\mathbb N}$.
	For every $z\in Q$, by \eqref{eq:4.28},
	\[
	\sum_{i=1}^{\infty}\frac{|z_i|^2}{d_i^2}
	= \frac1{2R^2}\sum_{j=0}^{\infty}2^j
	\sum_{i=N_j+1}^{N_{j+1}}|z_i|^2
	\leq \frac12\sum_{j=0}^{\infty}2^{-j}
	=1.
	\]
	Thus $Q\subset\mathcal F_{\mathbf d}$.	Moreover, it is easy to see that \(\mathcal F_{\mathbf d}\) is closed and bounded, and 
	\begin{equation}\label{eq:Ed-tail}
		\begin{aligned}
			\sup_{z\in \mathcal F_{\mathbf d}}
			\left(\sum_{i>N}|z_i|^2\right)^{1/2}
			\leq
			\left(\sup_{i>N}d_i\right)
			\sup_{z\in \mathcal F_{\mathbf d}}
			\left(\sum_{i>N}\frac{|z_i|^2}{d_i^2}\right)^{1/2}
			\leq\sup_{i>N}d_i\longrightarrow0.
		\end{aligned}
	\end{equation}
	Therefore, $\mathcal F_{\mathbf d}$ is compact.
\end{proof}
\begin{prop}
If \(\mathbb{T} = (T_1, T_2, \dots)\) is a row-bounded sequence of commuting operators on \(H\) satisfying \(\sum\limits_{i=1}^\infty \|T_i\| < \infty\), then \(\mathbb{T}\) satisfies the compact-set von Neumann inequality.
\end{prop}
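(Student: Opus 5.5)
We aim to show that $\|p(\mathbb T)\|\le\sup_{z\in\mathcal K}|p(z)|$ for every $p\in\mathcal P$, with the explicit compact set
\[
\mathcal K=\prod_{i\ge1} r_i\overline{\mathbb D}=\{z\in\ell^2:\ |z_i|\le r_i,\ \forall i\ge1\},
\qquad r_i:=\|T_i\|.
\]
Since $\sum_i r_i<\infty$ forces $\sum_i r_i^2<\infty$, the same tail argument used for $K_{\mathbb T}$ in the proof of the $\ell^p$-compactness theorem above applies: $\sup_{z\in\mathcal K}\sum_{i>N}|z_i|^2\le\sum_{i>N}r_i^2\to0$, and every finite projection $\pi_N\mathcal K$ is a compact polydisc. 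Hence $\mathcal K$ is compact in $\ell^2$. If some $r_i=0$, that coordinate is identically $0$ on $\mathcal K$ and $T_i=0$, so this case is harmless. If $\mathcal K$ is too small, we will instead enlarge it to $\prod_i \rho_i\overline{\mathbb D}$ with $\rho_i=c\, r_i$ for a fixed constant $c\ge1$; compactness survives this change.

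The naive bound fails, because expanding $p=\sum_\alpha a_\alpha z^\alpha$ and applying the triangle inequality only gives $\|p(\mathbb T)\|\le\sum_\alpha|a_\alpha|r^\alpha$. This is the $\ell^1$-norm of the coefficients, not the sup of $|p|$ over the polydisc. Some genuine von Neumann-type input is therefore needed. A polynomial $p\in\mathcal P$ depends only on $z_1,\dots,z_n$ for some $n$, so the problem reduces to a uniform estimate for the finite tuples $(T_1,\dots,T_n)$, with a set $\mathcal K$ that does not depend on $n$.

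The plan is to exploit summability through dilation. Choose weights $\rho_i=c\,r_i$ (or $\rho_i= r_i+\varepsilon_i$ with $\sum\varepsilon_i<\infty$) and consider the contractions $C_i=T_i/\rho_i$. We then look for a bound of the form $\|p(\mathbb T)\|\le M\sup_{\prod\rho_i\overline{\mathbb D}}|p|$ with $M$ independent of $n$. The constant is then absorbed by enlarging the polydisc. The key input is a Cauchy-estimate argument. Write $p(\mathbb T)$ as an iterated Cauchy integral: each $T_i$ has spectrum in $r_i\overline{\mathbb D}$, and on $|\zeta_i|=\rho_i$ with $\rho_i=2r_i$ we have $\|(\zeta_i-T_i)^{-1}\|\le 1/r_i$. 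This gives $\|p(\mathbb T)\|\le\prod_{i\le n}2\cdot\sup|p|$. That product grows with $n$, so the crude version fails.

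The refinement is to take $\rho_i=r_i+\delta_i$ with $\delta_i$ large compared to $r_i$, so that the factor $\rho_i/\delta_i=1+r_i/\delta_i$ has a convergent product. For example, take $\delta_i=\sqrt{r_i}$ when $r_i$ is small, giving factor $1+\sqrt{r_i}$. This requires $\sum\sqrt{r_i}<\infty$, which is not implied by the hypothesis. The alternative is to split the monomials by degree in each variable and handle them with a Neumann series; equivalently, use the one-variable von Neumann inequality on the contractions after factoring. Choosing $\rho_i$ with $\sum r_i/\rho_i<\infty$ and $\sum\rho_i^2<\infty$, for instance $\rho_i=\sqrt{r_i\,\tau_i}$ for a suitable tail sequence $\tau_i$, should make $\prod(1+r_i/(\rho_i-r_i))$ finite while keeping $\mathcal K$ in $\ell^2$ and compact. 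The main obstacle is exactly this step: obtaining an $n$-independent constant. I expect to need a careful choice of radii balancing $\sum r_i/\rho_i<\infty$ against the compactness condition $\rho_i\to0$ in $\ell^2$, using $\sum r_i<\infty$ (for instance $\rho_i=\sqrt{r_i}\,s_i^{-1/2}$ with $s_i$ the tail sums). Once $M$ is bounded, a final rescaling that enlarges each $\rho_i$ by a factor $M^{1/?}$ cannot simply absorb $M$. So I would instead define $\mathcal K$ as the polydisc with radii $\rho_i$ and accept the constant $M$, or else remove it by applying the estimate to $p^k$ and taking $k$-th roots: $\|p(\mathbb T)\|^k\le\|p^k(\mathbb T)\|\le M\sup|p|^k$, so $\|p(\mathbb T)\|\le M^{1/k}\sup|p|\to\sup_{\mathcal K}|p|$. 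This power trick removes the constant cleanly and finishes the proof.
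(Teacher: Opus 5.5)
Your proposal has a genuine gap at the step you flag yourself, and it cannot be closed along the route you describe. Resolvent estimates on the torus $|\zeta_i|=\rho_i$ give at best
$\|p(T_1,\dots,T_n)\|\le\prod_{i\le n}\frac{\rho_i}{\rho_i-r_i}\,\sup|p|$.

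\begin{itemize}
\item An $n$-independent constant forces $\sum_i r_i/\rho_i<\infty$.
\item Compactness of $\prod_i\rho_i\overline{\mathbb D}$ in $\ell^2$ forces $\sum_i\rho_i^2<\infty$.
\end{itemize}

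By H\"older, $\sum_i r_i^{2/3}\le\bigl(\sum_i r_i/\rho_i\bigr)^{2/3}\bigl(\sum_i\rho_i^2\bigr)^{1/3}$. So this route works only when $\sum_i\|T_i\|^{2/3}<\infty$. For $r_i=1/(i\log^2(i+1))$ no admissible radii exist. Your explicit choice $\rho_i=\sqrt{r_i/s_i}$ even gives $\sum_i\rho_i^2=\sum_i r_i/s_i=\infty$, so that set is not in $\ell^2$. The one-variable von Neumann inequality cannot rescue this either, since the polydisc inequality fails in general for three or more commuting contractions.

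The missing ingredient is a genuine multivariable dilation theorem with constant $1$. The paper uses Brehmer's theorem in the form: commuting $C_1,\dots,C_n$ with $\sum_i\|C_i\|^2\le1$ have a regular unitary dilation. This yields $\|p(C_1,\dots,C_n)\|\le\sup_{\overline{\mathbb D}^n}|p|$ uniformly in $n$. Take $C_i=T_i/d_i$ with $d_i=2\sqrt{A\|T_i\|+4^{-i}}$ and $A=\sum_i\|T_i\|$. Then $\sum_i\|C_i\|^2\le\frac14$ and $\sum_i d_i^2<\infty$, so $\prod_i d_i\overline{\mathbb D}$ is compact. The trade-off here is governed by Cauchy--Schwarz, $\sum_i\|T_i\|\le\bigl(\sum_i\|T_i\|^2/d_i^2\bigr)^{1/2}\bigl(\sum_i d_i^2\bigr)^{1/2}$. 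This is exactly why $\ell^1$-summability is the natural hypothesis.

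Your final power trick is also invalid, because $\|p(\mathbb T)\|^k\le\|p(\mathbb T)^k\|$ is backwards. Submultiplicativity gives $\|p(\mathbb T)^k\|\le\|p(\mathbb T)\|^k$, and a nonzero square-zero operator shows the reverse fails. From $\|p^k(\mathbb T)\|\le M\sup_{\mathcal K}|p|^k$ you only obtain the spectral-radius bound $r(p(\mathbb T))\le\sup_{\mathcal K}|p|$.

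Contrary to your remark, a uniform constant $M$ \emph{can} be absorbed by enlarging the compact set, but by a different device, the one in the proof of $(2)\Rightarrow(1)$ of Theorem~\ref{thm4.8}:
\begin{itemize}
\item Normalize so that $\sup|p|=1$ and $p(0)\ge0$.
\item Use $1-\operatorname{Re}p\ge0$ to get $|p_k|\le2(1-p(0))$ for the homogeneous components.
\item Dilate the (circled) set by the factor $2M+1$.
\end{itemize}
In any case, the proposal never produces the $n$-independent bound that everything else depends on.
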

\begin{proof}
Let \(a_i = \|T_i\|\) and \(A = \sum\limits_{i=1}^\infty a_i\). If \(A=0\) the conclusion is trivial. Assume \(A>0\), and set
\[
d_i = 2\sqrt{Aa_i + 4^{-i}}.
\]
Let
\[
K_{\mathbf{d}}=\{z\in\ell^2:|z_i|\le d_i,\ i\ge1\}.
\]
Since
\[
\sum_{i=1}^{\infty}d_i^2
=4A^2+4\sum_{i=1}^{\infty}4^{-i}<\infty,
\]
$K_{\mathbf{d}}$ is closed and bounded in $\ell^2$. Moreover,
\[
\sup_{z\in K_{\mathbf{d}}}\sum_{i>N}|z_i|^2
\le\sum_{i>N}d_i^2\longrightarrow0.
\]
Hence $K_{\mathbf{d}}$ is compact. Notice that
\[
\sum_{i=1}^{\infty}\left\|\frac{T_i}{d_i}\right\|^2
=\sum_{i=1}^{\infty}
\frac{a_i^2}{4(Aa_i+4^{-i})}
\le\frac1{4A}\sum_{i=1}^{\infty}a_i
=\frac14.
\]
Hence the sequence \((T_1/d_1, T_2/d_2, \dots)\) satisfies Brehmer's positivity condition. By Brehmer's dilation theorem \cite{Brehmer1961}, every finite tuple \((T_1/d_1, \dots, T_n/d_n)\) has a commuting unitary dilation, and hence for any polynomial \(p\),
\[
\|p(T_1/d_1, \dots, T_n/d_n)\| \le \sup_{|z_i| \le 1, 1 \le i \le n} |p(z_1, \dots, z_n)|.
\]
Thus, for every $n\ge1$ and
$p\in\mathbb C[z_1,\ldots,z_n]$,
\[
\|p(T_1,\ldots,T_n)\|
\le
\sup_{\substack{|z_i|\le d_i\\1\le i\le n}}
|p(z_1,\ldots,z_n)|
=\sup_{z\in K_{\mathbf{d}}}|p(z)|.
\]
Consequently, $\mathbb T$ satisfies the compact-set
von Neumann inequality with respect to $K_{\mathbf{d}}$.
\end{proof}
Let $\mathbb T=(T_i)_{i\ge1}$ be a row-bounded sequence
of commuting operators,
and let $K$ be a given compact subset of $\ell^2$.
Throughout Lemmas~\ref{lem:product-neighborhood}--\ref{lem:bijinGammaV}, we assume that \textbf{$\pmb{\mathbb T}$ satisfies condition~(1) of Theorem~\ref{thm:weighted-polynomial-calculus}}.
Then there exists a nonempty compact set $\mathcal K$ of \(\ell^2\) such that
\[
\|p(\mathbb T)\|
\leq \sup_{z\in \mathcal K}|p(z)|
\]
for every $p\in \mathcal P$.
By Lemma~\ref{lem:compact-ellipsoid},  there exists
$\mathbf{d}=(d_i)_{i \geq 1} \in c_0 \cap (\mathbb R_{+})^{\mathbb N}$ such that 
\begin{align}\label{K subset Ed}
	\mathcal K\cup K\subset \mathcal F_{\mathbf d}.
\end{align}
Then for every $p\in \mathcal P$, we have
\begin{equation}\label{eq:ellipsoid-bound}
	\|p(\mathbb T)\|
	\leq\sup_{z\in \mathcal F_{\mathbf d}}|p(z)|.
\end{equation}

For each $n\geq1$, define $\iota_n:\mathbb C^n\to\ell^2$ by $$\iota_n(z_1,\ldots,z_n)=(z_1,\ldots,z_n,0,\ldots).$$
Set $P_n=\iota_n\pi_n$.
For $r>0$, define $F_{n}=(I-P_n)\mathcal F_{\mathbf d}$ and $\Omega_{r,n}=\pi_nK+rB_{\mathbb C^n}$.

\begin{lem}\label{lem:product-neighborhood}
	For every open neighborhood $U\subset\ell^2$ of $K$,
	there exist $r>0$ and $N\geq1$ such that
	\[
	K\subset \iota_N(\Omega_{r,N}) \oplus F_{N}
	\subset \iota_N(\Omega_{r,N}) \oplus r(I-P_N)B
	\subset U.
	\]
\end{lem}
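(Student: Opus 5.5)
The plan is a direct metric argument: use compactness of $K$ to get a uniform ``thickening'' of $K$ inside $U$, and use the uniform tail bound \eqref{eq:Ed-tail} for the ellipsoid $\mathcal F_{\mathbf d}$ (which contains $K$ by \eqref{K subset Ed}) to control the coordinates beyond $N$. Here $B$ denotes the open unit ball of $\ell^2$. The sum $\iota_N(\Omega_{r,N})\oplus F_N$ is read as $\{\iota_N(a)+b : a\in\Omega_{r,N},\ b\in F_N\}$, and similarly for $\iota_N(\Omega_{r,N})\oplus r(I-P_N)B$.

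\emph{Choice of $r$.} If $U=\ell^2$ everything is trivial, so assume $U^c\neq\emptyset$. Since $K$ is compact and $U^c$ is closed and disjoint from $K$, we have $\delta=\operatorname{dist}(K,U^c)>0$. Put $r=\delta/3$, so that $K+3rB\subset U$.

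\emph{Choice of $N$.} Since $\mathbf d\in c_0$, choose $N$ with $\sup_{i>N}d_i<r$. By \eqref{eq:Ed-tail}, this gives $\|(I-P_N)z\|<r$ for every $z\in\mathcal F_{\mathbf d}$, and in particular for every $z\in K$.

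\emph{First inclusion.} For $z\in K$ write $z=\iota_N(\pi_N z)+(I-P_N)z$. Here $\pi_N z\in\pi_N K\subset\Omega_{r,N}$. Also $(I-P_N)z\in(I-P_N)\mathcal F_{\mathbf d}=F_N$, because $K\subset\mathcal F_{\mathbf d}$.

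\emph{Second inclusion.} For every $z\in\mathcal F_{\mathbf d}$, the vector $(I-P_N)z$ lies in the range of $I-P_N$ and has norm $<r$ by the choice of $N$. Hence $(I-P_N)z=(I-P_N)\bigl(r\cdot r^{-1}(I-P_N)z\bigr)\in r(I-P_N)B$, so $F_N\subset r(I-P_N)B$.

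\emph{Third inclusion.} Take $a=\pi_N k+ru$ with $k\in K$ and $u\in B_{\mathbb C^n}$, and take $b=(I-P_N)w$ with $\|w\|<r$. Then
\[
\iota_N(a)+b-k=r\,\iota_N u+b-(I-P_N)k ,
\]
since $k=P_Nk+(I-P_N)k$ and $\iota_N\pi_N=P_N$. Each of the three terms on the right has norm $<r$; the last one by the choice of $N$. Hence $\|\iota_N(a)+b-k\|<3r=\delta$, and so $\iota_N(a)+b\in U$.

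The only delicate point is that the tail of $K$ must be uniformly small. This is exactly why the proof goes through $K\subset\mathcal F_{\mathbf d}$ and the estimate \eqref{eq:Ed-tail}, rather than working with pointwise tails of elements of $K$. Everything else is bookkeeping with the triangle inequality.
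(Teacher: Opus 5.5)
Your proof is correct and follows the paper's argument essentially step for step: choose $r$ with $K+3rB\subset U$, pick $N$ so that the tail estimate \eqref{eq:Ed-tail} makes $\|(I-P_N)z\|$ uniformly small on $\mathcal F_{\mathbf d}\supset K$, and finish with the triangle inequality. The only cosmetic difference is that the paper takes $\sup_{i>N}d_i<r/2$, getting the bound $r+r+r/2<3r$, whereas you take $\sup_{i>N}d_i<r$ and still obtain a strict bound below $3r$.
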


\begin{proof}
	Since $K$ is compact, there exists $r>0$ such that
	\begin{equation}\label{eq:K-neighborhood}
		K+3rB\subset U.
	\end{equation}
	Choose $N$ so large that $\sup\limits_{i>N}d_i<\frac r2$.
	By \eqref{K subset Ed}, we have
	\[
	K\subset P_NK+(I-P_N)K \subset \iota_N(\Omega_{r,N}) \oplus (I-P_N) \mathcal F_{\mathbf d} = \iota_N(\Omega_{r,N}) \oplus F_{N}.
	\]
	Thus the first inclusion holds.
	Moreover, by \eqref{eq:Ed-tail},
	\begin{equation}\label{eq:tail-r}
		\sup_{y\in F_{N}}\|y\|
		=
		\sup_{z\in \mathcal F_{\mathbf d}}\|(I-P_N)z\|
		\leq\sup_{i>N}d_i<\frac r2.
	\end{equation}
	Thus 
	\begin{align}\label{FN subset B}
		F_N\subset \frac r2(I-P_N)B,
	\end{align}
	which implies the second inclusion.
	For $x\in\Omega_{r,N}$,
	there exists $\lambda\in K$ such that
	$\|x-\pi_N\lambda\|_2<r$.
	By \eqref{K subset Ed} and \eqref{eq:tail-r}, for \(y \in r(I-P_N)B\),
	\begin{equation}\label{eq:product-distance}
		\begin{aligned}
			\|\iota_N(x)+y-\lambda\|
			&=\|\iota_N(x)+y-P_N\lambda-(I-P_N)\lambda\|\\
			&\leq
			\|x-\pi_N\lambda\|_2+\|y\|+\|(I-P_N)\lambda\|\\
			&<r+r+\frac r2<3r.
		\end{aligned}
	\end{equation}
	Therefore, by \eqref{eq:product-distance} and \eqref{eq:K-neighborhood},
	\[
	\iota_N(x)+y \in\lambda+3rB\subset K+3rB\subset U.
	\]
	This proves the last inclusion.
\end{proof}
Let \(U \subset \ell^2\) be an open neighborhood of \(K\).
Choose $r$ and $N$ satisfying the conditions of Lemma~\ref{lem:product-neighborhood}.
For simplicity, \textbf{write \(\mathbf{\Omega=\Omega_{r,N}}\), \(\mathbf{F=F_N}\) and $$\bm{V=\iota_N(\Omega)\oplus r(I-P_N)B}.$$}
Let
\[
\mathcal A
=
\overline{\left\{q|_F:q\in\bigcup\limits_{m>N}\mathbb C[z_{N+1},\ldots,z_m]\right\}}^{\,C(F)}.
\]
Then $\mathcal A$ is a unital commutative Banach algebra,
with norm $\|a\|_{\mathcal A}=\sup\limits_{y\in F}|a(y)|$.
We have the following lemma.
\begin{lem}\label{lem:J}
	Let \(f \in \operatorname{Hol}(V)\), then
	\begin{enumerate}
		\item [(1)] For each  $x\in\Omega$, $f(x,\cdot)|_F\in\mathcal A$;
		\item [(2)] The \(\mathcal A\)-valued function $J_f:\Omega\to\mathcal A$ defined by
		$J_f(x)=f(x,\cdot)|_F$ is holomorphic.
	\end{enumerate}
\end{lem}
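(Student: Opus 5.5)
The plan is to approximate $f(x,\cdot)$ uniformly on $F$ by polynomials in finitely many tail variables. The tool is the Taylor expansion of $f$ in the tail variable around $0$, and the key room we have is that $F\subset \frac r2(I-P_N)B$ (by \eqref{FN subset B}), while $f$ is holomorphic on the larger ball $r(I-P_N)B$.

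\emph{Step 1: holomorphy on a slightly bigger closed set.} Fix $x\in\Omega$. The map $g_x(y)=f(x,y)$ is holomorphic on the ball $r(I-P_N)B$ of the Hilbert space $(I-P_N)\ell^2$. I intend to use the Taylor expansion $g_x(y)=\sum_{k\ge0}Q_{k,x}(y)$ in homogeneous polynomials. A difficulty is that in infinite dimensions holomorphic functions need not be bounded on balls. To get around this, I will first shrink: by compactness of $\{x\}\times F$ (or of $\overline{\Omega'}\times F$ for a compact $\overline{\Omega'}\subset\Omega$), $f$ is bounded on some open neighbourhood $W$ of it in $V$. Working instead with the compact set $F$, a uniform Cauchy estimate on $F$ should follow as below.

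\emph{Step 2: uniform approximation on $F$.} For $y\in F$ and $|\zeta|\le 3/2$, the point $\zeta y$ lies in $\frac{3r}{4}(I-P_N)B$. The set $\{(x,\zeta y):|\zeta|\le 3/2,\ y\in F\}$ is compact, so $f$ is bounded by some $M$ on a neighbourhood of it. The one-variable Cauchy estimate then gives $|Q_{k,x}(y)|\le M(2/3)^k$ uniformly for $y\in F$. Hence $\sum_{k\le m}Q_{k,x}\to g_x$ uniformly on $F$. It remains to show that each continuous homogeneous polynomial $Q_{k,x}$ restricted to $F$ lies in $\mathcal A$. Since $F$ is compact, $\sup_{y\in F}\|(I-P_m)y\|\to0$ by \eqref{eq:Ed-tail}, and $Q_{k,x}$ is uniformly continuous on bounded sets. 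Therefore $Q_{k,x}(P_my)\to Q_{k,x}(y)$ uniformly on $F$. Finally, $Q_{k,x}\circ P_m$ is a polynomial in $z_{N+1},\dots,z_m$. This proves (1).

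\emph{Step 3: holomorphy of $J_f$.} Since holomorphy of Banach-valued maps is local, fix $x_0\in\Omega$ and a closed polydisc $D\subset\Omega$ around it. The set $D\times\{\zeta y: |\zeta|\le 3/2, y\in F\}$ is compact in $V$, so $f$ is bounded by $M$ on an open neighbourhood of it. For each fixed $y\in F$, the function $x\mapsto f(x,y)$ is holomorphic on $D$. I will then show that $J_f$ is continuous into $\mathcal A$ and satisfies the Cauchy integral formula. Continuity follows from uniform continuity of $f$ on a compact neighbourhood. For the Cauchy formula, the iterated Cauchy integral $\int_{\partial_0 D} J_f(\xi)\prod(\xi_j-x_j)^{-1}\,d\xi$ converges as an $\mathcal A$-valued Riemann integral, and its value is determined by the point evaluations at $y\in F$, which separate points of $\mathcal A\subset C(F)$. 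Hence $J_f$ is given by a Cauchy integral and is holomorphic. Equivalently, $J_f$ is weakly holomorphic (via the evaluation functionals, together with local boundedness) and hence holomorphic.

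The main obstacle I expect is the compactness bookkeeping. We need $f$ to be locally bounded on a neighbourhood of the relevant compact sets, which holds because holomorphic functions are continuous. We also need the dilates $\zeta F$ with $|\zeta|\le 3/2$ to stay inside the domain; this uses the margin between $\frac r2$ and $r$ provided by Lemma~\ref{lem:product-neighborhood}.
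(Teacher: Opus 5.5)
Your proof is correct, but it runs the two limits in (1) in the opposite order from the paper, and in (2) it replaces a cited theorem with a direct argument.

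\textbf{Part (1).} The paper truncates first. For each $m>N$, the function $f(x,P_m\cdot)$ is holomorphic on a finite-dimensional ball of radius $r$. Its finite-dimensional homogeneous expansion therefore converges uniformly on $P_mF$, which lies in the ball of radius $r/2$, so $f(x,P_m\cdot)|_F\in\mathcal A$. The paper then lets $m\to\infty$, using only continuity of $f(x,\cdot)$ and $\sup_{y\in F}\|P_my-y\|\to0$.

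You expand first, in the infinite-dimensional homogeneous polynomials $Q_{k,x}$. The one-variable Cauchy estimate along $\zeta\mapsto\zeta y$ gives $|Q_{k,x}(y)|\le M(2/3)^k$ uniformly on $F$. Only then do you truncate each $Q_{k,x}$, using its Lipschitz continuity on bounded sets.

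Both routes use the same margin between $r/2$ and $r$. The paper's order needs only finite-dimensional power series plus continuity of $f$. Yours needs the standard facts that the Taylor series of a holomorphic function on a Banach-space ball converges along rays and that its terms are continuous homogeneous polynomials. In return it gives an explicit geometric rate.

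\textbf{Part (2).} The paper checks only two things: local boundedness of $J_f$, and holomorphy of $\delta_y\circ J_f=f(\cdot,y)$ for the separating family $\{\delta_y\}_{y\in F}\subset\mathcal A^*$. It then invokes Kruse's theorem. Because this uses only point evaluations rather than all of $\mathcal A^*$, it is a genuine result, not Dunford's elementary weak-implies-strong theorem.

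Your argument avoids that citation. Compactness of $D\times F$ gives norm continuity of $J_f$, so the $\mathcal A$-valued Cauchy integral over the distinguished boundary exists. Evaluating it at each $y\in F$ identifies it with $J_f(x)$. This is self-contained, and it works because continuity comes for free when $F$ is compact.

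Your closing ``equivalently, weakly holomorphic plus locally bounded'' remark is exactly the paper's route, but it is a theorem rather than an immediate equivalence. Your Step 1 boundedness observation is never actually used.
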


\begin{proof}
	(1) Fix $x\in\Omega$.
	For each $m>N$, by Lemma~\ref{lem:product-neighborhood}, $f(x,\cdot)$ is holomorphic on $rP_m(I-P_N)B$. 
	By \eqref{FN subset B}, we have $P_mF\subset \frac r2 P_m(I-P_N)B$. 
	Thus we have the Taylor expansion
	\[
	f(x,P_my)
	=
	\sum_{k=0}^{\infty}
	\frac{1}{k!}D_y^kf(x,0)[P_my,\ldots,P_my],
	\ y\in F,
	\]
	which converges uniformly on $F$.
	Therefore, by the definition of \(\mathcal A\), $f(x,P_m\cdot)|_F\in\mathcal A$.
	Since \(\mathcal F_{\mathbf d}\) is compact,
	\begin{align}\label{eq:to0}
		\sup_{y\in F}\|P_my-y\|
		=\sup_{y\in \mathcal F_{\mathbf d}}\|(I-P_m)(I-P_N)y\|
		=\sup_{y\in \mathcal F_{\mathbf d}}\|(I-P_m)y\|\longrightarrow0.
	\end{align}
	Consequently,
	\[
	\lim_{m\to\infty}
	\|f(x,P_m\cdot)|_F-f(x,\cdot)|_F\|_{C(F)}=0.
	\]
	Since $\mathcal A$ is closed in $C(F)$, it follows that
	$f(x,\cdot)|_F\in\mathcal A$.
	
	(2)
	For any $x_0\in\Omega$, there exists $\varepsilon>0$
	such that $\overline{B(x_0,\varepsilon)}\subset\Omega$.
	Then
	\[
	\sup_{x\in B(x_0,\varepsilon)}\|J_f(x)\|_{\mathcal A}
	\leq
	\sup_{(x,y)\in\overline{B(x_0,\varepsilon)}\times F}
	|f(x,y)|<\infty.
	\]
	Thus $J_f$ is locally bounded.
	Moreover, it is easy to verify that \(\operatorname{span}\{\delta_y, y\in F\}\) of \(\mathcal A^*\) separates the points of \(\mathcal A\) and 
	$\delta_y\circ J_f=f(\cdot,y)\in\operatorname{Hol}(\Omega)$. Therefore, by \cite[Theorem 6.1,\ \((f)\Rightarrow(a)\)]{Kruse2020}, we have $J_f\in\operatorname{Hol}(\Omega;\mathcal A)$.
\end{proof}

\begin{lem}\label{lem:rho-exist}
	There exists a bounded unital algebra homomorphism
	$\rho:\mathcal A\to B(H)$ with the following properties.
	\begin{enumerate}
		\item[(1)] For each $m>N$ and
		$p\in\mathbb C[z_{N+1},\ldots,z_m]$, \(\rho(p|_F)=p(T_{N+1},\ldots,T_m)\);
		\item[(2)] For every $a\in\mathcal A$ and	$[g]_N\in\operatorname{Hol}(\pi_NK)$,
		$\rho(a)$ commutes with $\tau_N([g]_N)$.
	\end{enumerate} 
\end{lem}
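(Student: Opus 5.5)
The plan is to build $\rho$ on polynomials in the tail variables and then extend by continuity. On the dense subalgebra $\mathcal A_0=\{q|_F: q\in\bigcup_{m>N}\mathbb C[z_{N+1},\ldots,z_m]\}$ of $\mathcal A$, set
\[
\rho_0(q|_F)=q(T_{N+1},\ldots,T_m).
\]
Two things need checking: that this is well defined, and that it is bounded in the sup norm over $F$.

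\textbf{Step 1: the key estimate.} For every tail polynomial $q$ we want
\[
\|q(T_{N+1},\ldots,T_m)\|\le \sup_{y\in F}|q(y)|.
\]
Regard $q$ as an element of $\mathcal P$ that does not depend on $z_1,\ldots,z_N$. By \eqref{eq:ellipsoid-bound},
\[
\|q(\mathbb T)\|\le\sup_{z\in\mathcal F_{\mathbf d}}|q(z)|.
\]
Since $q(z)=q((I-P_N)z)$ and $F=(I-P_N)\mathcal F_{\mathbf d}$, the right-hand side equals $\sup_{y\in F}|q(y)|$. This immediately gives well-definedness: if $q_1|_F=q_2|_F$, then $(q_1-q_2)(\mathbb T)=0$. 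It also shows that $\rho_0$ is contractive.

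\textbf{Step 2: algebra structure and extension.} Commutativity of $\mathbb T$ shows that $\rho_0$ is a unital algebra homomorphism on $\mathcal A_0$; indeed it is just polynomial evaluation. Extend it by continuity to all of $\mathcal A$. Products and the unit pass to the limit because multiplication is jointly continuous in both $C(F)$ and $B(H)$. Property (1) holds by construction.

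\textbf{Step 3: commutation with $\tau_N$.} By continuity of $\rho$ and density of $\mathcal A_0$, it suffices to show that $p(T_{N+1},\ldots,T_m)$ commutes with $\tau_N([g]_N)$. Let $S$ be an operator commuting with $T_1,\ldots,T_N$. The claim to establish is that $S$ commutes with $\tau_N([g]_N)$ for every $[g]_N\in\operatorname{Hol}(\pi_NK)$.

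\textbf{Main obstacle.} The difficulty is that $\tau_N$ is only an abstract holomorphic functional calculus on $\pi_NK$, not a priori Taylor's. The first thing I would try is to use the compatibility assumption in condition~(1) of Theorem~\ref{thm:weighted-polynomial-calculus}, together with uniqueness of such calculi. By continuity in the compact-open topology, $\tau_N$ is determined on functions that are uniform limits of rational functions with denominators nonvanishing near $\pi_NK$. For those functions commutation is clear, since $S$ commutes with any inverse of an operator it commutes with.

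If rational approximation on a neighborhood is unavailable, the fallback is a similarity argument. Consider the commuting tuple $(T_1,\ldots,T_N)$ acting on $H\oplus H$ and conjugate by the invertible operator $\begin{pmatrix} I & S\\ 0 & I\end{pmatrix}$. Uniqueness of the calculus for that tuple then forces $S\tau_N([g]_N)=\tau_N([g]_N)S$.

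So the plan is to prove the needed uniqueness of the calculus via its continuity from the compact-open topology to the norm topology. Extending $\tau_N$ to $H\oplus H$ is exactly the delicate point, and I expect to handle it by applying the same defining properties to $\tau_N\oplus\tau_N$ and its conjugate.
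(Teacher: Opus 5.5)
Your Steps 1 and 2 match the paper's argument. The estimate $\|q(T_{N+1},\ldots,T_m)\|\le\sup_{y\in F}|q(y)|$ from \eqref{eq:ellipsoid-bound}, together with $F=(I-P_N)\mathcal F_{\mathbf d}$, gives both well-definedness and contractivity on tail polynomials, and $\rho$ is then the continuous extension.

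The gap is in Step 3. You try to get the commutation from properties of $\tau_N$ alone, via the claim that every $S$ commuting with $T_1,\ldots,T_N$ commutes with $\tau_N([g]_N)$. For an abstract calculus on an arbitrary compact set $\pi_NK\subset\mathbb C^N$ with $N\ge2$, neither of your routes to this claim works.
\begin{itemize}
\item Runge-type density of rational functions (with poles off $U$) in $\operatorname{Hol}(U)$ is a one-variable fact. It fails for general neighbourhoods $U$ in several variables.
\item Your $2\times2$ similarity trick needs uniqueness of such calculi, which you only announce. Uniqueness does not follow from compact-open continuity, and it can fail: continuous characters of $\operatorname{Hol}(U)$ are point evaluations on the envelope of holomorphy of $U$, which may be non-schlicht. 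Two distinct points $\chi_1\neq\chi_2$ lying over the same $\lambda$ give two different calculi for the scalar tuple $\lambda$.
\item With $H=\mathbb C^2$, $T_j=\lambda_jI$ and $\tau=\chi_1\oplus\chi_2$, take the nilpotent $S$ sending the second basis vector to the first. It commutes with every $T_j$ but not with $\tau$, so your commutant claim is false in this generality.
\end{itemize}

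The missing idea is that you never need the full commutant. The operators you must handle are the $T_j$ with $j>N$, and for these the compatibility hypothesis gives commutation directly. For $m\ge j>N$, coordinate preservation and compatibility give $T_j=\tau_m([z_j]_m)$ and $\tau_N([g]_N)=\tau_m([g\circ\pi_{m,N}]_m)$; here $g\circ\pi_{m,N}\in\operatorname{Hol}(\pi_mK)$ because $\pi_{m,N}(\pi_mK)=\pi_NK$. Since $\tau_m$ is a homomorphism on the commutative algebra $\operatorname{Hol}(\pi_mK)$, these two operators commute. Hence every $p(T_{N+1},\ldots,T_m)$ commutes with $\tau_N([g]_N)$, and your density and continuity reduction then gives (2). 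This is exactly the paper's proof.
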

\begin{proof}
	For each $m>N$ and $p\in\mathbb C[z_{N+1},\ldots,z_m]$,
	by \eqref{eq:ellipsoid-bound},
	\begin{equation}\label{eq:rho-bound}
		\|p(T_{N+1},\ldots,T_m)\|
		\leq
		\sup_{z\in \mathcal F_{\mathbf d}}|p(z_{N+1},\ldots,z_m)|
		\leq
		\sup_{y\in F}|p(y)|
		=
		\|p|_F\|_{\mathcal A}.
	\end{equation}
	Thus there exists a bounded unital algebra homomorphism satisfying (1).
	Moreover, for $j>N$ and $[g]_N\in\operatorname{Hol}(\pi_NK)$, by the coordinate-preserving property and compatibility
	of the family $\{\tau_n\}_{n\ge1}$, we have
	\[
	\begin{aligned}
		\tau_N([g]_N)T_j
		=\tau_j([g\circ\pi_{j,N}]_j)\tau_j([z_j]_j)
		=\tau_j\bigl([(g\circ\pi_{j,N})z_j]_j\bigr)
		=\tau_j([z_j]_j)\tau_j([g\circ\pi_{j,N}]_j)
		=T_j\tau_N([g]_N).
	\end{aligned}
	\]
	Hence $\tau_N([g]_N)$ commutes with $\rho(a)$
	for every $a\in\mathcal A$.
\end{proof}

Define the bilinear map
$\Theta_0:\operatorname{Hol}(\Omega)\times\mathcal A\to B(H)$
by $(g,a)\mapsto\tau_N([g]_N)\rho(a)$.
By the continuity of $\tau_N$, there exist a compact set
$\mathbf K \subset\Omega$ and a constant $C>0$ such that
\begin{equation}\label{eq:tau-bound}
	\|\tau_N([g]_N)\|\leq C\sup_{x\in \mathbf K}|g(x)|.
\end{equation}
Then, by \eqref{eq:rho-bound} and \eqref{eq:tau-bound},
\begin{align}\label{eq:Theta-bound}
	\|\Theta_0(g,a)\|
	\leq C\sup_{x\in \mathbf K}|g(x)|\,\|a\|_{\mathcal A}.
\end{align}
Therefore, by \eqref{eq:Theta-bound} and Lemma~\ref{lem:rho-exist}, $\Theta_0$ induces a continuous unital
algebra homomorphism
\[
\Theta:
\operatorname{Hol}(\Omega)\widehat{\otimes}_{\pi}\mathcal A
\longrightarrow B(H),
\qquad
\Theta(g\otimes a)=\tau_N([g]_N)\rho(a),
\]
where $\operatorname{Hol}(\Omega)\widehat{\otimes}_{\pi}\mathcal A$ is the completion of the algebraic tensor product $\operatorname{Hol}(\Omega)\otimes\mathcal A$ with respect to the projective tensor product topology.
By $\operatorname{Hol}(\Omega)\widehat{\otimes}_{\pi}\mathcal A \cong\operatorname{Hol}(\Omega;\mathcal A)$ (see \cite[Chapter~II, \S4.2]{Helemskii1989}) and Lemma~\ref{lem:J}, define 
\[
\Gamma_V:\operatorname{Hol}(V)\longrightarrow B(H),
\qquad
\Gamma_V(f)=\Theta J_f.
\]

By the definition of $V$, it is easy to see that $P_mV\subset V$ for $m\geq N$. 
Then we have the following lemma.
\begin{lem}\label{lem:fPm to f}
	For any  \(f \in \operatorname{Hol}(V)\), $f\circ P_m\to f$ in the compact-open topology of $\operatorname{Hol}(V)$.
\end{lem}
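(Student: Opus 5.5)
Since the compact-open topology on $\operatorname{Hol}(V)$ is the topology of uniform convergence on compact subsets of $V$, it suffices to fix a compact set $L\subset V$ and show that $\sup_{z\in L}|f(P_mz)-f(z)|\to0$ as $m\to\infty$. The argument has three steps: the points $P_mz$ converge to $z$ uniformly on $L$; all these points stay in a single compact subset of $V$; and $f$ is uniformly continuous there.

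\emph{Step 1: uniform convergence on $L$.} Because $L$ is compact in $\ell^2$, the tails vanish uniformly, that is, $\sup_{z\in L}\|(I-P_m)z\|\to0$. This is the same fact used in \eqref{eq:to0} and in Lemma~\ref{lem:compact-ellipsoid}. Hence $P_mz\to z$ uniformly for $z\in L$.

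\emph{Step 2: a compact set containing all $P_mz$.} Let
\[
\widetilde L=L\cup\{P_mz: z\in L,\ m\ge N\}.
\]
For $m\ge N$ we have $P_mV\subset V$, so $\widetilde L\subset V$. To see that $\widetilde L$ is compact, take a sequence $P_{m_k}z_k$. Pass to a subsequence with $z_k\to z\in L$. If the $m_k$ are bounded, we may assume $m_k=m$ is constant, and then $P_{m}z_k\to P_mz\in\widetilde L$. If $m_k\to\infty$, then $\|P_{m_k}z_k-z\|\le\|z_k-z\|+\|(I-P_{m_k})z\|\to0$, so the limit is $z\in L$. Either way the subsequence converges in $\widetilde L$, which gives compactness.

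\emph{Step 3: uniform continuity.} A holomorphic function is continuous, so $f$ is uniformly continuous on a compact neighbourhood of $\widetilde L$. More precisely, for each $w\in\widetilde L$ there is $\delta_w>0$ such that $B(w,2\delta_w)\subset V$ and the oscillation of $f$ on $B(w,2\delta_w)$ is less than $\varepsilon$. Cover $\widetilde L$ by finitely many balls $B(w_i,\delta_{w_i})$ and let $\delta=\min_i\delta_{w_i}$. If $u\in\widetilde L$ and $\|u-v\|<\delta$, then $u$ and $v$ both lie in some $B(w_i,2\delta_{w_i})$, so $|f(u)-f(v)|<2\varepsilon$.

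Apply this with $u=P_mz$ and $v=z$. By Step 1, $\|P_mz-z\|<\delta$ for all $z\in L$ once $m$ is large, so $\sup_{z\in L}|f(P_mz)-f(z)|<2\varepsilon$ for all large $m$. This proves the convergence.

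The step most likely to need care is Step 2, that is, checking that the points $P_mz$ all lie in a compact subset of $V$. This is exactly where the structure $V=\iota_N(\Omega)\oplus r(I-P_N)B$ is used: $P_m$ fixes the first $N$ coordinates and does not increase the norm of the tail, which is why $P_mV\subset V$ for $m\ge N$.
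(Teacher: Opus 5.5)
Your argument is correct and is essentially the paper's proof. Both rest on $\sup_{z\in L}\|(I-P_m)z\|\to0$ for compact $L$ together with continuity of $f$ at points of $L$. The paper argues by contradiction along a sequence $z_j\to z$ with $m_j\to\infty$, while you give the direct finite-cover version.

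Two small remarks. Step~2 is not actually needed: covering $L$ itself by the balls $B(w_i,\delta_{w_i})$ already works, because $P_mz\in B(w_i,2\delta_{w_i})\subset V$ once $\|P_mz-z\|<\delta$. Also, the phrase ``compact neighbourhood of $\widetilde L$'' should be dropped, since $\ell^2$ is not locally compact; your ``more precisely'' formulation is the correct one.
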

\begin{proof}
	Suppose that there exists a compact set $\mathbf K\subset V$ such that $\sup\limits_{z\in \mathbf K}|f(P_mz)-f(z)|\not\to0$ as \(m\to \infty\).
	Then there exist $\varepsilon>0$, $m_j\to\infty$,
	and $z_j\in \mathbf K$ such that
	\begin{equation}\label{eq:Pm-contradiction}
		|f(P_{m_j}z_j)-f(z_j)|\geq\varepsilon,
		\ j\geq1.
	\end{equation}
	Since $\mathbf K$ is compact, without loss of generality, assume that $z_j\to z\in \mathbf K$. Therefore,
	\[
	\lim_{j \to \infty}\|P_{m_j}z_j-z\|
	\leq \lim_{j \to \infty}\left(\|P_{m_j}z_j-z_j\|+\|z_j-z\|\right)
	\leq 
	\lim_{j \to \infty}\left(\sup_{u\in \mathbf K}\|P_{m_j}u-u\|+\|z_j-z\|\right)
	\overset{(*)}{=}0,
	\]
	where \((*)\) follows from the compactness of $\mathbf K$.
	Hence, from the continuity of $f$ at $z$, we obtain a contradiction to \eqref{eq:Pm-contradiction}.
\end{proof}

\begin{lem}\label{lem:bijinGammaV}
	The map $\Gamma_V$ is a unital algebra homomorphism, continuous from the compact-open topology to the operator norm topology.
	Moreover, for every $f\in\operatorname{Hol}(V)$,
	\begin{align}\label{eq:bijinbiaoda}
		\Gamma_V(f)=\lim_{m\to\infty}\tau_m([f\circ\iota_m]_m).
	\end{align}
\end{lem}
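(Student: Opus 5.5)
The plan is to prove the three assertions in order, each time reducing to the structure $\Gamma_V=\Theta\circ J$ and to the case of finitely many variables. Throughout we use $J_f\in\operatorname{Hol}(\Omega;\mathcal A)$ from Lemma~\ref{lem:J}, the identification $\operatorname{Hol}(\Omega)\widehat{\otimes}_{\pi}\mathcal A\cong\operatorname{Hol}(\Omega;\mathcal A)$, and the continuity bound \eqref{eq:Theta-bound} for $\Theta$.

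\emph{Linearity, unitality and multiplicativity.} The map $f\mapsto J_f$ is a unital algebra homomorphism $\operatorname{Hol}(V)\to\operatorname{Hol}(\Omega;\mathcal A)$, because $J_{fg}(x)=J_f(x)J_g(x)$ pointwise in $\mathcal A$. The map $\Theta$ is multiplicative on elementary tensors: by Lemma~\ref{lem:rho-exist}(2),
\[
\Theta\bigl((g\otimes a)(g'\otimes a')\bigr)=\tau_N([gg']_N)\rho(aa')=\tau_N([g]_N)\rho(a)\tau_N([g']_N)\rho(a').
\]
By linearity and continuity this extends to the completion, so $\Gamma_V$ is a unital homomorphism.

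\emph{Continuity.} Suppose $f_\alpha\to f$ in the compact-open topology of $\operatorname{Hol}(V)$. Then $J_{f_\alpha}\to J_f$ uniformly on compact subsets of $\Omega$, with values in $\mathcal A$, since $\mathbf K\times F$ is a compact subset of $V$ whenever $\mathbf K\subset\Omega$ is compact. Uniform convergence on compacta in $\operatorname{Hol}(\Omega;\mathcal A)$ is exactly the topology in which $\Theta$ is continuous. More concretely, I would try to get a direct estimate of the form
\[
\|\Theta(G)\|\le C'\sup_{\mathbf K'\times F}|G|
\]
for some compact $\mathbf K'\supset\mathbf K$. To obtain it, expand $G$ on a polydisc neighbourhood of $\mathbf K$ via a Cauchy-integral or Taylor-series representation, writing $G=\sum_\alpha g_\alpha\otimes a_\alpha$ with $\sum\sup|g_\alpha|\,\|a_\alpha\|$ controlled by Cauchy estimates. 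This Cauchy-estimate step is the main technical obstacle; if the direct estimate proves awkward, I would instead cite the nuclearity of $\operatorname{Hol}(\Omega)$ so that the projective and injective tensor topologies coincide.

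\emph{Formula \eqref{eq:bijinbiaoda}.} First treat $f$ depending only on $z_1,\dots,z_m$ with $m>N$, i.e.\ $f=f\circ P_m$. Approximate $f\circ\iota_m$ on a neighbourhood of $\pi_mK$ by finite sums $\sum_k g_k(z_1,\dots,z_N)\,q_k(z_{N+1},\dots,z_m)$, using a Taylor expansion in the last variables on the polydisc-type product set. For such sums, $\Gamma_V$ gives $\sum_k\tau_N([g_k])\,q_k(T_{N+1},\dots,T_m)$ by Lemma~\ref{lem:rho-exist}(1). The functional calculus $\tau_m$ gives the same operator, by compatibility together with multiplicativity of $\tau_m$ and $\tau_m([z_j]_m)=T_j$. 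Continuity of both maps (the previous step for $\Gamma_V$, and the definition for $\tau_m$) then yields $\Gamma_V(f\circ P_m)=\tau_m([f\circ\iota_m]_m)$. For general $f$, Lemma~\ref{lem:fPm to f} gives $f\circ P_m\to f$ in the compact-open topology, and continuity of $\Gamma_V$ gives
\[
\Gamma_V(f)=\lim_{m\to\infty}\Gamma_V(f\circ P_m)=\lim_{m\to\infty}\tau_m([f\circ\iota_m]_m)
\]
in operator norm.
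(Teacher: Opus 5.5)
Your proposal is correct and is essentially the paper's proof: $\Gamma_V(f)=\Theta J_f$ is multiplicative because of the commutation in Lemma~\ref{lem:rho-exist}(2). Continuity comes from $\sup_{x\in Q}\|J_f(x)\|_{\mathcal A}=\sup_{Q\times F}|f|$ together with the identification $\operatorname{Hol}(\Omega)\widehat{\otimes}_{\pi}\mathcal A\cong\operatorname{Hol}(\Omega;\mathcal A)$. The formula follows by Taylor-expanding $f\circ\iota_m$ in $z_{N+1},\dots,z_m$ on $\Omega\times rB_{\mathbb C^{m-N}}$, matching terms through Lemma~\ref{lem:rho-exist}(1) and compatibility, and letting $m\to\infty$ via Lemma~\ref{lem:fPm to f}.

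For continuity of $\Theta$ in the compact-open topology, the nuclearity argument you offer as a fallback is the right one, since it is what makes the cited identification topological. A Taylor expansion on a single polydisc only works when $\pi_NK$ fits inside a polydisc contained in $\Omega$, which need not happen.
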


\begin{proof}
	It is easy to verify that $\Gamma_V$ is a unital algebra homomorphism.
	For every compact set $Q \subset\Omega$, we have
	\[
	\sup_{x\in Q}\|J_f(x)\|_{\mathcal A}
	=
	\sup_{(x,y)\in Q\times F}|f(x,y)|.
	\]
	Therefore, $\Gamma_V$ is continuous from the compact-open topology to the operator norm topology.

	Fix $m\geq N$ and define
	$g_m=f\circ\iota_m\in\operatorname{Hol}(\pi_m(V))$.
	We have the expansion
	\[
	g_m(x,w)
	=
	\sum_{k=0}^{\infty}\sum_{|\beta|=k}g_{m,\beta}(x)w^\beta,
	\]
	where $g_{m,\beta}\in\operatorname{Hol}(\Omega)$ and
	the partial sums converge uniformly on compact subsets
	of $\pi_m(V)$.
	Therefore, by Lemma~\ref{lem:fPm to f}, the continuity of  \(\Gamma_V\) and the compatibility
	of the family $\{\tau_n\}_{n\ge1}$, we have
	\begin{align*}
		\Gamma_V(f) &= \lim_{m\to\infty}\Gamma_V(f\circ P_m)\\
		&= \lim_{m\to\infty}\Gamma_V(g_m\circ \pi_m)\\
		&=\lim_{m\to\infty}\sum_{k=0}^{\infty}\sum_{|\beta|=k} \Gamma_V\bigl((g_{m,\beta}(x)w^\beta) \circ \pi_m\bigr)\\
		&=\lim_{m\to\infty}\sum_{k=0}^{\infty}\sum_{|\beta|=k}
		\tau_N([g_{m,\beta}]_N)\prod_{j=1}^{m-N}T_{N+j}^{\beta_j}\\
		&=\lim_{m\to\infty} \sum_{k=0}^{\infty}\sum_{|\beta|=k}
		\tau_m([g_{m,\beta}\circ\pi_{m,N}]_m)
		\prod_{j=1}^{m-N}T_{N+j}^{\beta_j}\\
		&\overset{(*)}{=}\lim_{m\to\infty}\tau_m([g_m]_m)\\
		&=\lim_{m\to\infty}\tau_m([f\circ\iota_m]_m),
	\end{align*}
		where $(*)$ follows from $g_{m,\beta}\circ\pi_{m,N}(z_1,\ldots,z_m)=g_{m,\beta}(z_1,\ldots,z_N)$ for $m\geq N$.
\end{proof}
We rewrite Theorem {\ref{thm:weighted-polynomial-calculus}} as follows for convenience.
\begin{thm}\label{thm4.8}
	Let $\mathbb T$ be a row-bounded sequence of commuting operators,  $K$ be a compact subset in $\ell^2$. 
	Then the following statements are equivalent.
	\begin{enumerate}
		\item[(1)]
		$\mathbb T$ satisfies the compact-set von Neumann inequality,  and there is a compatible family of holomorphic functional calculi  $\tau_n: {\rm Hol}(\pi_nK)\to B(H)$ of $\mathbb T_n$ with respect to $\pi_nK$;
		\item[(2)]
		$\mathbb T$ admits a continuous holomorphic functional
		calculus on $K$.
	\end{enumerate}
\end{thm}

\begin{proof}
	$(2)\Rightarrow(1)$.
	By (2) in Definition~\ref{def:Gamma}, there exist a nonempty compact set
	$\mathbf K\subset\ell^2$ and a constant $C\geq1$ such that,
	for every $f\in\operatorname{Hol}(\ell^2)$,
	\begin{equation}\label{eq:Gamma-compact-bound}
		\|\Gamma([f])\|\leq C\sup_{z\in \mathbf K}|f(z)|.
	\end{equation}
	Write $a=2C+1$.
	By Lemma~\ref{lem:compact-ellipsoid}, choose a sequence
	$d=(d_1,d_2,\ldots) \in c_0 \cap (\mathbb R_{>0})^{\mathbb N}$ such that $a\mathbf K\subset \mathcal F_{\mathbf d}$.
	Thus, for every $n\geq1$, 
	\begin{align}\label{apinL}
		a\pi_n(\mathbf K)\subset \pi_n\mathcal F_{d}.
	\end{align}
	Fix $n\geq1$ and $p\in\mathbb C[z_1,\ldots,z_n]$.
	Without loss of generality, we assume that
	$p(0)=|p(0)|$ and $\sup\limits_{z\in \pi_n \mathcal F_{d}}|p(z)|=1$.
	Then for each $z\in \mathbf K$ and \(\theta \in [0,2\pi]\), by \eqref{apinL}, we have
	$|p(ae^{i\theta}\pi_nz)|\leq1$, and hence
	$1-\operatorname{Re}p(ae^{i\theta}\pi_nz)\geq0$.
	Write $p=p(0)+\sum\limits_{k=1}^{\deg p}p_k$, where \(p_k\) is the \(k\)-homogeneous component of \(p\).
	Then, for every \(1 \leq k \leq \deg p\), we have
	\begin{equation}\label{eq:homogeneous-bound}
		\begin{aligned}
			|a^kp_k(\pi_nz)|
			&= \left|\frac1\pi\int_0^{2\pi}
			\bigl(1-\operatorname{Re}p(ae^{i\theta}\pi_nz)\bigr)
			e^{-ik\theta}\,d\theta\right|\\
			&\leq
			\frac1\pi\int_0^{2\pi}
			\bigl(1-\operatorname{Re}p(ae^{i\theta}\pi_nz)\bigr)
			\,d\theta\\
			&\overset{(*)}{=}2(1-p(0)),
		\end{aligned}
	\end{equation}
	where \((*)\) is given by the mean value property.
	By (1) in Definition~\ref{def:Gamma},
	$\Gamma([p_k\circ\pi_n])=p_k(T_1,\ldots,T_n)$.
	Therefore, by \eqref{eq:Gamma-compact-bound}
	and \eqref{eq:homogeneous-bound},
	\[
	\begin{aligned}
		\|p(T_1,\ldots,T_n)\|
		&\leq
		p(0)+\sum_{k=1}^{\deg p}\|\Gamma(p_k\circ\pi_n)\|\\
		&\leq
		p(0)+C\sum_{k=1}^{\deg p}
		\sup_{z\in \mathbf K}|p_k(\pi_nz)|\\
		&\leq
		p(0)+2C(1-p(0))\sum_{k=1}^{\infty}a^{-k}\\
		&=1
		=\sup_{z\in \pi_n\mathcal F_{d}}|p(z)|.
	\end{aligned}
	\]
	Let $\mathcal K=\mathcal F_{\mathbf d}$.
	Hence $\mathbb T$ satisfies the compact-set von Neumann inequality.
	
	For each \(n\geq 1\), define \(\tau_n:\operatorname{Hol}(\pi_n K)\longrightarrow B(H)\) by \(\tau_n([g]_n)=\Gamma([g\circ \pi_n])\).
	It is easy to verify that $\tau_n$ is a unital algebra
	homomorphism satisfying the coordinate-preserving
	and continuity properties.
	Moreover, for every \(m\geq n\), we have
	\[
	\tau_m([g\circ \pi_{m,n}]_m)= \Gamma([g\circ \pi_{m,n} \circ \pi_m])=\Gamma([g\circ \pi_{n}])=\tau_n([g]_n).
	\]
	Thus $\{\tau_n\}_{n\ge1}$ is a compatible family of
	holomorphic functional calculi on $\{\pi_nK\}_{n\ge1}$.
	
	\medskip
	$(1)\Rightarrow(2)$.
	For each $[f]\in\operatorname{Hol}(K)$, there exists an open neighborhood \(U_f\) of \(K\) such that $f\in\operatorname{Hol}(U_f)$.
	Then choose $V_f=\iota_N(\Omega) \oplus r(I-P_N)B$ satisfying the conditions of Lemma~\ref{lem:product-neighborhood}.
	Define
	\[
	\Gamma:\operatorname{Hol}(K)\longrightarrow B(H),
	\qquad
	\Gamma([f])=\Gamma_{V_f}(f|_{V_f}).
	\]
	First, we show that \(\Gamma\) is well-defined.
	Let $[f]=[g]\in\operatorname{Hol}(K)$. 
	Then $f$ and $g$ are holomorphic on open neighborhoods $U_f$ and $U_g$ of $K$, respectively, and agree on	an open neighborhood $W\subseteq U_f\cap U_g$ of $K$. 
	Therefore, for any $m \geq 1$,
	\begin{align}\label{f=g}
		f\circ\iota_m |_{\iota_m^{-1}(W)}=g\circ\iota_m|_ {\iota_m^{-1}(W)}.
	\end{align}
	Since $\sup\limits_{z\in K}\|(I-P_m)z\|\to0$ and \(K\subset W\),
	we have $P_mK\subset W$ for all sufficiently large $m$.
	For such $m$, since $P_m=\iota_m\circ\pi_m$, it follows that 
	\begin{align}\label{neighborhood}
		\pi_mK \subset \iota_m^{-1}(W).
	\end{align}
	Therefore, by \eqref{f=g} and \eqref{neighborhood}, we have \([f\circ\iota_m]_m = [g\circ\iota_m]_m\), which implies that \(\tau_m([f\circ\iota_m]_m)=\tau_m([g\circ\iota_m]_m)\).
	Consequently, by Lemma~\ref{lem:bijinGammaV}, we have
	$\Gamma([f])=\Gamma([g])$.
	
	For each \(j \geq 1\), let \([f]=[z_j] \in \operatorname{Hol}(K)\).
	For any \(m\geq j\), by the coordinate-preserving property of $\tau_m$,
	\[
	\tau_m ([f \circ \iota_m]_m) = \tau_m([z_j]_m) = T_j.
	\]
	Therefore, by Lemma~\ref{lem:bijinGammaV}, (1) in Definition~\ref{def:Gamma} holds.
	For each fixed \(\ell^2\)-open neighborhood $U$ of \(K\), choose $V=\iota_N(\Omega) \oplus r(I-P_N)B$ satisfying the conditions of Lemma~\ref{lem:product-neighborhood}. 
	Define $\varphi:\operatorname{Hol}(U)\to\operatorname{Hol}(V)$
	by $f\mapsto f|_V$.
	Then 
	\[
	(\Gamma_V \circ \varphi) (f) = \Gamma([f]).
	\]
	Since both $\varphi$ and $\Gamma_V$ are continuous, (2) in Definition~\ref{def:Gamma} follows.
\end{proof}
\begin{rem}\label{unique}
		Suppose that (1) in Theorem~\ref{thm:weighted-polynomial-calculus} holds. 
		For this compatible family $\{\tau_n\}_{n\ge1}$,
		there exists a unique continuous holomorphic functional
		calculus $\Gamma$ on $K$ such that, for every $n\ge1$
		and every $[g]_n\in\operatorname{Hol}(\pi_nK)$,
		\begin{equation}\label{com}
			\Gamma([g\circ\pi_n])=\tau_n([g]_n).
		\end{equation}
		In fact, by Theorem~\ref{thm:weighted-polynomial-calculus}, there exists a continuous
		holomorphic functional calculus $\Gamma$ on $K$ constructed from $\{\tau_n\}_{n\ge1}$.
		For every \(n \geq 1\) and every \([g]_n \in \operatorname{Hol}(\pi_n(K))\), by Lemma~\ref{lem:bijinGammaV} and  the compatibility of $\{\tau_n\}_{n\ge1}$, we have
	\[
	\Gamma([g \circ \pi_n]) = \lim_{m \to \infty}\tau_{m} ([g \circ \pi_n \circ \iota_m]_m) = \lim_{m \to \infty}\tau_{m} ([g \circ \pi_{m,n}]_m)=\tau_n([g]_n).
	\]
	Thus the existence follows immediately.
	To prove uniqueness, let $\widetilde{\Gamma}$ and $\Gamma$
	be two continuous holomorphic functional calculi on $K$
	satisfying \eqref{com} for the same family
	$\{\tau_n\}_{n\ge1}$.
	For $[f]\in\operatorname{Hol}(K)$, there exists open neighborhood \(U_f\) of \(K\) such that \(f \in \operatorname{Hol}(U_f)\).
	Then choose $V_f=\iota_N(\Omega) \oplus r(I-P_N)B$ satisfying the conditions of Lemma~\ref{lem:product-neighborhood}.
	For every $m\geq N$, by \eqref{com}, we have
	\begin{equation}\label{eq:Gamma-uniqueness}
		\widetilde\Gamma([f\circ P_m])
		=
		\tau_m([f\circ\iota_m]_m)
		=
		\Gamma([f\circ P_m]).
	\end{equation}
	Therefore, by (2) in Definition~\ref{def:Gamma} with respect to $\Gamma$ and $\widetilde{\Gamma}$, Lemma~\ref{lem:fPm to f} and \eqref{eq:Gamma-uniqueness},
	\[
	\widetilde\Gamma([f])
	=
	\lim_{m\to\infty}\widetilde\Gamma([f\circ P_m])
	=
	\lim_{m\to\infty}\Gamma([f\circ P_m])
	=
	\Gamma([f]).
	\]
	This proves uniqueness.
\end{rem}
\begin{cor}
  	Let \(\mathcal M\) be a finitely generated submodule of \(H^2\otimes E\), and \(\mathbb{S}=\left(S_{z_1}^{{\mathcal M}^\perp}, S_{z_2}^{{\mathcal M}^\perp}, \cdots\right)\). Suppose that there is a \(\mu\in \mathbb{B}\), such that
$$
		\operatorname{codim}\{f(\mu)|f\in \mathcal M\}\geq 1.
$$
Then for any nonempty compact subset $K$ of $\ell^2$, $\mathbb S$ does not admit a continuous holomorphic functional calculus on $K$.
\end{cor}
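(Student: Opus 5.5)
The plan is to argue by contradiction, combining the implication $(2)\Rightarrow(1)$ of Theorem~\ref{thm:weighted-polynomial-calculus} (in the form of Theorem~\ref{thm4.8}) with the lower bounds obtained in the proof of Proposition~\ref{prop1.2}. Suppose that for some nonempty compact $K\subset\ell^2$ the sequence $\mathbb S$ admits a continuous holomorphic functional calculus $\Gamma$ on $K$. By $(2)\Rightarrow(1)$ of Theorem~\ref{thm4.8}, $\mathbb S$ then satisfies the compact-set von Neumann inequality. That is, there is a nonempty compact set $\mathcal K\subset\ell^2$ (in the proof it is $\mathcal F_{\mathbf d}$) such that $\|p(\mathbb S)\|\le\sup_{z\in\mathcal K}|p(z)|$ for every $p\in\mathcal P$.

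The key step is to apply this inequality to the coordinate polynomials $p=z_i$. This gives
\[
\|S_{z_i}^{\mathcal M^\perp}\|\le \sup_{z\in\mathcal K}|z_i|\le \sup_{z\in\mathcal K}\Bigl(\sum_{j\ge i}|z_j|^2\Bigr)^{1/2}.
\]
Since $\mathcal K$ is compact in $\ell^2$, the tails $\sup_{z\in\mathcal K}\|(I-P_{i-1})z\|$ tend to $0$, as used in Lemma~\ref{lem:compact-ellipsoid} and in \eqref{eq:Ed-tail}. Hence $\|S_{z_i}^{\mathcal M^\perp}\|\to0$ in operator norm as $i\to\infty$.

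On the other hand, the proof of Proposition~\ref{prop1.2} uses the hypothesis $\operatorname{codim}\{f(\mu):f\in\mathcal M\}\ge1$ to produce a fixed nonzero vector $x\in\mathcal M^\perp$ with $\|S_{z_i}^{\mathcal M^\perp}x\|$ bounded below by a positive constant for all large $i$. In Case~1 ($\mu=0$) one takes $x=1\otimes\zeta$, and by \eqref{Si1zeta} we have $\|S_{z_i}^{\mathcal M^\perp}x\|\to1$. In Case~2 ($\mu\ne0$) one takes $x=k_\mu\otimes\zeta$, and the final estimate of that proof gives $\|S_{z_i}^{\mathcal M^\perp}x\|>c_1\bigl(2\sqrt{1-|\mu|^2}\,\|S_{K_\mu}\|\bigr)^{-1}$ for $i\ge N_2$. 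In either case this contradicts $\|S_{z_i}^{\mathcal M^\perp}\|\|x\|\to0$, so no continuous holomorphic functional calculus can exist on any $K$.

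I do not expect a serious obstacle. The only point needing care is that the lower bound is extracted from inside the proof of Proposition~\ref{prop1.2}, not from its stated conclusion, so I will quote the two displayed estimates explicitly. I will also check that the passage from Definition~\ref{def:Gamma} to the compact-set von Neumann inequality in Theorem~\ref{thm4.8} requires no extra hypothesis on $K$ beyond compactness.
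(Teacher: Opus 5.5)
Your proposal is correct and matches the paper's proof: the paper also uses $(2)\Rightarrow(1)$ of Theorem~\ref{thm:weighted-polynomial-calculus} to get a compact-set von Neumann inequality on some compact $L\subset\ell^2$. Applied to the coordinates $z_i$, this forces $\|S_{z_i}^{\mathcal M^\perp}\|\to0$, which contradicts the bound $\liminf_i\|S_{z_i}^{\mathcal M^\perp}\|\ge c>0$ taken from the proof of Proposition~\ref{prop1.2}; your version only spells out that extraction via the fixed vectors $1\otimes\zeta$ and $k_\mu\otimes\zeta$.
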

\begin{proof}
Suppose $\mathbb S$ admit a continuous holomorphic functional calculus on a nonempty compact subset $K$. Then by Theorem \ref{thm:weighted-polynomial-calculus}, there is a compact subset $L$ of $\ell^2$ such that
\begin{equation}\label{compact von eq}
\|p(\mathbb S)\|\leq \sup\limits_{z\in L}|p(z)|,~\forall p\in \cal P.
\end{equation}
  As the proof in Proposition \ref{prop1.2}, there is $c>0$ such that
  \begin{equation}\label{norm szi}
    \liminf\limits_{i\to \infty}\left\|S_{z_i}^{{\mathcal M}^\perp}\right\|\geq c.
  \end{equation}
Then by \eqref{compact von eq} and \eqref{norm szi}, we have
$$
c\leq \liminf\limits_{i\to \infty}\left\|S_{z_i}^{{\mathcal M}^\perp}\right\|\leq \lim\limits_{i\to \infty}\sup\limits_{z\in L}|z_i(z)|=0,
$$
which is a contradiction.
\end{proof}

\begin{prop}\label{lem:compactness-of-both-spectra}
	Let $\mathbb T$ be a row-bounded sequence
	of commuting operators satisfying the compact-set
	von Neumann inequality. Then both $\sigma(\mathbb T)$
	and $\sigma_{SZ}(\mathbb T)$ are compact subsets
	of $\ell^2$.
\end{prop}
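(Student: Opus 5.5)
Assume $\|p(\mathbb T)\|\le\sup_{z\in\mathcal K}|p(z)|$ for all $p\in\mathcal P$, with $\mathcal K\subset\ell^2$ nonempty and compact. By Lemma~\ref{lem:compact-ellipsoid} we may enlarge $\mathcal K$ to an ellipsoid $\mathcal F_{\mathbf d}$ with $\mathbf d\in c_0$. This gives \eqref{eq:ellipsoid-bound}. The plan is to show that both spectra lie inside $\mathcal F_{\mathbf d}$, or at least inside a compact set of the same type. Once that is done, compactness follows by combining three facts: closedness of $\sigma(\mathbb T)$ in $\ell^2$ (Theorem~\ref{bounded closed}), closedness of $\sigma_{SZ}(\mathbb T)$ in $\ell^2$, and the uniform tail estimate \eqref{eq:Ed-tail}.

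\emph{Step 1 ($\sigma_{SZ}$).} Fix $n$ and a polynomial $p\in\mathbb C[z_1,\dots,z_n]$. The von Neumann inequality gives $\|p(T_1,\dots,T_n)\|\le\sup_{\pi_n\mathcal F_{\mathbf d}}|p|$. The Taylor spectrum satisfies the polynomial spectral mapping theorem, so for $\mu\in\sigma(T_1,\dots,T_n)$ we get $|p(\mu)|\le\|p(T_1,\dots,T_n)\|\le\sup_{\pi_n\mathcal F_{\mathbf d}}|p|$. Hence $\sigma(T_1,\dots,T_n)$ lies in the polynomial hull of the compact convex set $\pi_n\mathcal F_{\mathbf d}$, and that hull is $\pi_n\mathcal F_{\mathbf d}$ itself. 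It follows that every $\lambda\in\sigma_{SZ}(\mathbb T)$ satisfies $\pi_n\lambda\in\pi_n\mathcal F_{\mathbf d}$ for all $n$. Since $\mathcal F_{\mathbf d}$ is an ellipsoid, this means $\sum_{i\le n}|\lambda_i|^2/d_i^2\le1$ for every $n$, and letting $n\to\infty$ gives $\lambda\in\mathcal F_{\mathbf d}$. Closedness of $\sigma_{SZ}(\mathbb T)$ in $\ell^2$ comes from its definition as an intersection of preimages of the closed sets $\sigma(T_1,\dots,T_n)$ under the continuous maps $\pi_n$. So $\sigma_{SZ}(\mathbb T)$ is a closed subset of a compact set, hence compact.

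\emph{Step 2 ($\sigma$).} This is the main obstacle, because $\sigma(\mathbb T)$ need not be contained in $\sigma_{SZ}(\mathbb T)$ in general. The idea is to show that every $\lambda\notin\mathcal F'$ lies outside $\sigma(\mathbb T)$, for a suitable compact set $\mathcal F'$, by producing $\mathbb S$ as in Corollary~\ref{TS=1}.

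First take $\mathcal F'=\mathcal F_{\mathbf d}$. If $\lambda\notin\mathcal F_{\mathbf d}$, choose $N$ with $\pi_N\lambda\notin\pi_N\mathcal F_{\mathbf d}$. The set $\pi_N\mathcal F_{\mathbf d}$ is compact and convex, so it is polynomially convex. The natural strategy is then to find holomorphic $g_1,\dots,g_N$ on a neighbourhood of $\pi_N\mathcal F_{\mathbf d}$ with $\sum_i(z_i-\lambda_i)g_i=1$ there; a linear separating functional reduces this to one variable. One would then set $S_i=\tau(g_i)$, using the functional calculus from Theorem~\ref{thm:weighted-polynomial-calculus} or a polynomial approximation together with the von Neumann bound. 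The difficulty is that Corollary~\ref{TS=1} needs $\partial_{\mathbb S}$ to be bounded, and $\mathbb S$ must commute with $\mathbb T$.

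For a finite sequence $\mathbb S=(S_1,\dots,S_N,0,\dots)$, boundedness of $\partial_{\mathbb S}$ is automatic, since $\|\partial_{\mathbb S}\|\le\sum_{i\le N}\|S_i\|$. Commutativity with all $T_j$ holds because each $S_i$ is a norm limit of polynomials in $T_1,\dots,T_N$.

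The key computation is therefore the separation step. Since $\pi_N\lambda\notin\pi_N\mathcal F_{\mathbf d}$, there is a linear functional $\ell(z)=\sum_{i\le N}a_iz_i$ such that $\ell(\pi_N\lambda)\notin\overline{\ell(\pi_N\mathcal F_{\mathbf d})}$, where the latter is a closed disc $D$. Then $1/(\ell(z)-\ell(\lambda))$ is uniformly approximable on $\pi_N\mathcal F_{\mathbf d}$ by polynomials $q_k$. The von Neumann inequality makes $q_k(T)$ Cauchy in norm, with limit $R$ satisfying $R\sum_i a_i(T_i-\lambda_i)=I$. This yields $S_i=a_iR$, and Corollary~\ref{TS=1} applies. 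Hence $\sigma(\mathbb T)\subset\mathcal F_{\mathbf d}$, and together with closedness from Theorem~\ref{bounded closed}, $\sigma(\mathbb T)$ is compact.
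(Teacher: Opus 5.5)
Your proof is correct and follows essentially the paper's route. You enlarge $\mathcal K$ to an ellipsoid $\mathcal F_{\mathbf d}$ and confine both spectra to it by testing against linear forms $\sum_i a_iz_i$: Taylor spectral mapping handles $\sigma_{SZ}(\mathbb T)$, and Corollary~\ref{TS=1} with $S_i=a_iB^{-1}$, $B=\sum_i a_i(T_i-\lambda_i)$, handles $\sigma(\mathbb T)$; closedness then gives compactness. The differences are only cosmetic: you phrase Step~1 through polynomial convexity and build $B^{-1}$ from polynomial approximations of the resolvent, whereas the paper argues by contrapositive via $|\sum_i a_i\lambda_i|\le\|\sum_i a_iT_i\|\le(\sum_i|a_i|^2d_i^2)^{1/2}$ with $a_i=\overline{\lambda_i}/d_i^2$ (and the functional-calculus detour you sketch first is not needed).
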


\begin{proof}
	By Lemma~\ref{lem:compact-ellipsoid} and the compact-set von Neumann inequality, there exists $\mathbf d=(d_1,d_2,\ldots)\in c_0\cap(\mathbb R_{+})^{\mathbb N}$ such that, for every $p\in\mathcal P$,
	\begin{equation}\label{eq:both-spectra-vN}
		\|p(\mathbb T)\| \leq \sup_{z\in \mathcal F_{\mathbf d}}|p(z)|.
	\end{equation}
	For every $\lambda=(\lambda_1,\lambda_2,\ldots)\in\sigma(\mathbb T)\cup\sigma_{SZ}(\mathbb T)$,
	$n\geq1$, and $\{a_i\}_{i=1}^n\subset \mathbb C$,
	we claim that
	\begin{equation}\label{eq:both-spectra-linear}
		\sum_{i=1}^{n}a_i\lambda_i
		\in \sigma\left(\sum_{i=1}^{n}a_iT_i\right).
	\end{equation}
	If $\lambda\in\sigma_{SZ}(\mathbb T)$, then
	$\pi_n\lambda\in\sigma(T_1,\ldots,T_n)$.
	Therefore,
	\[
	\sum_{i=1}^n a_i\lambda_i
	\in
	\left\{
	\sum_{i=1}^n a_i z_i:
	z\in\sigma(T_1,\ldots,T_n)
	\right\}
	=
	\sigma\left(\sum_{i=1}^n a_iT_i\right).
	\]
	If $\lambda\in\sigma(\mathbb T)$, suppose that
	$B=\sum\limits_{i=1}^{n}a_i(T_i-\lambda_iI)$ is invertible.
	Let \(\mathbb S=(a_1B^{-1},\ldots,a_nB^{-1},0,\ldots)\).
	Then
	\[
	\begin{aligned}
		\sum_{i=1}^\infty (T_i-\lambda_iI)S_i
		=\sum_{i=1}^{n}(T_i-\lambda_iI)a_iB^{-1}
		=\left(\sum_{i=1}^{n}a_i(T_i-\lambda_iI)\right)B^{-1}
		=BB^{-1}=I.
	\end{aligned}
	\]
	Therefore, by Corollary~\ref{TS=1}, we have $\mathbb T-\lambda$ is invertible, a contradiction.
	Consequently, \eqref{eq:both-spectra-linear} holds.
	The claim is proved.
	
	By \eqref{eq:both-spectra-linear} and \eqref{eq:both-spectra-vN}, we have
	\[
	\begin{aligned}
		\left|\sum_{i=1}^{n}a_i\lambda_i\right|
		\leq
		\left\|\sum_{i=1}^{n}a_iT_i\right\|
		\leq
		\sup_{z\in \mathcal F_{\mathbf d}}
		\left|\sum_{i=1}^{n}a_iz_i\right|
		\leq\left(\sum_{i=1}^{n}|a_i|^2d_i^2\right)^{\frac 12}.
	\end{aligned}
	\]
	Taking $a_i=\frac{\overline{\lambda_i}}{d_i^2}$, we obtain \(\sum\limits_{i=1}^{n}\frac{|\lambda_i|^2}{d_i^2}\leq1\).
	Then let $n\to\infty$. We obtain $\lambda\in \mathcal F_{\mathbf d}$.
	Therefore, 
	\begin{align}\label{sigma subset}
		\sigma(\mathbb T)\cup\sigma_{SZ}(\mathbb T)
		\subseteq \mathcal F_{\mathbf d}
	\end{align}
	By Theorem~\ref{bounded closed}, $\sigma(\mathbb T)$ is closed in $\ell^2$.
	Moreover, by \cite[Proposition~1.9]{SZ1974},
	$\sigma_{SZ}(\mathbb T)$ is closed in $\ell^2$.
	Therefore, by \eqref{sigma subset}, both $\sigma(\mathbb T)$ and $\sigma_{SZ}(\mathbb T)$ are compact sets of \(\ell^2\).
\end{proof}
\begin{rem}\label{rem:spectrum-calculus}
		We justify the assertion in the introduction concerning $K=\sigma(\mathbb T)$.
		If $\mathbb T$ satisfies the compact-set von Neumann inequality,
		then $\sigma(\mathbb T)$ is compact by
		Proposition~\ref{lem:compactness-of-both-spectra}.
		Together with the assumed compatibility,
		Theorem~\ref{thm:weighted-polynomial-calculus}
		yields the desired functional calculus.
		Conversely, the proof of $(2)\Rightarrow(1)$ in
		Theorem~\ref{thm:weighted-polynomial-calculus}
		gives the compact-set von Neumann inequality without
		assuming compactness of $K$.
		Thus $\sigma(\mathbb T)$ is compact by
		Proposition~\ref{lem:compactness-of-both-spectra},
		and $\tau_n([g]_n)=\Gamma([g\circ\pi_n])$
		defines the required compatible family.
		
		For Theorem~\ref{1.12}, the implication
		$(2)\Rightarrow(1)$ follows from the proof of
		Theorem~\ref{thm:weighted-polynomial-calculus}.
		Suppose that $\textup{(1)}$ holds.
		By Proposition~\ref{lem:compactness-of-both-spectra},
		$\sigma_{SZ}(\mathbb T)$ is a compact subset of $\ell^2$.
		For every $n\ge1$, since
		$\pi_n\sigma_{SZ}(\mathbb T)=\sigma(T_1,\ldots,T_n)$,
		we define
		\[
		\tau_n:
		\operatorname{Hol}(\pi_n\sigma_{SZ}(\mathbb T))
		\longrightarrow B(H),
		\qquad
		\tau_n([g]_n)=g(T_1,\ldots,T_n).
		\]
		By the properties of the finite-dimensional Taylor functional
		calculus, the family $\{\tau_n\}_{n\ge1}$ is compatible.
		Then Theorem~\ref{thm:weighted-polynomial-calculus}
		and Remark~\ref{unique} yield $(2)$.
\end{rem}

\begin{thm}
Let $\mathbb T=(T_i)_{i\geq1}$ be a row-bounded sequence of commuting operators.
Suppose that $\mathbb{T}$ admits a continuous holomorphic functional calculus $\Gamma$ on $\sigma_{SZ}(\mathbb T)$ as in Theorem \ref{1.12}(2). Let $F(z)=(f_i(z))_{i\geq1}\subset\operatorname{Hol}(\sigma_{SZ}(\mathbb T))$ such that 
$$
\Gamma(F):=(\Gamma(f_1),\Gamma(f_2),\cdots)
$$
is a row-bounded sequence of commuting operators on $H$. Then
\begin{equation}\label{eq:SM-product}
\sigma_{SZ}(\Gamma(F))=F(\sigma_{SZ}(\mathbb T)).
\end{equation}
\end{thm}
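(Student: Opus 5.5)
The plan is to reduce \eqref{eq:SM-product} to the finite-dimensional Taylor spectral mapping theorem, one coordinate block at a time. We then pass to the limit using the projection property of $\sigma_{SZ}$ and compactness.

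\textbf{Step 1: approximating the $f_i$ by cylinder germs.} Fix $n\ge1$. By definition of $\sigma_{SZ}$ and Lemma~\ref{project sz}, $\pi_n\sigma_{SZ}(\Gamma(F))=\sigma(\Gamma(f_1),\ldots,\Gamma(f_n))$. Moreover, $\sigma_{SZ}(\Gamma(F))$ consists exactly of those $w\in\mathbb C^\infty$ with $\pi_n w$ in this joint spectrum for every $n$. So it suffices to show, for each $n$,
\[
\sigma(\Gamma(f_1),\ldots,\Gamma(f_n))=\pi_n F(\sigma_{SZ}(\mathbb T)),
\]
and then to check that the intersection over $n$ of the cylinders $\pi_n^{-1}\pi_nF(\sigma_{SZ}(\mathbb T))$ equals $F(\sigma_{SZ}(\mathbb T))$.

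To relate $\Gamma(f_i)$ to finite Taylor calculi, I will use Lemma~\ref{lem:fPm to f}: $f_i\circ P_m\to f_i$ in the compact-open topology on a product neighborhood $V$ of $\sigma_{SZ}(\mathbb T)$. Write $g_{i,m}=f_i\circ\iota_m$. Since $\sigma_{SZ}(\mathbb T)$ is compact (Proposition~\ref{lem:compactness-of-both-spectra}), $g_{i,m}$ is defined near $\pi_m\sigma_{SZ}(\mathbb T)=\sigma(T_1,\ldots,T_m)$. By Theorem~\ref{1.12}(2) we have $\Gamma([g_{i,m}\circ\pi_m])=g_{i,m}(T_1,\ldots,T_m)$, and continuity of $\Gamma$ gives $A_{i,m}:=g_{i,m}(T_1,\ldots,T_m)\to\Gamma(f_i)$ in norm.

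\textbf{Step 2: finite spectral mapping.} Taylor's spectral mapping theorem for the $m$-tuple gives
\[
\sigma(A_{1,m},\ldots,A_{n,m})=(g_{1,m},\ldots,g_{n,m})(\sigma(T_1,\ldots,T_m))=\pi_nF(P_m\sigma_{SZ}(\mathbb T)).
\]

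\textbf{Step 3: passing to the limit $m\to\infty$.} The right-hand side converges in Hausdorff distance to $\pi_nF(\sigma_{SZ}(\mathbb T))$. This follows because $\sup_{z\in\sigma_{SZ}}\|P_mz-z\|\to0$ and $F$ is uniformly continuous near the compact set $\sigma_{SZ}(\mathbb T)$. The left-hand side involves commuting $n$-tuples converging in norm to $(\Gamma(f_1),\ldots,\Gamma(f_n))$, all lying in the commutative algebra generated by the range of $\Gamma$.

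This limit step is where I expect the main obstacle, because the Taylor spectrum is only upper semicontinuous in general. To get both inclusions, I would first work in a maximal commutative subalgebra $\mathcal B\subset B(H)$ containing $\Gamma(\mathrm{Hol}(\sigma_{SZ}(\mathbb T)))$. There I would use the inclusion of the Taylor spectrum in the character-based joint spectrum $\sigma_{\mathcal B}$, together with equality of their polynomially convex hulls. Alternatively, and more cleanly, I would argue directly in each direction:

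\begin{itemize}
\item[$(\subseteq)$] If $w\notin\pi_nF(\sigma_{SZ}(\mathbb T))$, then $\sum_i(f_i-w_i)\bar{(\cdot)}$-type Koszul data can be built holomorphically. Concretely, the tuple $(f_1-w_1,\ldots,f_n-w_n)$ has no common zero on a neighborhood of $\sigma_{SZ}(\mathbb T)$. After reducing to a finite-variable neighborhood via $P_m$, a Cartan--Taylor homotopy $h$ with holomorphic entries on a neighborhood of $\sigma(T_1,\ldots,T_m)$ exists for $(g_{i,m}-w_i)$. Applying $\Gamma$ to it then produces a homotopy for the Koszul complex of $(\Gamma(f_i)-w_i)$. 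Upper semicontinuity of the Taylor spectrum also suffices here, since the limit set is compact and away from $w$.

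\item[$(\supseteq)$] If $w=\pi_nF(\lambda)$ with $\lambda\in\sigma_{SZ}(\mathbb T)$, I would use the spectral mapping property for the joint tuple $(T_1,\ldots,T_m,\Gamma(f_1),\ldots,\Gamma(f_n))$. By Taylor's projection property, $(\pi_m\lambda,\mu)$ lies in its spectrum for some $\mu$, and applying the calculus to $\Gamma(f_i)-g_{i,m}(T)$, whose norm tends to $0$, forces $|\mu_i-g_{i,m}(\pi_m\lambda)|\le\|\Gamma(f_i)-A_{i,m}\|$. Letting $m\to\infty$ and using closedness of the spectrum then yields $w$.
\end{itemize}

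\textbf{Step 4: reassembling.} $F(\sigma_{SZ}(\mathbb T))$ is compact in the product topology, being a continuous image of a compact set. Hence a point $w$ whose every finite projection lies in $\pi_nF(\sigma_{SZ}(\mathbb T))$ lies in $F(\sigma_{SZ}(\mathbb T))$, by a nested-compact-sets argument. This gives \eqref{eq:SM-product}.
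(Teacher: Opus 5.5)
Your second bullet, the inclusion $\pi_nF(\sigma_{SZ}(\mathbb T))\subseteq\sigma(\Gamma(f_1),\ldots,\Gamma(f_n))$, is correct. It uses the paper's key estimate: by spectral mapping for $(T_1,\ldots,T_m,\Gamma(f_1),\ldots,\Gamma(f_n))$, every point $(z,\mu)$ of its spectrum satisfies $|\mu_i-g_{i,m}(z)|\le\|\Gamma(f_i)-A_{i,m}\|$, and the projection property supplies such points. The gap is the first bullet, the reverse inclusion, where none of your three mechanisms works.

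\begin{itemize}
\item A Koszul homotopy with \emph{holomorphic} entries for $(g_{i,m}-w_i)$ near $\sigma(T_1,\ldots,T_m)$ need not exist, because Taylor spectra need not have small Stein neighbourhoods. Take $\mathbb T=(M_{z_1},M_{z_2},0,0,\ldots)$ on $L^2(\partial\mathbb B_2,\sigma)$, $F=(z_1,z_2,0,\ldots)$ and $w=0$; all hypotheses hold. Then $\sigma(T_1,\ldots,T_m)=\partial\mathbb B_2\times\{0\}^{m-2}$, and Hartogs' theorem rules out $u_1,u_2$ holomorphic near it with $z_1u_1+z_2u_2=1$. This is the same phenomenon as in the paper's shell example.
\item Even when such a homotopy exists, the calculus turns it into a homotopy for $(A_{i,m}-w_i)$, not for $(\Gamma(f_i)-w_i)$. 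You have no bounds uniform in $m$ that would let you pass to the limit.
\item Upper semicontinuity points the other way. It gives $\limsup_m\sigma(A_{1,m},\ldots,A_{n,m})\subseteq\sigma(\Gamma(f_1),\ldots,\Gamma(f_n))$, which is only the inclusion you already have. Concluding $w\notin\sigma(\Gamma(f_1),\ldots,\Gamma(f_n))$ from $w\notin\sigma(A_{1,m},\ldots,A_{n,m})$ would need lower semicontinuity, which fails in general. The $\sigma_{\mathcal B}$ route also cannot help, since it only controls polynomially convex hulls.
\end{itemize}

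The fix is to run your own estimate backwards, which is what the paper does. Given $\mu\in\sigma(\Gamma(f_1),\ldots,\Gamma(f_n))$, the projection property yields for each large $m$ a point $z^{(m)}\in\sigma(T_1,\ldots,T_m)=\pi_m\sigma_{SZ}(\mathbb T)$ with $(z^{(m)},\mu)$ in the joint spectrum, so $|\mu_i-g_{i,m}(z^{(m)})|\to0$. Write $z^{(m)}=\pi_m\zeta^{(m)}$ with $\zeta^{(m)}\in\sigma_{SZ}(\mathbb T)$. Compactness of $\sigma_{SZ}(\mathbb T)$ in $\ell^2$ (Proposition~\ref{lem:compactness-of-both-spectra}) gives a subsequential limit $\zeta\in\sigma_{SZ}(\mathbb T)$ with $\mu=\pi_nF(\zeta)$.

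The paper gets both inclusions at once. It shows that $\sigma_{SZ}(\Gamma(f_1),\ldots,\Gamma(f_n),T_1,T_2,\ldots)$ is exactly the graph $\{(f_1(z),\ldots,f_n(z),z):z\in\sigma_{SZ}(\mathbb T)\}$, then projects with Lemma~\ref{project sz}, which absorbs the compactness step. With that, the Hausdorff comparison of $\sigma(A_{1,m},\ldots,A_{n,m})$ in your Steps 2--3 is unnecessary. Your Step 4 matches the paper's nested-$L_m$ argument.
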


\begin{proof}

We first prove that, for every $m\geq1$,
\begin{equation}\label{eq:SM-finite}
\sigma(\Gamma(f_1),\ldots,\Gamma(f_m))
=
\left\{
(f_1(z),\ldots,f_m(z)):z\in \sigma_{SZ}(\mathbb T)
\right\}.
\end{equation}
Fix $m\geq1$.
Since $\Gamma$ is an algebra homomorphism and
$\Gamma(z_j)=T_j$, the operators $\Gamma(f_i)$ and $T_j$
commute with one another. For $n\geq1$, let
\[
S_n=\sigma(\Gamma(f_1),\ldots,\Gamma(f_m),T_1,\ldots,T_n),
\]
and 
\[
S=\sigma_{SZ}(\Gamma(f_1),\ldots,\Gamma(f_m),T_1,T_2,\ldots).
\]
We claim that 
\begin{equation}\label{mu=fz}
\mu_i=f_i(z),\quad\forall 1\leq i\leq m,~(\mu_1,\cdots,\mu_m,z)\in S.
\end{equation}
In fact, 
take $(\mu_1,\cdots,\mu_m,z)\in S$, then by Lemma \ref{project sz}, 
\begin{equation}\label{z in sz T}
z\in \sigma_{SZ}(\mathbb{T}).
\end{equation}
For \(1\le i\le m\) and all sufficiently large $n$, since \(f_i\circ\iota_n\in \operatorname{Hol}(\sigma(T_1,\cdots,T_n))\), by the spectral projection theorem, \(f_i\circ\iota_n\) can be regarded as an element of \(\operatorname{Hol}(S_n)\).
Then by (2) in Definition~\ref{def:Gamma} and Lemma~\ref{lem:fPm to f},
\begin{equation}\label{eq:SM-coordinate-approximation}
\begin{aligned}
&\lim\limits_{n\to\infty}\|\Gamma(f_i)-f_i\circ\iota_n(\Gamma(f_1),\ldots,\Gamma(f_m),T_1,\ldots,T_n)\|\\
=&\lim\limits_{n\to\infty}\|\Gamma(f_i)-f_i\circ\iota_n(T_1,\ldots,T_n)\|\\
=&\lim\limits_{n\to\infty}\|\Gamma(f_i)-\Gamma(f_i\circ P_n)\|
=0.
\end{aligned}
\end{equation}
By Lemma \ref{project sz}, 
\begin{equation}\label{mu pinz}
(\mu_1,\cdots,\mu_m,\pi_nz)\in \pi_{m+n} S= S_n.
\end{equation}
Since $z_i-f_i\circ\iota_n\in \operatorname{Hol}(S_n)$, the spectral mapping theorem gives
\[
\begin{aligned}
\mu_i-f_i(P_nz)=&\mu_i-f_i\circ\iota_n(\mu_1,\cdots,\mu_m,\pi_nz)\\
\in&
(z_i-f_i\circ\iota_n)(S_n)\\
=&
\sigma\!\left(
\Gamma(f_i)-f_i\circ\iota_n(\Gamma(f_1),\ldots,\Gamma(f_m),T_1,\ldots,T_n)
\right).
\end{aligned}
\]
Consequently by \eqref{eq:SM-coordinate-approximation},
$
\lim\limits_{n\to\infty} f_i(P_nz)=\mu_i.
$
Since $\lim\limits_{n\to\infty} P_nz=z$ and $f_i$ is continuous, we obtain 
$$
\mu_i=f_i(z),\quad\forall 1\leq i\leq m.
$$
The claim is proved. Hence by \eqref{mu=fz} and \eqref{z in sz T}, 
$$
S\subseteq\{(f_1(z),\ldots,f_m(z),z):z\in \sigma_{SZ}(\mathbb T)\}.
$$
Conversely, for fixed $z\in \sigma_{SZ}(\mathbb T)$, by Lemma \ref{project sz}, we can take 
$$
(\mu_1,\cdots,\mu_m)\in \sigma(\Gamma(f_1),\ldots,\Gamma(f_m))
$$ 
with $(\mu_1,\cdots,\mu_m,z)\in S$. Then by \eqref{mu=fz}, $\mu=(f_1(z),\ldots,f_m(z))$, which implies that 
$$\{(f_1(z),\ldots,f_m(z),z):z\in \sigma_{SZ}(\mathbb T)\}\subseteq S.$$ 
Hence
\begin{equation}\label{S=fmz z}
 S=\{(f_1(z),\ldots,f_m(z),z):z\in \sigma_{SZ}(\mathbb T)\}.
\end{equation}
Then
by \eqref{S=fmz z} and Lemma \ref{project sz},
$$
\left\{
(f_1(z),\ldots,f_m(z)):z\in \sigma_{SZ}(\mathbb T)
\right\}=\pi_mS=\sigma(\Gamma(f_1),\ldots,\Gamma(f_m)).
$$

We now prove \eqref{eq:SM-product}.
For every $z\in \sigma_{SZ}(\mathbb T)$ and $m\geq 1$, \eqref{eq:SM-finite} gives
\[
(f_1(z),\ldots,f_m(z))
\in\sigma(\Gamma(f_1),\ldots,\Gamma(f_m)).
\]
Thus $F(z)\in\sigma_{SZ}(\Gamma(F))\subset\ell^2$, where the last inclusion follows from the row boundedness of $\Gamma(F)$.
Therefore, 
$$
\begin{aligned}
F(\sigma_{SZ}(\mathbb T))=&\{(f_1(z),\cdots):z\in\sigma_{SZ}(\mathbb T)\}\\
\subset&\bigcap\limits_{m=1}^\infty\left(\{(f_1(z),\cdots,f_m(z)):z\in\sigma_{SZ}(\mathbb T)\}\times \ell^2\right)\\
=&\bigcap\limits_{m=1}^\infty\left(\sigma(\Gamma(f_1),\ldots,\Gamma(f_m))\times \ell^2\right)\\
=&\sigma_{SZ}(\Gamma(F)).
\end{aligned}
$$
Conversely, let
$\lambda\in\sigma_{SZ}(\Gamma(F))$.
For each $m\geq1$, define
\[
L_m=
\{z\in \sigma_{SZ}(\mathbb T):f_i(z)=\lambda_i,\ 1\leq i\leq m\}.
\]
By \eqref{eq:SM-finite}, $L_m$ is a non-empty compact subset of $\sigma_{SZ}(\mathbb T)$ and satisfies
$L_{m+1}\subseteq L_m$. Hence
$
\bigcap\limits_{m=1}^\infty L_m\neq\varnothing.
$
Take $z\in \bigcap\limits_{m=1}^\infty L_m$, then $F(z)=\lambda$.
Thus
$
\sigma_{SZ}(\Gamma(F))\subseteq F(\sigma_{SZ}(\mathbb T)).
$
\end{proof}

\section{Fredholm property and essential spectrum}
We now turn to the Fredholm property and the essential spectrum of row‑bounded sequences.  First we recall the definition of an unbounded Fredholm operator.

\begin{defn}\cite{MS}\label{def unbounded fredholm}
  Let $X$ and $Y$ be Banach spaces, and let $T$ be a densely defined, closed linear operator from $\mathrm{Dom}(T)\subseteq X$ to $Y$. We say that $T$ is a Fredholm operator if it satisfies:
  \begin{itemize}
    \item [(1)] $\dim \ker T<\infty$;
    \item [(2)] $\dim \ker T^*<\infty$;
    \item [(3)] $\operatorname{Im} T$ is closed.
  \end{itemize}
\end{defn}
For a row-bounded sequence $\mathbb{T}$, we have the following theorem.
\begin{thm}\label{3equal}
The following statements are equivalent.
\begin{itemize}
  \item[(1)] $\mathbb{T}$ is Fredholm;
  \item[(2)] $\dim \operatorname{ker} \partial_{\mathbb{T}} / \operatorname{Im} \partial_{\mathbb{T}}<\infty$;
  \item[(3)]$\widehat{\mathbb{T}}$ is Fredholm,
\end{itemize}
and in this case, $\operatorname{ind}(\widehat{\mathbb{T}})=\operatorname{ind}(\mathbb{T})$.
\end{thm}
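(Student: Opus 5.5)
The plan is to pass everything through the closed operator $\partial_{\mathbb T}$. The decisive fact is that, once $\operatorname{Im}\partial_{\mathbb T}$ is closed, the harmonic space
\[
\ker\partial_{\mathbb T}\cap\ker\partial_{\mathbb T}^*=\ker\partial_{\mathbb T}\ominus\operatorname{Im}\partial_{\mathbb T}
\]
splits into even and odd parts, and these are exactly $\ker\widehat{\mathbb T}$ and $\ker\widehat{\mathbb T}^*$. This follows from \eqref{hat T x eq dx d*x}, as already used in the proof of Proposition~\ref{thm3.2}. Lemma~\ref{lem3.6} then converts closedness of $\operatorname{Im}\partial_{\mathbb T}$ into the uniform-constant condition and into closedness of $\operatorname{Im}\widehat{\mathbb T}$. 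In addition, closedness of $\operatorname{Im}\partial_{\mathbb T}$ gives
\[
\operatorname{Im}\partial_{\mathbb T}=\bigoplus_k\operatorname{Im}\partial_{\mathbb T,k},
\]
by the same density argument as in the proof of Proposition~\ref{thm3.2}. Hence $\ker\partial_{\mathbb T}/\operatorname{Im}\partial_{\mathbb T}\cong\bigoplus_k H^k_{\mathbb T}$, and $H^k_{\mathbb T}\cong\ker\partial_{\mathbb T,k}\cap\ker\partial_{\mathbb T,k+1}^*$.

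(1)$\Rightarrow$(2). For $n\le N$ the space $H^n_{\mathbb T}$ is finite dimensional. So $\operatorname{Im}\partial_{\mathbb T,n+1}$ has finite codimension in the closed subspace $\ker\partial_{\mathbb T,n}$, and since $\partial_{\mathbb T,n+1}$ is bounded, $\operatorname{Im}\partial_{\mathbb T,n+1}$ is closed. The open mapping theorem then gives a constant $c_n$ controlling preimages, for each $n\le N$. Combining these with the uniform constant in condition (3) of Definition~\ref{def:Fredholm} for $n>N$ yields a single constant $c=\max\{c_0,\dots,c_N,c\}$. This verifies Lemma~\ref{lem3.6}(2), so $\operatorname{Im}\partial_{\mathbb T}$ is closed. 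Consequently $\ker\partial_{\mathbb T}/\operatorname{Im}\partial_{\mathbb T}\cong\bigoplus_{n\le N}H^n_{\mathbb T}$, which is finite dimensional.

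(2)$\Rightarrow$(3). This is where I expect the main obstacle: showing that finite algebraic codimension of $\operatorname{Im}\partial_{\mathbb T}$ in $\ker\partial_{\mathbb T}$ forces closedness, when $\partial_{\mathbb T}$ may be unbounded. I would take a finite-dimensional $F\subset\ker\partial_{\mathbb T}$ with $\operatorname{Im}\partial_{\mathbb T}+F=\ker\partial_{\mathbb T}$. Then I would consider the bounded map
\[
(\mathrm{Dom}(\partial_{\mathbb T}),\|\cdot\|_{\rm graph})\oplus F\to\ker\partial_{\mathbb T},\qquad (x,f)\mapsto\partial_{\mathbb T}x+f.
\]
The domain is a Banach space because $\partial_{\mathbb T}$ is closed (Lemma~\ref{partial closed}), and the map is surjective onto the closed subspace $\ker\partial_{\mathbb T}$. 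Passing to the quotient by its kernel and applying the open mapping theorem, the standard argument (as for Kato's lemma) shows that $\operatorname{Im}\partial_{\mathbb T}$ is closed. With closedness in hand, Lemma~\ref{lem3.6} gives that $\operatorname{Im}\widehat{\mathbb T}$ is closed. Also $\ker\widehat{\mathbb T}$ and $\ker\widehat{\mathbb T}^*$ are the even and odd parts of $\ker\partial_{\mathbb T}\ominus\operatorname{Im}\partial_{\mathbb T}$, hence finite dimensional. Since $\widehat{\mathbb T}$ is closed and densely defined, it is Fredholm in the sense of Definition~\ref{def unbounded fredholm}.

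(3)$\Rightarrow$(1) and the index. By Lemma~\ref{lem3.6}(4)$\Rightarrow$(1), $\operatorname{Im}\partial_{\mathbb T}$ is closed. Hence
\[
\bigoplus_k H^k_{\mathbb T}\cong\ker\partial_{\mathbb T}\cap\ker\partial_{\mathbb T}^*=\ker\widehat{\mathbb T}\oplus\ker\widehat{\mathbb T}^*,
\]
which is finite dimensional. So only finitely many $H^k_{\mathbb T}$ are nonzero, each is finite dimensional, and I can choose $N$ beyond the last nonzero one. The uniform constant required for $n>N$ comes from Lemma~\ref{lem3.6}(2), since exactness there means $\ker\partial_{\mathbb T,n}=\operatorname{Im}\partial_{\mathbb T,n+1}$. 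Finally,
\[
\operatorname{ind}(\widehat{\mathbb T})=\dim\ker\widehat{\mathbb T}-\dim\ker\widehat{\mathbb T}^*=\sum_{k\ \mathrm{even}}\dim H^k_{\mathbb T}-\sum_{k\ \mathrm{odd}}\dim H^k_{\mathbb T}=\operatorname{ind}(\mathbb T).
\]
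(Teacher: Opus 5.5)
Your proposal is correct and follows essentially the same route as the paper. Both first establish that $\operatorname{Im}\partial_{\mathbb T}$ is closed via Lemma~\ref{lem3.6}, using for (2) an open-mapping argument on $\mathrm{Dom}(\partial_{\mathbb T})$ together with a finite-dimensional complement in $\ker\partial_{\mathbb T}$, and then identify $\ker\widehat{\mathbb T}$ and $\ker\widehat{\mathbb T}^*$ with the even and odd parts of $\ker\partial_{\mathbb T}\cap\ker\partial_{\mathbb T}^*$.

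The differences are only organizational. You close the cycle (1)$\Rightarrow$(2)$\Rightarrow$(3)$\Rightarrow$(1), whereas the paper proves (1)$\Leftrightarrow$(2) and (2)$\Leftrightarrow$(3). You also phrase the closed-range step with the graph-norm Banach space, whereas the paper uses the closed operator $S$ on $\Lambda(H)\times Y_0$.
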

\begin{proof}
(1)$\Rightarrow$(2): Since $\mathbb{T}$ is Fredholm, there is an integer $N,c> 0$ such that
\begin{equation}\label{f control 2}
\operatorname{dim} \operatorname{ker} \partial_{\mathbb{T}, n} / \operatorname{Im} \partial_{\mathbb{T}, n+1}<\infty, \forall n \leq N ~\text{and}~ \operatorname{dim} \operatorname{ker} \partial_{\mathbb{T}, n} / \operatorname{Im} \partial_{\mathbb{T}, n+1}=0, \forall n>N.
\end{equation}
Moreover, for every $n>N$ and $y_{n}\in \ker \partial_{\mathbb{T}, n}$, there exists an $x_{n+1} \in \Lambda^{n+1}(H)$ such that $\partial_{\mathbb{T}, n+1}x_{n+1}=y_{n}$ and $\|x_{n+1}\|\leq c\|y_{n}\|$. By \eqref{f control 2}, $\operatorname{Im} \partial_{\mathbb{T}, n}$ is closed for $1\leq n\leq N$. Thus there is $c_1>0$ such that for every $n\leq N$ and $y_{n}\in \operatorname{Im} \partial_{\mathbb{T}, n+1}$, there exists an $x_{n+1} \in \Lambda^{n+1}(H)$ such that $\partial_{\mathbb{T}, n+1}x_{n+1}=y_{n}$ and $\|x_{n+1}\|\leq c_1\|y_{n}\|$. Then by Lemma $\ref{Imclosed}$, $\operatorname{Im}\partial_{\mathbb{T}}$ is closed. So
$$
\operatorname{Im}\partial_{\mathbb{T}}=\bigoplus\limits_{k=1}^{\infty}\operatorname{Im}\partial_{\mathbb{T},k}.
$$
Hence
\begin{equation}\label{dim ker frac}
\begin{aligned}
\operatorname{dim}\operatorname{ker}\partial_{\mathbb{T}}/\operatorname{Im}\partial_{\mathbb{T}}
&=\operatorname{dim}\left(\bigoplus\limits_{k=0}^{\infty}\operatorname{ker}\partial_{\mathbb{T},k}\right)\ominus
\left(\bigoplus\limits_{k=1}^{\infty}\operatorname{Im}\partial_{\mathbb{T},k}\right)\\
&=\operatorname{dim}\bigoplus\limits_{k=0}^{\infty}\left(\operatorname{ker}\partial_{\mathbb{T},k}\ominus\operatorname{Im}\partial_{\mathbb{T},k+1}\right)\\
&=\sum_{k=0}^{\infty}\operatorname{dim}H_\mathbb{T}^{k}<\infty.
\end{aligned}
\end{equation}
(2)$\Rightarrow$(1):At first we claim that $\operatorname{Im}\partial_{\mathbb{T}}$ is closed. Since $\dim \ker \partial_{\mathbb{T}}/ \operatorname{Im} \partial_{\mathbb{T}}<\infty$, there exists a finite-dimensional subspace $Y_{0}$ of $\operatorname{ker} \partial_{\mathbb{T}}$ such that $\operatorname{Im} \partial_{\mathbb{T}} \oplus Y_{0}=\operatorname{ker} \partial_{\mathbb{T}}$. Considering $\Lambda(H) \times Y_{0}$ as a Banach space with norm
$$
\|(x, y)\|=\|x\|+\|y\|.
$$
Then $\operatorname{Dom}\left(\partial_{\mathbb{T}}\right) \times Y_{0}$ is dense in $\Lambda(H) \times Y_{0}$. Let $S$ be the linear map
$$
\begin{aligned}
S: D\left(\partial_{\mathbb{T}}\right) \times Y_{0} &\rightarrow \operatorname{ker} \partial_{\mathbb{T}} \\
(x, y) &\mapsto \partial_{\mathbb{T}} x+y.
\end{aligned}
$$
By Lemma $\ref{partial closed}$, $\partial_{\mathbb{T}}$ is closed. Hence it is easy to see that $S$ is closed. Notice that 
\begin{equation}\label{directsum}
  \operatorname{Im} S=\operatorname{Im}\partial_{\mathbb{T}}\oplus Y_{0}= \operatorname{ker} \partial_{\mathbb{T}}
\end{equation}
is closed. Then by Closed Range Theorem, there is a constant $c>0$ such that for every $z \in \operatorname{Im} \partial_{\mathbb{T}}\subseteq \operatorname{Im}S$, there exist $x\in \operatorname{Dom}(\partial_{\mathbb{T}})$ and $y\in Y_{0}$ satisfying
$$
\partial_{\mathbb{T}} x+y=S(x, y)=z \text { and }\|x\|+\|y\| \leq c\|z\|.
$$
Then $y=z-\partial_{\mathbb{T}} x\in \operatorname{Im} \partial_{\mathbb{T}}$. It follows from
\(\operatorname{Im}\partial_{\mathbb T}\oplus Y_0\) is direct that \(y=0\). Hence
\begin{equation}\label{Tx=y}
\partial_{\mathbb{T}}x=z \text { and }\|x\|\leq c\|z\|.
\end{equation}
By Closed Range Theorem, we have $\operatorname{Im} \partial_{\mathbb{T}}$ is closed. The claim is proved. 
Hence
$$
\operatorname{Im} \partial_{\mathbb{T}}=\bigoplus\limits_{k=1}^{\infty} \operatorname{Im} \partial_{\mathbb{T}, k}.
$$
Since $\operatorname{dim} \operatorname{ker} \partial_{\mathbb{T}} / \operatorname{Im} \partial_{\mathbb{T}}<\infty$, we have $\sum\limits_{k=0}^{\infty}\operatorname{dim}H_\mathbb{T}^{k}<\infty$. So there is an integer $N > 0$ such that
\begin{equation}\label{f control}
\operatorname{dim} \operatorname{ker} \partial_{\mathbb{T}, n} / \operatorname{Im} \partial_{\mathbb{T}, n+1}<\infty, \forall n \leq N ~\text{and}~ \operatorname{dim} \operatorname{ker} \partial_{\mathbb{T}, n} / \operatorname{Im} \partial_{\mathbb{T}, n+1}=0, \forall n>N.
\end{equation}
By $(1)\Leftrightarrow(2)$ of Lemma \ref{lem3.6}, for every $n$ and $y_{n}\in \operatorname{Im} \partial_{\mathbb{T}, n+1}$, there exists an $x_{n+1} \in \Lambda^{n+1}(H)$ such that $\partial_{\mathbb{T}, n+1}x_{n+1}=y_{n}$ and $\|x_{n+1}\|\leq c\|y_{n}\|$. Thus by $(\ref{f control})$, $\mathbb{T}$ is Fredholm.\\
(2)\(\Rightarrow\)(3): 
Assume (2) holds. As shown in the proof of $(2)\Rightarrow(1)$,
$\operatorname{Im}\partial_{\mathbb T}$ is closed. Thus by Lemma \ref{lem3.6}, $\operatorname{Im} \widehat{\mathbb{T}}$ is closed. Moreover,
\begin{equation}\label{kerdT frac+}
\begin{aligned}
\ker \partial_{\mathbb{T}}\cap \Lambda^+(H)
=& (\operatorname{Im} \partial_{\mathbb{T}}\oplus( \ker\partial_{\mathbb{T}} \cap (\operatorname{Im} \partial_{\mathbb{T}})^\perp) )\cap \Lambda^+(H)\\
=& (\operatorname{Im} \partial_{\mathbb{T}}\cap \Lambda^+(H))\oplus ( \ker\partial_{\mathbb{T}} \cap \ker\partial_{\mathbb{T}}^*\cap \Lambda^+(H))\\
\overset{(*)}{=}&(\operatorname{Im} \partial_{\mathbb{T}}\cap \Lambda^+(H))\oplus  \ker \widehat{\mathbb{T}},
\end{aligned}
\end{equation}
where $(*)$ follows from \eqref{hat T x eq dx d*x}. Hence by \eqref{kerdT frac+},
$$
\begin{aligned}
\dim \ker \widehat{\mathbb{T}}
=&\dim (\ker \partial_{\mathbb{T}}\cap \Lambda^+(H))\ominus (\operatorname{Im} \partial_{\mathbb{T}}\cap \Lambda^+(H))\\
\overset{(\star)}{=}&\dim \bigoplus\limits_{k=0}^{\infty}\left(\operatorname{ker}\partial_{\mathbb{T},2k}\ominus\operatorname{Im}\partial_{\mathbb{T},2k+1}\right)\\
=&\dim \bigoplus\limits_{k \text{ even}} H_\mathbb{T}^{k}<\infty,
\end{aligned}
$$
where $(\star)$ follows from \eqref{dim ker frac}.
By the same reasoning, $\dim \ker \widehat{\mathbb{T}}^*=\dim \bigoplus\limits_{k \text{ odd}} H_\mathbb{T}^{k}<\infty$. Therefore, \(\widehat{\mathbb{T}}\) is Fredholm, and 
\[
\operatorname{ind}(\widehat{\mathbb{T}}) = \dim \ker \widehat{\mathbb{T}} - \dim \ker \widehat{\mathbb{T}}^* = \dim \bigoplus\limits_{k \text{ even}} H_\mathbb{T}^{k} - \dim \bigoplus\limits_{k \text{ odd}} H_\mathbb{T}^{k} = \sum\limits_{k=0}^\infty (-1)^k \dim H_\mathbb{T}^{k} = \operatorname{ind}(\mathbb{T}).
\]
(3)\(\Rightarrow\)(2): 
Assume (3) holds. Then \(\ker \widehat{\mathbb{T}}\) and \(\ker \widehat{\mathbb{T}}^*\) are finite-dimensional, and \(\operatorname{Im} \widehat{\mathbb{T}}\) is closed. Thus by Lemma \ref{lem3.6}, $\operatorname{Im} \partial_{\mathbb{T}}$ and $\operatorname{Im} \partial_{\mathbb{T}}^*$ are closed. Hence \eqref{kerdT frac+} holds in this case as well. Moreover,
$$
\begin{aligned}
\ker \partial_{\mathbb{T}}\cap \Lambda^-(H)
=& (\operatorname{Im} \partial_{\mathbb{T}}\oplus( \ker\partial_{\mathbb{T}} \cap (\operatorname{Im} \partial_{\mathbb{T}})^\perp) )\cap \Lambda^-(H)\\
=& (\operatorname{Im} \partial_{\mathbb{T}}\cap \Lambda^-(H))\oplus ( \ker\partial_{\mathbb{T}} \cap \ker\partial_{\mathbb{T}}^*\cap \Lambda^-(H))\\
\overset{(*)}{=}&(\operatorname{Im} \partial_{\mathbb{T}}\cap \Lambda^-(H))\oplus  \ker \widehat{\mathbb{T}}^*,
\end{aligned}
$$
where $(*)$ follows from \eqref{hat T x eq dx d*x}.
Consequently,
$$
\begin{aligned}
\dim\ker \partial_{\mathbb{T}} / \operatorname{Im} \partial_{\mathbb{T}}
=&\dim\frac{(\ker \partial_{\mathbb{T}}\cap \Lambda^+(H))\oplus (\ker \partial_{\mathbb{T}}\cap \Lambda^-(H))}
{(\operatorname{Im} \partial_{\mathbb{T}}\cap \Lambda^+(H))\oplus (\operatorname{Im} \partial_{\mathbb{T}}\cap \Lambda^-(H))}\\
=&\dim\frac{(\operatorname{Im} \partial_{\mathbb{T}}\cap \Lambda^+(H))\oplus  \ker \widehat{\mathbb{T}}\oplus (\operatorname{Im} \partial_{\mathbb{T}}\cap \Lambda^-(H))\oplus  \ker \widehat{\mathbb{T}}^*}
{(\operatorname{Im} \partial_{\mathbb{T}}\cap \Lambda^+(H))\oplus (\operatorname{Im} \partial_{\mathbb{T}}\cap \Lambda^-(H))}\\
=&\dim \ker \widehat{\mathbb{T}}\oplus \ker \widehat{\mathbb{T}}^*<\infty.
\end{aligned}
$$
\end{proof} 
Similar to Corollary \ref{invert continious}, by \cite[Theorem 7.11]{MS} we have the following corollary.
\begin{cor}\label{index continious}
  Let $\mathbb{T}=(T_1,T_2,\cdots)$ be a row-bounded sequence on $H$. If $\mathbb{T}$ is Fredholm, then there exists an $\varepsilon>0$ such that for every row-bounded sequence $\mathfrak{T}=(\mathfrak{T}_{1}, \mathfrak{T}_{2}, \cdots)$ satisfying 
  $$
  \left\|\left(\widehat{\mathbb{T}}-\widehat{\mathfrak{T}}\right)x\right\|<\varepsilon\|x\|,~\forall x\in \Lambda_{\mathrm{fin}}^+(H),
  $$
  $\mathfrak{T}$ is Fredholm, and $\operatorname{ind}(\mathbb{T})=\operatorname{ind}(\mathfrak{T})$.
\end{cor}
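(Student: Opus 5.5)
The plan is to reduce the statement to the stability of unbounded Fredholm operators under small relative perturbations, applied to $\widehat{\mathbb T}$. This mirrors how Corollary \ref{invert continious} was obtained from \cite[Theorem 7.11]{MS}, with Theorem \ref{3equal} used to move between Fredholmness of $\mathbb T$ and of $\widehat{\mathbb T}$.

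First, since $\mathbb T$ is Fredholm, Theorem \ref{3equal} shows that $\widehat{\mathbb T}$ is a Fredholm operator in the sense of Definition \ref{def unbounded fredholm}. It is densely defined, and it is closed by Lemma \ref{partial closed}, with $\operatorname{ind}(\widehat{\mathbb T})=\operatorname{ind}(\mathbb T)$.

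Second, I will use that $\Lambda_{\mathrm{fin}}^+(H)$ is a core for $\widehat{\mathbb T}$, that is, dense in $\operatorname{Dom}(\widehat{\mathbb T})$ for the graph norm; this was recalled before Corollary \ref{invert continious}. The same holds for $\widehat{\mathfrak T}$. The hypothesis $\|(\widehat{\mathbb T}-\widehat{\mathfrak T})x\|<\varepsilon\|x\|$ on this common core lets the difference $\widehat{\mathbb T}-\widehat{\mathfrak T}$ extend to a bounded operator $B$ on $\Lambda^+(H)$ with $\|B\|\le\varepsilon$.

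The key point to check is that $\widehat{\mathfrak T}=\widehat{\mathbb T}-B$ holds exactly, domains included, and not merely on the core. Take $x\in\operatorname{Dom}(\widehat{\mathbb T})$ and approximate it by $x_n\in\Lambda_{\mathrm{fin}}^+(H)$ in the graph norm of $\widehat{\mathbb T}$. Then $\widehat{\mathfrak T}x_n=\widehat{\mathbb T}x_n-Bx_n$ converges. Since $\widehat{\mathfrak T}$ is closed (Lemma \ref{partial closed}), this gives $x\in\operatorname{Dom}(\widehat{\mathfrak T})$ with $\widehat{\mathfrak T}x=\widehat{\mathbb T}x-Bx$. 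The symmetric argument, using that $\Lambda_{\mathrm{fin}}^+(H)$ is also a core for $\widehat{\mathfrak T}$, gives the reverse inclusion of domains. I expect this to be the main obstacle, and it is handled by core density together with closedness.

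Finally, I will invoke the bounded-perturbation stability theorem for closed Fredholm operators (\cite[Theorem 7.11]{MS}). It provides $\varepsilon>0$, depending only on $\widehat{\mathbb T}$ (for instance the reduced minimum modulus of $\widehat{\mathbb T}$), such that $\widehat{\mathbb T}-B$ is Fredholm with the same index whenever $\|B\|<\varepsilon$. Hence $\widehat{\mathfrak T}$ is Fredholm and $\operatorname{ind}(\widehat{\mathfrak T})=\operatorname{ind}(\widehat{\mathbb T})$. Applying Theorem \ref{3equal} to $\mathfrak T$ then shows that $\mathfrak T$ is Fredholm, with $\operatorname{ind}(\mathfrak T)=\operatorname{ind}(\widehat{\mathfrak T})=\operatorname{ind}(\widehat{\mathbb T})=\operatorname{ind}(\mathbb T)$.
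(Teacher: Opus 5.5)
Your proposal is correct and follows the same route as the paper. The paper obtains this corollary, like Corollary \ref{invert continious}, from Theorem \ref{3equal}, the graph-norm density of $\Lambda_{\mathrm{fin}}^+(H)$ in $\operatorname{Dom}(\widehat{\mathbb T})$, and \cite[Theorem 7.11]{MS}; your check that $\widehat{\mathbb T}-\widehat{\mathfrak T}$ extends to a bounded $B$ with $\widehat{\mathfrak T}=\widehat{\mathbb T}-B$ exactly, domains included, spells out what the paper leaves implicit. The only cosmetic point is that the extension gives only $\|B\|\le\varepsilon$, so you should choose $\varepsilon$ strictly smaller than the threshold supplied by \cite[Theorem 7.11]{MS}.
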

By Theorem \ref{3equal} and Corollary \ref{index continious}, we have the following theorem.
\begin{thm}
  If $\mathbb{T}$ is a row-bounded sequence of commuting operators, then $\sigma_{e}(\mathbb{T})$ is a bounded closed set in $\ell^2$.
\end{thm}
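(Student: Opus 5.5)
The plan splits the statement into two parts. First we show $\sigma_e(\mathbb T)\subseteq\sigma(\mathbb T)$, which gives boundedness via Theorem~\ref{bounded closed}. Then we show the complement of $\sigma_e(\mathbb T)$ in $\ell^2$ is open, using Corollary~\ref{index continious}.

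\emph{Boundedness.} If $\lambda\notin\sigma(\mathbb T)$, then $\mathbb T-\lambda$ is invertible. By Definition~\ref{1and2invert}, every $H^n_{\mathbb T-\lambda}$ vanishes and the uniform control constant exists. This is exactly Definition~\ref{def:Fredholm} with $N=0$ (note $\dim H^0=0<\infty$), so $\mathbb T-\lambda$ is Fredholm. Hence $\sigma_e(\mathbb T)\subseteq\sigma(\mathbb T)\subseteq C\overline{\mathbb B}$ by Theorem~\ref{bounded closed}.

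\emph{Closedness.} Fix $\lambda\notin\sigma_e(\mathbb T)$ and let $\mu\in\ell^2$ be small. We compare $\widehat{\mathbb T-\lambda}$ and $\widehat{\mathbb T-\mu}$ on $\Lambda^+_{\mathrm{fin}}(H)$. On each graded piece the two boundary maps differ by
\[
\partial_{\mathbb T-\mu}-\partial_{\mathbb T-\lambda}=\partial_{I_{\lambda-\mu}},
\]
where $I_\nu=(\nu_1 I,\nu_2 I,\dots)$ denotes the scalar sequence. This is a sequence of commuting normal operators. By Lemma~\ref{lem:bounded-partial-normal}, $\partial_{I_{\lambda-\mu}}$ is bounded on all of $\Lambda(H)$ with norm $\|\lambda-\mu\|$.

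Corollary~\ref{cor:hat-T-sum} gives $\widehat{\mathfrak T}=(\partial_{\mathfrak T}+\partial_{\mathfrak T}^*)|$ on $\Lambda^+_{\mathrm{fin}}(H)$. The difference $\widehat{\mathbb T-\lambda}-\widehat{\mathbb T-\mu}$ is therefore the restriction of $\partial_{I_{\lambda-\mu}}+\partial_{I_{\lambda-\mu}}^*$. Applying Lemma~\ref{partial norm hatT} to the scalar sequence (or using the orthogonality identity \eqref{hat T x eq dx d*x} directly), its norm is at most $\|\lambda-\mu\|$. Choose $\|\lambda-\mu\|<\varepsilon$, with $\varepsilon$ taken from Corollary~\ref{index continious} applied to the Fredholm sequence $\mathbb T-\lambda$. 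Then $\mathbb T-\mu$ is Fredholm, and also $\operatorname{ind}(\mathbb T-\mu)=\operatorname{ind}(\mathbb T-\lambda)$. So the complement of $\sigma_e(\mathbb T)$ is open and $\sigma_e(\mathbb T)$ is closed.

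The main point to check is the row-boundedness hypothesis in Corollary~\ref{index continious}. $\mathbb T-\mu$ is row-bounded because $\mathbb T$ is row-bounded and the scalar sequence $(\mu_i)$ lies in $\ell^2$; the triangle inequality handles the row sums. Beyond this, only the norm estimate for the scalar perturbation needs care, and it follows from Lemma~\ref{lem:bounded-partial-normal} together with the computation in Lemma~\ref{partial norm hatT}.
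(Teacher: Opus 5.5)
Your proof is correct and follows the paper's route. The paper derives the result directly from Theorem~\ref{3equal} and Corollary~\ref{index continious}, i.e.\ stability of Fredholmness of $\widehat{\mathbb T}$ under small perturbations, with boundedness coming from $\sigma_e(\mathbb T)\subseteq\sigma(\mathbb T)$ and Theorem~\ref{bounded closed}. You additionally make explicit the scalar-shift estimate $\|\widehat{\mathbb T-\lambda}-\widehat{\mathbb T-\mu}\|\le\|\lambda-\mu\|$ via Lemmas~\ref{lem:bounded-partial-normal} and~\ref{partial norm hatT}, which the paper leaves implicit here (it uses the same mechanism in the proof of Theorem~\ref{thm3.1.2}).
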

\begin{rem}\label{rem:essential-spectrum-realization}
  Conversely, for every nonempty bounded closed subset \(E\) of \(\ell^2\), let \(H = \ell^2(E) \otimes \ell^2\) and \(T_i = M_{z_i} \otimes I\). Since \(E\) is bounded, \(\mathbb{T} = (T_1, T_2, \cdots)\) is row-bounded. If \(\lambda \notin E\), \(\mathbb{T} - \lambda\) is clearly invertible. If $\lambda\in E$, then
\[
\operatorname{Im}\partial_{\mathbb T-\lambda,1}
\subseteq\{f\in H:f(\lambda)=0\}.
\]
Hence evaluation at $\lambda$ induces a surjective
linear map from $H^0(\mathbb T-\lambda)$ onto $\ell^2$.
Thus $H^0(\mathbb T-\lambda)$ is infinite-dimensional.
Therefore, $\mathbb T-\lambda$ is not Fredholm,
and $\sigma_e(\mathbb T)=E$.
\end{rem}
Now we are ready to prove Theorem \ref{thm3.1.2}.
\vskip2mm
\noindent\emph{Proof of Theorem \ref{thm3.1.2}.}  By Theorem \ref{thm3.1}, it suffices to prove $\sigma_{SZ}(\mathbb{T})\subseteq \sigma_e(\mathbb{T})$. Take $\lambda\notin \sigma_e(\mathbb{T})$. By Corollary \ref{index continious} and Lemma \ref{partial norm hatT}, there exists $\varepsilon>0$ such that a row-bounded sequence $\mathfrak{T}$ is Fredholm whenever $\|\partial_{\mathfrak{T}}-\partial_{\mathbb{T}-\lambda}\|<\varepsilon$. Since the sequence $\left\{\sum\limits_{i=1}^{k}T_i\otimes E_i^*\right\}_{k\geq1}$ converges in norm, by Lemma \ref{lem:bounded-partial-normal}, 
  \begin{equation}\label{T-lambda conver}
 \lim\limits_{n,m\to\infty} \left\|\sum\limits_{i=m+1}^{n}(T_i-\lambda_iI_H)\otimes E_i^*\right\|=0.
  \end{equation}
  For $n>0$, ${\bf\lambda}^{(n)}=(\lambda_1,\lambda_2,\cdots,\lambda_n,0,0,\cdots)$, write
  $$
  \mathbb{T}^{(n)}-\lambda^{(n)}=(T_1-\lambda_1,\cdots,T_n-\lambda_n,0,0,\cdots).
  $$
  Then by \eqref{T-lambda conver}, there exists $N>0$ such that $\|\partial_{\mathbb{T}^{(n)}-\lambda^{(n)}}-\partial_{\mathbb{T}-\lambda}\|<\varepsilon$, which implies that $\mathbb{T}^{(n)}-\lambda^{(n)}$ is Fredholm. Hence by Lemma \ref{Tn and T'}, we have $(T_1-\lambda_1,\cdots,T_n-\lambda_n)$ is invertible, i.e. $\lambda\notin \sigma_{SZ}(\mathbb{T})$.

\hfill$\Box$

\section{Column boundedness vs. boundedness of Koszul boundary operators }
In this section, we prove the equivalence of column boundedness and boundedness of Koszul boundary operators. 
To do this, we need some preparations. 

Let \(\mathcal{H}\) be a reproducing kernel Hilbert space on \(\mathbf{X}\) with kernel \(K\), and let \(\mathcal{K}\) be another reproducing kernel Hilbert space on \(\mathbf{X}\). Let \(\mathcal{E}\) and \(\mathcal{F}\) be Hilbert spaces. \(\mathrm{Mult}(\mathcal{H}\otimes \mathcal{E},\mathcal{K}\otimes \mathcal{F})\) is defined  to be the space of all \(B(\mathcal{E},\mathcal{F})\)-valued functions on \(\mathbf{X}\) that pointwise multiply \(\mathcal{H}\otimes \mathcal{E}\) into \(\mathcal{K}\otimes \mathcal{F}\). The closed graph theorem easily implies that every element \(\Phi\in\mathrm{Mult}(\mathcal{H}\otimes \mathcal{E},\mathcal{K}\otimes \mathcal{F})\) induces a bounded multiplication operator \(M_\Phi:\mathcal{H}\otimes \mathcal{E}\to \mathcal{K}\otimes \mathcal{F}\) with
\begin{equation}\label{mult op}
	(M_\Phi f)(z)=\Phi(z)f(z).
\end{equation}
\(\mathcal H\) is said to satisfy the \emph{column-row property} with constant \(c\ge1\) if whenever \(\{\phi_n\}_{n\ge1}\) is a sequence in \(\mathrm{Mult}(\mathcal H)\) with 
\[
\|[M_{\phi_1},M_{\phi_2},\cdots]^T\|\leq 1,
\]
then
\[
\|[M_{\phi_1},M_{\phi_2},\cdots]\|\leq c.
\]
The reproducing kernel Hilbert space \(\mathcal H\) is said to be irreducible if the underlying set \(\mathbf{X}\) cannot be partitioned into two non-empty disjoint sets \(\mathbf{X}_1\) and \(\mathbf{X}_2\) such that
$$
K(x_1,x_2)=0,~\forall x_1\in \mathbf{X}_1,~x_2\in \mathbf{X}_2.
$$
By a \emph{normalized complete Pick space}, we mean a reproducing kernel Hilbert space whose kernel is of the form
\[
K(z,w)=\frac{1}{1-\langle b(z),b(w)\rangle},
\]
where \(b:\mathbf{X}\to\mathbb B_d\) for some \(d\in\mathbb N\cup\{\infty\}\), and there exist $x_0\in \mathbf{X}$ such that $b(x_0)=0$.
For such spaces, Hartz proved the following theorem.
\begin{thm}\cite[Theorem 1.2]{Hartz2023}\label{thm:hartz}
	Every normalized complete Pick space satisfies the column-row property with constant \(1\).
\end{thm}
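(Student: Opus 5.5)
We plan to reduce, in a few standard steps, to a statement about the full Fock space, where the row/column asymmetry can be exploited through the flip symmetry. \emph{Step 1 (reduction to finitely many multipliers).} The row norm $\|[M_{\phi_1},M_{\phi_2},\cdots]\|$ is the supremum of the row norms of the finite truncations $[M_{\phi_1},\cdots,M_{\phi_n}]$. Truncating a column contraction keeps it a column contraction. Hence it suffices to prove that for every $n$, $\sum_{k=1}^n M_{\phi_k}^*M_{\phi_k}\le I$ implies $\sum_{k=1}^n M_{\phi_k}M_{\phi_k}^*\le I$.

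\emph{Step 2 (reduction to the Drury--Arveson space).} Write $K(z,w)=(1-\langle b(z),b(w)\rangle)^{-1}$ with $b:\mathbf X\to\mathbb B_d$. The map $f\mapsto f\circ b$ identifies $\mathcal H$ isometrically with the quotient $H^2_d\big|_{b(\mathbf X)}$ of the Drury--Arveson space. By the operator-valued complete Pick/commutant lifting theorem (McCullough--Trent), every multiplier $\Phi\in\mathrm{Mult}(\mathcal H\otimes\mathcal E,\mathcal H\otimes\mathcal F)$ lifts to a multiplier of $H^2_d$ of the same norm. Apply this with $\mathcal E=\mathbb C$, $\mathcal F=\mathbb C^n$ to the column $[\phi_1,\cdots,\phi_n]^T$; this produces a contractive column $[\psi_1,\cdots,\psi_n]^T$ over $H^2_d$ with $\psi_k\circ b=\phi_k$. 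Compressing a row to a co-invariant subspace (the span of kernel functions at $b(\mathbf X)$) does not increase its norm. So it suffices to treat $H^2_d$, and by a further approximation argument, $d<\infty$.

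\emph{Step 3 (Drury--Arveson case via the Fock space).} Identify $H^2_d$ with the symmetric Fock space, a co-invariant subspace of the full Fock space $\mathcal F^2_d$. Using the noncommutative lifting theorem (Popescu, Davidson--Pitts), lift the contractive column of Drury--Arveson multipliers to a contractive column $[L_{F_1},\cdots,L_{F_n}]^T$ of left analytic Toeplitz operators on $\mathcal F^2_d$ that compress to $M_{\psi_k}$. The key structural input is the flip unitary $U$ on $\mathcal F^2_d$ reversing words: it satisfies $UL_FU=R_{\tilde F}$, where $\tilde F$ is $F$ with reversed words. It also fixes the symmetric Fock space pointwise, so on $H^2_d$ left and right symbols agree. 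The aim is to show that column contractivity of $(L_{F_k})$, transported by $U$ and combined with commutativity of the left and right algebras, yields the positivity $K(z,w)\big(1-\sum_k\psi_k(z)\overline{\psi_k(w)}\big)\succeq 0$. This positivity is exactly contractivity of the row on $H^2_d$.

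\emph{Main obstacle.} Step 3 is the hard part: turning column positivity into row positivity. My first attempt would be a tensor trick. Consider the contraction $X=[L_{F_k}]^T$ from $\mathcal F^2_d$ into $\mathcal F^2_d\otimes\mathbb C^n$, and form the symmetric-in-variables kernel $\langle X^*(\,\cdot\,)\rangle$ evaluated on kernel vectors, exchanging the roles of the two kernel arguments via $U$. Here I would use that for scalar-valued symbols the transpose of the matrix $[\psi_k(z_i)\overline{\psi_k(z_j)}]$ is the conjugate matrix, which is positive exactly when the original is. If this direct approach fails, the fallback is to bound $\sup_{z,w}$ expressions via Schur products of $K$ with itself, using that $K$ is a complete Pick kernel, so $1-1/K$ is positive. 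This may cost a constant, and obtaining constant exactly $1$ is where I expect the real difficulty lies.
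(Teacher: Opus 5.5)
\paragraph{Assessment.} Your Steps 1 and 2 are standard and essentially correct. The identification runs $g\mapsto g\circ b$ from $H^2_d|_{b(\mathbf X)}$ onto $\mathcal H$; the complete Pick extension theorem lifts the column with the same norm; and compressing to the span of kernel functions can only decrease the row norm. The gap is Step 3. It is the entire content of the theorem, and you do not carry it out: you state an aim, a first attempt, and a fallback that you yourself expect may lose the constant. Write $\Psi=(\psi_1,\dots,\psi_n)^{\mathrm T}$. The real task is to pass from the column positivity
\[
\bigl[K(z_i,z_j)\bigl(I_n-\Psi(z_i)\Psi(z_j)^*\bigr)\bigr]_{i,j}\ge 0,
\]
which involves the outer products $\Psi(z_i)\Psi(z_j)^*$, to the row positivity $\bigl[K(z_i,z_j)\bigl(1-\Psi(z_j)^*\Psi(z_i)\bigr)\bigr]_{i,j}\ge 0$, which involves inner products. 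Transposing or conjugating the block matrix does not interchange $\Psi\Psi^*$ and $\Psi^*\Psi$. Taking a partial trace over $\mathbb C^n$ does interchange them, but it only yields the trivial constant $\sqrt n$. Nothing in your sketch supplies this step.

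\paragraph{The free-lift route fails.} The free-lift route cannot work in the form you describe, because the inequality you would exploit upstairs is false. On the full Fock space over $\mathbb C^2$, let $L_1,L_2$ be the left creation operators and $e_w$ the basis vectors indexed by words. Take $F_1=1+sx_2$ and $F_2=\sqrt s\,x_1(1-x_2)$. A direct computation gives
\[
L_{F_1}^*L_{F_1}+L_{F_2}^*L_{F_2}=(1+s)^2 I .
\]
For $\xi=\tfrac{1}{\sqrt2}(e_1-e_{12})$, however, $\|L_{F_1}^*\xi\|^2+\|L_{F_2}^*\xi\|^2=1+\tfrac52 s$, which exceeds $(1+s)^2$ whenever $0<s<\tfrac12$. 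So contractive columns of left multipliers need not be contractive rows. The flip does not help: it carries the row $[L_{F_k}]$ unitarily onto the row $[R_{\tilde F_k}]$ of the same norm. Commutation of left and right multipliers does not help either, since it also holds in the free setting. Indeed, if the free statement held, your lifting step would give the theorem at once. Any proof must therefore use the commutative structure in an essential way, and your proposal gives no mechanism for that.

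\paragraph{What works instead.} The paper does not prove this statement; it cites Hartz. The mechanism that works is visible in its proof of Proposition~\ref{prop:reduction}, which follows Hartz's architecture:
\begin{enumerate}
\item restrict to finite sets $F\subset\mathbb B_d$ and induct on $|F|$;
\item use automorphisms of the ball to arrange $0\in F$, and compose with $\theta_a$ on the range to get $\Phi(0)=0$;
\item factor $\Phi=[z_1,\dots,z_d]\Psi$ with $\Psi$ a contractive column on $F$ (Hartz's Proposition~3.8);
\item apply the induction hypothesis to $\Psi$ on $F\setminus\{0\}$;
\item return to $F$ via the positivity criterion of Lemma~\ref{3.12a} for functions vanishing at $0$, as in Lemma~\ref{lem:zPhi-contractive}.
\end{enumerate}
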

Hartz constructed an example showing that the row norm can be strictly smaller than the column norm \cite[Example 3.15]{Hartz2023}. This naturally raises the question of how to characterize column boundedness. Let $\{\varphi_i\}_{i=1}^\infty\subset \mathrm{Mult}(\mathcal{H})$, and $\mathbb{T}=(M_{\varphi_1},M_{\varphi_2},\ldots)$. Then 
$
\partial_{\mathbb{T}}
$
is well-defined on the dense subspace 
\[
\Lambda_{\mathrm{alg}}(\mathcal{H})=\operatorname{span}\Bigl\{f\otimes e_{i_1}\wedge\cdots\wedge e_{i_p}:\ f\in \mathcal{H},\ i_1<\cdots<i_p,\ p\ge0\Bigr\},
\]
and hence $\partial_{\mathbb{T}}$ is  densely defined  on \(\Lambda(\mathcal{H})\).  We have the following theorem.
\begin{thm}\label{thm:main}
	Let $\mathcal{H}$ be a normalized complete Pick space, $\{\varphi_i\}_{i=1}^\infty\subset \mathrm{Mult}(\mathcal{H})$, and 
	\[
	\mathbb{T}=(M_{\varphi_1},M_{\varphi_2},\ldots).
	\]
	Then the following statements are equivalent.
	\begin{itemize}
		\item[(1)] $[M_{\varphi_1},M_{\varphi_2},\ldots]^{\mathrm T}$ is bounded.
		\item[(2)] $\partial_{\mathbb{T}}$ extends to a bounded linear operator on \(\Lambda(\mathcal H)\).
	\end{itemize}
	Moreover, in this case, 
	\[
	\|[M_{\varphi_1},M_{\varphi_2},\ldots]^{\mathrm T}\|
	=
	\|\partial_{\mathbb{T}}\|.
	\]
\end{thm}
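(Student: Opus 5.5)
The plan is to prove the two implications separately; the norm identity then comes from matching the two estimates. Throughout, write $L_c=\|[M_{\varphi_1},M_{\varphi_2},\ldots]^{\mathrm T}\|$.

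\emph{(2)$\Rightarrow$(1).} This direction is elementary and repeats the computation \eqref{column bounded} from the proof of Lemma~\ref{bound and SOT}. Fix $N$ and $f\in\mathcal H$, and set $\omega_N=e_1\wedge\cdots\wedge e_N$. On $f\otimes\omega_N$ only the terms with $i\le N$ act, and the vectors $E_i^*\omega_N$ for $1\le i\le N$ are orthonormal. Hence
\[
\sum_{i=1}^N\|M_{\varphi_i}f\|^2=\|\partial_{\mathbb T}(f\otimes\omega_N)\|^2\le\|\partial_{\mathbb T}\|^2\|f\|^2 .
\]
Letting $N\to\infty$ shows that the column is bounded, with $L_c\le\|\partial_{\mathbb T}\|$.

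\emph{(1)$\Rightarrow$(2).} Here I would use the reproducing kernel structure. The operator $\partial_{\mathbb T}$ on $\mathcal H\otimes\Lambda$ is multiplication by the $B(\Lambda)$-valued function
\[
\Phi(z)=\sum_i\varphi_i(z)E_i^* .
\]
Because of the anticommutation relations, for each fixed $z$ the operator $\Phi(z)$ is the fermionic annihilation operator associated to the vector $\overline{\varphi(z)}\in\ell^2$. Therefore $\|\Phi(z)\|=\|\varphi(z)\|_{\ell^2}$, and moreover
\[
\Phi(z)\Phi(w)^*=\langle\varphi(z),\varphi(w)\rangle I_\Lambda-\sum_{i,j}\varphi_i(z)\overline{\varphi_j(w)}\,E_jE_i^* .
\]
Since $\mathcal H\otimes\Lambda$ has kernel $K\otimes I_\Lambda$, the statement $\|M_\Phi\|\le L_c$ is equivalent to positivity of the kernel $(L_c^2I-\Phi(z)\Phi(w)^*)K(z,w)$. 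Using the identity above, this splits as
\[
\bigl(L_c^2-\langle\varphi(z),\varphi(w)\rangle\bigr)K(z,w)\,I_\Lambda
+\sum_{i,j}E_j\bigl[\varphi_i(z)\overline{\varphi_j(w)}K(z,w)\bigr]E_i^* .
\]
The first summand is a positive kernel precisely when the \emph{row} $[M_{\varphi_1},M_{\varphi_2},\ldots]$ has norm at most $L_c$. By Hartz's column-row property with constant $1$ (Theorem~\ref{thm:hartz}), this follows from the column bound $L_c$.

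The second summand is the one I expect to be the main obstacle. It is a compression $\mathcal E(z)\,G(z,w)\,\mathcal E(w)^*$ of a matrix kernel whose $(j,i)$ entry is $\varphi_i(z)\overline{\varphi_j(w)}K(z,w)$; that is, the $\ell^2$-indices are transposed relative to the evidently positive Gram kernel $\varphi(z)\varphi(w)^*K(z,w)$. I would first try to exploit the column hypothesis $\sum_iM_{\varphi_i}^*M_{\varphi_i}\le L_c^2$ in the form of the kernel inequality supplied by the Pick property. If that does not close the argument directly, I would instead interleave the two terms through the Clifford unitaries $U_N$ of Lemma~\ref{sup=}, so as to pass between $\sum_i\varphi_iE_i^*$ and $\sum_i\varphi_iE_i$. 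The target is to show that $\|\sum_{i\le N}M_{\varphi_i}\otimes E_i^*\|^2\le L_c^2$ uniformly in $N$; for this, estimating the full operator $\partial\partial^*+\partial^*\partial$ is the natural route, combined with the row bound on one grading and the column bound on the other.

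Once the partial sums are uniformly bounded by $L_c$, Lemma~\ref{bound and SOT} gives SOT convergence. The limit then agrees with $\partial_{\mathbb T}$ on $\Lambda_{\mathrm{alg}}(\mathcal H)$, which yields the bounded extension with $\|\partial_{\mathbb T}\|\le L_c$. Combined with the first part, this gives $\|\partial_{\mathbb T}\|=L_c$.
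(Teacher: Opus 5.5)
Your (2)$\Rightarrow$(1) is correct and is the paper's argument. Your reformulation of (1)$\Rightarrow$(2) as positivity of $(L_c^2I-\Phi(z)\Phi(w)^*)K(z,w)$ is also fine, and so is handling the first summand by Theorem~\ref{thm:hartz}. The gap is the second summand. It carries the entire content, and it is not a positive kernel in general, so the splitting cannot be closed termwise.

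On $\Lambda^1$ the second summand acts by $\eta\mapsto K(z,w)\langle\eta,v(z)\rangle v(w)$, where $v(z)=\sum_j\overline{\varphi_j(z)}e_j$. Take $\mathcal H=H^2_2$, $\varphi_i=z_i$, the points $z^{(1)}=(\epsilon,0)$ and $z^{(2)}=(0,\epsilon)$ (so $K(z^{(1)},z^{(2)})=1$), and $\eta_1=e_2$, $\eta_2=-e_1$. Then the associated quadratic form equals $-2\epsilon^2<0$. So positivity of the sum depends on slack in the first summand absorbing this negative part, and nothing in your plan produces that.

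Note also that $\|\partial_{\mathbb T}\|\ge\|\partial_{\mathbb T,1}\|$, which is the row norm. Hence the theorem itself implies the column--row property with constant $1$, and using Hartz's theorem as a black box on the degree-zero piece cannot carry the argument.

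Your fallbacks do not fill the gap either. The Clifford unitaries only give $\|\sum_i\varphi_i\otimes E_i^*\|=\|\sum_i\varphi_i\otimes E_i\|$ (Lemma~\ref{sup=}), which is not a bound. For the truncations $\partial=\sum_{i\le N}M_{\varphi_i}\otimes E_i^*$ with non-normal multipliers,
\[
\partial\partial^*+\partial^*\partial=\sum_iM_{\varphi_i}M_{\varphi_i}^*\otimes I+\sum_{i,j}\bigl(M_{\varphi_j}^*M_{\varphi_i}-M_{\varphi_i}M_{\varphi_j}^*\bigr)\otimes E_jE_i^*,
\]
which contains commutator terms. On the intermediate degrees these are controlled neither by the row bound (which governs only $\Lambda^0$) nor by the column bound (which governs only the top degree of a truncation). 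You give no mechanism transferring control from these two ends to the middle.

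The missing idea, and what the paper does, is to re-run Hartz's inductive proof of the column--row property for the $B(\Lambda_N)$-valued multiplier $\sum_i\varphi_iE_i^*$ itself. After Hartz's reduction to finite sets $F\subset\mathbb B_d$ and the spaces $H^2_d|_F$, Proposition~\ref{prop:reduction} inducts on $|F|$:
\begin{enumerate}
\item It arranges $0\in F$ by a domain automorphism.
\item It arranges $\Phi(0)=0$ by composing with the target automorphism $\theta_a$, $a=\Phi(0)$. This is harmless precisely because the operator M\"obius map satisfies $\Theta_{\partial^{(u)}}(\partial^{(v)})=\partial^{(\theta_u(v))}$ (Lemmas~\ref{lem:compatibility} and \ref{lem:range-rotation}).
\item Hartz's factorization $\varphi_i=\sum_jz_j\psi_{ji}$, with $\Psi$ a contractive column, gives $\sum_iM_{\varphi_i}\otimes E_i^*=[M_{z_1}\otimes I,\ldots,M_{z_d}\otimes I]\,T_\Psi$.
\item The induction hypothesis on $F\setminus\{0\}$ bounds $T_\Psi$ there by $\|M_\Psi\|\le1$ (Lemma~\ref{Tpsicontrac}).
\item Hartz's Lemma 3.12(a) (Lemma~\ref{lem:zPhi-contractive}) extends contractivity to all of $F$.
\end{enumerate}
This controls all degrees of $\Lambda$ simultaneously, which a row-bound-plus-remainder splitting cannot do.
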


Let $\mathcal{H}$ be a reproducing kernel Hilbert space on $\mathbf{X}$ with kernel $K$. For $\mathbf{Y}\subset \mathbf{X}$, let $\mathcal{H}|_\mathbf{Y}$ be the reproducing kernel Hilbert space on $\mathbf{Y}$ with kernel $K|_{\mathbf{Y}\times \mathbf{Y}}$. The following lemmas are from \cite{Hartz2023}; we omit the proof.
\begin{lem}\label{mult crit}\cite[Lemma 2.1]{Hartz2023}
	Let \(\mathcal H\) be a reproducing kernel Hilbert space on \(\mathbf{X}\) with kernel \(K\), and let \(\mathcal E_1,\mathcal E_2\) be Hilbert spaces. A function \(\Phi:\mathbf{X}\to B(\mathcal E_1,\mathcal E_2)\) is a contractive multiplier, i.e. \(\|M_\Phi\|\le1\) on \(\mathcal H\otimes\mathcal E_1\to\mathcal H\otimes\mathcal E_2\), if and only if for every finite set \(\{z_1,\cdots,z_n\}\subset \mathbf{X}\), the operator matrix
	\[
	\bigl[K(z_i,z_j)(I_{\mathcal E_2}-\Phi(z_i)\Phi(z_j)^*)\bigr]_{i,j=1}^n
	\]
	is positive.
\end{lem}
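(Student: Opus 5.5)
The plan is to use the standard kernel-function characterization of adjoints of multiplication operators and to reduce both directions to one quadratic-form identity on finite linear combinations of kernel functions. Write \(K_w=K(\cdot,w)\). For \(w\in\mathbf X\) and \(\xi\in\mathcal E_2\), the vectors \(K_w\otimes\xi\) span a dense subspace of \(\mathcal H\otimes\mathcal E_2\), since any vector orthogonal to all of them has \(f(w)=0\) for every \(w\). The same holds in \(\mathcal H\otimes\mathcal E_1\). The key identity, valid for any finite set \(\{z_1,\dots,z_n\}\subset\mathbf X\) and vectors \(\xi_1,\dots,\xi_n\in\mathcal E_2\), is
\[
\Bigl\|\sum_{j}K_{z_j}\otimes\xi_j\Bigr\|^2-\Bigl\|\sum_{j}K_{z_j}\otimes\Phi(z_j)^*\xi_j\Bigr\|^2
=\sum_{i,j}\bigl\langle K(z_i,z_j)\bigl(I_{\mathcal E_2}-\Phi(z_i)\Phi(z_j)^*\bigr)\xi_j,\xi_i\bigr\rangle .
\]
This follows from expanding the inner products with the reproducing property \(\langle K_{z_j},K_{z_i}\rangle=K(z_i,z_j)\). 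Hence positivity of all the operator matrices in the statement is equivalent to the map \(K_w\otimes\xi\mapsto K_w\otimes\Phi(w)^*\xi\) being contractive on finite sums.

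For the forward direction, assume \(\|M_\Phi\|\le1\). For \(f\in\mathcal H\otimes\mathcal E_1\) we have
\[
\langle f,M_\Phi^*(K_w\otimes\xi)\rangle=\langle \Phi(w)f(w),\xi\rangle=\langle f(w),\Phi(w)^*\xi\rangle=\langle f,K_w\otimes\Phi(w)^*\xi\rangle,
\]
so \(M_\Phi^*(K_w\otimes\xi)=K_w\otimes\Phi(w)^*\xi\). Since \(\|M_\Phi^*\|\le1\), the left side of the identity is nonnegative, and therefore every matrix \(\bigl[K(z_i,z_j)(I-\Phi(z_i)\Phi(z_j)^*)\bigr]\) is positive.

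For the converse, assume all the matrices are positive, and define \(A\) on the span of the kernel vectors by \(A\bigl(\sum_j K_{z_j}\otimes\xi_j\bigr)=\sum_j K_{z_j}\otimes\Phi(z_j)^*\xi_j\). The main obstacle is that \(A\) must be well defined, since a vector may have several representations as a finite sum of kernel vectors. The identity settles this: if \(\sum_j K_{z_j}\otimes\xi_j=0\), then the left side is \(-\|A(\cdot)\|^2\) while the right side is \(\ge0\), which forces the image to vanish. After merging repeated points, \(A\) is therefore well defined and contractive on a dense subspace, and it extends to a contraction \(A:\mathcal H\otimes\mathcal E_2\to\mathcal H\otimes\mathcal E_1\).

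Finally, for \(f\in\mathcal H\otimes\mathcal E_1\) we compute
\[
\langle (A^*f)(w),\xi\rangle=\langle A^*f,K_w\otimes\xi\rangle=\langle f,K_w\otimes\Phi(w)^*\xi\rangle=\langle\Phi(w)f(w),\xi\rangle.
\]
Hence \(\Phi f=A^*f\in\mathcal H\otimes\mathcal E_2\), so \(\Phi\) is a multiplier with \(M_\Phi=A^*\) and \(\|M_\Phi\|\le1\).
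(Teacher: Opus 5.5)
Your proof is correct: the quadratic-form identity, the relation $M_\Phi^*(K_w\otimes\xi)=K_w\otimes\Phi(w)^*\xi$, and the well-definedness argument (via the same identity, after merging repeated points) give both directions. The paper gives no proof of its own here; it cites \cite[Lemma 2.1]{Hartz2023}, and your argument is the standard kernel-function proof behind that reference.
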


\begin{lem}\label{3.12a}\cite[Lemma 3.12(a)]{Hartz2023}
	Let \(F=E\cup\{0\}\) be finite, $\mathcal{E}$ be a Hilbert space and $\Phi$ be a multiplier in $\mathrm{Mult}(H_d^2|_F\otimes \mathcal{E})$ with $\Phi(0)=0$. Then $M_\Phi$ is a contraction on  $H_d^2|_F\otimes \mathcal{E}$ if and only if 
	$$
	\bigl[K(z^{(i)},z^{(j)})\bigl(I_{\mathcal{E}}-\Phi(z^{(i)})\Phi(z^{(j)})^*\bigr)-I_{\mathcal{E}}\bigr]_{i,j=1}^n\geq 0
	$$
	for any finite subset $\{z^{(k)}\}_{k=1}^n\subset E$, where $K(z,w)=\frac{1}{1-\langle z,w\rangle}$ is the reproducing kernel of $H_d^2$.
\end{lem}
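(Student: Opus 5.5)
The plan is to reduce the statement to Lemma \ref{mult crit} and then eliminate the point $0$ by a Schur complement, using that $K(0,w)=1$ and $\Phi(0)=0$.

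\emph{Applying Lemma \ref{mult crit}.} Since $F$ is finite, Lemma \ref{mult crit} (applied to $H_d^2|_F$, whose kernel is $K|_{F\times F}$) says that $M_\Phi$ is a contraction on $H_d^2|_F\otimes\mathcal E$ if and only if the block matrix $\bigl[K(w_i,w_j)(I_{\mathcal E}-\Phi(w_i)\Phi(w_j)^*)\bigr]$ is positive for every finite subset $\{w_i\}\subset F$. A principal submatrix of a positive block matrix is positive. So it suffices to test only subsets of the form $\{0,z^{(1)},\dots,z^{(n)}\}$ with $z^{(k)}\in E$.

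\emph{Computing the block matrix.} Order such a set as $w_0=0$, $w_k=z^{(k)}$, and use $K(0,w)=K(w,0)=1$ and $\Phi(0)=0$. Every entry in the row and column indexed by $0$ is then $I_{\mathcal E}$. The full matrix therefore takes the form
\[
\begin{pmatrix} I_{\mathcal E} & R\\ R^* & A\end{pmatrix},\qquad R=[\,I_{\mathcal E}\ \cdots\ I_{\mathcal E}\,],\quad A=\bigl[K(z^{(i)},z^{(j)})(I_{\mathcal E}-\Phi(z^{(i)})\Phi(z^{(j)})^*)\bigr]_{i,j=1}^n .
\]

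\emph{Schur complement.} The top-left corner $I_{\mathcal E}$ is invertible and positive. By the operator Schur complement criterion, the whole matrix is positive if and only if $A-R^*R\ge0$. To justify this, I would conjugate by the invertible upper-triangular block matrix $\begin{pmatrix} I & -R\\ 0 & I\end{pmatrix}$, which turns the matrix into $I\oplus(A-R^*R)$. Now $R^*R$ is the $n\times n$ block matrix all of whose entries are $I_{\mathcal E}$. Hence $A-R^*R$ is exactly the matrix $\bigl[K(z^{(i)},z^{(j)})(I_{\mathcal E}-\Phi(z^{(i)})\Phi(z^{(j)})^*)-I_{\mathcal E}\bigr]_{i,j=1}^n$ in the statement, which proves both directions.

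The only point needing care is the operator-valued Schur complement step, and the explicit congruence above handles it. One also has to check the reduction to subsets containing $0$, which rests on the principal-submatrix observation. The rest is bookkeeping.
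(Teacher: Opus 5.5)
Your proof is correct. Lemma \ref{mult crit} together with the principal-submatrix observation reduces everything to the Pick matrices on the sets $\{0\}\cup\{z^{(k)}\}$. Because $K(0,\cdot)=1$ and $\Phi(0)=0$, the first block row and column of each such matrix are all $I_{\mathcal E}$. Your congruence by $\begin{pmatrix} I & -R\\ 0 & I\end{pmatrix}$ then turns positivity into exactly the stated condition.

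The paper gives no proof of its own for this lemma; it only quotes Hartz. Your Schur-complement reduction is a complete, self-contained argument for it. The one case you leave implicit is $0\in E$, which is harmless. If some $z^{(k)}=0$, then the row and column of the target matrix indexed by that point vanish, so the condition reduces to the same one with that point removed.
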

Recall that for \(d \in \mathbb{N}\), \(\Lambda_d = \bigoplus\limits_{k=0}^d \Lambda_d^k\) is the antisymmetric Fock space over \(\mathbb{C}^d\).
For \(a\in\mathbb C^d\), define
$$
\partial^{(a)}=\sum\limits_{i=1}^d a_iE_i^*\in B(\Lambda_d).
$$
The following lemma is an easy consequence of the CAR relations \eqref{anticommutationrelations}; we omit the proof.
\begin{lem}\label{lem:partial-relations}
	For \(u,v\in \mathbb B_d\), set \(X=\partial^{(u)}\) and \(Y=\partial^{(v)}\). Then the following identities hold.
	\begin{enumerate}
		\item[(1)] \(XY+YX=0\). In particular, $X^2=Y^2=0$.
		\item[(2)] \(X^*Y+YX^*=\langle v,u\rangle I_{\Lambda_d}\).
		\item[(3)] $YX^*Y=\langle v,u\rangle Y$.
		\item[(4)] $XX^*Y+YX^*X=\langle v,u\rangle X+\|u\|^2Y$.
		\item[(5)] $XX^*YX^*X=\langle v,u\rangle\|u\|^2X$.
		\item[(6)] $\|X\|=\|u\|$.
	\end{enumerate}
\end{lem}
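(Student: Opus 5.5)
The six identities are algebraic consequences of the CAR relations \eqref{anticommutationrelations} applied to $X=\sum_i u_iE_i^*$ and $Y=\sum_j v_jE_j^*$. The order is (1), (2), then (3)--(5) derived from (1) and (2) without returning to coordinates, and finally (6).

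For (1), expand bilinearly: $XY+YX=\sum_{i,j}u_iv_j(E_i^*E_j^*+E_j^*E_i^*)$. Taking adjoints in $E_iE_j+E_jE_i=0$ shows each bracket vanishes. Taking $u=v$ gives $2X^2=0$, so $X^2=0$, and likewise $Y^2=0$.

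For (2), note $X^*=\sum_i\bar u_iE_i$. Then
\[
X^*Y+YX^*=\sum_{i,j}\bar u_i v_j(E_iE_j^*+E_j^*E_i)=\sum_{i,j}\bar u_iv_j\delta_{ij}I=\langle v,u\rangle I,
\]
using the second CAR relation.

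For (3), multiply (2) on the right by $Y$ and use $Y^2=0$: $YX^*Y=\langle v,u\rangle Y-X^*Y^2=\langle v,u\rangle Y$.

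For (4), specialise (2) to $v=u$ to get $XX^*+X^*X=\|u\|^2I$, which gives $XX^*Y=\|u\|^2Y-X^*XY$. Next, $XY=-YX$ from (1) gives $X^*XY=-X^*YX$. Using (2) in the form $X^*Y=\langle v,u\rangle I-YX^*$, we get $-X^*YX=-\langle v,u\rangle X+YX^*X$. Substituting back, $XX^*Y=\|u\|^2Y+\langle v,u\rangle X-YX^*X$, which is (4).

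For (5), multiply (4) on the right by $X^*X$. Since $X^2=0$, the term $YX^*XX^*X$ equals $YX^*(\|u\|^2-X^*X)X=\|u\|^2YX^*X$. Hence
\[
XX^*YX^*X=\langle v,u\rangle XX^*X+\|u\|^2Y X^*X-\|u\|^2YX^*X=\langle v,u\rangle XX^*X .
\]
Finally $XX^*X=(\|u\|^2-X^*X)X=\|u\|^2X$, which gives (5).

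For (6), let $P=XX^*$ and $Q=X^*X$. These are positive, $P+Q=\|u\|^2I$, and $PQ=XX^*X^*X=0$ because $X^2=0$. Thus $P$ and $Q$ are commuting positive operators with orthogonal ranges summing to $\|u\|^2I$. It follows that $P/\|u\|^2$ is a projection when $u\ne0$, so $\|X\|^2=\|P\|\le\|u\|^2$, with equality provided $P\neq0$. Nonvanishing holds because, for $u\ne0$, $P$ acting on $\Lambda_d^1$ sends $u/\|u\|\in\mathbb C^d$ (viewed as an element of $\Lambda_d^1$) to $\|u\|^2$ times the vacuum, hence is nonzero. The case $u=0$ is trivial.

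The only step that needs some care is tracking the signs in (4). None of the steps is a genuine obstacle.
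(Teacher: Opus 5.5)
The paper omits this proof as an easy consequence of the CAR relations. Your derivations of (1)--(5) are correct and are the expected CAR computation; getting (3)--(5) abstractly from (1) and (2) is clean, and the signs in (4) check out. In (6), the projection argument showing $\|X\|^2=\|XX^*\|\in\{0,\|u\|^2\}$ is also fine.

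However, your witness for $P\neq0$ is wrong. Since $X^*=\sum_i\bar u_iE_i$ raises degree by one and $X$ lowers it by one, $P=XX^*$ maps $\Lambda_d^1$ into $\Lambda_d^1$. It therefore cannot send a vector of $\Lambda_d^1$ to a multiple of the vacuum. In fact the vector you chose can be annihilated: for $u=e_1$ one has $P=E_1^*E_1$ and $Pe_1=E_1^*(e_1\wedge e_1)=0$. The operator that reaches the vacuum is $X$ itself, with $Xw=\bigl(\sum_j u_jw_j\bigr)1$ for $w=\sum_jw_je_j\in\Lambda_d^1$. Even then the right vector is $\bar u$, not $u$: for $u=\tfrac12(1,i)$ one gets $Xu=0$, while $X\bar u=\|u\|^2\,1$.

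The fix is immediate. Evaluate $P$ on the vacuum: $P1=X\bigl(\sum_i\bar u_ie_i\bigr)=\|u\|^2\,1$. Equivalently, $\|X^*1\|=\|u\|$ gives $\|X\|\ge\|u\|$ directly. Combined with $XX^*\le XX^*+X^*X=\|u\|^2I$, this yields (6) without the projection argument.
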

The following lemma is a straightforward consequence of the spectral theorem; its proof is omitted.
\begin{lem}\label{lem:sqrt-one-minus-Q}
	Let \(Q\) be a self-adjoint operator with \(\sigma(Q)\subseteq\{0,\lambda\}\) for some \(0<\lambda\leq 1\). Then
	\begin{equation}\label{eq:sqrt-one-minus-Q}
		\sqrt{I-Q}=I-\bigl(1-\sqrt{1-\lambda}\bigr)\frac{Q}{\lambda}.
	\end{equation}
\end{lem}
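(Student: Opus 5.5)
Since $Q$ is self-adjoint with $\sigma(Q)\subseteq\{0,\lambda\}$, I will use the continuous functional calculus of $Q$. Because $0<\lambda\le 1$, the spectrum $\sigma(Q)\subseteq\{0,\lambda\}$ lies inside $[0,1]$. Hence $I-Q\ge 0$ and $\sqrt{I-Q}$ is well defined as $h(Q)$ with $h(t)=\sqrt{1-t}$. The idea is to replace $h$ by the affine function
\[
g(t)=1-\bigl(1-\sqrt{1-\lambda}\bigr)\frac{t}{\lambda},
\]
which agrees with $h$ on $\sigma(Q)$. Indeed, $g(0)=1=h(0)$, and $g(\lambda)=1-\bigl(1-\sqrt{1-\lambda}\bigr)=\sqrt{1-\lambda}=h(\lambda)$.

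The key step is the standard fact that $f(Q)$ depends only on $f|_{\sigma(Q)}$. Since $h|_{\sigma(Q)}=g|_{\sigma(Q)}$, this gives $h(Q)=g(Q)$. For the polynomial $g$, the functional calculus gives $g(Q)=I-\bigl(1-\sqrt{1-\lambda}\bigr)\lambda^{-1}Q$ directly, which is \eqref{eq:sqrt-one-minus-Q}.

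Alternatively, one can verify the identity algebraically and avoid the functional calculus. Since $Q$ is self-adjoint, $Q(Q-\lambda I)=0$, so $Q^2=\lambda Q$. Setting $R=I-cQ/\lambda$ with $c=1-\sqrt{1-\lambda}$, we compute
\[
R^2=I-\frac{2c}{\lambda}Q+\frac{c^2}{\lambda}Q=I-\frac{2c-c^2}{\lambda}Q.
\]
Since $2c-c^2=1-(1-c)^2=\lambda$, this gives $R^2=I-Q$. Moreover, $R$ is self-adjoint, and $\sigma(R)=g(\sigma(Q))\subseteq\{1,\sqrt{1-\lambda}\}\subset[0,\infty)$, so $R\ge0$. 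By uniqueness of the positive square root, $R=\sqrt{I-Q}$.

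There is no real obstacle here. The only points needing care are that $Q^2=\lambda Q$ follows from the spectral condition for a self-adjoint operator (the minimal polynomial divides $t(t-\lambda)$), and that $R$ is positive so the square-root uniqueness theorem applies.
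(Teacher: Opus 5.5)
Your proof is correct and matches the paper. The paper omits the argument as a straightforward consequence of the spectral theorem, and your first route (the functional calculus gives $h(Q)=g(Q)$ because $h$ and $g$ agree on $\sigma(Q)\subseteq\{0,\lambda\}$) is exactly that argument. Your algebraic alternative is also fine, but in infinite dimensions you should justify $Q^2=\lambda Q$ by noting that $Q(Q-\lambda I)$ is self-adjoint with spectrum $\{0\}$, hence zero, rather than by appealing to a minimal polynomial.
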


Let \(H\) be a Hilbert space and let \(A\in B(H)\) be a strict contraction. For \(X,Y\in B_1(H)\), define
\[
\Theta_A(X)=A-\Delta_A (I-XA^*)^{-1}X\Delta_{A^*},
\]
where \(\Delta_A=(I-AA^*)^{1/2}\) and $B_1(H)$ is the closed unit ball of $B(H)$. Then we have the following lemma.
\begin{lem}\label{operator aut}
	Let \(H\) be a Hilbert space and let \(A\in B(H)\) be a strict contraction. Then for \(X,Y\in B_1(H)\),
	\[
	I-\Theta_A(X)\Theta_A(Y)^*
	=
	\Delta_A (I-XA^*)^{-1}(I-XY^*)(I-AY^*)^{-1}\Delta_A.
	\]
\end{lem}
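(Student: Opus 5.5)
The identity is a direct algebraic computation, in the spirit of the classical Möbius-map identity for the operator ball. Before starting I need to fix the defect operators. The statement writes $\Delta_A$ on both the left and the right, while $\Theta_A$ uses $\Delta_A$ on the left and $\Delta_{A^*}$ on the right. With $\Delta_A=(I-AA^*)^{1/2}$ and $\Delta_{A^*}=(I-A^*A)^{1/2}$, the computation closes with $\Delta_A$ on both outer sides of the right-hand side. The reason is that the right factor of $\Theta_A(X)$ contributes $\Delta_{A^*}$ only in inner positions, where it appears squared as $I-A^*A$. I also need both inverses $(I-XA^*)^{-1}$ and $(I-AY^*)^{-1}$ to exist; this holds because $\|A\|<1$ and $\|X\|,\|Y\|\le1$.

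\textbf{Notation.} Write $D=\Delta_A$ and $D_*=\Delta_{A^*}$, and set
\[
U=(I-XA^*)^{-1}X,\qquad V=(I-YA^*)^{-1}Y,
\]
so that $\Theta_A(X)=A-DUD_*$ and $\Theta_A(Y)^*=A^*-D_*V^*D$.

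\textbf{Expansion.} Expanding the product gives
\[
I-\Theta_A(X)\Theta_A(Y)^*=I-AA^*+AD_*V^*D+DUD_*A^*-DU D_*^2 V^*D.
\]
I then apply the intertwining relations $AD_*=DA$ and $D_*A^*=A^*D$. These follow from the functional calculus and the identity $A(I-A^*A)=(I-AA^*)A$. Using them, every term except $I$ can be written in the form $D(\cdots)D$:
\[
D\bigl[I+AV^*+UA^*-U(I-A^*A)V^*\bigr]D.
\]
So the whole task reduces to the middle identity
\[
I+AV^*+UA^*-UV^*+UA^*AV^*=(I-XA^*)^{-1}(I-XY^*)(I-AY^*)^{-1}.
\]

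\textbf{Verifying the middle identity.} Note that $V^*=Y^*(I-AY^*)^{-1}$. I multiply the middle identity on the left by $(I-XA^*)$ and on the right by $(I-AY^*)$. After this multiplication, $U$ becomes $X$ and $V^*$ becomes $Y^*$, with the surrounding factors absorbed. The left-hand side turns into
\[
(I-XA^*)(I-AY^*)+(I-XA^*)AY^*+XA^*(I-AY^*)-XY^*+XA^*AY^*.
\]
Expanding term by term:
\begin{itemize}
\item $(I-XA^*)(I-AY^*)=I-AY^*-XA^*+XA^*AY^*$;
\item $(I-XA^*)AY^*=AY^*-XA^*AY^*$;
\item $XA^*(I-AY^*)=XA^*-XA^*AY^*$;
\item the remaining terms are $-XY^*$ and $+XA^*AY^*$.
\end{itemize}
In the sum, the terms $AY^*$ and $XA^*$ cancel, and the three copies of $XA^*AY^*$ (two with a minus sign, two with a plus sign overall) also cancel. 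What remains is $I-XY^*$, which is exactly the desired right-hand side after the same multiplication. This proves the middle identity.

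\textbf{Main obstacle.} The only delicate steps are bookkeeping: applying the intertwining relations in the correct direction, and handling the $D$ versus $D_*$ convention noted at the start. Everything else is routine expansion.
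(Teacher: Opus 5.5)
Your proof is correct and follows essentially the same route as the paper: expand, use $A\Delta_{A^*}=\Delta_A A$ and $\Delta_{A^*}A^*=A^*\Delta_A$ to factor the expression as $\Delta_A[\cdots]\Delta_A$, and then verify the same middle identity. The only differences are cosmetic. You clear the inverses by multiplying by $(I-XA^*)$ on the left and $(I-AY^*)$ on the right, whereas the paper rewrites $AY^*(I-AY^*)^{-1}=(I-AY^*)^{-1}-I$ and $(I-XA^*)^{-1}XA^*=(I-XA^*)^{-1}-I$ and factors. There is also a small miscount in your expansion: it contains four copies of $XA^*AY^*$ (two with each sign), not three, though they do cancel as you say.
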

\begin{proof}
	By the functional calculus,
	\begin{equation}\label{ADA*=DAA}
		A\Delta_{A^*}=\Delta_A A,\text{ and }\Delta_{A^*}A^*=A^*\Delta_A.
	\end{equation}
	Then by \eqref{ADA*=DAA},
	\[
	\begin{aligned}
		&\quad I-\Theta_A(X)\Theta_A(Y)^*\\
		&= I-AA^* + A\Delta_{A^*}Y^*(I-AY^*)^{-1}\Delta_A \\
		&\quad + \Delta_A  (I -  XA^*)^{-1}X \Delta_{A^*}A^* \\
		&\quad - \Delta_A  (I -  XA^*)^{-1}X \Delta_{A^*}^2 Y^*(I-AY^*)^{-1}\Delta_A \\
		&= \Delta_A \Big[ I + A Y^*(I-AY^*)^{-1} + (I -  XA^*)^{-1}XA^*-(I -  XA^*)^{-1}XY^*(I-AY^*)^{-1} \\
		&\quad+ (I -  XA^*)^{-1}XA^*AY^*(I-AY^*)^{-1} \Big] \Delta_A\\
		&\overset{(*)}{=} \Delta_A \Big[ I + ((I-AY^*)^{-1}-I) + ((I-XA^*)^{-1}-I)-(I -  XA^*)^{-1}XY^*(I-AY^*)^{-1} \\
		&\quad+ ((I-XA^*)^{-1}-I)((I-AY^*)^{-1}-I) \Big] \Delta_A\\
		&=\Delta_A (I-XA^*)^{-1}(I-XY^*)(I-AY^*)^{-1}\Delta_A,
	\end{aligned}
	\]
	where $(*)$ follows from 
	$$
	A Y^*(I-AY^*)^{-1}=(I-AY^*)^{-1}-I, \text{ and } (I -  XA^*)^{-1}XA^*=(I-XA^*)^{-1}-I.
	$$
\end{proof}
	For \(a \in \mathbb{B}_d\setminus\{0\}\), let $\theta_a$ be the standard M\"obius automorphism of $\mathbb{B}_d$, given by
	$$
	\theta_a(z)=\frac{-\sqrt{1-|a|^{2}}|a|^{2} z+\left(\sqrt{1-|a|^{2}}-1\right)\langle z, a \rangle a+|a|^{2} a}{|a|^{2}(1-\langle z, a \rangle)},
	$$
and let $\theta_0(z)=-z$. Then we have the following lemma.

\begin{lem}\label{lem:delta-DXD}
	For \(u,v\in\mathbb B_d\), set $X=\partial^{(u)}$ and $Y=\partial^{(v)}$, then
	$$
	\Delta_{X}Y\Delta_{X^*}
	=\sqrt{1-\|u\|^2}Y
	-\frac{\sqrt{1-\|u\|^2}\langle v,u\rangle}{1+\sqrt{1-\|u\|^2}}X.
	$$
\end{lem}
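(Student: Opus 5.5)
The plan is to compute both defect operators explicitly with Lemma \ref{lem:sqrt-one-minus-Q}, and then expand the product using the identities of Lemma \ref{lem:partial-relations}. Write $s=\sqrt{1-\|u\|^2}$. If $u=0$, then $X=0$, $\Delta_X=\Delta_{X^*}=I$ and $s=1$, so both sides equal $Y$. We therefore assume $u\neq 0$.

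\emph{Step 1: the spectra of $XX^*$ and $X^*X$.} Apply Lemma \ref{lem:partial-relations}(2) with $v=u$ (so $Y=X$). This gives
\[
X^*X+XX^*=\|u\|^2 I_{\Lambda_d}.
\]
Since $X^2=0$, we get
\[
(XX^*)^2=X(X^*X)X^*=X\bigl(\|u\|^2I-XX^*\bigr)X^*=\|u\|^2XX^*.
\]
The same computation gives $(X^*X)^2=\|u\|^2X^*X$. Hence $XX^*$ and $X^*X$ are self-adjoint with spectrum contained in $\{0,\|u\|^2\}$, and $0<\|u\|^2<1$.

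\emph{Step 2: explicit square roots.} Lemma \ref{lem:sqrt-one-minus-Q} with $\lambda=\|u\|^2$ yields
\[
\Delta_X=I-c\,XX^*,\qquad \Delta_{X^*}=I-c\,X^*X,
\]
where
\[
c=\frac{1-s}{\|u\|^2}=\frac{1}{1+s}.
\]
The second expression for $c$ follows from $\|u\|^2=(1-s)(1+s)$. The relation we will actually use is $c\|u\|^2=1-s$.

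\emph{Step 3: expand the product.}
\[
\Delta_XY\Delta_{X^*}=Y-c\,(XX^*Y+YX^*X)+c^2\,XX^*YX^*X.
\]
Lemma \ref{lem:partial-relations}(4) replaces the middle bracket by $\langle v,u\rangle X+\|u\|^2Y$. Lemma \ref{lem:partial-relations}(5) replaces the last term by $\langle v,u\rangle\|u\|^2X$.

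\emph{Step 4: collect coefficients.} The coefficient of $Y$ is $1-c\|u\|^2=s$. The coefficient of $X$ is
\[
-c\langle v,u\rangle\bigl(1-c\|u\|^2\bigr)=-cs\langle v,u\rangle=-\frac{s\langle v,u\rangle}{1+s}.
\]
This is exactly the claimed identity.

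None of these steps is a real obstacle. The only point needing care is Step 1, namely checking that the spectra lie in $\{0,\|u\|^2\}$ so that Lemma \ref{lem:sqrt-one-minus-Q} applies. That check follows directly from the CAR relations through Lemma \ref{lem:partial-relations}(1),(2).
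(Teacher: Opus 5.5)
Your proposal is correct and follows the paper's proof essentially step for step. Both arguments use Lemma~\ref{lem:partial-relations}(2) to get $\sigma(X^*X),\sigma(XX^*)\subseteq\{0,\|u\|^2\}$, then take the explicit square roots from Lemma~\ref{lem:sqrt-one-minus-Q}, then expand and simplify with Lemma~\ref{lem:partial-relations}(4),(5); the only differences are that you check $(XX^*)^2=\|u\|^2XX^*$ explicitly and treat $u=0$ separately, both of which the paper passes over.
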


\begin{proof}
	By Lemma~\ref{lem:partial-relations}(2), 
	$$
	(X^*X)^2=X^*XX^*X=X^*(\|u\|^2I-X^*X)X=\|u\|^2X^*X,
	$$
	so \(\sigma(X^*X)\subseteq\{0,\|u\|^2\}\). By Lemma~\ref{lem:sqrt-one-minus-Q}, 
	\begin{equation}\label{deltaa}
		\Delta_{X^*}=(I-X^*X)^{1/2}=I-\frac{X^*X}{1+\sqrt{1-\|u\|^2}}.
	\end{equation}
	By the same reasoning as above, we obtain
	\begin{equation}\label{deltaastar}
		\Delta_{X}=(I-XX^*)^{1/2}=I-\frac{XX^*}{1+\sqrt{1-\|u\|^2}}.
	\end{equation}
	Then by \eqref{deltaa} and \eqref{deltaastar},
	$$
	\begin{aligned}
		\Delta_{X}Y\Delta_{X^*}
		&=Y-\frac{XX^*Y+YX^*X}{1+\sqrt{1-\|u\|^2}}+\frac{XX^*YX^*X}{(1+\sqrt{1-\|u\|^2})^2}\\
		&\overset{(*)}{=}Y-\frac{\langle v,u\rangle X+\|u\|^2Y}{1+\sqrt{1-\|u\|^2}}
		+\frac{\langle v,u\rangle\|u\|^2X}{(1+\sqrt{1-\|u\|^2})^2}\\
		&=\sqrt{1-\|u\|^2}Y-\frac{\sqrt{1-\|u\|^2}\langle v,u\rangle}{1+\sqrt{1-\|u\|^2}}X,
	\end{aligned}
	$$
	where $(*)$ follows from Lemma \ref{lem:partial-relations}(4)(5).
\end{proof}

\begin{lem}
	\label{lem:compatibility}
	Let \(u,v\in \mathbb{B}_d\). Then
	$
	\Theta_{\partial^{(u)}}\bigl(\partial^{(v)}\bigr) = \partial^{(\theta_u(v))}.
	$
\end{lem}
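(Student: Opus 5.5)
We prove the identity $\Theta_{\partial^{(u)}}(\partial^{(v)})=\partial^{(\theta_u(v))}$ by direct computation, using the algebraic identities of Lemma~\ref{lem:partial-relations} and the formula for $\Delta_X Y\Delta_{X^*}$ in Lemma~\ref{lem:delta-DXD}. Write $X=\partial^{(u)}$ and $Y=\partial^{(v)}$. By Lemma~\ref{lem:partial-relations}(6), $\|X\|=\|u\|<1$, so $\Theta_X$ is defined. Also $\|Y\|=\|v\|\le 1$, so $Y\in B_1(\Lambda_d)$. The case $u=0$ is trivial: $\Theta_0(Y)=-Y=\partial^{(-v)}=\partial^{(\theta_0(v))}$. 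So assume $u\neq 0$.

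\textbf{Step 1: invert $I-YX^*$.} By Lemma~\ref{lem:partial-relations}(3), $YX^*Y=\langle v,u\rangle Y$. Hence $(YX^*)^2=\langle v,u\rangle YX^*$, so $YX^*$ is a multiple of an idempotent. Put $c=\langle v,u\rangle$, which satisfies $|c|<1$. Then
\[
(I-YX^*)^{-1}=I+\frac{YX^*}{1-c},
\]
which is verified by multiplying out and using $(YX^*)^2=cYX^*$. Consequently
\[
(I-YX^*)^{-1}Y=Y+\frac{cY}{1-c}=\frac{Y}{1-c}.
\]

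\textbf{Step 2: assemble $\Theta_X(Y)$.} From Step 1,
\[
\Theta_X(Y)=X-\frac{1}{1-c}\,\Delta_X Y\Delta_{X^*}.
\]
Lemma~\ref{lem:delta-DXD} gives
\[
\Delta_X Y\Delta_{X^*}=\sqrt{1-\|u\|^2}\,Y-\frac{\sqrt{1-\|u\|^2}\,c}{1+\sqrt{1-\|u\|^2}}\,X.
\]
Since $a\mapsto\partial^{(a)}$ is linear, $\Theta_X(Y)=\partial^{(w)}$ with
\[
w=u-\frac{\sqrt{1-\|u\|^2}}{1-c}\,v+\frac{\sqrt{1-\|u\|^2}\,c}{(1-c)\bigl(1+\sqrt{1-\|u\|^2}\bigr)}\,u .
\]

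\textbf{Step 3: match with $\theta_u(v)$.} Set $s=\sqrt{1-\|u\|^2}$. The Möbius formula can be rewritten as
\[
\theta_u(v)=\frac{-s\,v}{1-c}+\frac{(s-1)c+\|u\|^2}{\|u\|^2(1-c)}\,u .
\]
The $v$-coefficients already agree. For the $u$-coefficients, the coefficient in $w$ is
\[
1+\frac{sc}{(1-c)(1+s)}=\frac{(1-c)(1+s)+sc}{(1-c)(1+s)} .
\]
Using $\|u\|^2=1-s^2=(1-s)(1+s)$, the coefficient in $\theta_u(v)$ becomes
\[
\frac{(s-1)c+(1-s)(1+s)}{(1-s)(1+s)(1-c)}=\frac{1+s-c}{(1+s)(1-c)} .
\]
The two numerators agree, since $(1-c)(1+s)+sc=1+s-c$.

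The only genuinely non-routine point is Step 1, namely the closed-form inverse of $I-YX^*$. The remaining steps are bookkeeping with the scalar coefficients.
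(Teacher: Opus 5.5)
Your proof is correct and follows essentially the same route as the paper: you use Lemma~\ref{lem:partial-relations}(3) to collapse the resolvent factor to $\frac{1}{1-\langle v,u\rangle}\partial^{(v)}$, then substitute Lemma~\ref{lem:delta-DXD} and compare coefficients with $\theta_u(v)$. The differences are cosmetic: the paper writes the resolvent factor in the push-through form $\partial^{(v)}(I-(\partial^{(u)})^*\partial^{(v)})^{-1}$ rather than inverting $I-YX^*$ explicitly, and it leaves the final coefficient matching, which you carry out, implicit.
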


\begin{proof}
	By Lemma~\ref{lem:partial-relations}(3),
	\begin{equation}\label{dvinvet}
		\partial^{(v)}(I-(\partial^{(u)})^*\partial^{(v)})^{-1}
		=\frac{\partial^{(v)}}{1-\langle v,u\rangle}.
	\end{equation}
	Hence by \eqref{dvinvet},
	\[
	\begin{aligned}
		\Theta_{\partial^{(u)}}\bigl(\partial^{(v)}\bigr)
		&=\partial^{(u)}
		-\Delta_{\partial^{(u)}}\partial^{(v)}
		(I-(\partial^{(u)})^*\partial^{(v)})^{-1}
		\Delta_{(\partial^{(u)})^*}\\
		&=\partial^{(u)}
		-\frac{1}{1-\langle v,u\rangle}
		\Delta_{\partial^{(u)}}\partial^{(v)}
		\Delta_{(\partial^{(u)})^*}\\
		&\overset{(*)}{=}\frac{\Bigl(1-\frac{\langle v,u\rangle}{1+\sqrt{1-\|u\|^2}}\Bigr)\partial^{(u)}
			-\sqrt{1-\|u\|^2}\partial^{(v)}}{1-\langle v,u\rangle}\\
		&=\sum_{i=1}^d
		\frac{\Bigl(1-\frac{\langle v,u\rangle}{1+\sqrt{1-\|u\|^2}}\Bigr)u_i
			-\sqrt{1-\|u\|^2}v_i}{1-\langle v,u\rangle}
		E_i^*\\
		&=\partial^{(\theta_u(v))},
	\end{aligned}
	\]
	where $(*)$ follows from Lemma~\ref{lem:delta-DXD}.
\end{proof}
Let \(\Phi=(\varphi_1,\cdots,\varphi_N)^{\mathrm T}\in\mathrm{Mult}(\mathcal{H},\mathcal{H}\otimes\mathbb C^N)\). Then \(\sum\limits_{i=1}^N M_{\varphi_i}\otimes E_i^*\) can be regarded as a multiplication operator with the associate multiplier 
$$
\left(\sum\limits_{i=1}^N M_{\varphi_i}\otimes E_i^*\right)(\cdot)\in \mathrm{Mult}(\mathcal{H}\otimes \Lambda_N,\mathcal{H}\otimes\Lambda_N),
$$
and
\begin{equation}\label{pointvalue}
	\left(\sum_{i=1}^N M_{\varphi_i}\otimes E_i^*\right)(z)=\partial^{(\Phi(z))}.
\end{equation}
We have the following lemma.
\begin{lem}
	\label{lem:range-rotation}
	Let \(F\subset\mathbb B_d\) be finite, \(\Phi=(\varphi_1,\cdots,\varphi_N)^{\mathrm T}\in\mathrm{Mult}(H_d^2|_F,H_d^2|_F\otimes\mathbb C^N)\), and \(a\in\mathbb B_N\). Suppose that
	$
	\left\|\sum\limits_{i=1}^N M_{\varphi_i}\otimes E_i^*\right\|\leq 1.
	$
	Then
	$
	\left\|\sum\limits_{i=1}^N M_{(\theta_a\circ\Phi)_i}\otimes E_i^*\right\|\le1.
	$
\end{lem}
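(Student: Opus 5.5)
The idea is to transport the Pick-type positivity criterion through the operator Möbius map $\Theta_A$ with $A=\partial^{(a)}$. By \eqref{pointvalue}, the multiplier attached to $\sum_{i=1}^N M_{\varphi_i}\otimes E_i^*$ is $z\mapsto X_z:=\partial^{(\Phi(z))}$. Likewise, the multiplier attached to $\sum_{i=1}^N M_{(\theta_a\circ\Phi)_i}\otimes E_i^*$ is $z\mapsto \partial^{(\theta_a(\Phi(z)))}$. Since $F$ is finite, every function on $F$ is a multiplier, so the only issue is the norm bound.

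First I check that everything is well defined. The hypothesis $\|\sum_i M_{\varphi_i}\otimes E_i^*\|\le 1$ gives $\|X_z\|\le 1$ for $z\in F$, since multiplier values are dominated by the multiplier norm (evaluate against kernel functions). By Lemma~\ref{lem:partial-relations}(6) this means $\|\Phi(z)\|\le 1$. For $A=\partial^{(a)}$, Lemma~\ref{lem:partial-relations}(6) gives $\|A\|=\|a\|<1$, so $A$ is a strict contraction and $I-X_zA^*$ is invertible. In particular $\langle \Phi(z),a\rangle\neq 1$, so $\theta_a(\Phi(z))$ makes sense. Lemma~\ref{lem:compatibility} is stated for $v\in\mathbb B_N$. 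Its proof uses only Lemma~\ref{lem:partial-relations}, which holds for any vectors, and the invertibility of $I-(\partial^{(u)})^*\partial^{(v)}$. Hence it extends verbatim to $\|v\|\le 1$, giving
\[
\partial^{(\theta_a(\Phi(z)))}=\Theta_A(X_z),\qquad z\in F.
\]
The case $a=0$ is consistent, since $\Theta_0(Y)=-Y$ and $\theta_0(z)=-z$.

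Next I apply Lemma~\ref{mult crit} in both directions. Let $z_1,\dots,z_n\in F$ and write $X_j=X_{z_j}$. By hypothesis and Lemma~\ref{mult crit}, the block matrix $M=[K(z_i,z_j)(I-X_iX_j^*)]_{i,j}$ is positive. By Lemma~\ref{operator aut},
\[
K(z_i,z_j)\bigl(I-\Theta_A(X_i)\Theta_A(X_j)^*\bigr)=\Delta_A(I-X_iA^*)^{-1}\,K(z_i,z_j)(I-X_iX_j^*)\,(I-AX_j^*)^{-1}\Delta_A .
\]
Note that $(I-AX_j^*)^{-1}=\bigl((I-X_jA^*)^{-1}\bigr)^*$. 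So with the block-diagonal operator $C=\operatorname{diag}\bigl((I-X_jA^*)^{-1}\Delta_A\bigr)_{j}$, the new matrix equals $C^*$-conjugation of $M$ in the form $\widetilde C M\widetilde C^{*}$, where $\widetilde C=\operatorname{diag}(\Delta_A(I-X_jA^*)^{-1})_j$. It is therefore positive. Applying Lemma~\ref{mult crit} once more shows that $z\mapsto\Theta_A(X_z)$ is a contractive multiplier, which is the claim.

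The main obstacle is the boundary case $\|\Phi(z)\|=1$. Lemma~\ref{operator aut} and Lemma~\ref{lem:compatibility} are stated on $B_1(H)$ and $\mathbb B_d$ respectively, so I must confirm they still hold there. For Lemma~\ref{operator aut} this is fine, because its proof only uses invertibility of $I-XA^*$ and $I-AY^*$, which follows from $\|A\|<1$. For Lemma~\ref{lem:compatibility} I would re-check the identity \eqref{dvinvet}, which needs $1-\langle v,u\rangle\neq0$; this holds since $\|u\|<1$.
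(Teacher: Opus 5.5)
Your proof is correct and essentially the paper's argument: you transfer positivity of the Pick matrix from Lemma~\ref{mult crit} through the factorization in Lemma~\ref{operator aut} combined with Lemma~\ref{lem:compatibility}, then conjugate by the block-diagonal operator $\widetilde C=\operatorname{diag}\bigl(\Delta_A(I-X_jA^*)^{-1}\bigr)_j$. (Your intermediate ``$C$'' is the wrong operator, but $\widetilde C$ is right.) The only difference is how you handle the boundary case $\|\Phi(z)\|=1$: the paper replaces $\Phi$ by $\rho\Phi$ with $0<\rho<1$ and lets $\rho\to1$, so that Lemma~\ref{lem:compatibility} applies as stated on the open ball, whereas you check directly that its proof only needs $\|a\|<1$ and $\|\Phi(z)\|\le 1$; both are valid.
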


\begin{proof}
By replacing $\Phi$ with $\rho\Phi$ for $0<\rho<1$ and letting $\rho\to 1$, we may assume without loss of generality that $\left\|\sum\limits_{i=1}^N M_{\varphi_i}\otimes E_i^*\right\|<1$.
	By Lemma \ref{mult crit} and \eqref{pointvalue}, it suffices to prove that 
	\begin{equation}\label{Ktheta>0}
		\left[K(z^{(i)},z^{(j)})\left(I-\partial^{(\theta_a\circ\Phi(z^{(i)}))}(\partial^{(\theta_a\circ\Phi(z^{(j)}))})^*\right)\right]_{i,j=1}^n\geq 0
	\end{equation}
	for every finite subset \(\{z^{(1)},\cdots,z^{(n)}\}\subset F\). By Lemma \ref{operator aut} and Lemma \ref{lem:compatibility}, for $z,w\in F$,
	\begin{equation}\label{1-theta=1-psi}
		\begin{aligned}
			I-\partial^{(\theta_a\circ\Phi(z))}(\partial^{(\theta_a\circ\Phi(w))})^*
			=&I-\Theta_{\partial^{(a)}}\bigl(\partial^{(\Phi(z))}\bigr)\Theta_{\partial^{(a)}}\bigl(\partial^{(\Phi(w))}\bigr)^*\\
			=&A_z\left(I-\partial^{(\Phi(z))}(\partial^{(\Phi(w))})^*\right)A_w^*,
		\end{aligned}
	\end{equation}
	where 
	$
	A_z=\Delta_{\partial^{(a)}}\left(I-\partial^{(\Phi(z))}(\partial^{(a)})^*\right)^{-1}.
	$
	Hence by \eqref{1-theta=1-psi}, for every finite subset \(\{z^{(1)},\cdots,z^{(n)}\}\subset F\),
	\begin{equation}\label{thetaaphi=Azphi}
		\begin{aligned}
			&\left[K(z^{(i)},z^{(j)})\left(I-\partial^{(\theta_a\circ\Phi(z^{(i)}))}(\partial^{(\theta_a\circ\Phi(z^{(j)}))})^*\right)\right]_{i,j=1}^n\\
			=&\operatorname{diag}(A_{z^{(1)}},\cdots,A_{z^{(n)}})\left[K(z^{(i)},z^{(j)})\left(I-\partial^{(\Phi(z^{(i)}))}(\partial^{(\Phi(z^{(j)}))})^*\right)\right]_{i,j=1}^n\operatorname{diag}(A_{z^{(1)}}^*,\cdots,A_{z^{(n)}}^*)\\
		\end{aligned}
	\end{equation}
	Since
	$
	\left\|\sum\limits_{i=1}^N M_{\varphi_i}\otimes E_i^*\right\|\le1,
	$
	by Lemma \ref{mult crit} and \eqref{pointvalue}, for every finite subset \(\{z^{(1)},\cdots,z^{(n)}\}\subset F\)
	$$
	\left[K(z^{(i)},z^{(j)})\left(I-\partial^{(\Phi(z^{(i)}))}(\partial^{(\Phi(z^{(j)}))})^*\right)\right]_{i,j=1}^n\geq0.
	$$
	Hence by \eqref{thetaaphi=Azphi}, \eqref{Ktheta>0} holds.
\end{proof}
Write $\mathrm{Mult}_1(\mathcal{H}\otimes \mathcal{E},\mathcal{K}\otimes \mathcal{F})$ for the closed unit ball of $\mathrm{Mult}(\mathcal{H}\otimes \mathcal{E},\mathcal{K}\otimes \mathcal{F})$. Then we have the following lemma.
\begin{lem}\label{Tpsicontrac}
	Let $\mathcal{H}$ be an irreducible complete Pick space on $\bf X$. Assume that for any 
	$$
	\psi=(\psi_1,\cdots,\psi_N)^\mathrm{T}\in\mathrm{Mult}_1(\mathcal{H},\mathcal{H}\otimes\mathbb C^N),
	$$
	we have
	$
	\left\|\sum\limits_{i=1}^{N}M_{\psi_i}\otimes E_i^*\right\|\leq 1.
	$
	Then for any $d\in \mathbb{N}$ and $\{\psi^{(k)}\}_{k=1}^d\subset \mathrm{Mult}_1(\mathcal{H},\mathcal{H}\otimes\mathbb C^N)$,
	$$
	\left\|\left[\sum_{i=1}^{N}M_{\psi_i^{(1)}}\otimes E_i^*,\cdots,\sum_{i=1}^{N}M_{\psi_i^{(d)}}\otimes E_i^*\right]^\mathrm{T}\right\|\leq \left\|M_\Psi\right\|,
	$$
where $\Psi=(\psi_1^{(1)},\cdots,\psi_N^{(1)},\cdots, \psi_1^{(d)},\cdots,\psi_N^{(d)})^\mathrm{T}\in \mathrm{Mult}(\mathcal{H},\mathcal{H}\otimes\mathbb C^{dN})$.
\end{lem}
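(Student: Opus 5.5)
The plan is to reduce everything to kernel positivity conditions via Lemma~\ref{mult crit}, and to compare the column in the statement with the single ``big'' boundary operator built from all the $\psi^{(k)}$ at once. By homogeneity I would first normalize so that $\|M_\Psi\|=1$ (the case $M_\Psi=0$ is trivial). Then $\Psi\in\mathrm{Mult}_1(\mathcal H,\mathcal H\otimes\mathbb C^{dN})$, so the hypothesis, applied with $N$ replaced by $dN$, gives
\[
\Bigl\|\sum_{k=1}^d\sum_{i=1}^N M_{\psi^{(k)}_i}\otimes E_{(k-1)N+i}^*\Bigr\|\le 1
\quad\text{on }\mathcal H\otimes\Lambda_{dN}.
\]
Write $X_k(z)=\partial^{(\psi^{(k)}(z))}$ acting on $\Lambda_N$. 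Let $\widetilde X_k(z)$ be the same operator acting on the $k$-th block of coordinates in $\Lambda_{dN}$, and set $X(z)=\sum_k\widetilde X_k(z)=\partial^{(\Psi(z))}$. By \eqref{pointvalue} and Lemma~\ref{mult crit}, the displayed bound is equivalent to
\[
\bigl[K(z_i,z_j)\bigl(I-X(z_i)X(z_j)^*\bigr)\bigr]_{i,j}\ge0 .
\]
The goal is likewise equivalent to positivity of $\bigl[K(z_i,z_j)\bigl(I_{\Lambda_N\otimes\mathbb C^d}-G(z_i)G(z_j)^*\bigr)\bigr]$, where $G(z)$ is the column $(X_1(z),\dots,X_d(z))^{\mathrm T}$. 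Hence $G(z)G(w)^*=[X_k(z)X_l(w)^*]_{k,l}$.

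The key step is to find a fixed isometry (or contraction) $J:\Lambda_N\otimes\mathbb C^d\to\Lambda_{dN}$ with
\[
J^*X(z)X(w)^*J=G(z)G(w)^*,\qquad J^*J=I,
\]
because then compressing the first positive kernel by $\operatorname{diag}(J^*,\dots,J^*)$ gives the second. The natural candidate sends $x\otimes e_k$ to ``$x$ placed in block $k$, wedged with a fixed parity-corrected element $\omega_k$ of the other blocks.'' The $\omega_k$ are to be chosen so that three things hold. First, the images for different $k$ are orthogonal. Second, $\widetilde X_l(w)^*$ for $l\neq k$ maps the image of block $k$ into a subspace orthogonal to everything that $\widetilde X_{k'}(z)^*$ produces from the image of block $k'$. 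Third, the CAR relations, in the form of Lemma~\ref{lem:partial-relations}(1)(2), reduce the surviving cross terms to exactly $X_k(z)X_l(w)^*$.

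I expect this to be the main obstacle. The vacuum and top-degree parts of the different blocks overlap, so a naive block placement is not isometric, and the cross terms $\widetilde X_k(z)\widetilde X_l(w)^*$ do not vanish identically. If no exact $J$ exists, my fallback is to use the Pick structure. Since $\mathcal H$ is an irreducible complete Pick space, $K(z,w)(1-\langle\Psi(z),\Psi(w)\rangle)\ge0$. Combining this with the pointwise identity $X_k(z)X_l(w)^*+X_l(w)^*X_k(z)=\langle\psi^{(k)}(z),\psi^{(l)}(w)\rangle I$, I would try to express $I-G(z)G(w)^*$ as a sum of two kernels that are positive after multiplication by $K$. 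One of them is $(1-\langle\Psi(z),\Psi(w)\rangle)\otimes I$. The other is a block kernel $[X_l(w)^*X_k(z)]$, which I would show is positive either directly (it is a Gram-type kernel) or by applying the hypothesis to suitably rotated multipliers via Lemma~\ref{lem:range-rotation}, the M\"obius map $\theta_a$ being used to move the diagonal kernel into the required form.
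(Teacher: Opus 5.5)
There is a genuine gap, and neither of your two routes goes through.

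\textbf{The compression step.} A preliminary point first: you apply the hypothesis to the length-$dN$ column $\Psi$. The lemma only grants it for columns of length $N$, and that is all the induction in Proposition~\ref{prop:reduction} supplies, since $N$ is fixed there.

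More seriously, the compression step is impossible once $N\ge2$ and $d\ge2$. Suppose a linear $J$, independent of $\Psi$, satisfied $J^*X(z)X(w)^*J=G(z)G(w)^*$. Testing with constant multipliers, this identity must hold for all small values of the $\psi^{(k)}$.
\begin{itemize}
\item In the $(l,l)$ block take $\psi^{(l)}=0$. The right side vanishes, so $\sum_{k\neq l}\sum_i\overline{\psi^{(k)}_i}\,E_{(k-1)N+i}J(x\otimes e_l)=0$ for all values of the other $\psi^{(k)}$. Here the $E_j$ are the creation operators of $\Lambda_{dN}$. Hence $J(x\otimes e_l)$ is saturated (of top degree) in every block $k\neq l$.
\item In an off-diagonal block $(k,l)$, the pairing $\langle X(w)^*J(y\otimes e_l),X(z)^*J(x\otimes e_k)\rangle$ therefore only sees top-degree components in blocks $k$ and $l$. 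It is a product of a function of $\psi^{(k)}(z)$ and a function of $\psi^{(l)}(w)$, i.e.\ a rank-one form.
\item The required value for $x=y=1$ is $\langle X_l(w)^*1,X_k(z)^*1\rangle=\langle\psi^{(k)}(z),\psi^{(l)}(w)\rangle$, which has rank $N$.
\end{itemize}
So the overlap you worried about is a real obstruction. The argument uses nothing about $J$ beyond linearity, so passing to a contraction does not help.

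\textbf{The fallback.} It rests on a wrong decomposition. Lemma~\ref{lem:partial-relations}(2) gives exactly
\[
I-G(z)G(w)^*=\bigl[\delta_{kl}-\langle\psi^{(k)}(z),\psi^{(l)}(w)\rangle\bigr]_{k,l}\otimes I_{\Lambda_N}+\bigl[X_l(w)^*X_k(z)\bigr]_{k,l}.
\]
The scalar part is $I_d-\Gamma(z)\Gamma(w)^*$ for the $d\times N$ matrix $\Gamma=[\psi^{(k)}_i]_{k,i}$, not $(1-\langle\Psi(z),\Psi(w)\rangle)I$. Its $K$-positivity is a row-versus-column inequality you have not established.

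The second kernel is a partial transpose of a Gram kernel and is not positive. Take $N=2$ and a single point with $\psi^{(1)}=e_1/2$, $\psi^{(2)}=e_2/2$, so $\|M_\Psi\|<1$. The vector $(e_2,-e_1)$ gives $\sum_{k,l}\langle X_k\xi_l,X_l\xi_k\rangle=-1/2$.

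Note also that your argument never uses the complete Pick property: $K(1-\langle\Psi(z),\Psi(w)\rangle)\ge0$ holds in any reproducing kernel Hilbert space.

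\textbf{The missing idea.} The paper uses a transposition trick that needs only the length-$N$ hypothesis and brings in the Pick property through \cite[Lemma 3.9]{Hartz2023}. That lemma says that on an irreducible complete Pick space the norm of a multiplication operator $T$ equals $\sup\|TM_\Upsilon\|$ over contractive column multipliers $\Upsilon$.
\begin{itemize}
\item For $\Upsilon\in\mathrm{Mult}(\mathcal H,\mathcal H\otimes\Lambda_N)$, set $B_\Upsilon(z)a=\partial^{(a)}\Upsilon(z)$ for $a\in\mathbb C^N$.
\item Then $T_\Psi M_\Upsilon=(I_{\mathbb C^d}\otimes M_{B_\Upsilon})M_\Psi$, so $\|T_\Psi\|\le\|M_\Psi\|\sup_\Upsilon\|M_{B_\Upsilon}\|$.
\item A second application of that lemma gives $\|M_{B_\Upsilon}\|=\sup_{\alpha}\|(\sum_iM_{\alpha_i}\otimes E_i^*)M_\Upsilon\|\le1$, by the length-$N$ hypothesis.
\end{itemize}
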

\begin{proof}
	Fix $d\in \mathbb{N}$, and write 
	$$
	T_\Psi=\left[\sum\limits_{i=1}^{N}M_{\psi_i^{(1)}}\otimes E_i^*,\cdots,\sum\limits_{i=1}^{N}M_{\psi_i^{(d)}}\otimes E_i^*\right]^\mathrm{T}.
	$$
	For 
	$
	\Upsilon\in \mathrm{Mult}(\mathcal{H},\mathcal{H}\otimes\Lambda_N),
	$
	define 
	$
	B_\Upsilon(z):\mathbb C^N\to\Lambda_N
	$
	by
	$
	B_\Upsilon(z)a=\partial^{(a)}(\Upsilon(z)).
	$
	Then for any \(\alpha=(\alpha_1,\cdots,\alpha_N)^\mathrm{T}\in\mathrm{Mult}(\mathcal{H},\mathcal{H}\otimes\mathbb C^N)\), by \eqref{pointvalue} we have
	\begin{equation}\label{Balpha=partialU}
		(B_\Upsilon(z))(\alpha(z))=\partial^{(\alpha(z))}(\Upsilon(z))=\left(\sum\limits_{i=1}^{N}M_{\alpha_i}\otimes E_i^*\right)(z)\Upsilon(z).
	\end{equation}
	Hence by \eqref{mult op} and \eqref{Balpha=partialU}, $M_{B_\Upsilon}M_{\alpha}=\left(\sum\limits_{i=1}^{N}M_{\alpha_i}\otimes E_i^*\right)M_{\Upsilon}$. Then it is easy to see that 
	$$
	T_\Psi M_{\Upsilon}=(I_{\mathbb{C}^d}\otimes M_{B_\Upsilon})M_{\Psi}.
	$$
	Therefore by \cite[Lemma 3.9]{Hartz2023} and the assumption,
	$$
	\begin{aligned}
		\|T_\Psi\|
		&=\sup\limits_{\|M_\Upsilon\|\leq 1}\|T_\Psi M_\Upsilon\|\\
		&=\sup\limits_{\|M_\Upsilon\|\leq 1}\|(I_{\mathbb{C}^d}\otimes M_{B_\Upsilon})M_\Psi\|\\
		&\leq\|M_\Psi\|\sup\limits_{\|M_\Upsilon\|\leq 1}\|M_{B_\Upsilon}\|\\
		&=\|M_\Psi\|\sup\limits_{\|M_\Upsilon\|\leq 1}\left(\sup\limits_{\|M_\alpha\|\leq 1}\|M_{B_\Upsilon}M_\alpha\|\right)\\
		&=\|M_\Psi\|\sup\limits_{\|M_\Upsilon\|\leq 1}\left(\sup\limits_{\|M_\alpha\|\leq 1}\left\|\left(\sum\limits_{i=1}^{N}M_{\alpha_i}\otimes E_i^*\right)M_\Upsilon\right\|\right)\\
		&\leq \|M_\Psi\|.
	\end{aligned}
	$$
\end{proof}

\begin{lem}\label{lem:zPhi-contractive}
		Let \(E\subset\mathbb B_d\) be finite, and set \(F=E\cup\{0\}\).
		Let \(\mathcal{E}\) be a Hilbert space, and
		\[
		\Phi=(\Phi_1,\cdots,\Phi_d)^\mathrm{T}\in\mathrm{Mult}\bigl(H_d^2|_F\otimes\mathcal{E},\,
		H_d^2|_F\otimes(\mathcal{E}\otimes\mathbb C^d)\bigr),
		\]
	where $\Phi_i\in\mathrm{Mult}(H_d^2|_F\otimes\mathcal{E})$ for $1\leq i\leq d$.
	Suppose that
	$
	\|M_{\Phi|_E}\|\le1.
	$
	Then 
	$$
	\left\|\left[M_{z_1}\otimes I_{\mathcal{E}},\cdots,M_{z_d}\otimes I_{\mathcal{E}}\right]M_\Phi\right\|_{H_d^2|_F\otimes\mathcal{E}}\leq 1.
	$$
\end{lem}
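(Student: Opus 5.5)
The plan is to reduce the claim to the positivity criterion of Lemma~\ref{3.12a}, applied to the composed multiplier, and to check that positivity using Lemma~\ref{mult crit} for $\Phi|_E$.

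\emph{Step 1: the composed multiplier.} The product $[M_{z_1}\otimes I_{\mathcal E},\cdots,M_{z_d}\otimes I_{\mathcal E}]M_\Phi$ is the multiplication operator $M_\Psi$ on $H_d^2|_F\otimes\mathcal E$ with symbol
\[
\Psi(z)=\sum_{i=1}^d z_i\Phi_i(z)=Z(z)\Phi(z),
\]
where $Z(z)=[z_1I_{\mathcal E},\cdots,z_dI_{\mathcal E}]:\mathcal E\otimes\mathbb C^d\to\mathcal E$. Since $F$ is finite, every bounded function on $F$ is a multiplier, so $\Psi\in\mathrm{Mult}(H_d^2|_F\otimes\mathcal E)$. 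Moreover $\Psi(0)=Z(0)\Phi(0)=0$, whatever the value $\Phi(0)$ is. Hence Lemma~\ref{3.12a} applies, and it suffices to show that for every finite subset $\{z^{(k)}\}_{k=1}^n\subset E$,
\[
\bigl[K(z^{(i)},z^{(j)})\bigl(I-\Psi(z^{(i)})\Psi(z^{(j)})^*\bigr)-I\bigr]_{i,j=1}^n\ge0 .
\]

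\emph{Step 2: the algebraic identity.} Since $K(z,w)(1-\langle z,w\rangle)=1$, we can write $I=K(z,w)(1-\langle z,w\rangle)I$. Therefore each entry equals
\[
K(z,w)\bigl(\langle z,w\rangle I-\Psi(z)\Psi(w)^*\bigr).
\]
Now $Z(z)Z(w)^*=\langle z,w\rangle I_{\mathcal E}$, so
\[
\langle z,w\rangle I-\Psi(z)\Psi(w)^*=Z(z)\bigl(I-\Phi(z)\Phi(w)^*\bigr)Z(w)^* .
\]
It follows that the block matrix factors as
\[
\operatorname{diag}\bigl(Z(z^{(1)}),\cdots,Z(z^{(n)})\bigr)\,
\bigl[K(z^{(i)},z^{(j)})\bigl(I-\Phi(z^{(i)})\Phi(z^{(j)})^*\bigr)\bigr]_{i,j=1}^n\,
\operatorname{diag}\bigl(Z(z^{(1)}),\cdots,Z(z^{(n)})\bigr)^* .
\]

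\emph{Step 3: positivity.} Because $\|M_{\Phi|_E}\|\le1$ on $H_d^2|_E\otimes\mathcal E$, and the kernel of $H_d^2|_E$ is the restriction of $K$ to $E\times E$, Lemma~\ref{mult crit} gives that the middle matrix is positive for all points in $E$. Conjugating a positive matrix preserves positivity, so the required inequality holds, and Lemma~\ref{3.12a} then gives $\|M_\Psi\|\le1$.

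The main point to handle carefully is Step 1. The hypothesis only controls $\Phi$ on $E$, while the conclusion concerns $H_d^2|_F$ with $0\in F$. This gap is exactly what Lemma~\ref{3.12a} absorbs: the factor $Z(0)=0$ forces $\Psi(0)=0$ regardless of $\Phi(0)$, and the criterion then only requires positivity at points of $E$. The rest is routine.
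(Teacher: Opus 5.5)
Your proposal is correct and follows the paper's proof essentially line for line. You reduce via Lemma~\ref{3.12a} to positivity of the block matrix over points of $E$, rewrite each entry using $K(z,w)(1-\langle z,w\rangle)=1$ and $Z(z)Z(w)^*=\langle z,w\rangle I$, factor the matrix through $\operatorname{diag}\bigl(Z(z^{(1)}),\cdots,Z(z^{(n)})\bigr)$, and conclude with Lemma~\ref{mult crit} applied to $\Phi|_E$. Your explicit check that $\Psi(0)=Z(0)\Phi(0)=0$, which is needed to invoke Lemma~\ref{3.12a}, is a point the paper leaves implicit.
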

\begin{proof}
	For $z\in \mathbb{B}_d$, set $\mathbf{z}(z)=[z_1I_\mathcal{E},\cdots,z_dI_\mathcal{E}]$. Then $z\Phi\in \mathrm{Mult}(H_d^2|_F\otimes\mathcal{E})$, and it is easy to see that
	$$
	M_{\mathbf{z}\Phi}=\left[M_{z_1}\otimes I_{\mathcal{E}},\cdots,M_{z_d}\otimes I_{\mathcal{E}}\right]M_\Phi.
	$$
	Hence by Lemma \ref{3.12a}, it suffices to prove that for any finite subset \(\{z^{(1)},\cdots,z^{(n)}\}\subset E\),
	\begin{equation}\label{eq:zPhi}
		\bigl[K(z^{(i)},z^{(j)})\bigl(I_{\mathcal{E}}-(\mathbf{z}\Phi)(z^{(i)})(\mathbf{z}\Phi)(z^{(j)})^*\bigr)-I_{\mathcal{E}}\bigr]_{i,j=1}^n\geq 0
	\end{equation}
	For $z,w\in E$, by \(K(z,w)\langle z,w\rangle=K(z,w)-1\), we have
	\begin{equation}\label{Ks-I=zK-1zs}
		\begin{aligned}
			&K(z,w)\bigl(I_{\mathcal{E}}-(\mathbf{z}\Phi)(z)(\mathbf{z}\Phi)(w)^*\bigr)-I_{\mathcal{E}}\\
			=&K(z,w)\bigl(\langle z,w\rangle I_{\mathcal{E}}-(\mathbf{z}\Phi)(z)(\mathbf{z}\Phi)(w)^*\bigr)\\
			=&K(z,w)\bigl(\mathbf{z}(z)\mathbf{z}(w)^*-\mathbf{z}(z)\Phi(z)\Phi(w)^*\mathbf{z}(w)^*\bigr)\\
			=&\mathbf{z}(z)K(z,w)\bigl(I_{\mathcal{E}}\otimes I_{\mathbb C^d}-\Phi(z)\Phi(w)^*\bigr)\mathbf{z}(w)^*
		\end{aligned}
	\end{equation}
	Hence by \eqref{Ks-I=zK-1zs}, for any finite subset \(\{z^{(1)},\cdots,z^{(n)}\}\subset E\),
	\begin{equation}\label{IE-zphi}
		\begin{aligned}
			&\bigl[K(z^{(i)},z^{(j)})\bigl(I_{\mathcal E}-(\mathbf{z}\Phi)(z^{(i)})(\mathbf{z}\Phi)(z^{(j)})^*\bigr)-I_{\mathcal E}\bigr]_{i,j=1}^n\\
			=&\operatorname{diag}(\mathbf{z}(z^{(1)}),\cdots,\mathbf{z}(z^{(n)}))
			\bigl[K(z^{(i)},z^{(j)})\bigl(I_{\mathcal E}\otimes I_{\mathbb C^d}\\
			&-\Phi(z^{(i)})\Phi(z^{(j)})^*\bigr)\bigr]_{i,j=1}^n
			\operatorname{diag}(\mathbf{z}(z^{(1)})^*,\cdots,\mathbf{z}(z^{(n)})^*)\\
		\end{aligned}
	\end{equation}
	By Lemma \ref{mult crit} and $\|\Phi|_{E}\|\leq 1$, for every finite subset \(\{z^{(1)},\cdots,z^{(n)}\}\subset E\),
	\begin{equation}\label{IF-phi}
		\bigl[K(z^{(i)},z^{(j)})\bigl(I_{\mathcal{E}}\otimes I_{\mathbb C^d}-\Phi(z^{(i)})\Phi(z^{(j)})^*\bigr)\bigr]_{i,j=1}^n\ge0.
	\end{equation}
	Therefore by \eqref{IE-zphi} and \eqref{IF-phi}, \eqref{eq:zPhi} holds.
\end{proof}
\begin{prop}
	\label{prop:reduction}
	Fix \(d,N\in\mathbb N\). Then for any finite set \(E\subset\mathbb B_d\), the following statement holds: if
	$$
	\Phi=(\varphi_1,\cdots,\varphi_N)^\mathrm{T}\in\mathrm{Mult}_1(H_d^2|_E,H_d^2|_E\otimes\mathbb C^N),
	$$
	then 
	$$
	\left\|\sum\limits_{i=1}^{N}M_{\varphi_i}\otimes E_i^*\right\|_{H_d^2|_E\otimes\Lambda_N\to H_d^2|_E\otimes\Lambda_N}\le1.
	$$
\end{prop}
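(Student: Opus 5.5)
The plan is to argue by induction on $|E|$, proving the statement simultaneously for all $N\in\mathbb N$.

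\emph{Base case and multiplier criterion.} By \eqref{pointvalue}, $\sum_{i=1}^N M_{\varphi_i}\otimes E_i^*$ is the multiplier $z\mapsto\partial^{(\Phi(z))}$ on $H_d^2|_E\otimes\Lambda_N$. Lemma~\ref{mult crit} then reduces the claim to a positivity condition on the points of $E$. For $|E|=1$ the claim is immediate: the multiplier norm at a single point $z$ is $\|\partial^{(\Phi(z))}\|$, which equals $\|\Phi(z)\|\le1$ by Lemma~\ref{lem:partial-relations}(6).

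\emph{Normalization.} For the inductive step, fix $z_0\in E$. First we normalize the values: put $a=\Phi(z_0)\in\overline{\mathbb B_N}$, using a scaling argument as in Lemma~\ref{lem:range-rotation} if $\|a\|=1$. Since $\theta_a$ is an involution of the ball, Lemma~\ref{lem:range-rotation} shows that the claim for $\theta_a\circ\Phi$ implies it for $\Phi=\theta_a\circ(\theta_a\circ\Phi)$. Moreover, $\theta_a\circ\Phi$ is again a contractive column multiplier, by the classical Möbius invariance of contractive column multipliers on complete Pick spaces. So we may assume $\Phi(z_0)=0$. Next we normalize the domain: composing with an automorphism $\theta_{z_0}$ of $\mathbb B_d$ moves $z_0$ to $0$. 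The induced map $f\mapsto (f\circ\theta_{z_0})k_{z_0}$ is unitary between the restricted spaces, as in the unitary $v_\mu$ used earlier, and it intertwines multipliers. Hence both the hypothesis and the conclusion are invariant, and we may assume $z_0=0$. Write $E=E'\cup\{0\}$ with $|E'|<|E|$.

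\emph{Factorization.} With $\Phi(0)=0$ and $\|M_\Phi\|\le1$ on $H_d^2|_E$, Lemma~\ref{3.12a} gives $\bigl[K(z,w)(I-\Phi(z)\Phi(w)^*)-I\bigr]_{z,w\in E'}\ge0$. Using $K(z,w)-1=K(z,w)\langle z,w\rangle$ and a lurking-isometry (Kolmogorov factorization) argument, which is the converse of the computation in Lemma~\ref{lem:zPhi-contractive}, we factor
\[
\Phi(z)=\sum_{j=1}^d z_j\Psi^{(j)}(z),\quad z\in E',
\]
with $\Psi=(\Psi^{(1)},\dots,\Psi^{(d)})^{\mathrm T}$ a contractive column multiplier on $H_d^2|_{E'}$ and each $\Psi^{(j)}$ being $\mathbb C^N$-valued. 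This is essentially Hartz's Lemma~3.12(b), and I expect it to be the main obstacle. The required positivity is available, but the factor must be produced on $E'$ with the right tensor structure. The extra freedom in the lurking isometry has to be absorbed, which can be done by enlarging $\mathcal E$ and compressing.

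\emph{Conclusion via the induction hypothesis.} Linearity of $a\mapsto\partial^{(a)}$ gives
\[
\partial^{(\Phi(z))}=\sum_j z_j\,\partial^{(\Psi^{(j)}(z))}.
\]
In multiplier form, the multiplier of $\sum_i M_{\varphi_i}\otimes E_i^*$ equals $[M_{z_1},\dots,M_{z_d}]\otimes I_{\Lambda_N}$ composed with the column $\Upsilon$ whose $j$-th entry is $\sum_i M_{\Psi^{(j)}_i}\otimes E_i^*$. On the smaller set $E'$ the induction hypothesis holds for every $N$. Since $H_d^2|_{E'}$ is an irreducible complete Pick space (its kernel never vanishes), Lemma~\ref{Tpsicontrac} applies and gives $\|\Upsilon|_{E'}\|\le\|M_\Psi\|\le1$. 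Finally, Lemma~\ref{lem:zPhi-contractive}, applied with $\mathcal E=\Lambda_N$ and $F=E'\cup\{0\}=E$, yields that $[M_{z_1}\otimes I,\dots,M_{z_d}\otimes I]M_\Upsilon$ is contractive on $H_d^2|_E\otimes\Lambda_N$. This is exactly the desired bound, and it completes the induction.
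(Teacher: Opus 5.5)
Your proposal is correct and follows the paper's proof essentially step for step. The paper also argues by induction on $|E|$, reduces to $0\in E$ and $\Phi(0)=0$ via a domain automorphism, M\"obius invariance of contractive columns and Lemma~\ref{lem:range-rotation}, factors $\varphi_i=\sum_j z_j\psi_{ji}$ with a contractive column $\Psi$, and then combines Lemma~\ref{Tpsicontrac} (with the induction hypothesis on $E\setminus\{0\}$) and Lemma~\ref{lem:zPhi-contractive}.

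The differences are cosmetic. The paper cites Hartz's Proposition~3.8 for the factorization rather than deriving it from Lemma~\ref{3.12a} by a Leech/lurking-isometry argument; your derivation works, since $H_d^2|_{E'}$ is a complete Pick space and you only need $\Psi$ contractive on $E'$. The paper also performs the two normalizations in the opposite order.
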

\begin{proof}
	By replacing $\Phi$ with $\rho\Phi$ for $0<\rho<1$ and letting $\rho\to 1$, we may assume without loss of generality that $\|\Phi\|_{\mathrm{Mult}(H_d^2|_E,H_d^2|_E\otimes\mathbb C^N)}<1$.
	We prove the statement by induction on \(|E|\). For \(|E|=1\), write \(E=\{w\}\), then \(\dim H_d^2|_E=1\). By Lemma \ref{mult crit} and the assumption, $K(w,w)(I-\Phi(w)\Phi(w)^*)\geq 0$, i.e. $\|\Phi(w)\|\leq 1$. Hence by \eqref{pointvalue} and Lemma \ref{lem:partial-relations}(6), 
	$$
	\left\|\left(\sum\limits_{i=1}^NM_{\varphi_i}\otimes E_i^*\right)(w)\right\|=\|\partial^{(\Phi(w))}\|=\|\Phi(w)\|\leq 1.
	$$
	Hence 
	$$
	K(w,w)\left(I-\left(\sum\limits_{i=1}^NM_{\varphi_i}\otimes E_i^*\right)(w)\left(\sum\limits_{i=1}^NM_{\varphi_i}\otimes E_i^*\right)(w)^*\right)\geq 0.
	$$
	Then by Lemma \ref{mult crit}, 
	$$
	\left\|\sum\limits_{i=1}^{N}M_{\varphi_i}\otimes E_i^*\right\|_{H_d^2|_E\otimes\Lambda_N\to H_d^2|_E\otimes\Lambda_N}\le1.
	$$

	Assume the statement holds for every finite set \(E\subset\mathbb B_d\) with \(|E|\le n-1\). Let \(F\subset\mathbb B_d\) with \(|F|=n\) and let \(\Phi\in\mathrm{Mult}_1(H_d^2|_F,H_d^2|_F\otimes\mathbb C^N)\). 
	
	By \cite[Lemma 3.4]{Hartz2023}, we may apply a domain automorphism of \(\mathbb B_d\) and assume \(0\in F\). Let \(a=\Phi(0)\) and set \(\Phi'=\theta_a\circ\Phi\). By \cite[Lemma 3.5]{Hartz2023}, \(\Phi'\) is a contractive column multiplier with \(\Phi'(0)=0\). Since \(\Phi=\theta_a\circ\Phi'\), by Lemma \ref{lem:range-rotation}, it suffices to prove
	$
	\Bigl\|\sum\limits_{i=1}^N M_{\Phi'_i}\otimes E_i^*\Bigr\|\le1.
	$
	Thus we may assume \(\Phi(0)=0\).
	
	By \cite[Proposition 3.8]{Hartz2023}, there exist functions
	$
	\psi^{(i)}=(\psi_{1i},\cdots,\psi_{di})^{\mathrm T}
	\in\mathrm{Mult}(H_d^2|_F,H_d^2|_F\otimes\mathbb C^d)
	$
	for $1\leq i\leq N$
	such that
	\begin{equation}\label{phi=zpsi}
		\varphi_i=\sum_{j=1}^d z_j\psi_{ji},
	\end{equation}
	and
	\[
	\Psi=(\psi^{(1)\mathrm T},\cdots,\psi^{(N)\mathrm T})^{\mathrm T}
	\in\mathrm{Mult}_1(H_d^2|_F,H_d^2|_F\otimes\mathbb C^{dN}).
	\]
	Write 
	\[
	T_\Psi=\left(\sum_{i=1}^{N}M_{\psi_{1i}}\otimes E_i^*,\cdots,\sum_{i=1}^{N}M_{\psi_{di}}\otimes E_i^*\right)^{\mathrm T}.
	\]
	Then by \eqref{phi=zpsi},
	\begin{equation}\label{Mphi=zTpsi}
		\sum_{i=1}^{N}M_{\varphi_i}\otimes E_i^*=[M_{z_1}\otimes I_{\Lambda_N},\cdots, M_{z_d}\otimes I_{\Lambda_N}] T_\Psi.
	\end{equation}
	Set $F'=F\setminus\{0\}$. 
	By Lemma \ref{Tpsicontrac} and the induction hypothesis, we have
	$$
	\|T_{\Psi|_{F'}}\|\leq \|M_{\Psi|_{F'}}\| \le \|M_{\Psi}\|\leq 1.
	$$
	Hence by \eqref{Mphi=zTpsi} and Lemma \ref{lem:zPhi-contractive}, 
	$$
	\left\|\sum\limits_{i=1}^{N}M_{\varphi_i}\otimes E_i^*\right\|_{H_d^2|_F\otimes\Lambda_N\to H_d^2|_F\otimes\Lambda_N}\le1.
	$$
\end{proof}

\begin{proof}[Proof of Theorem \ref{thm:main}]
	\((2)\Rightarrow(1)\): Since
	$$
	\sum_{i=1}^{N}\|\varphi_if\|^2=\|\partial_{\mathbb{T}}(f\otimes e_1\wedge\cdots \wedge e_N)\|^2\leq \|\partial_{\mathbb{T}}\|^2\|f\|^2,
	$$
	(1) holds and $\|[M_{\varphi_1},M_{\varphi_2},\cdots]^T\| \leq  \|\partial_{\mathbb{T}}\|$. \\
\vskip 1mm	
 \((1)\Rightarrow(2)\):
	Similar to \cite[Lemma 3.2]{Hartz2023}(replace \(\Phi^\mathrm{T}\) by \(\partial_{\mathbb{T}}\) in the proof of \cite[Lemma 3.2]{Hartz2023}), by Proposition \ref{prop:reduction}, $\|\partial_{\mathbb{T}}\|\leq 1$ whenever $\|[M_{\varphi_1},M_{\varphi_2},\cdots]^T\|\leq 1$. Thus (2) holds and 
	$$
	\|\partial_{\mathbb{T}}\|\leq \|[M_{\varphi_1},M_{\varphi_2},\cdots]^T\|.
	$$
\end{proof}

\section{Applications to the Drury--Arveson space and its quotient modules}
In this section, we prove Theorem \ref{7.1}. In order to prove this theorem, we need some preparation.

For $\lambda\in\mathbb{B}\setminus\{0\}$, let $\varphi_{\lambda}(z)$ be the automorphism of $\mathbb{B}$ taking $\lambda$ to 0 of the form
\begin{align}\label{def:varphi_lambda}
\varphi_{\lambda}(z)=\frac{\lambda-P_{\lambda} z-s_{\lambda} Q_{\lambda} z}{1-\left\langle z, \lambda\right\rangle}=\frac{s_{\lambda} A_{\lambda}(z-\lambda)}{1-\left\langle z, \lambda\right\rangle},
\end{align}
where 
$$
s_{\lambda}=\left(1-|\lambda|^{2}\right)^{1 / 2},~A_{\lambda}=-\left(\frac{1}{s_{\lambda}} P_{\lambda}+Q_{\lambda}\right),~P_{\lambda} z=\frac{\left\langle z, \lambda\right\rangle}{|\lambda|^{2}} \lambda,~Q_{\lambda}=I-P_{\lambda}.
$$
For \(\lambda=0\), we set \(\varphi_{0}(z)=-z\).
Let $\mathbb{T}=\left(T_{1}, T_{2}, \cdots\right)$ be a row-contraction on $H$. For $\lambda \in \mathbb{B} $, we have $I-\left\langle \mathbb{T}, \lambda\right\rangle=1-\sum\limits_{i=1}^{\infty} \bar{\lambda}_{i} T_{i} \quad$ is invertible. Write $A_{\lambda}=\left(a_{i j}(\lambda)\right)_{i, j \geq 1}$ with respect to the standard orthonormal basis in $l^{2}$, then it is easy to see that
$$
a_{ij}(\lambda) = -\delta_{ij} + \left(1 - \frac{1}{s_\lambda}\right) \frac{\overline{\lambda_j} \lambda_i}{|\lambda|^2},
\quad s_\lambda = \sqrt{1-|\lambda|^2}.
$$ 
Then for $i\geq 1$, $\sum\limits_{j=1}^{\infty}\left| a_{ij}(\lambda)\right|^2<\infty$. Hence for $x\in H$, $\sum\limits_{j=1}^{\infty}a_{ij}(\lambda)x\otimes e_j\in \Lambda^1(H)$. Notice that for $n,p>0$ and $x\in H$, 
\begin{equation}\label{sum aij=sum Ti}
\begin{aligned}
\left\|\sum_{j=n}^{n+p}a_{ij}(\lambda)(T_j-\lambda_j)x\right\|&=\left\|\left(\sum_{j=n}^{n+p}(T_j-\lambda_j)\otimes E_j^*\right)\sum_{j=n}^{n+p}a_{ij}(\lambda)x\otimes e_j\right\|\\
&=
\left\|\left(\sum_{j=n}^{n+p}(T_j-\lambda_j)\otimes E_j^*\right)\sum_{j=1}^{\infty}a_{ij}(\lambda)x\otimes e_j\right\|.
\end{aligned}
\end{equation}
By Theorem \ref{prop 1.1}, $\left\{\sum\limits_{j=1}^{m}(T_j-\lambda_j)\otimes E_j^*\right\}_{m\geq 1}$ converges in SOT. Then by \eqref{sum aij=sum Ti}, $\left\{\sum\limits_{j=1}^{m} a_{i j}(\lambda)\left(T_{j}-\lambda_{j}\right)\right\}_{m\geq 1}$ converges in SOT. We can define $\varphi_{\lambda}(\mathbb{T})$ as
\begin{equation}\label{varphi_lambda}
\left(\varphi_{\lambda}(\mathbb{T})\right)_{i}=(I-\left\langle \mathbb{T}, \lambda\right\rangle)^{-1} s_{\lambda} \sum_{j=1}^{\infty} a_{i j}(\lambda)\left(T_{j}-\lambda_{j}\right), ~\forall i \geq 1.
\end{equation}
Let \(\Lambda_{\mathrm{alg}}=\bigoplus\limits_{p\ge0}\Lambda^p\) be the
algebraic direct sum. For \(T\in B(\ell^2)\), define
\(\Gamma(T)\) on \(\Lambda_{\mathrm{alg}}\) by
\[
\Gamma(T)|_{\Lambda^0}=I,
\qquad
\Gamma(T)(v_1\wedge\cdots\wedge v_p)
=
Tv_1\wedge\cdots\wedge Tv_p
\quad(p\ge1,\;v_i\in\ell^2),
\]
and linear extension. If \(\Gamma(T)\) is bounded on
\(\Lambda_{\mathrm{alg}}\), it extends uniquely to a bounded operator on \(\Lambda\), still denoted
by \(\Gamma(T)\).

On \(\Lambda_{\mathrm{alg}}\) one has
\[
\Gamma(AB)=\Gamma(A)\Gamma(B),
\qquad
\Gamma(I)=I,
\]
and these identities extend by continuity whenever the corresponding
operators are bounded. Moreover, if \(U\) is unitary, then \(\Gamma(U)\)
is unitary.

\begin{lem}\label{T fredholm iff phi T}
  $\mathbb{T}-\lambda$ is Fredholm if and only if $\varphi_{\lambda}(\mathbb{T})$ is Fredholm, and in this case
  $\operatorname{ind}(\mathbb{T}-\lambda)=\operatorname{ind}\varphi_{\lambda}(\mathbb{T})$.
\end{lem}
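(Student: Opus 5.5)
The plan is to factor the passage from $\mathbb T-\lambda$ to $\varphi_\lambda(\mathbb T)$ into two elementary operations on row-bounded commuting sequences. For each operation I would check directly that it preserves every homology group $H^p$ up to isomorphism, and that it preserves the uniform solvability constant in condition (3) of Definition \ref{def:Fredholm}. Equality of the indices then follows degree by degree.

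Write $\mathbb S=\mathbb T-\lambda=(T_1-\lambda_1,T_2-\lambda_2,\ldots)$ and $R=s_\lambda(I-\langle\mathbb T,\lambda\rangle)^{-1}$. Then $R$ is invertible and commutes with every $T_j$. By \eqref{varphi_lambda}, $\varphi_\lambda(\mathbb T)=R\cdot(A_\lambda\mathbb S)$, where $(A_\lambda\mathbb S)_i=\sum_j a_{ij}(\lambda)S_j$ (SOT-convergent, as shown above).

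\emph{Step 1 (the linear change of variables $A_\lambda$).} Formally,
\[
\partial_{A_\lambda\mathbb S}=\sum_i\Big(\sum_j a_{ij}S_j\Big)\otimes E_i^*=\sum_j S_j\otimes\Big(\sum_i a_{ij}E_i^*\Big).
\]
Using the CAR relations \eqref{anticommutationrelations}, I would verify an intertwining identity of the form
\[
(I_H\otimes\Gamma(B))\,\partial_{\mathbb S}=\partial_{A_\lambda\mathbb S}\,(I_H\otimes\Gamma(B)),
\]
degree by degree on $\Lambda_{\mathrm{fin}}(H)$. Here $B$ is the appropriate transpose or adjoint of $A_\lambda^{-1}$; it differs from a unitary only in the one-dimensional direction $\mathbb C\lambda$, since $A_\lambda=-(s_\lambda^{-1}P_\lambda+Q_\lambda)$.

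The point I expect to be the main obstacle is the boundedness of $\Gamma(B)$ and $\Gamma(B)^{-1}$ uniformly over all degrees $p$. In general one only has $\|\Gamma(C)|_{\Lambda^p}\|\le\|C\|^p$, and $\|A_\lambda\|=s_\lambda^{-1}>1$, so this crude bound is useless. I would exploit that $P_\lambda$ has rank one. In an orthonormal basis of $\ell^2$ whose first vector is $\lambda/|\lambda|$, any antisymmetric basis tensor contains the $\lambda$-direction at most once. Consequently $\Gamma(A_\lambda)$ acts diagonally on such basis tensors, with eigenvalues $\pm 1$ or $\pm s_\lambda^{-1}$. Hence
\[
\|\Gamma(A_\lambda)\|\le s_\lambda^{-1},\qquad \|\Gamma(A_\lambda^{-1})\|\le 1,
\]
uniformly in $p$, and the same holds for the transposes. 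With these bounds, $I\otimes\Gamma(B)$ is a bounded, boundedly invertible, degree-preserving chain isomorphism between $K(\mathbb S)$ and $K(A_\lambda\mathbb S)$. It carries $\ker$ onto $\ker$ and $\operatorname{Im}$ onto $\operatorname{Im}$ in each degree, and it multiplies the constant $c$ in Definition \ref{def:Fredholm}(3) by at most $\|\Gamma(B)\|\,\|\Gamma(B)^{-1}\|$. Domain issues disappear because each $\partial_{\cdot,p}$ is bounded by Theorem \ref{prop 1.1}; the identity needs only SOT limits of the finite sums.

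\emph{Step 2 (multiplication by $R$).} Let $\mathbb S'=A_\lambda\mathbb S$. Then $R$ commutes with each $S'_i$, and
\[
\partial_{R\mathbb S',p}=(R\otimes I)\,\partial_{\mathbb S',p}=\partial_{\mathbb S',p}\,(R\otimes I).
\]
Since $R$ is injective, $\ker\partial_{R\mathbb S',p}=\ker\partial_{\mathbb S',p}$. Moreover $\operatorname{Im}\partial_{R\mathbb S',p+1}=(R\otimes I)\operatorname{Im}\partial_{\mathbb S',p+1}$. Because $R\otimes I$ and its inverse preserve $\ker\partial_{\mathbb S',p}$, the map $R^{-1}\otimes I$ induces an isomorphism $H^p_{R\mathbb S'}\cong H^p_{\mathbb S'}$ for every $p$.

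For the uniform constant, suppose $x\in\ker\partial_{\mathbb S',p}$ and $x=\partial_{\mathbb S'}y$ with $\|y\|\le c\|x\|$. Then
\[
x=\partial_{R\mathbb S'}\big((R^{-1}\otimes I)y\big),\qquad \|(R^{-1}\otimes I)y\|\le\|R^{-1}\|\,c\,\|x\|,
\]
with a constant independent of $p$. The converse direction is the same argument with $R^{-1}$ in place of $R$. Note that one must not use the naive degree-dependent chain map $R^{-p}\otimes I$, whose norms grow with $p$.

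Combining Steps 1 and 2 gives $\dim H^p_{\mathbb T-\lambda}=\dim H^p_{\varphi_\lambda(\mathbb T)}$ for every $p$. Condition (3) holds for one sequence exactly when it holds for the other, with the same $N$. Therefore $\mathbb T-\lambda$ is Fredholm if and only if $\varphi_\lambda(\mathbb T)$ is, and the indices coincide. The case $\lambda=0$, where $\varphi_0(\mathbb T)=-\mathbb T$, is immediate from Step 2 with $R=-I$.
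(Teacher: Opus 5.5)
Your proposal is correct and follows essentially the same route as the paper. The paper also first discards the invertible commuting factor $s_\lambda(I-\langle\mathbb T,\lambda\rangle)^{-1}$, a step it simply calls ``clear''. It then intertwines $\partial_{\mathcal S}$ with $\partial_{\mathbb T-\lambda}$ via $I\otimes\Gamma(A_\lambda^T)$, and gets the uniform bounds $\|\Gamma(A_\lambda^T)\|=s_\lambda^{-1}$ and $\|\Gamma(A_\lambda^T)^{-1}\|=1$ by diagonalizing exactly as you do. Your degree-by-degree treatment, the explicit uniform-constant argument in Step 2, and your warning against the chain map $R^{-p}\otimes I$ make explicit what the paper leaves implicit; for the direction you wrote, the correct choice in Step 1 is $B=(A_\lambda^T)^{-1}$.
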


\begin{proof}
The case \(\lambda=0\) is immediate from the fact that \(\varphi_0(\mathbb T)=-\mathbb T\), so we may assume \(\lambda\neq0\). For $i\ge 1$ set
\[
\mathcal{S}_{i}=\sum_{j=1}^{\infty} a_{i j}(\lambda)(T_{j}-\lambda_{j}).
\]
By \eqref{varphi_lambda}, clearly $\varphi_{\lambda}(\mathbb{T})$ is Fredholm iff $\mathcal{S}=(\mathcal{S}_1,\mathcal{S}_2,\cdots)$ is Fredholm, with the same index.
Notice that
$$
\partial_{\mathcal{S}} = \sum_{i=1}^{\infty} \mathcal{S}_i \otimes E_i^*
= \sum_{j=1}^{\infty} (T_j-\lambda_j) \otimes D_j^*,
$$
where $D_j^* = \sum\limits_{i=1}^{\infty} a_{i j}(\lambda) E_i^*$. We claim that $\Gamma(A_{\lambda}^T)D_{i}^{*}=E_{i}^{*} \Gamma(A_{\lambda}^T), \forall i \geq 1$. Indeed, since $A_{\lambda}=-\left(\frac{1}{s_\lambda}P_\lambda+Q_\lambda\right)$, $A_{\lambda}^T$ is self-adjoint and can be unitarily diagonalized by
$$
A_{\lambda}^T=V B V^{-1},
$$
where $B=\operatorname{diag}(-1/s_\lambda,-1,-1,\cdots)$. Hence $\Gamma(A_\lambda^T)=\Gamma(V)\Gamma(B)\Gamma(V^{-1})$.  For $p\ge 1$,
\[
\|\Gamma(B)|_{\Lambda^p}\|
= \sup_{i_1<\cdots<i_p} \prod_{k=1}^{p} |b_{i_k}|
= \max\{1/s_\lambda,\,1\} = \frac1{s_\lambda},
\]
and $\|\Gamma(B^{-1})|_{\Lambda^p}\| = \max\{s_\lambda,1\} = 1 $. Thus $\|\Gamma(A_\lambda^T)\| = 1/s_\lambda$ and $\|\Gamma(A_\lambda^T)^{-1}\|=1$. Hence $\Gamma(A_\lambda^T)$ is bounded and invertible.
For \(v\in\ell^2\), define \(E(v)^*:\Lambda\to\Lambda\) by
$$
E(v)^*(x_1\wedge\cdots\wedge x_p)= \sum_{k=1}^{p}(-1)^{k-1}\langle x_k,v\rangle\,
x_1\wedge\cdots\wedge\widehat{x_k}\wedge\cdots\wedge x_p,
$$
where the hat indicates that \(x_k\) is omitted. Then \(E_i^* = E(e_i)^*\). 
Notice that \(A_\lambda=A_\lambda^*\), write \(A_\lambda=(a_{ij})\). Then
$$
A_\lambda^T e_j = \sum_{i=1}^{\infty} a_{ji} e_i = \sum_{i=1}^{\infty} \overline{a_{ij}} e_i.
$$
Since \(E(v)^*\) is conjugate-linear in \(v\), we have
$$
E(A_\lambda^T e_j)^* 
= E\left(\sum_{i=1}^{\infty} \overline{a_{ij}} e_i\right)^*
= \sum_{i=1}^{\infty} a_{ij} E(e_i)^*
= \sum_{i=1}^{\infty} a_{ij} E_i^*
= D_j^*.
$$
Moreover, \(A_\lambda^T\) is self-adjoint. For any \(\xi = x_1\wedge\cdots\wedge x_p\), we have
$$
\begin{aligned}
\Gamma(A_\lambda^T)E(v)^*\Gamma(A_\lambda^T)^{-1}\xi 
&= \Gamma(A_\lambda^T)E(v)^*((A_\lambda^T)^{-1}x_1\wedge\cdots\wedge (A_\lambda^T)^{-1}x_p)\\
&= \Gamma(A_\lambda^T) \left( \sum_{k=1}^p (-1)^{k-1} \langle (A_\lambda^T)^{-1}x_k, v \rangle (A_\lambda^T)^{-1}x_1\wedge\cdots\wedge \widehat{(A_\lambda^T)^{-1}x_k} \wedge\cdots\wedge (A_\lambda^T)^{-1}x_p \right)\\
&= \sum_{k=1}^p (-1)^{k-1} \langle (A_\lambda^T)^{-1}x_k, v \rangle x_1\wedge\cdots\wedge \widehat{x_k} \wedge\cdots\wedge x_p\\
&\stackrel{(*)}{=} \sum_{k=1}^p (-1)^{k-1} \langle x_k, (A_\lambda^T)^{-1}v \rangle x_1\wedge\cdots\wedge \widehat{x_k} \wedge\cdots\wedge x_p\\
&= E((A_\lambda^T)^{-1}v)^*\xi,
\end{aligned}
$$
where \((*)\) follows from
$$
\langle (A_\lambda^T)^{-1}x_k, v \rangle
= \langle x_k, ((A_\lambda^T)^{-1})^*v \rangle
= \langle x_k, (A_\lambda^T)^{-1}v \rangle,
$$
since \(A_\lambda^T\) is self-adjoint. By linearity and density, this equality extends to all of \(\Lambda\), yielding
$$
\Gamma(A_\lambda^T)E(v)^*\Gamma(A_\lambda^T)^{-1} = E((A_\lambda^T)^{-1}v)^*.
$$
Consequently,
$$
\Gamma(A_\lambda^T)D_j^*\Gamma(A_\lambda^T)^{-1}
=E((A_\lambda^T)^{-1}A_\lambda^T e_j)^*
=E_j^*.
$$
The claim is proved. Since \(\Gamma(A_\lambda^T)\) is bounded and invertible on \(\Lambda\), the operator \(I \otimes \Gamma(A_\lambda^T)\) is a bounded invertible operator on \(\Lambda(H)\). It is straightforward to check that \(I \otimes \Gamma(A_\lambda^T)\) maps \(\operatorname{Dom}(\partial_{\mathcal{S}})\) bijectively onto \(\operatorname{Dom}(\partial_{\mathbb{T}-\lambda})\). 
Consequently, the intertwining relation
\[
(I \otimes \Gamma(A_\lambda^T)) \partial_{\mathcal{S}} = \partial_{\mathbb{T}-\lambda} (I \otimes \Gamma(A_\lambda^T))
\]
holds on \(\operatorname{Dom}(\partial_{\mathcal{S}})\). This implies that \(\mathcal{S}\) is Fredholm if and only if \(\mathbb{T}-\lambda\) is Fredholm, and in this case,
\[
\operatorname{ind}(\mathbb{T}-\lambda)=\operatorname{ind}\mathcal{S}.
\]
\end{proof}

\begin{lem}\label{lem:composition}
For $\lambda\in\mathbb{B}$, the composition operator 
\[
C_{\varphi_\lambda}: H^2 \to H^2,\qquad (C_{\varphi_\lambda} f)(z)=f(\varphi_\lambda(z)),
\]
is bounded and invertible.
\end{lem}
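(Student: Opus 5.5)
Since $\varphi_\lambda$ is an automorphism of $\mathbb B$ with $\varphi_\lambda\circ\varphi_\lambda=\mathrm{id}$, we have $C_{\varphi_\lambda}^2=I$ on functions. Thus the whole proof reduces to showing that $C_{\varphi_\lambda}$ maps $H^2$ boundedly into $H^2$: invertibility then follows immediately, with $C_{\varphi_\lambda}^{-1}=C_{\varphi_\lambda}$. The case $\lambda=0$ is trivial, since $f\mapsto f(-z)$ is unitary: it multiplies each homogeneous component by $\pm1$. So we assume $\lambda\ne0$.

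For boundedness we reuse the unitary $v_\lambda f=(f\circ\varphi_\lambda)\,k_\lambda$ introduced before Lemma~\ref{vmuM}. There $v_\mu$ was defined via the Möbius map written in the earlier form. That form coincides with \eqref{def:varphi_lambda}; if necessary we first check the sign conventions, or compose with the unitary $z\mapsto -z$. Since $v_\lambda$ is unitary, we can write
\[
C_{\varphi_\lambda}f = k_\lambda^{-1}\, v_\lambda f = \frac{1-\langle z,\lambda\rangle}{\sqrt{1-|\lambda|^2}}\, v_\lambda f .
\]
Hence it suffices to show that $g(z)=1-\langle z,\lambda\rangle$ is a multiplier of $H^2$. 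This holds because $M_{\langle z,\lambda\rangle}=\sum_i\bar\lambda_i M_{z_i}$, and by the row-contraction property of $(M_{z_1},M_{z_2},\dots)$ we get $\|M_{\langle z,\lambda\rangle}\|\le|\lambda|$. Therefore
\[
\|C_{\varphi_\lambda}\|\le \frac{1+|\lambda|}{\sqrt{1-|\lambda|^2}}.
\]

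The main step to verify is that $v_\lambda$ is indeed a well-defined unitary on the infinite-variable space $H^2$. The paper asserts this ``similar to \cite{Gu}'', and I would confirm it directly. Using the kernel identity $1-\langle\varphi_\lambda(z),\varphi_\lambda(w)\rangle=\frac{(1-|\lambda|^2)(1-\langle z,w\rangle)}{(1-\langle z,\lambda\rangle)(1-\langle\lambda,w\rangle)}$, which holds on $\mathbb B\subset\ell^2$ by the same algebra as in finite dimensions, one obtains
\[
v_\lambda^* K_w=\overline{k_\lambda(w)}\,K_{\varphi_\lambda(w)}.
\]
This shows that $v_\lambda$ is isometric on the span of kernels. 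Together with $v_\lambda^2=I$, which follows from \eqref{kmukumphi}, it gives unitarity.

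Alternatively, if one prefers to avoid $v_\lambda$, boundedness follows directly from this kernel identity. Indeed, $C_{\varphi_\lambda}^*K_w=K_{\varphi_\lambda(w)}$, and the positivity of
\[
c^2K(z,w)-K(\varphi_\lambda(z),\varphi_\lambda(w))
= K(z,w)\Bigl(c^2-\tfrac{(1-\langle z,\lambda\rangle)(1-\langle\lambda,w\rangle)}{1-|\lambda|^2}\Bigr)
\]
follows from the Schur product theorem and the multiplier bound on $1-\langle z,\lambda\rangle$ established above.
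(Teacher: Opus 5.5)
Your proof is correct and is essentially the paper's: the operator $U_\lambda$ in the paper's proof is exactly your $v_\lambda$, and the paper likewise writes $C_{\varphi_\lambda}=M_\alpha U_\lambda$ with $\alpha=k_\lambda^{-1}=(1-\langle z,\lambda\rangle)/\sqrt{1-|\lambda|^2}$. The differences are minor: the paper cites Hartz for the unitarity of $U_\lambda$ where you verify it via the kernel identity, and it obtains invertibility from $1/\alpha=k_\lambda$ being a multiplier where you use the equally valid identity $C_{\varphi_\lambda}^2=I$.
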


\begin{proof}
Define \(U_\lambda: H^2\to H^2\) by
\[
(U_\lambda f)(z) = \frac{\sqrt{1-|\lambda|^2}}{1-\langle z,\lambda\rangle}\, f(\varphi_\lambda(z)).
\]
By \cite[Corollary 4.2]{hartz2025}, the operator \(U_\lambda\) is unitary.  Notice that
\[
C_{\varphi_\lambda} f = (U_\lambda f) \cdot \frac{1-\langle z,\lambda\rangle}{\sqrt{1-|\lambda|^2}}
= M_{\alpha} U_\lambda f,
\]
where \(\alpha(z)=\frac{1-\langle z,\lambda\rangle}{\sqrt{1-|\lambda|^2}}\) is an invertible multiplier. Since \(M_\alpha\) and \(U_\lambda\) are both bounded and invertible, their composition \(C_{\varphi_\lambda}\) is bounded and invertible.
\end{proof}

For \(t\geq 0\), let 
$$
H_t=\overline{\operatorname{span}\{z^\alpha:|\alpha|=t\}}
$$ 
be the space of homogeneous functions of degree \(t\) in \(H^2\), and \(P_t\) be the orthogonal projection from \(H^2\) onto \(H_t\).
\begin{prop}\label{spec H2}
  Let $\mathbb{T}=(M_{z_1},M_{z_2},\cdots)$ on $H^2$. Then
  $\sigma(\mathbb{T})=\sigma_{SZ}(\mathbb{T})=\overline{\mathbb B}$,
  $\sigma_e(\mathbb{T})=\partial\mathbb B$, and for every $\lambda\in\mathbb B$,
  $\operatorname{ind}(\mathbb{T}-\lambda)=1$.
\end{prop}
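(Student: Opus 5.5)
We split the statement into three parts: the location of $\sigma(\mathbb T)$ and $\sigma_{SZ}(\mathbb T)$, the exactness and index at interior points $\lambda\in\mathbb B$, and the failure of Fredholmness on $\partial\mathbb B$.

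\emph{Location of the spectra.} Since $\sum_i M_{z_i}M_{z_i}^*=I-E_0\le I$, Theorem~\ref{bounded closed} gives $\sigma(\mathbb T)\subset\overline{\mathbb B}$. For $\sigma_{SZ}(\mathbb T)$ we use the classical finite-variable fact $\sigma(M_{z_1},\dots,M_{z_n})=\overline{\mathbb B_n}$ on $H^2$: for $w\in\overline{\mathbb B_n}$, compressing to $H_n^2$ (a reducing subspace for $M_{z_1},\dots,M_{z_n}$, with $H^2=\bigoplus_\beta z^{\beta}H_n^2$ over multi-indices $\beta$ in the tail variables) reduces this to the $n$-variable Drury--Arveson result. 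Hence $\sigma_{SZ}(\mathbb T)=\overline{\mathbb B}$.

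\emph{Interior points.} Fix $\lambda\in\mathbb B$. By Lemma~\ref{T fredholm iff phi T}, $\mathbb T-\lambda$ is Fredholm with the same index as $\varphi_\lambda(\mathbb T)$. Lemma~\ref{lem:composition} together with the unitary $U_\lambda$ shows that $\varphi_\lambda(\mathbb T)$ is similar to $-\mathbb T$, since $(\varphi_\lambda(\mathbb T))_i=M_{(\varphi_\lambda)_i}=U_\lambda^{-1}M_{z_i}U_\lambda$ up to sign. Homology dimensions and the uniform controllability condition are preserved under conjugation by $I\otimes U_\lambda$, so it suffices to treat $\lambda=0$. Here we claim $H^0_{\mathbb T}\cong\mathbb C$, $H^p_{\mathbb T}=0$ for $p\ge1$, with uniform control. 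The natural tool is the "Euler" homotopy: on the degree-$t$ homogeneous part $H_t\otimes\Lambda^p$, the operators $\partial_{\mathbb T}$ and $\delta=\sum_i M_{z_i}^*\otimes E_i$ satisfy
\[
\partial_{\mathbb T}\delta+\delta\partial_{\mathbb T}=c(t,p)\,I
\]
with $c(t,p)=\tfrac{t+p}{t+p}$-type weights. More precisely, we would compute $\sum_{i,j}M_{z_j}M_{z_i}^*\otimes E_j^*E_i+M_{z_i}^*M_{z_j}\otimes E_iE_j^*$, using $M_{z_i}^*M_{z_j}-M_{z_j}M_{z_i}^*$ on $H_t$ (which on $H^2$ equals $\frac{1}{t+1}\big(\delta_{ij}I-M_{z_j}M_{z_i}^*\big)$ type relations via $M_{z_i}^*z^\alpha=\frac{\alpha_i}{|\alpha|}z^{\alpha-e_i}$). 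This should give a diagonal operator whose eigenvalue on the component of total degree $t+p$ is bounded below by a positive constant independent of $t,p$ except at $t=p=0$. Inverting it yields a bounded homotopy $h$ on all of $\Lambda(H^2)\ominus(\mathbb C\otimes\Lambda^0)$, so Proposition~\ref{prop:uniform-homotopy} (applied from degree $1$ on, plus a direct check in degree $0$) gives exactness with uniform constants and $H^0\cong H^2/\overline{\sum z_iH^2}=\mathbb C$. Thus the index is $1$.

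\emph{Boundary points.} For $|\lambda|=1$, Fredholmness would force $\operatorname{Im}\partial_{\mathbb T-\lambda,1}$ to be closed and of finite codimension, hence by continuity of the index (Corollary~\ref{index continious}) the index would be locally constant near $\lambda$. But points $r\lambda$ with $r>1$ lie outside $\overline{\mathbb B}$, where $\mathbb T-r\lambda$ is invertible (index $0$), while points $r\lambda$ with $r<1$ have index $1$. Here we must check that $\widehat{\mathbb T-\mu}$ depends continuously on $\mu$ in the sense of Corollary~\ref{index continious}, which is immediate from $\|\partial_{\mu I}\|=\|\mu\|$ (Lemma~\ref{lem:bounded-partial-normal}). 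This contradiction gives $\partial\mathbb B\subset\sigma_e$. Since interior points and exterior points are non-essential, we get $\sigma_e(\mathbb T)=\partial\mathbb B$.

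The main obstacle is the uniform lower bound for the Laplacian-type identity at $\lambda=0$ over all degrees $p$, which is exactly where infinitely many variables could break controllability. I would first verify the eigenvalue formula on monomials $z^\alpha\otimes e_I$, where the computation reduces to combinatorics of $\alpha$ and $I$.
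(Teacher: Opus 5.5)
Your outline has the same architecture as the paper. You get $\sigma(\mathbb T)\subset\overline{\mathbb B}$ from Theorem~\ref{bounded closed}. You reduce interior points to $\lambda=0$ through Lemma~\ref{T fredholm iff phi T} and the M\"obius change of variables. At $\lambda=0$ you analyse the homogeneous pieces $H_t\otimes\Lambda^p$ using the relation $M_{z_i}^*M_{z_j}-M_{z_j}M_{z_i}^*=\frac{1}{t+1}(\delta_{ij}I-M_{z_j}M_{z_i}^*)$ on $H_t$; this is correct and is the paper's $(t+1)M_{z_i}^*M_{z_j}-tM_{z_j}M_{z_i}^*=\delta_{ij}I$. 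For $\partial\mathbb B\subset\sigma_e(\mathbb T)$ you use index continuity plus invertibility outside $\overline{\mathbb B}$.

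The gap is that the one step carrying the content, the uniform estimate at $\lambda=0$, is only announced (``this should give''), and the mechanism you predict for it is wrong. The Laplacian $\Delta_p=\partial_{\mathbb T,p}^*\partial_{\mathbb T,p}+\partial_{\mathbb T,p+1}\partial_{\mathbb T,p+1}^*$ is not a scalar on $H_t\otimes\Lambda^p$, nor on any total-degree component. It is not diagonal on monomials either: for instance $\Delta_1(z_1\otimes e_2)=\tfrac32\, z_1\otimes e_2-\tfrac12\, z_2\otimes e_1$. So the verification you propose, an eigenvalue formula on $z^\alpha\otimes e_I$, would not go through.

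Substitute your relation into $\partial_{\mathbb T,p}^*\partial_{\mathbb T,p}=\sum_{i,j}M_{z_i}^*M_{z_j}\otimes E_iE_j^*$, and use $E_iE_j^*+E_j^*E_i=\delta_{ij}I$, $\sum_iE_iE_i^*=p\,I$ on $\Lambda^p$, and $\sum_iM_{z_i}M_{z_i}^*=I$ on $H_t$ for $t\ge1$. This gives
\[
\Delta_p=\frac{t+p}{t+1}\,I+\frac{1}{t+1}\,\partial_{\mathbb T,p+1}\partial_{\mathbb T,p+1}^*\qquad\text{on } H_t\otimes\Lambda^p,
\]
which is a rewriting of the paper's identity $t\,\partial_{\mathbb T,p+1}\partial_{\mathbb T,p+1}^*+(t+1)\,\partial_{\mathbb T,p}^*\partial_{\mathbb T,p}=(t+p)I$. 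Hence $\Delta_p\ge I$ for every $p\ge1$, and $\Delta_0=I-E_0$. Set $h_p=\partial_{\mathbb T,p+1}^*\Delta_p^{-1}$, with $h_0=\partial_{\mathbb T,1}^*$ on $H^2\ominus\mathbb C$. Then $\|h_p\|\le\|\Delta_p^{-1}\|^{1/2}\le1$ and $\partial_{\mathbb T,p+1}h_p+h_{p-1}\partial_{\mathbb T,p}=I$ for $p\ge1$, which is exactly the uniform control you need. Also $\partial_{\mathbb T,1}h_0=I$ on $H^2\ominus\mathbb C$, which gives $H^0_{\mathbb T}\cong\mathbb C$.

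With this filled in, your route differs from the paper's only in packaging. The paper applies the same identity to $x\in\ker\partial_{\mathbb T,p}$, where it reads $t\,\partial_{\mathbb T,p+1}\partial_{\mathbb T,p+1}^*x_t=(t+p)x_t$. This forces $x_0=0$ and gives the explicit preimage $y=\sum_{t\ge1}\frac{t}{t+p}\partial_{\mathbb T,p+1}^*x_t$ with $\|y\|\le\|x\|$. You would obtain a full contracting homotopy instead, which carries slightly more information but is not needed.

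There are two harmless slips. First, $\varphi_\lambda(\mathbb T)$ is unitarily equivalent to $\mathbb T$ itself, not to $-\mathbb T$: Lemma~\ref{vmuM}(i) gives $v_\lambda M_{z_i}v_\lambda=M_{z_i\circ\varphi_\lambda}$ with no sign. Second, the tail-graded summands $\overline{z^\beta\,\mathbb C[z_1,\dots,z_n]}$ of $H^2$ carry the coordinate shift of the space with kernel $(1-\langle z,w\rangle)^{-|\beta|-1}$, not copies of the $n$-variable Drury--Arveson shift. For $\sigma(M_{z_1},\dots,M_{z_n})=\overline{\mathbb B_n}$ you only need the reducing summand $\beta=0$ together with row contractivity, so your conclusion about $\sigma_{SZ}(\mathbb T)$ is unaffected.
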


\begin{proof}
Notice that
  \[
  \operatorname{Im}\partial_{\mathbb{T},1} = \left\{ \sum_{i=1}^{\infty} z_i f_i : \sum_i\|f_i\|^2<\infty \right\}.
  \]
  Clearly $\operatorname{Im}\partial_{\mathbb{T},1} \subset \{f\in H^2 : f(0)=0\}$. Conversely, for any $f\in H^2$ with $f(0)=0$, since $I = P_{0} + \sum\limits_{i=1}^\infty M_{z_i}M_{z_i}^*$, we have
  \[
  f = \sum_{i=1}^{\infty} z_i (M_{z_i}^* f) \quad \text{and}
  \quad \sum_{i=1}^{\infty} \|M_{z_i}^* f\|^2 = \|f\|^2 .
  \]
  Hence $\operatorname{Im}\partial_{\mathbb{T},1} = \{f\in H^2 : f(0)=0\}$, which implies that 
  \begin{equation}\label{dim H0T}
  \dim H^0(\mathbb{T})=1.
  \end{equation}
  For any monomial $z^\alpha$ with $|\alpha|=t$, a direct computation gives
\[
(t+1)M_{z_i}^*M_{z_j}z^\alpha - t M_{z_j}M_{z_i}^*z^\alpha = \delta_{ij} z^\alpha .
\]
Since $ H_t=\overline{\operatorname{span}\{z^\alpha:~|\alpha|=t\}}$, we obtain
\begin{equation}\label{zizjcommute}
(t+1)M_{z_i}^*M_{z_j} - t M_{z_j}M_{z_i}^* = \delta_{ij} I \qquad\text{on } H_t. 
\end{equation}
Hence by \eqref{zizjcommute} and $E_i^*E_j+E_jE_i^*=\delta_{ij}I$, for $p\geq 1$ we have on $H_t\otimes\Lambda^p$,
\begin{equation}\label{t+1partial=pI-t}
\begin{aligned}
(t+1)\partial_{\mathbb{T},p}^*\partial_{\mathbb{T},p}
&= (t+1)\sum_{i,j} M_{z_i}^*M_{z_j}\otimes E_iE_j^* \\
&= \sum_{i,j}\left(t M_{z_j}M_{z_i}^* + \delta_{ij}I\right)\otimes E_iE_j^* \\
&= t\sum_{i,j} M_{z_i}M_{z_j}^*\otimes E_jE_i^* + \sum_i I\otimes E_iE_i^* \\
&= t\sum_{i,j} M_{z_i}M_{z_j}^*\otimes (\delta_{ij}I - E_i^*E_j) + pI \\
&= t\sum_i M_{z_i}M_{z_i}^*\otimes I - t\sum_{i,j} M_{z_i}M_{z_j}^*\otimes E_i^*E_j + pI \\
&= t\sum_i M_{z_i}M_{z_i}^*\otimes I - t\partial_{\mathbb{T},p+1}\partial_{\mathbb{T},p+1}^* + pI .
\end{aligned}
\end{equation}
Notice that
\[
\sum_{i=1}^{\infty} M_{z_i}M_{z_i}^*|_{H_t} =
\begin{cases}
I_{H_t}, & t\ge 1,\\[2pt]
0, & t=0 .
\end{cases}
\]
Then by \eqref{t+1partial=pI-t} for every $t\ge 0$ and $p\ge 1$,
\begin{equation}\label{tpar+t+1par=t+pI}
t\partial_{\mathbb{T},p+1}\partial_{\mathbb{T},p+1}^* + (t+1)\partial_{\mathbb{T},p}^*\partial_{\mathbb{T},p} = (t+p)I.
\end{equation}

Let \(x\in\ker\partial_{\mathbb{T},p}\) and write
\(x_t = (P_t\otimes I)x\). Since \(\partial_{\mathbb{T},p}\) maps \(H_t\otimes\Lambda^p\)
into \(H_{t+1}\otimes\Lambda^{p-1}\), we have \(\partial_{\mathbb{T},p} x_t = 0\) for every \(t\). Then by \eqref{tpar+t+1par=t+pI},
\begin{equation}\label{partialxt}
  t\partial_{\mathbb{T},p+1}\partial_{\mathbb{T},p+1}^*x_t=(t+p)x_t.
\end{equation}
Hence
\begin{equation}\label{tpartial star x t =t+p x t}
t\|\partial_{\mathbb{T},p+1}^* x_t\|^2=(t+p)\|x_t\|^2.
\end{equation}
In particular, $x_0=0$. Now set
$$
y = \sum_{t= 1}^\infty \frac{t}{t+p}\,\partial_{\mathbb{T},p+1}^* x_t \in \Lambda^{p+1}(H^2).
$$
Then by \eqref{partialxt} and \eqref{tpartial star x t =t+p x t}, for $p\geq 1$ and $x\in \ker\partial_{\mathbb{T},p}$, $y\in\Lambda^{p+1}(H^2)$ such that 
\begin{equation}\label{dp+1y=x}
\partial_{\mathbb{T},p+1} y = x,\quad \|y\|^2 = \sum\limits_{t= 1}^\infty \frac{t}{t+p}\|x_t\|^2 \le \|x\|^2
\end{equation}
Hence combine with \eqref{dim H0T},$\mathbb{T}$ is Fredholm with $\operatorname{ind}(\mathbb{T})=1$.

For $\lambda\in\mathbb B$, by Lemma \ref{lem:composition}, 
    \[
    (C_{\varphi_\lambda}^{-1} M_{z_i} C_{\varphi_\lambda} f)(z)
    = (\varphi_\lambda)_i(z) f(z)
    = (M_{(\varphi_\lambda)_i} f)(z),~\forall f\in H^2
    \]
    Hence
    $
    C_{\varphi_\lambda}^{-1}\, \mathbb T\, C_{\varphi_\lambda}
    = \varphi_\lambda(\mathbb T),
    $
    so $\mathbb T$ is similar to $\varphi_\lambda(\mathbb T)$, thus
    \[
    \operatorname{ind}(\varphi_\lambda(\mathbb T)) = \operatorname{ind}(\mathbb T) = 1.
    \]
    Then by Lemma \ref{T fredholm iff phi T}, $\mathbb T-\lambda$ is Fredholm with index $1$ for every $\lambda\in\mathbb B$. By Theorem \ref{bounded closed} we have $\sigma(\mathbb{T})\subset\overline{\mathbb{B}}$. Then by Corollary~\ref{index continious}, $\sigma_e(\mathbb{T})=\partial\mathbb{B}$ and $\sigma(\mathbb{T})=\overline{\mathbb B}$. Moreover, for any $\lambda\in \mathbb{B}$ and $n>0$, it is easy to see that $\sum\limits_{i=1}^{n}(M_{z_i}-\lambda_i)H^2\neq H^2$, hence $\mathbb{B}\subset\sigma_{SZ}(\mathbb{T})$. Then by Theorem \ref{bounded closed} we have $\sigma_{SZ}(\mathbb{T})=\overline{\mathbb{B}}$.
\end{proof}
Let $N\in\mathbb{N}$ and $\mathcal{M}$ be a submodule of $H^2\otimes \mathbb{C}^N$. By \cite[Theorem 0.7]{McCulloughTrent2000}, there exist a separable Hilbert space \(E\) and an inner multiplier \(\Phi \in B(H^2 \otimes E, H^2 \otimes \mathbb{C}^N)\) such that \(\operatorname{ran} \Phi = \mathcal{M}\) and \(M_\Phi M_\Phi^*\) is the orthogonal projection onto \(\mathcal{M}\). Let \(\phi: \mathbb{B} \to B(E, \mathbb{C}^N)\) be the associated operator valued analytic function. Then \(\mathcal{M}_\lambda = \operatorname{ran} \phi(\lambda)\) for every \(\lambda \in \mathbb{B}\), and there exists \(\lambda_0 \in \mathbb{B}\) such that \(\operatorname{rank} \phi(\lambda_0) = \operatorname{fd}(\mathcal{M})\). For simplicity, write \(m = \operatorname{fd}(\mathcal{M})\) and \(D = \mathcal{M}_{\lambda_0} = \operatorname{ran} \phi(\lambda_0)\).

Let $\{e_n\}_{1\leq n\leq m}$ be an orthonormal basis for $(\ker \phi(\lambda_0))^\perp$, and $\{d_k\}_{1\leq k\leq m}$ be an orthonormal basis for $D$.
We define the \(m\times m\) matrix
	\[
	\mathcal M(\lambda)
	=
	\bigl(
	\langle\phi(\lambda)e_j,d_k\rangle
	\bigr)_{1\leq k,j\leq m},
	\]	
and the analytic function $\varphi$ by
\begin{align}\label{defvarphi}
	\varphi(\lambda)=\frac{\det \mathcal{M}(\lambda)}{\det \mathcal{M}(\lambda_0)}.
\end{align}
Then $\varphi$ is a multiplier of $H^2$.
Similar to \cite[Lemma 3.3]{index of Invariant Subspace}. we have the following lemma.
\begin{lem}
	There exists a multiplication operator \(\Psi \in B(H^2\otimes D, H^2\otimes\mathbb C^N)\) with \(\operatorname{ran} \Psi \subset \mathcal M\) and such that
	\begin{align}\label{rightequ-Psi}
		(I_{H^2}\otimes P_D) \Psi = M_\varphi\otimes I_D,
	\end{align}
	and
	\begin{align}\label{leftequ-Psi}
		\Psi (I_{H^2}\otimes P_D) \big|_{\mathcal M}
		=
		(M_\varphi\otimes I_{\mathbb C^N})
		\big|_{\mathcal M},
	\end{align}
\end{lem}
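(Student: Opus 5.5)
The plan follows the Gleason--Richter--Sundberg construction (cf.\ \cite[Lemma 3.3]{index of Invariant Subspace}) and builds $\Psi$ from the adjugate of the $m\times m$ matrix $\mathcal M(\lambda)$. Extend $\{e_j\}_{1\le j\le m}$ to the map $V:\mathbb C^m\to E$, $V\epsilon_j=e_j$. Put $\phi_0(\lambda)=\phi(\lambda)V\in B(\mathbb C^m,\mathbb C^N)$, which is a multiplier, since $M_\phi$ is a contraction and $V$ is an isometry. Identify $D$ with $\mathbb C^m$ via $\{d_k\}$. Then $P_D\phi_0(\lambda)$ is represented by the matrix $\mathcal M(\lambda)$, whose entries are multipliers of $H^2$. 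Let $\operatorname{adj}\mathcal M(\lambda)$ be the classical adjugate; its entries are polynomials in multipliers, hence multipliers. Define
\[
\Psi(\lambda)=\frac{1}{\det\mathcal M(\lambda_0)}\,\phi_0(\lambda)\,\operatorname{adj}\mathcal M(\lambda)\,U^*,
\]
where $U:\mathbb C^m\to D$ sends $\epsilon_k\mapsto d_k$. This is a finite sum of products of multipliers, so $M_\Psi\in B(H^2\otimes D,H^2\otimes\mathbb C^N)$. Since $\Psi=M_\phi\circ(\text{bounded multiplier})$, we have $\operatorname{ran}\Psi\subset\operatorname{ran}M_\phi=\mathcal M$, because $\operatorname{ran}M_\phi$ is closed ($M_\phi$ is a partial isometry).

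For \eqref{rightequ-Psi}, compute $P_D\Psi(\lambda)=\det\mathcal M(\lambda_0)^{-1}U\mathcal M(\lambda)\operatorname{adj}\mathcal M(\lambda)U^*=\varphi(\lambda)I_D$, and this holds pointwise. Pointwise identities of multipliers give operator identities.

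For \eqref{leftequ-Psi}, it suffices to check the identity on $\operatorname{ran}M_\phi$, i.e.\ that $\Psi(\lambda)P_D\phi(\lambda)=\varphi(\lambda)\phi(\lambda)$ pointwise as $B(E,\mathbb C^N)$-valued functions; density and boundedness then extend it to $\mathcal M$. This identity is the main obstacle. It is a rank argument. Since $\operatorname{rank}\phi(\lambda)\le m$ everywhere, fix $\lambda$ and $x\in E$, and consider the $N\times(m+1)$ block $[\phi_0(\lambda)\ \ \phi(\lambda)x]$. The desired identity reads
\[
\phi_0(\lambda)\operatorname{adj}\mathcal M(\lambda)\,\mathcal M_x(\lambda)=\det\mathcal M(\lambda)\,\phi(\lambda)x,
\]
with $\mathcal M_x(\lambda)=U^*P_D\phi(\lambda)x$. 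Take any coordinate functional $\ell$ on $\mathbb C^N$. Cramer/Laplace expansion of the $(m+1)\times(m+1)$ determinant with rows $U^*P_D$ and $\ell$, and columns $[\phi_0(\lambda),\phi(\lambda)x]$, gives $\ell(\det\mathcal M\,\phi x-\phi_0\operatorname{adj}\mathcal M\,\mathcal M_x)$ up to sign. That determinant vanishes because the $m+1$ columns lie in $\operatorname{ran}\phi(\lambda)$, which has dimension $\le m$. Taking all $\ell$ yields the identity.

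Finally, multiply by $\det\mathcal M(\lambda_0)^{-1}$ and recall $\varphi=\det\mathcal M/\det\mathcal M(\lambda_0)$. The one technical check is that $\operatorname{span}\{\phi(\cdot)x\cdot g\}$, i.e.\ $\operatorname{ran}M_\phi$ applied to $H^2\otimes E$, is dense in $\mathcal M$. This holds since $\operatorname{ran}M_\phi=\mathcal M$.
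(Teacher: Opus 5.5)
Your argument is correct and is essentially the adjugate (Cramer's rule) construction of Gleason--Richter--Sundberg's Lemma 3.3, which the paper invokes without writing it out. You use the same matrix $\mathcal M(\lambda)$ and $\varphi=\det\mathcal M/\det\mathcal M(\lambda_0)$, with the multipliers $\phi(\cdot)e_j$ coming from the McCullough--Trent inner multiplier as generators. The key step, the vanishing of the bordered $(m+1)\times(m+1)$ determinant because $\operatorname{rank}\phi(\lambda)\le \operatorname{fd}(\mathcal M)=m$ at every $\lambda$, is handled correctly, and since $\operatorname{ran}M_\phi=\mathcal M$ exactly, no density argument is needed for the second identity.
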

Define \(\delta_{\mathcal M^\perp}:  \mathcal M^\perp \to D^\perp\) and \(\delta_{D^\perp}: H^2\otimes D^\perp \to D^\perp \) by \(\delta_{\mathcal M^\perp}q=P_{D^\perp}\bigl(q(\lambda_0)\bigr)\)
and \(\delta_{D^\perp}h=h(\lambda_0)\).

\begin{lem}\label{AJ}
	There exist bounded Hilbert module homomorphisms \(A:\mathcal  M^\perp\longrightarrow H^2\otimes D^\perp\)
	and \(J:H^2\otimes D^\perp\longrightarrow \mathcal M^\perp\)
	such that
	\begin{enumerate}[label=(\arabic*)]
	\item \(JA=S_\varphi\). \label{JA1}
	\item \(AJ=M_\varphi\otimes I_{D^\perp}\).\label{JA2}
	\item \(\delta_{D^\perp}A=\delta_{\mathcal M^\perp}\).\label{JA3}
	\item \(\delta_{\mathcal M^\perp}J=\delta_{D^\perp}\).\label{JA4}
	\end{enumerate}
\end{lem}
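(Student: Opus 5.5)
We follow the strategy of \cite[Lemma 3.3--3.4]{index of Invariant Subspace} and build $A$ and $J$ from the multiplier $\Psi$ of the preceding lemma, together with the compression structure of $\mathcal M^\perp$. Decompose $\mathbb C^N=D\oplus D^\perp$. The map $J$ will be compression of the inclusion: for $h\in H^2\otimes D^\perp\subset H^2\otimes\mathbb C^N$, set
\[
Jh=P_{\mathcal M^\perp}h .
\]
Since $\mathcal M$ is invariant, $P_{\mathcal M^\perp}(M_{z_i}\otimes I)P_{\mathcal M^\perp}=P_{\mathcal M^\perp}(M_{z_i}\otimes I)$. Hence $J(M_{z_i}\otimes I_{D^\perp})=S_{z_i}J$, so $J$ is a bounded module homomorphism.

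For $A$ we use $\Psi$ to remove the $D$-component. For $q\in\mathcal M^\perp$, put
\[
Aq=(I\otimes P_{D^\perp})\bigl[(M_\varphi\otimes I)q-\Psi(I\otimes P_D)q\bigr].
\]
This needs justification, and a simpler candidate may suffice. Identity \eqref{rightequ-Psi} shows that $(M_\varphi\otimes I)q-\Psi(I\otimes P_D)q$ has vanishing $D$-component. Thus
\[
Aq=(M_\varphi\otimes I)q-\Psi(I\otimes P_D)q\in H^2\otimes D^\perp
\]
is a genuine element there. The map $A$ must also be a module map from $\mathcal M^\perp$ with the compressed action. The key point is that $q\mapsto (M_\varphi\otimes I)q-\Psi(I\otimes P_D)q$ kills $\mathcal M$ by \eqref{leftequ-Psi}. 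We therefore define $\widetilde A$ on all of $H^2\otimes\mathbb C^N$ by the same formula; it commutes with $M_{z_i}\otimes I$, because $\Psi$ and $M_\varphi$ are multipliers, and it vanishes on $\mathcal M$. Setting $A=\widetilde A|_{\mathcal M^\perp}$, we get
\[
AS_{z_i}q=\widetilde A P_{\mathcal M^\perp}(z_iq)=\widetilde A(z_iq)=(M_{z_i}\otimes I)Aq,
\]
so $A$ is a bounded module homomorphism.

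With these definitions we verify (1)--(4) in turn.

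\textbf{(1)} For $q\in\mathcal M^\perp$, the term $\Psi(I\otimes P_D)q$ lies in $\operatorname{ran}\Psi\subset\mathcal M$. Hence
\[
JAq=P_{\mathcal M^\perp}(\varphi q)-0=S_\varphi q .
\]

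\textbf{(2)} For $h\in H^2\otimes D^\perp$, write $h=P_{\mathcal M^\perp}h+m$ with $m\in\mathcal M$. Since $\widetilde A$ vanishes on $\mathcal M$,
\[
AJh=\widetilde A(h-m)=\widetilde Ah=\varphi h-\Psi(I\otimes P_D)h=\varphi h,
\]
because $P_Dh=0$.

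\textbf{(3)} Evaluate at $\lambda_0$, where $\varphi(\lambda_0)=1$. The definition of $\Psi$ in \cite[Lemma 3.3]{index of Invariant Subspace} (via the adjugate of $\mathcal M(\lambda)$) should give $P_{D^\perp}\Psi(\lambda_0)=0$. Then
\[
(Aq)(\lambda_0)=P_{D^\perp}q(\lambda_0)=\delta_{\mathcal M^\perp}q .
\]

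\textbf{(4)} We have $(Jh)(\lambda_0)=h(\lambda_0)-m(\lambda_0)$, and $m(\lambda_0)\in\mathcal M_{\lambda_0}=D$. Therefore
\[
P_{D^\perp}(Jh)(\lambda_0)=h(\lambda_0)=\delta_{D^\perp}h .
\]

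The main obstacle is step (3): it requires knowing $\Psi(\lambda_0)$ explicitly, in particular that $P_{D^\perp}\Psi(\lambda_0)=0$. We expect to establish this by inspecting the construction of $\Psi$ from the cofactor matrix of $\mathcal M(\lambda)$ as in \cite{index of Invariant Subspace}. Failing that, we would replace $\Psi$ by $\Psi-\Psi(\lambda_0)$-type corrections that preserve \eqref{rightequ-Psi} and \eqref{leftequ-Psi}.
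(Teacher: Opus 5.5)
Your $A$ and $J$ are exactly the paper's. Your checks of the module-homomorphism property and of (1), (2), (4) are correct, and in fact more explicit than the paper, which only says these are easy to verify.

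The one genuine gap is step (3). You leave the key fact $P_{D^\perp}\Psi(\lambda_0)=0$ as something you ``expect'' to read off from the cofactor construction in the cited reference. Your fallback is not viable: subtracting the constant $\Psi(\lambda_0)$ from $\Psi$ would in general destroy $\operatorname{ran}\Psi\subset\mathcal M$, since constants $1\otimes v$ need not lie in $\mathcal M$. It would also turn the right-hand side of \eqref{rightequ-Psi} into $M_{\varphi-1}\otimes I_D$.

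No explicit knowledge of $\Psi$ is needed. The fact follows from the stated property $\operatorname{ran}\Psi\subset\mathcal M$, by the same reasoning you already use in (4). For $v\in D$, $\Psi(\lambda_0)v=(\Psi(1\otimes v))(\lambda_0)\in\mathcal M_{\lambda_0}=D$, so $P_{D^\perp}\Psi(\lambda_0)=0$. Since $Aq\in H^2\otimes D^\perp$, your computation $(Aq)(\lambda_0)=P_{D^\perp}\bigl[\varphi(\lambda_0)q(\lambda_0)-\Psi(\lambda_0)P_Dq(\lambda_0)\bigr]=P_{D^\perp}q(\lambda_0)$ then goes through.

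This is precisely the paper's argument, with one small addition. The paper also evaluates \eqref{rightequ-Psi} at $\lambda_0$, using $\varphi(\lambda_0)=1$, to get $P_D\Psi(\lambda_0)=I_D$. Hence $(\Psi h)(\lambda_0)=h(\lambda_0)$ for $h\in H^2\otimes D$, which gives $\delta_{D^\perp}Aq=q(\lambda_0)-P_Dq(\lambda_0)=\delta_{\mathcal M^\perp}q$.
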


\begin{proof}
	Define
	\[
	A=\bigl(M_\varphi\otimes I_{\mathbb C^N}-\Psi (I_{H^2}\otimes P_D)\bigr)\big|_{\mathcal M^\perp},
	\qquad
	J=P_{\mathcal M^\perp}\big|_{H^2\otimes D^\perp}.
	\]
	By \eqref{rightequ-Psi},
	\begin{align*}
		(I_{H^2}\otimes P_D)\bigl(M_\varphi\otimes I_{\mathbb C^N}-\Psi (I_{H^2}\otimes P_D)\bigr) 
		=
		M_\varphi\otimes P_D -(M_\varphi \otimes I_D)(I_{H^2}\otimes P_D)
		= 0.
	\end{align*}
	Hence \(\operatorname{ran} A \subset H^2 \otimes D^\perp\).
	Moreover, it is easy to verify that \(A\) and \(J\) are bounded Hilbert module homomorphisms.\\
	\ref{JA1} Since $\operatorname{ran}\Psi\subset\mathcal M$, we have $P_{\mathcal M^\perp}\Psi=0$.
	Therefore, for $q\in\mathcal M^\perp$,
	\begin{align*}
		JAq
		=P_{\mathcal M^\perp}\bigl(M_\varphi\otimes I_{\mathbb C^N}-\Psi (I_{H^2}\otimes P_D)\bigr) q
		=S_\varphi q.
	\end{align*}
	\ref{JA2} For $h\in H^2\otimes D^\perp$, we have $(I_{H^2}\otimes P_D)h=0$.
	Hence, by \eqref{leftequ-Psi},
	\begin{align*}
		AJh
		=A P_{\mathcal M^\perp}h
		=\bigl(M_\varphi\otimes I_{\mathbb C^N}-\Psi (I_{H^2}\otimes P_D)\bigr) h
		=(M_\varphi\otimes I_{D^\perp})h.
	\end{align*}
	\ref{JA3} 
	For \(h \in H^2\otimes D\), since \((\Psi h)(\lambda_0) \in D\), we have
	\[
	(\Psi h)(\lambda_0)=P_D\bigl( (\Psi h)(\lambda_0)\bigr) = \bigl((I_{H^2 }\otimes P_D)\Psi h\bigr)(\lambda_0) \overset{(*)}{=} \bigl((M_{\varphi} \otimes I_D)h\bigr)(\lambda_0)\overset{(**)}{=}h(\lambda_0),
	\]
	where $(*)$ and $(**)$ follow from \eqref{rightequ-Psi} and \eqref{defvarphi}, respectively.
	Therefore, for $q\in\mathcal M^\perp$, 
	\[
	\delta_{D^\perp}Aq
	=\left(\bigl(M_\varphi\otimes I_{\mathbb C^N}-\Psi (I_{H^2}\otimes P_D)\bigr) q\right)(\lambda_0)
	=q(\lambda_0) - P_D\bigl(q(\lambda_0)\bigr)
	=\delta_{\mathcal M^\perp}q.
	\]
	\ref{JA4} For $h\in H^2\otimes D^\perp$, notice that \(h(\lambda_0)-(P_{\mathcal M^\perp }h)(\lambda_0)\in \mathcal M_{\lambda_0}=D\), we have
	\[
	\delta_{\mathcal M^\perp}Jh
	=P_{D^\perp}\bigl((P_{\mathcal M^\perp}h)(\lambda_0)\bigr)
	=P_{D^\perp}\bigl(h(\lambda_0)\bigr)
	=h(\lambda_0)=\delta_{D^\perp}h.
	\]
\end{proof}

Let \(\mathcal M\) be a closed submodule of
\(H^2\otimes\mathbb C^N\), 
Denote \(\mathbb S=(S_{z_1},S_{z_2},\ldots)\).
Then we have the following lemma.
\begin{lem}\label{g0mult}
	If \(g\in\mathrm{Mult}(H^2)\) satisfies
	\(g(0)=0\), then there exists a sequence of
	operators
	\[
	K_p:{\mathcal{M}^\perp}\otimes\Lambda^p
	\longrightarrow
	{\mathcal{M}^\perp}\otimes\Lambda^{p+1},
	~ p\geq0,
	\]
	such that $\sup\limits_{p\geq0}\|K_p\|<\infty$ and
	\[
	\partial_{\mathbb S,p+1}K_p
	+
	K_{p-1}\partial_{\mathbb S,p}
	=
	S_g\otimes I_{\Lambda^p},
	~ p\geq0,
	\]
	where \(K_{-1}=0\).
\end{lem}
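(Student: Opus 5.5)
Since $g(0)=0$, the plan is to factor $g$ through the coordinate row, write $S_g$ as a contraction of $\mathbb S$, and then use the Koszul algebra to produce an explicit homotopy.

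\emph{Factorisation.} By Gleason-type division in the Drury--Arveson space, there is a column $(g_1,g_2,\ldots)^{\mathrm T}$ of multipliers with
\[
g=\sum_{i\ge1} z_i g_i .
\]
This can be read off from Hartz's factorisation \cite[Proposition 3.8]{Hartz2023}, or from the identity $M_g=\sum_i M_{z_i}(M_{z_i}^*M_g)$ restricted to functions vanishing at $0$. The bound I want is that $[M_{g_1},M_{g_2},\ldots]^{\mathrm T}$ is bounded with norm at most $\|g\|_{\mathrm{Mult}}$ (up to a constant), so it is a bounded column.

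\emph{Compression to $\mathcal M^\perp$.} Because $\mathcal M^\perp$ is co-invariant, compressions multiply:
\[
S_{\phi\psi}=S_\phi S_\psi \quad\text{for } \phi,\psi\in\mathrm{Mult}(H^2).
\]
In particular the $S_{g_i}$ commute with each other and with all $S_{z_j}$, and
\[
S_g=\operatorname{SOT-}\sum_i S_{z_i}S_{g_i}.
\]

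\emph{The homotopy.} Set
\[
K_p=\sum_i S_{g_i}\otimes E_i\Big|_{\mathcal M^\perp\otimes\Lambda^p}.
\]
The CAR relations give, exactly as in the computation of Corollary~\ref{TS=1},
\[
\partial_{\mathbb S,p+1}K_p+K_{p-1}\partial_{\mathbb S,p}=\sum_i S_{z_i}S_{g_i}\otimes I=S_g\otimes I_{\Lambda^p}.
\]
This step needs only commutativity and SOT-convergence. For the latter, Lemma~\ref{bound and SOT} gives SOT-convergence of $\sum_i S_{g_i}\otimes E_i$ as soon as its partial sums are uniformly bounded.

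\emph{Uniform bound on $K_p$ (the main obstacle).} By Lemma~\ref{sup=}, $\|K\|=\|\sum_i S_{g_i}\otimes E_i^*\|$, and this in turn is bounded by $\|\sum_i (M_{g_i}\otimes I_{\mathbb C^N})\otimes E_i^*\|$. The reason is that $\mathcal M^\perp\otimes\Lambda$ is co-invariant for $M_{g_i}\otimes I\otimes E_i^*$: writing
\[
\sum_i M_{g_i}\otimes E_i^*=\Bigl(\sum_i M_{g_i}^*\otimes E_i\Bigr)^*,
\]
the compression of this operator to a co-invariant subspace is exactly $\sum_i S_{g_i}\otimes E_i^*$. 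Theorem~\ref{thm:main}, which applies since $H^2$ is a normalized complete Pick space, then yields
\[
\Bigl\|\sum_i M_{g_i}\otimes E_i^*\Bigr\|=\bigl\|[M_{g_1},M_{g_2},\ldots]^{\mathrm T}\bigr\|<\infty ,
\]
and tensoring with $\mathbb C^N$ does not change the norm. Hence $\sup_p\|K_p\|<\infty$.

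The delicate point is obtaining the factorisation $g=\sum_i z_i g_i$ with a bounded \emph{column} of multipliers rather than merely a bounded row. If the direct division argument fails, I would instead use Hartz's result (Theorem~\ref{thm:hartz}) together with the explicit choice $g_i=M_{z_i}^*g$. The reasoning would be that the operator $\sum_i M_{z_i}^* M_g\otimes e_i$ is bounded, with $\sum_i\|M_{z_i}^*M_gf\|^2\le\|g\|^2\|f\|^2$. It would then remain to check that these $g_i$ are multipliers and that this column bound is what is needed.
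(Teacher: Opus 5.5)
Your outline matches the paper's proof. You compress a factorization $g=\sum_i z_i\eta_i$ to $\mathcal M^\perp$ and set $K_p=\sum_i S_{\eta_i}\otimes E_i$. The homotopy identity comes from the CAR relations and multiplicativity of compressions to the co-invariant subspace $\mathcal M^\perp$. You bound $\sup_p\|K_p\|$ by Lemma~\ref{sup=}, compression and Theorem~\ref{thm:main}, and get SOT convergence from Lemma~\ref{bound and SOT}. The gap is the one step you leave open: producing a \emph{bounded multiplier column} $(\eta_i)$ with $g=\sum_i z_i\eta_i$. That is the only non-routine ingredient, and neither of your justifications delivers it.

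\textbf{How the paper gets the factorization.} It applies Leech's theorem (the Douglas-type factorization for complete Pick spaces) directly on $H^2$. Since $g(0)=0$, we have $M_g^*1=\overline{g(0)}\,1=0$, so $M_g^*=M_g^*(I-P_0)$. Writing $M_Z=[M_{z_1},M_{z_2},\ldots]$, this gives
\[
M_gM_g^*=(I-P_0)M_gM_g^*(I-P_0)\le\|M_g\|^2(I-P_0)=\|M_g\|^2M_ZM_Z^*.
\]
Leech's theorem then yields a multiplier column $\eta$ with $M_g=M_ZM_\eta$ and $\|M_\eta\|\le\|M_g\|$. The same identity $M_g=M_ZM_\eta$ also justifies $S_g=\operatorname{SOT-}\sum_i S_{z_i}S_{\eta_i}$, which you use without comment.

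\textbf{Why your two routes fall short.}
\begin{itemize}
\item Hartz's Proposition~3.8, as used in Proposition~\ref{prop:reduction}, concerns contractive column multipliers on $H^2_d|_F$ for finite $F\subset\mathbb B_d$. Getting the factorization on $\mathbb B\subset\ell^2$ from it would need a limiting argument over finite sets and finitely many variables, which you do not give.
\item Your fallback $g_i=M_{z_i}^*g$ does not work. The identity $M_g=\sum_i M_{z_i}(M_{z_i}^*M_g)$ is true, but $M_{z_i}^*M_g$ is not multiplication by $M_{z_i}^*g$ once there are two or more variables. Already $M_{z_1}^*M_{z_1}z^\alpha=\frac{\alpha_1+1}{|\alpha|+1}z^\alpha\neq z^\alpha$, although $M_{z_1}^*z_1=1$.
\item So the estimate $\sum_i\|M_{z_i}^*M_gf\|^2\le\|g\|^2\|f\|^2$ says nothing about whether $(M_{z_i}^*g)_i$ is a bounded multiplier column.
\item Hartz's column-row theorem runs from column bounds to row bounds, so it cannot supply the column bound you need.
\end{itemize}
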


\begin{proof}
	Define $M_Z:H^2 \otimes\ell^2\rightarrow H^2$ by
	\[
	M_Z\left( \sum_{j=1}^\infty f_j \otimes e_j \right)=\sum_{j=1}^\infty z_jf_j.
	\]
	Since \(g(0)=0\), we have \(M_g^*1=0\).
	Hence \(M_g^*(I-P_0) =M_g^*\).
	Consequently, 
	\[
	M_gM_g^*
	=M_g M_g^*(I-P_0)
	\leq
	\|M_g\|^2(I-P_0)
	=
	\|M_g\|^2M_ZM_Z^* .
	\]
	By the Leech--Douglas factorization theorem\cite[Theorem~1.10]{McCulloughTrent2012}, there exists a multiplier column \(\eta=(\eta_1,\eta_2,\cdots)^{\rm T}\) such that
	\(\|M_\eta\|\leq\|M_g\|\) and
	\begin{align}\label{g}
		g = \sum_{j=1}^\infty z_j\eta_j.
	\end{align}
	Let
		\[
		U:(H^2\otimes\mathbb C^N)\otimes\Lambda
		\longrightarrow (H^2\otimes\Lambda)\otimes\mathbb C^N,
		\qquad
		U\bigl((f\otimes v)\otimes\xi\bigr)=(f\otimes\xi)\otimes v.
		\]
	Then \(U\) is a unitary operator
	and for each $1\leq i\leq r$, 
	\[
	U\left(
	\sum_{i=1}^r(M_{\eta_i}\otimes I_{\mathbb C^N})\otimes E_i
	\right)U^*
	=
	\left(\sum_{i=1}^rM_{\eta_i}\otimes E_i\right)
	\otimes I_{\mathbb C^N}.
	\]
	Therefore,
	\begin{equation}\label{sup<infty}
		\begin{aligned}
			\left\|\sum_{i=1}^{r}S_{\eta_i}\otimes E_i\right\|
			&=\left\| (P_{\mathcal M^\perp} \otimes I_{\Lambda})\left(\sum_{i=1}^r (M_{\eta_i}\otimes I_{\mathbb C^N})\otimes E_i\right)(P_{\mathcal M^\perp} \otimes I_{\Lambda}) \right\|\\
			&\leq
			\left\| \sum_{i=1}^r (M_{\eta_i}\otimes I_{\mathbb C^N})\otimes E_i \right\|\\
			&=\left\|\left(\sum_{i=1}^r M_{\eta_i}\otimes E_i \right)\otimes I_{\mathbb C^N} \right\|\\
			&\overset{(*)}{=}
			\|M_{(\eta_1,\ldots,\eta_r)^{\rm T}}\|\\
			&\leq
			\|M_\eta\|
			\leq 
			\|M_g\|,
		\end{aligned}
	\end{equation}	
	where \((*)\) is given by Lemma~\ref{sup=} and Theorem \ref{thm:main}.
	Hence, by \eqref{sup<infty} and Lemma~\ref{bound and SOT}, we have \( (S_{\eta_1}, S_{\eta_2},\ldots)\) is row bounded and \(\sum\limits_{i=1}^\infty S_{\eta_i} \otimes E_i\) converges in the strong operator topology.
	Then define
	\[
	K_p
	=
	\left(\operatorname{SOT}\!-\!\lim_{r\to\infty}
	\sum_{i=1}^{r}
	S_{\eta_i}\otimes E_{i}\right)\Bigr|_{\mathcal M^\perp \otimes \Lambda^p} 
	= \operatorname{SOT}\!-\!\lim_{r\to\infty}
	\sum_{i=1}^{r}
	S_{\eta_i}\otimes E_{i,p+1}.
	\]
	For any \(\xi \in \mathcal M^\perp \otimes \Lambda^p\), by \eqref{sup<infty}, we have 
	\[
	\sup\limits_{p\geq0}\|K_p\|\leq \|M_g\|<\infty.
	\]
Moreover, for \(p \geq 0\), let
\[
K_p^{(r)} = \sum_{i=1}^r S_{\eta_i} \otimes E_{i,p+1} 
\quad \text{and} \quad 
\partial_{\mathbb S,p}^{(r)} = \sum_{i=1}^r S_{z_i} \otimes E_{i,p}^*.
\]
Then by the CAR relations, a direct computation yields
\begin{equation}\label{dK+kd}
\begin{aligned}
&\partial_{\mathbb S,p+1}^{(r)}K_p^{(r)}
+
K_{p-1}^{(r)}\partial_{\mathbb S,p}^{(r)}\\
=&
\sum_{i,j=1}^r
S_{z_i}S_{\eta_j}
\otimes
\left(E_{i,p+1}^*E_{j,p+1}+E_{j,p}E_{i,p}^*\right)
\\
=&
\sum_{i=1}^r S_{z_i}S_{\eta_i}\otimes I_{\Lambda^p},
\end{aligned}
\end{equation}
Since \(K_p^{(r)} \to K_p\) and \(\partial_{\mathbb S,p}^{(r)} \to \partial_{\mathbb S,p}\) in the strong operator topology as \(r\to\infty\), passing to the strong limit in \eqref{dK+kd} yields
\[
\partial_{\mathbb S,p+1}K_p
+
K_{p-1}\partial_{\mathbb S,p}
=
\left(\sum_{i=1}^\infty S_{z_i}S_{\eta_i}\right)\otimes I_{\Lambda^p}
\overset{(*)}{=}
S_g\otimes I_{\Lambda^p},
\]
where \((*)\) is given by \eqref{g}.
\end{proof}

\begin{lem}\label{lem:global}
	Let
	\[ 
	(X,\partial_X): \quad 0 \longleftarrow X_{-1} \xleftarrow{\partial_{X,0}} X_0 \xleftarrow{\partial_{X,1}} X_1 \xleftarrow{\partial_{X,2}} X_2 \xleftarrow{\partial_{X,3}} \cdots 
	\] 
	and 
	\[ 
	(Y,\partial_Y): \quad 0 \longleftarrow Y_{-1} \xleftarrow{\partial_{Y,0}} Y_0 \xleftarrow{\partial_{Y,1}} Y_1 \xleftarrow{\partial_{Y,2}} Y_2 \xleftarrow{\partial_{Y,3}} \cdots 
	\] 
	be Hilbert complexes,
	where \(\{X_p\}_{p \geq -1}\) and \(\{Y_p\}_{p\geq -1}\) are Hilbert spaces, \( \partial_{X,p}\in B(X_p,X_{p-1})\) and \(\partial_{Y,p}\in B(Y_p,Y_{p-1})\), \(p\geq0\).
	By convention, \(X_{-2}=Y_{-2}=\{0\}\).
	Assume that the following conditions hold:
	\begin{enumerate}
		\item There exists a uniformly bounded operator $R_{X,p}:X_p\to X_{p+1},\ p\geq -1$ such that
		\[
		\partial_{X,0} R_{X,-1} =I_{X_{-1}}, \quad \partial_{X,p+1} R_{X,p} + R_{X,p-1}\partial_{X,p} = I_{X_p}, \ p \geq 0.
		\]

		\item There exist uniformly bounded operators $A_p:Y_p\to X_p$, $J_p:X_p\to Y_p$ and $K_p:Y_p\to Y_{p+1}, \ p\geq -1$ such that
		\[
		A_{p-1}\partial_{Y,p} = \partial_{X,p} A_p,\quad J_{p-1}\partial_{X,p} = \partial_{Y,p} J_p, \ p \geq 0,\]
		and
		\[\partial_{Y,0}K_{-1} = J_{-1}A_{-1}-I_{Y_{-1}}, \quad \partial_{Y,p+1} K_p + K_{p-1}\partial_{Y,p} = J_pA_p - I_{Y_p}, \ p \geq 0.
		\]
	\end{enumerate}
	Then there exists a uniformly bounded operator $R_{Y,p}:Y_p\to Y_{p+1}, \ p \geq -1$ such that
		\[
		\partial_{Y,0}R_{Y,-1}=I_{Y_{-1}}, \quad \partial_{Y,p+1} R_{Y,p} + R_{Y,p-1}\partial_{Y,p} = I_{Y_p}, \ p \geq 0.
		\]
\end{lem}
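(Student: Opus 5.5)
The plan is to transfer the contracting homotopy of $(X,\partial_X)$ to $(Y,\partial_Y)$ along the maps $A$ and $J$, and to correct the error term with $K$. For $p\geq -1$, I define
\[
R_{Y,p}=J_{p+1}R_{X,p}A_p-K_p:\ Y_p\longrightarrow Y_{p+1}.
\]
Uniform boundedness is then immediate:
\[
\sup_{p\geq-1}\|R_{Y,p}\|\leq \Bigl(\sup_p\|J_p\|\Bigr)\Bigl(\sup_p\|R_{X,p}\|\Bigr)\Bigl(\sup_p\|A_p\|\Bigr)+\sup_p\|K_p\|<\infty,
\]
since all four families are uniformly bounded by hypothesis.

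\textbf{Step 1: the identity for $p\geq0$.} I use the chain-map relations to move $\partial_Y$ past $J$ and $A$. On the left, $\partial_{Y,p+1}J_{p+1}=J_p\partial_{X,p+1}$. On the right, $A_{p-1}\partial_{Y,p}=\partial_{X,p}A_p$. These give
\[
\partial_{Y,p+1}J_{p+1}R_{X,p}A_p+J_pR_{X,p-1}A_{p-1}\partial_{Y,p}
=J_p\bigl(\partial_{X,p+1}R_{X,p}+R_{X,p-1}\partial_{X,p}\bigr)A_p=J_pA_p.
\]
Subtracting $\partial_{Y,p+1}K_p+K_{p-1}\partial_{Y,p}=J_pA_p-I_{Y_p}$ yields $\partial_{Y,p+1}R_{Y,p}+R_{Y,p-1}\partial_{Y,p}=I_{Y_p}$.

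\textbf{Step 2: the case $p=-1$.} By the same reasoning,
\[
\partial_{Y,0}R_{Y,-1}=J_{-1}\partial_{X,0}R_{X,-1}A_{-1}-\partial_{Y,0}K_{-1}=J_{-1}A_{-1}-(J_{-1}A_{-1}-I_{Y_{-1}})=I_{Y_{-1}}.
\]

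The only real obstacle is bookkeeping. The index shifts in the chain-map relations must line up, in particular $J_{p+1}$ versus $J_p$ when passing $\partial_{Y,p+1}$. The sign of the $K$ correction must also be right: the hypothesis is $\partial K+K\partial=JA-I$, which is why $K$ enters $R_{Y,p}$ with a minus sign. No analytic difficulty arises, since all operators are bounded and every identity is purely algebraic.
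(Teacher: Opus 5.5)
Your proposal is correct and essentially identical to the paper's proof. The paper also defines $R_{Y,p}=J_{p+1}R_{X,p}A_p-K_p$, bounds it by the same product estimate, and verifies the identities for $p\geq 0$ and for $p=-1$ by moving $\partial_Y$ through $J$ and $A$ with the chain-map relations, exactly as you do.
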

\begin{proof}
	For each \(p\geq -1\), define
	\[
	R_{Y,p}
	=
	J_{p+1}R_{X,p}A_p-K_p
	:
	Y_p\longrightarrow Y_{p+1}.
	\]
	Then
	\[
	\sup_{p \geq -1} \|R_{Y,p}\| \leq \left(\sup_{p \geq 0} \|J_p\|\right) \left(\sup_{p \geq -1}  \|R_{X,p}\|\right)\left(\sup_{p \geq -1}  \|A_p\|\right) + \sup_{p \geq -1}  \|K_p\| < \infty.
	\]
	For \(p \geq 0\), by assumptions 1 and 2, we have
	\begin{align*}
		&\partial_{Y,p+1} R_{Y,p} + R_{Y,p-1}\partial_{Y,p}\\
		=& \partial_{Y,p+1}(J_{p+1}R_{X,p}A_p) - \partial_{Y,p+1} K_p + (J_{p}R_{X,p-1}A_{p-1})\partial_{Y,p} - K_{p-1}\partial_{Y,p} \\
		=&
		J_p\partial_{X,p+1}R_{X,p}A_p + J_{p}R_{X,p-1}\partial_{X,p} A_p-(\partial_{Y,p+1} K_p +K_{p-1}\partial_{Y,p} )\\
		=& J_p(\partial_{X,p+1}R_{X,p} + R_{X,p-1}\partial_{X,p})A_p - J_{p}A_{p}+ I_{Y_p}  \\
		=&J_pI_{X_p}A_p - J_{p}A_{p} + I_{Y_p} \\
		=&  I_{Y_p}.
	\end{align*}
	Moreover,
	\begin{align*}
	\partial_{Y,0}R_{Y,-1}&=\partial_{Y,0}(J_0R_{X,-1}A_{-1})-\partial_{Y,0}K_{-1}\\
	&=J_{-1}\partial_{X,0}R_{X,-1}A_{-1}-J_{-1}A_{-1}+I_{Y_{-1}}\\
	&=J_{-1}I_{X_{-1}}A_{-1}-J_{-1}A_{-1}+I_{Y_{-1}}\\
	&=I_{Y_{-1}}.
	\end{align*}
\end{proof}

\begin{lem}\label{dengshi}
	Let $\mathcal M$ be a submodule of $H^2\otimes\mathbb C^N$ and $\mathbb S=(S_{z_1},S_{z_2},\ldots)$.
	For $\lambda\in\mathbb B\setminus\{0\}$, let $\varphi_\lambda$ and $v_\lambda$
	be as in Lemma~\ref{vmuM}, and set \(\widetilde{\mathcal M}
	=(v_\lambda\otimes I_{\mathbb C^N})\mathcal M\).
	Then, for every $i\geq1$,
	\[
	(v_\lambda\otimes I_{\mathbb C^N})^*
	S_{z_i}^{\widetilde{\mathcal M}^{\perp}}
	(v_\lambda\otimes I_{\mathbb C^N})
	=(\varphi_\lambda(\mathbb S))_iP_{M^\perp}.
	\]
\end{lem}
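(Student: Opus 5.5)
The plan is to reduce the identity to Lemma~\ref{vmuM}(iv) and then identify the compression of the multiplier $z_i\circ\varphi_\lambda$ with the $i$-th entry of $\varphi_\lambda(\mathbb S)$ defined in \eqref{varphi_lambda}. Throughout, I regard $S^{\mathcal N^\perp}_{\psi}=P_{\mathcal N^\perp}(M_\psi\otimes I_{\mathbb C^N})P_{\mathcal N^\perp}$ as an operator on $H^2\otimes\mathbb C^N$, for $\mathcal N=\mathcal M$ or $\widetilde{\mathcal M}$.

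\emph{Step 1: the two M\"obius maps agree.} I first check that the automorphism $\varphi_\mu$ used in Lemma~\ref{vmuM} coincides with the one in \eqref{def:varphi_lambda}. Put $s=\sqrt{1-|\lambda|^2}$ and write $z=P_\lambda z+Q_\lambda z$ with $P_\lambda z=\langle z,\lambda\rangle\lambda/|\lambda|^2$. The numerator of the formula in Lemma~\ref{vmuM}, divided by $|\lambda|^2$, becomes
$$\lambda-sz-(1-s)P_\lambda z=\lambda-P_\lambda z-sQ_\lambda z,$$
which is the numerator in \eqref{def:varphi_lambda}.

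\emph{Step 2: reduction to a compression.} Since $v_\lambda$ is unitary with $v_\lambda^2=I$, we have $v_\lambda^*=v_\lambda$. By Lemma~\ref{vmuM}(iii), $(v_\lambda\otimes I)$ intertwines $P_{\widetilde{\mathcal M}^\perp}$ and $P_{\mathcal M^\perp}$, and by Lemma~\ref{vmuM}(i), $v_\lambda M_{z_i}v_\lambda=M_{z_i\circ\varphi_\lambda}$. Exactly as in the proof of Lemma~\ref{vmuM}(iv), the left-hand side therefore equals
$$P_{\mathcal M^\perp}(M_{z_i\circ\varphi_\lambda}\otimes I)P_{\mathcal M^\perp}=S_{z_i\circ\varphi_\lambda}P_{\mathcal M^\perp}.$$

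\emph{Step 3: identify the compression with $(\varphi_\lambda(\mathbb S))_i$.} Because $\mathcal M^\perp$ is co-invariant for all multiplication operators, the compression map $f\mapsto S_f$ is a unital homomorphism on $\mathrm{Mult}(H^2)$; in particular $S_{fg}=S_fS_g$. Write
$$z_i\circ\varphi_\lambda=(1-\langle z,\lambda\rangle)^{-1}\cdot s\,\ell_i,\qquad \ell_i=\sum_j a_{ij}(\lambda)(z_j-\lambda_j).$$

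The factor $1-\langle z,\lambda\rangle$ is an invertible multiplier whose compression is $I-\langle\mathbb S,\lambda\rangle$. Since the compression map is a homomorphism, $S_{(1-\langle z,\lambda\rangle)^{-1}}=(I-\langle\mathbb S,\lambda\rangle)^{-1}$.

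For the linear factor, $M_{\ell_i}=\sum_j a_{ij}(\lambda)(M_{z_j}-\lambda_j)$, with the series converging in SOT because $(M_{z_j})$ is a row contraction and $\sum_j|a_{ij}|^2<\infty$; this is the same argument as \eqref{sum aij=sum Ti}. Compression by $P_{\mathcal M^\perp}$ preserves SOT limits, so $S_{\ell_i}=\sum_j a_{ij}(\lambda)(S_{z_j}-\lambda_j)$ in SOT. Multiplying the two factors gives $S_{z_i\circ\varphi_\lambda}=(\varphi_\lambda(\mathbb S))_i$, and hence the claimed identity.

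The only mildly delicate point is Step 3: justifying the homomorphism property of compression for the non-polynomial multiplier $(1-\langle z,\lambda\rangle)^{-1}$, and the passage of the infinite linear sum through the compression. I would handle both via co-invariance of $\mathcal M^\perp$ together with SOT continuity of compression.
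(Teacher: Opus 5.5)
Your proposal is correct and follows essentially the paper's route. You reduce via Lemma~\ref{vmuM}(iv) to the compression $P_{\mathcal M^\perp}(M_{z_i\circ\varphi_\lambda}\otimes I_{\mathbb C^N})P_{\mathcal M^\perp}$, and then use the identity $(1-\langle z,\lambda\rangle)\varphi_{\lambda,i}(z)=s_\lambda\sum_j a_{ij}(\lambda)(z_j-\lambda_j)$ together with co-invariance of $\mathcal M^\perp$.

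The only difference is in how the inverse factor is handled. The paper multiplies on the left by $I-\langle\mathbb S,\lambda\rangle$ and then inverts, so it needs co-invariance only under the $M_{z_j}$. Your homomorphism step instead needs $M_{(1-\langle z,\lambda\rangle)^{-1}}$ to leave $\mathcal M$ invariant. This does hold: $\|\sum_j\bar\lambda_jM_{z_j}\|\le|\lambda|<1$, so the Neumann series $\sum_k\langle z,\lambda\rangle^k$ converges in multiplier norm. Your blanket claim that $\mathcal M^\perp$ is co-invariant for all multipliers is therefore stronger than what you need. Your explicit check that the two M\"obius formulas agree covers a point the paper leaves tacit.
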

\begin{proof}
	By Lemma~\ref{vmuM} \((iv)\), we have
	\begin{equation}\label{biaoda3}
		\begin{aligned}
			(v_\lambda\otimes I_{\mathbb C^N})^*
			S_{z_i}^{\widetilde{\mathcal M}^\perp}
			(v_\lambda\otimes I_{\mathbb C^N})
			=
			S_{z_i \circ \varphi_\lambda}^{\mathcal M^\perp}
			=
			P_{\mathcal M^\perp}
			(M_{z_i \circ \varphi_\lambda}\otimes I_{\mathbb C^N})P_{\mathcal M^\perp}.
		\end{aligned}
	\end{equation}
	By \eqref{def:varphi_lambda}, we have
	\begin{align}\label{bianxing}
	(1-\langle z,\lambda\rangle)\varphi_{\lambda,i}(z)
	=
	s_\lambda\sum_{j=1}^{\infty}
	a_{ij}(\lambda)(z_j-\lambda_j).
	\end{align}
	Therefore, for every $f\in\mathcal M^\perp$,
	\begin{equation}\label{biaoda2}
		\begin{aligned}
			(I-\langle \mathbb S,\lambda\rangle)
			P_{\mathcal M^\perp}
			(M_{z_i \circ \varphi_\lambda}\otimes I_{\mathbb C^N})f
			&=
			P_{\mathcal M^\perp}\left(
			I-\sum_{j=1}^{\infty}\overline{\lambda_j}
			(M_{z_j}\otimes I_{\mathbb C^N})
			\right)
			(M_{z_i \circ \varphi_\lambda}\otimes I_{\mathbb C^N})f\\
			&\overset{(*)}{=}
			P_{\mathcal M^ \perp} \left(s_\lambda\sum_{j=1}^{\infty}
			a_{ij}(\lambda)
			\bigl(M_{z_j}\otimes I_{\mathbb C^N}-\lambda_jI\bigr)\right)f\\
			&=
			s_\lambda\sum_{j=1}^{\infty}
			a_{ij}(\lambda)(S_{z_j}-\lambda_jI)f,
		\end{aligned}
	\end{equation}
	where \((*)\) is given by \eqref{bianxing}.
 	Since $\mathbb S$ is a row contraction and $\lambda\in\mathbb B$, we have \(\|\langle \mathbb S,\lambda\rangle\|\leq\|\lambda\|<1\), so $I-\langle\mathbb S,\lambda\rangle$ is invertible.
	Therefore, by \eqref{biaoda2}  and \eqref{varphi_lambda},
	\begin{align}\label{right=}
		P_{\mathcal M^\perp}
		(M_{z_i \circ \varphi_\lambda}\otimes I_{\mathbb C^N})f
		=(\varphi_\lambda(\mathbb S))_if.
	\end{align}
	Then by  \eqref{biaoda3} and \eqref{right=},
	\[
	(v_\lambda\otimes I_{\mathbb C^N})^*
	S_{z_i}^{\widetilde{\mathcal M}^\perp}
	(v_\lambda\otimes I_{\mathbb C^N})
	=(\varphi_\lambda(\mathbb S))_iP_{\mathcal M^\perp}.
	\]
\end{proof}

\begin{proof}[Proof of Theorem \ref{7.1}]
	Choose \(\lambda \in \mathbb B \setminus Z(\mathcal M)\). By \cite[Theorem 1.2]{curv}, it suffices to prove that $\lambda\in \mathbb B\setminus\sigma_e(\mathbb S)$ and $\operatorname{ind}(\mathbb S-\lambda)
	=
	N-\operatorname{fd}(\mathcal M).$

	Let $\varphi_\lambda$ and $v_\lambda$ be as in Lemma~\ref{vmuM}.
	Set \(\widetilde{\mathcal M}=(v_{\lambda}\otimes I_{\mathbb C^N})\mathcal M\) and \(\widetilde{\mathbb S}= (S_{z_1}^{\widetilde{\mathcal M}^\perp},S_{z_2}^{\widetilde{\mathcal M}^\perp},\cdots)\).
	By Lemma~\ref{vmuM}(ii), $\widetilde{\mathcal M}$ is a submodule
		of $H^2\otimes\mathbb C^N$.
	For any \(z \in \mathbb B\),
	\begin{align}\label{widetildeM=M}
	\widetilde{\mathcal M}_z =\left\{\widetilde{f}(z)\ | \ \widetilde{f} \in  \widetilde{\mathcal M}\right\}=\left\{ k_\lambda(z)f(\varphi_{\lambda}(z))\ |\ {f} \in  {\mathcal M}\right\} = \mathcal M_{\varphi_\lambda(z)}.
	\end{align}
	Thus 
	\begin{align}\label{fdwidetildeM=fdM}
	\operatorname{fd}\left(\widetilde{\mathcal M}\right) = \sup_{z \in \mathbb B}\dim 	\widetilde{\mathcal M}_z =  \sup_{z \in \mathbb B}\dim \mathcal M_{\varphi_\lambda(z)} =\sup_{w \in \mathbb B} \dim \mathcal M_{w} =\operatorname{fd} (\mathcal M).
	\end{align}
	Since \(\lambda \in \mathbb B \setminus Z(\mathcal M)\),  by \eqref{widetildeM=M} and \eqref{fdwidetildeM=fdM}, we have \(\dim\widetilde{\mathcal M}_0 = \dim {\mathcal M}_\lambda = \operatorname{fd}(\mathcal M) =\operatorname{fd}\left(\widetilde{\mathcal M}\right)\). 
	Thus $0 \in \mathbb B \setminus Z(\widetilde{\mathcal M})$.
	By Lemma~\ref{dengshi}, 
	\[
	(v_\lambda\otimes I_{\mathbb C^N})^*
	S_{z_i}^{\widetilde{\mathcal M}^\perp}
	(v_\lambda\otimes I_{\mathbb C^N})
	=(\varphi_\lambda(\mathbb S))_i \quad \text{on}\ \mathcal M^\perp.
	\]	
	Hence, by Lemma~\ref{T fredholm iff phi T},
	\(\widetilde{\mathbb S}\) is Fredholm
	if and only if \(\mathbb S-\lambda\) is Fredholm and 
	\(	\operatorname{ind}(\widetilde{\mathbb S})= \operatorname{ind}({\mathbb S}-\lambda)\).
	Thus, without loss of generality, we assume that \(\lambda=0\). We now take $\lambda_0=0$ in the constructions of \eqref{defvarphi} and Lemma 7.4-7.5. Thus $D=\mathcal{M}_0$ and $\varphi(0)=1$.
	
	Consider the augmented Koszul complexes
	\[
	K(\mathcal{M}^\perp,D^\perp):\quad
	0
	\longleftarrow
	D^\perp
	\xleftarrow{\delta_{\mathcal{M}^\perp}}
	\mathcal{M}^\perp
	\xleftarrow{\partial_{\mathbb S,1}}
	\Lambda^1(\mathcal{M}^\perp)
	\xleftarrow{\partial_{\mathbb S,2}}
	\Lambda^2(\mathcal{M}^\perp)
	\longleftarrow\cdots ,
	\]
	and
	\[
	K(H^2\otimes D^\perp,D^\perp):\quad
	0
	\longleftarrow
	D^\perp
	\xleftarrow{\delta_{D^\perp}}
	H^2\otimes D^\perp
	\xleftarrow{\partial_{\mathbb T,1}}
	\Lambda^1(H^2\otimes D^\perp)
	\xleftarrow{\partial_{\mathbb T,2}}
	\Lambda^2(H^2\otimes D^\perp)
	\longleftarrow\cdots.
	\]
Set \(\Lambda^{-1}(\mathcal{M}^\perp) = \Lambda^{-1}(H^2\otimes D^\perp) := D^\perp\).
	Choose \(A\) and \(J\) as in Lemma~\ref{AJ}.
	For each \(p\ge0\), let
	\(A_p=A\otimes I_{\Lambda^p} \in B(\Lambda^p(\mathcal M^\perp) , \Lambda^p(H^2\otimes D^\perp))\) 
	and
	\(J_p=J\otimes I_{\Lambda^p} \in B(\Lambda^p(H^2\otimes D^\perp),  \Lambda^p(\mathcal M^\perp))\). 
	Set \(A_{-1}=J_{-1}=I_{D^\perp}\in B(D^\perp, D^\perp)\). 
	Then for \(p\ge0\), by \ref{JA3} and \ref{JA4} of Lemma~\ref{AJ} and the fact that both \(A\) and \(J\) are Hilbert module homomorphisms, we have
	\begin{equation}\label{condition1}
		A_{p-1}\partial_{\mathbb S,p}=\partial_{\mathbb T,p}A_p,\qquad
	J_{p-1}\partial_{\mathbb T,p}=\partial_{\mathbb S,p}J_p.
	\end{equation}
	Applying Lemma~\ref{g0mult} to \(g=\varphi-1\), there exists a uniformly bounded operator \(K_p:\Lambda^p(\mathcal M^\perp) \to \Lambda^{p+1}(\mathcal M^\perp), \ p\geq -1\) such that
	\begin{equation}\label{condition2}
		\begin{gathered}
			\partial_{\mathbb S,0}K_{-1} =J_{-1}A_{-1}-I_{D^\perp},\\
			\partial_{\mathbb S,p+1}K_p+K_{p-1}\partial_{\mathbb S,p}
			= S_{\varphi}\otimes I_{\Lambda^p}-I_{\Lambda^p(\mathcal M^\perp)}\overset{(*)}{=}J_pA_p-I_{\Lambda^p(\mathcal M^\perp)},\ p\geq 0,
		\end{gathered}
	\end{equation}
	where \((*)\) is given by \ref{JA1} of Lemma~\ref{AJ}.
We claim that the augmented complex \(K(H^2\otimes D^\perp,D^\perp)\) satisfies the condition Proposition \ref{Iinvert iff uniform homotopy}(2) with $c=1$. In fact, for $\lambda \in D^\perp$, let $y_0=1\otimes \lambda$. Then
\begin{equation}\label{uni bound x0}
  \lambda=\delta_{D^\perp}y_0,\quad \|y_0\|\leq c\|\lambda\|.
\end{equation}
For $x_0\in \ker\delta_{D^\perp}$, since $x_0(0)=\delta_{D^\perp}x_0=0$ and $\sum\limits_{i=1}^\infty M_{z_i}M_{z_i}^*=I-P_0$, we have
\begin{equation}\label{construction y2}
x_0=\left(\left(\sum\limits_{i=1}^{\infty}M_{z_i}M_{z_i}^*\right)\otimes I_{D^\perp}\right)x_0=\sum_{i=1}^{\infty} \left(M_{z_i}\otimes I_{D^\perp}\right)\left(M_{z_i}^*\otimes I_{D^\perp}\right)x_0
\end{equation}
Hence by \eqref{construction y2},
\begin{equation}\label{norm y2}
\begin{aligned}
\left\|\sum\limits_{i=1}^{\infty}\left(\left(M_{z_i}^*\otimes I_{D^\perp}\right) x_0\right)\otimes e_i\right\|^2
&=\sum\limits_{i=1}^{\infty}\left\|\left(M_{z_i}^*\otimes I_{D^\perp}\right) x_0\right\|^2\\
&=\left\langle \sum_{i=1}^{\infty} \left(M_{z_i}\otimes I_{D^\perp}\right)\left(M_{z_i}^*\otimes I_{D^\perp}\right)x_0,x_0\right\rangle\\
&=\|x_0\|^2
\end{aligned}
\end{equation}
Let $y_1=\sum\limits_{i=1}^{\infty}\left(\left(M_{z_i}^*\otimes I_{D^\perp}\right) x_0\right)\otimes e_i\in \Lambda^1(H^2\otimes D^\perp)$. Then by \eqref{construction y2} and \eqref{norm y2},
\begin{equation}\label{y2 verify}
  x_0=\partial_{\mathbb{T},1}y_1,\quad \|y_1\|\leq c\|x_0\|.
\end{equation}
For $p\geq 1$ and $x_p\in \ker\partial_{\mathbb{T},p}$, by \eqref{dp+1y=x}, there exists $y_{p+1}\in \Lambda^{p+1}(H^2\otimes D^\perp)$ satisfying 
\begin{equation}\label{yge3 verify}
  x_p=\partial_{\mathbb{T},p+1}y_{p+1},\quad \|y_{p+1}\|\leq c\|x_p\|.
\end{equation}
By \eqref{uni bound x0}, \eqref{y2 verify} and \eqref{yge3 verify}, the claim is proved. Hence by Proposition \ref{Iinvert iff uniform homotopy}, \(K(H^2\otimes D^\perp,D^\perp)\) admits a uniformly bounded contraction \(R^{D^\perp}_p:\Lambda^p(H^2\otimes D^\perp) \to \Lambda^{p+1}(H^2\otimes D^\perp)\) satisfying
	\begin{equation}\label{condition3}
		\partial_{\mathbb T,p+1}R_p^{D^\perp}+R_{p-1}^{D^\perp}\partial_{\mathbb T,p}=I_{\Lambda^p(H^2\otimes D^\perp)}.
	\end{equation}
	By \eqref{condition1}, \eqref{condition2} and \eqref{condition3}, the hypotheses of Lemma~\ref{lem:global} are satisfied. Hence, there exists a uniformly bounded operator $R_p^{\mathcal M^\perp}: \Lambda^p(\mathcal M^\perp) \to \Lambda^{p+1}(\mathcal M^\perp), \ p\geq-1$ such that
	\[
	\partial_{\mathbb S,0}R_{-1}^{\mathcal M^\perp}=I_{D^\perp}, \quad
	\partial_{\mathbb S,p+1}R_p^{\mathcal M^\perp}+R_{p-1}^{\mathcal M^\perp}\partial_{\mathbb S,p}
	=I_{\Lambda^p(\mathcal M^\perp)},\ p\geq0.
	\]
	Therefore, by Proposition~\ref{Iinvert iff uniform homotopy}, the augmented complex \(K(\mathcal M^\perp, D^\perp)\) is exact,
	and \(3)\) in Definition~\ref{def:Fredholm} is satisfied.
	Moreover, since $H_0(\mathcal M^\perp)\cong D^\perp$, we have
	\[
	\dim H_0(\mathcal M^\perp)=\dim D^\perp=N-\operatorname{fd}(\mathcal M).
	\]
	Consequently, \(\mathbb S\) is Fredholm and \(\operatorname{ind}(\mathbb S)=N-\operatorname{fd}(\mathcal M)\).
\end{proof}
\begin{prop}\label{cor:spectrum}
Under the same assumptions as above, we have the following spectral characterizations:
\begin{enumerate}
    \item [(1)]\(\sigma_e(\mathbb{S}) \subset Z(\mathcal{M}) \cup \partial \mathbb{B}\).
    \item [(2)]If \(\operatorname{fd}(\mathcal{M}) = N\), then \(\sigma(\mathbb{S}) \cap \mathbb{B} = Z(\mathcal{M})\).
    \item [(3)]If \(\operatorname{fd}(\mathcal{M}) < N\), then \(\sigma(\mathbb{S}) = \overline{\mathbb{B}}\) and \(\partial \mathbb{B} \subset \sigma_e(\mathbb{S})\).
\end{enumerate}
\end{prop}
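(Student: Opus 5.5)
Part (1) comes almost directly from Theorem~\ref{7.1}. That theorem gives $\mathbb B\setminus Z(\mathcal M)\subset\mathbb B\setminus\sigma_e(\mathbb S)$. By Theorem~\ref{bounded closed}, $\sigma_e(\mathbb S)\subset\sigma(\mathbb S)\subset\overline{\mathbb B}$, since $\mathbb S$ is a row contraction. Combining the two gives $\sigma_e(\mathbb S)\subset Z(\mathcal M)\cup\partial\mathbb B$.

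For (3), assume $\operatorname{fd}(\mathcal M)<N$. Theorem~\ref{7.1} then gives, for every $\lambda\in\mathbb B\setminus Z(\mathcal M)$,
\[
H^0_{\mathbb S-\lambda}\cong\mathbb C^N/\mathcal M_\lambda\neq0 .
\]
So $\mathbb S-\lambda$ is not invertible, and $\mathbb B\setminus Z(\mathcal M)\subset\sigma(\mathbb S)$. The set $Z(\mathcal M)$ is the zero set in $\mathbb B$ of the holomorphic function $\det\mathcal M(\cdot)$ from \eqref{defvarphi}; more precisely, it is the common zero set of the $m\times m$ minors of $\phi$, which are not all identically zero. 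Hence $Z(\mathcal M)$ has empty interior, so $\mathbb B\setminus Z(\mathcal M)$ is dense in $\overline{\mathbb B}$. Since $\sigma(\mathbb S)$ is closed, this gives $\overline{\mathbb B}\subset\sigma(\mathbb S)$, and Theorem~\ref{bounded closed} gives the reverse inclusion.

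To get $\partial\mathbb B\subset\sigma_e(\mathbb S)$, I would argue by contradiction. Suppose some $\mu\in\partial\mathbb B$ is not in $\sigma_e(\mathbb S)$. By Corollary~\ref{index continious}, the Fredholm set is open and the index is locally constant on it. So there is a neighbourhood $W$ of $\mu$ in $\ell^2$ on which $\mathbb S-\lambda$ is Fredholm with constant index. This $W$ meets $\ell^2\setminus\overline{\mathbb B}$, where $\mathbb S-\lambda$ is invertible (Theorem~\ref{bounded closed}) and so has index $0$. It also meets $\mathbb B\setminus Z(\mathcal M)$, where the index is $N-\operatorname{fd}(\mathcal M)>0$. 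The main difficulty is that $W$ need not be connected to both regions through a single component, so local constancy alone does not finish the argument. I would fix this by shrinking $W$ to a ball around $\mu$, which is connected, and checking that Corollary~\ref{index continious} indeed gives index constancy along segments inside such a ball. The condition there, smallness of $\widehat{\mathbb S-\lambda}-\widehat{\mathbb S-\lambda'}$ on $\Lambda^+_{\mathrm{fin}}$, is controlled by $\|\lambda-\lambda'\|$ via Lemma~\ref{lem:bounded-partial-normal}.

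For (2), assume $\operatorname{fd}(\mathcal M)=N$. For $\lambda\in\mathbb B\setminus Z(\mathcal M)$ we have $\mathcal M_\lambda=\mathbb C^N$, and Theorem~\ref{7.1} gives $H^p_{\mathbb S-\lambda}=0$ for all $p$. The proof of Theorem~\ref{7.1} already produces uniformly bounded contracting homotopies, so by Proposition~\ref{Iinvert iff uniform homotopy} the operator $\mathbb S-\lambda$ is invertible. Hence $\sigma(\mathbb S)\cap\mathbb B\subset Z(\mathcal M)$. For the converse, take $\lambda\in Z(\mathcal M)$. Then $\mathcal M_\lambda\neq\mathbb C^N$, so there is a unit vector $\zeta\perp\mathcal M_\lambda$, and $k_\lambda\otimes\zeta\in\mathcal M^\perp$. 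I would show $k_\lambda\otimes\zeta\perp\operatorname{Im}\partial_{\mathbb S-\lambda,1}$. For $f\in\mathcal M^\perp$,
\[
\langle (S_{z_i}-\lambda_i)f,\,k_\lambda\otimes\zeta\rangle
=\langle (z_i-\lambda_i)f,\,K_\lambda\otimes\zeta\rangle/\|K_\lambda\|=0,
\]
using that $P_{\mathcal M^\perp}$ fixes $k_\lambda\otimes\zeta$ and that $(z_i-\lambda_i)f$ vanishes at $\lambda$. By continuity this extends to the SOT limit defining $\partial_{\mathbb S-\lambda,1}$. Therefore $H^0_{\mathbb S-\lambda}\neq0$, so $\lambda\in\sigma(\mathbb S)$.
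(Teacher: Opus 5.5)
Your proposal is correct. Parts (1) and (2) follow the paper's proof, while the first half of (3) takes a somewhat different route.

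\textbf{Part (3), $\sigma(\mathbb S)=\overline{\mathbb B}$.} The paper gets $\mathbb B\subset\sigma(\mathbb S)$ pointwise. On $\mathbb B\setminus Z(\mathcal M)$ the index $N-\operatorname{fd}(\mathcal M)$ is positive. On $Z(\mathcal M)$ it reuses the kernel vector $K_\lambda\otimes w$ from (2), which still exists because $\dim\mathcal M_\lambda<\operatorname{fd}(\mathcal M)<N$ there. So no topological information about $Z(\mathcal M)$ is needed at that point.

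You instead prove non-invertibility only off $Z(\mathcal M)$. You then close up using that $Z(\mathcal M)\subset\{\det\mathcal M(\cdot)=0\}$ has empty interior, by the identity theorem on $\mathbb B\subset\ell^2$. This costs an extra, correct analytic fact, but it pays off later. When the paper picks $\lambda\in U\cap(\mathbb B\setminus Z(\mathcal M))$ near a boundary point, it silently needs exactly this density, and your argument supplies it.

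\textbf{Part (3), $\partial\mathbb B\subset\sigma_e(\mathbb S)$.} The ``main difficulty'' you flag is not one. Corollary~\ref{index continious} says every $\mathfrak T$ in the $\varepsilon$-neighbourhood is Fredholm with $\operatorname{ind}(\mathfrak T)=\operatorname{ind}(\mathbb T)$, i.e.\ equal to the index at the centre $\mu$. So the index is constant on the whole neighbourhood, with no connectedness or segment argument required. The only thing to check is the perturbation bound $\|\widehat{\mathbb S-\lambda}x-\widehat{\mathbb S-\lambda'}x\|\le\|\lambda-\lambda'\|\,\|x\|$ on $\Lambda^+_{\mathrm{fin}}(H)$. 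This follows from Lemma~\ref{lem:bounded-partial-normal} and Lemma~\ref{partial norm hatT} applied to the constant tuple $((\lambda'_i-\lambda_i)I)$, as you say.

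\textbf{Part (2).} The paper gets invertibility without reopening the proof of Theorem~\ref{7.1}. Fredholmness gives closed range of $\partial_{\mathbb S-\lambda}$ (Theorem~\ref{3equal}), and Lemma~\ref{lem3.6} together with $H^p_{\mathbb S-\lambda}=0$ for all $p$ yields the uniform constant. Your route through the contracting homotopy also works, with one caveat. That homotopy is built for the transformed tuple $\widetilde{\mathbb S}$ after the reduction to $\lambda=0$. Lemma~\ref{T fredholm iff phi T}, as stated, transfers only Fredholmness and index back to $\mathbb S-\lambda$. You would need to observe that its proof also transfers invertibility: $I\otimes\Gamma(A_\lambda^T)$ is uniformly bounded with uniformly bounded inverse, and multiplying by a commuting invertible operator preserves the uniform bound. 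Alternatively, just use the paper's short argument.
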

\begin{proof}
(1) Since \(\mathbb{S}\) is a row contraction, by Theorem \ref{bounded closed},
\begin{equation}\label{sie le B}
\sigma_e(\mathbb{S})\subset \sigma(\mathbb{S}) \subset \overline{\mathbb{B}}.
\end{equation}
By Theorem \ref{7.1}, 
\begin{equation}\label{sie cap B}
\sigma_e(\mathbb{S}) \cap \mathbb{B} \subset Z(\mathcal{M}).
\end{equation}
Hence by \eqref{sie le B} and \eqref{sie cap B}, \(\sigma_e(\mathbb{S}) \subset Z(\mathcal{M}) \cup \partial \mathbb{B}\).\\
(2) Assume \(\operatorname{fd}(\mathcal{M}) = N\). If \(\lambda \in \mathbb{B} \setminus Z(\mathcal{M})\), then \(\mathcal{M}_\lambda = \mathbb{C}^N\). By Theorem \ref{7.1}, \(\mathbb{S} - \lambda\) is Fredholm, and 
\begin{equation}\label{hp S-lambda eq 0}
H^p_{\mathbb{S} - \lambda} = 0, \quad \forall p \ge 0.
\end{equation}
By Theorem \ref{3equal}, we have \(\dim(\ker \partial_{\mathbb{S}-\lambda} / \operatorname{Im} \partial_{\mathbb{S}-\lambda}) < \infty\), which implies that \(\operatorname{Im} \partial_{\mathbb{S}-\lambda}\) is closed. Then by Lemma \ref{lem3.6} and \eqref{hp S-lambda eq 0}, there exists a constant \(c>0\) such that for every \(k \ge 0\) and \(x_k \in \ker \partial_{\mathbb{S}-\lambda, k}\), there is \(y_{k+1} \in \Lambda^{k+1}\) satisfying
\[
x_k = \partial_{\mathbb{S}-\lambda, k+1} y_{k+1} \quad \text{and} \quad \|y_{k+1}\| \le c\|x_k\|.
\]
Hence \(\lambda \notin \sigma(\mathbb{S})\). 

Conversely, if \(\lambda \in Z(\mathcal{M})\), take a nonzero \(w \in \mathbb{C}^N\) such that \(w \perp \mathcal{M}_\lambda\) and set \(x = K_\lambda \otimes w \in \mathcal{M}^\perp\). Since \(M_{z_i}^* K_\lambda = \bar{\lambda}_i K_\lambda\), we have \((S_{z_i} - \lambda_i)^* x = 0\). For any \(y = \sum\limits_{i=1}^\infty y_i \otimes e_i \in \Lambda^1(\mathcal{M}^\perp)\),
\[
\langle x, \partial_{\mathbb{S}-\lambda, 1} y \rangle = \left\langle x, \sum\limits_{i=1}^\infty (S_{z_i} - \lambda_i)y_i \right\rangle = \sum\limits_{i=1}^\infty \langle (S_{z_i} - \lambda_i)^* x, y_i \rangle = 0.
\]
Thus \(x \perp \operatorname{Im} \partial_{\mathbb{S}-\lambda, 1}\), which implies \(H_0(\mathbb{S}-\lambda) \neq 0\). Hence \(\mathbb{S}-\lambda\) is not invertible. Thus, \(\sigma(\mathbb{S}) \cap \mathbb{B} = Z(\mathcal{M})\).\\
(3) Assume \(\operatorname{fd}(\mathcal{M}) < N\). For every \(\lambda \in \mathbb{B} \setminus Z(\mathcal{M})\), the index 
\[
\operatorname{ind}(\mathbb{S} - \lambda) = N - \operatorname{fd}(\mathcal{M}) > 0,
\]
so \(\mathbb{S} - \lambda\) is not invertible. For \(\lambda \in Z(\mathcal{M})\), the same argument as in (2) shows that \(\mathbb{S} - \lambda\) is not invertible. Hence, \(\mathbb{B} \subset \sigma(\mathbb{S})\). Since \(\sigma(\mathbb{S}) \subset \overline{\mathbb{B}}\) and \(\sigma(\mathbb{S})\) is closed, we conclude 
\[
\sigma(\mathbb{S}) = \overline{\mathbb{B}}.
\]
Now suppose \(\lambda_0 \in \partial \mathbb{B}\) such that $\mathbb{S}-\lambda_0$ is Fredholm. By Corollary \ref{index continious}, there exists a neighborhood \(U\) of \(\lambda_0\) such that \(\operatorname{ind}(\mathbb{S} - \mu)\) is constant for all \(\mu \in U\). Choosing \(\mu \in U\setminus \overline{\mathbb{B}}\) gives \(\operatorname{ind}(\mathbb{S} - \mu) = 0\), while choosing \(\lambda \in U\cap (\mathbb{B} \setminus Z(\mathcal{M}))\) gives \(\operatorname{ind}(\mathbb{S} - \lambda) > 0\), a contradiction. Therefore, 
$$
\partial \mathbb{B} \subset \sigma_e(\mathbb{S}).
$$
\end{proof}

 \noindent{Penghui Wang, School of Mathematics, Shandong University, Jinan 250100, Shandong, P. R. China, Email:
phwang@sdu.edu.cn}

 \noindent{Fan Zhang, School of Mathematics, Shandong University, Jinan 250100, Shandong, P. R. China, Email:
202211823@sdu.edu.cn}

\noindent{Ruoyu Zhang, School of Mathematics, Shandong University, Jinan 250100, Shandong, P. R. China, Email:
ryzhangmath@163.com}

 \noindent{Zeyou Zhu, School of Mathematical Sciences, Fudan University, Shanghai 200433, P. R. China, Email:
zhuzeyou@fudan.edu.cn}

\end{document}